\numberwithin{equation}{section}
\newtheorem{theorem}{Theorem}[section]
\newtheorem{definition}[theorem]{Definition}
\newtheorem{proposition}[theorem]{Proposition}
\newtheorem{corollary}[theorem]{Corollary}
\newtheorem{lemma}[theorem]{Lemma}
\newtheorem{remark}[theorem]{Remark}
\newtheorem{mainthm}{Theorem}
\newcommand{\cali}[1]{\mathscr{#1}}
\newcommand{\GL}{{\rm GL}}
\newcommand{\U}{{\rm U}}
\newcommand{\SL}{{\rm SL}}
\newcommand{\supp}{{\rm supp}}
\newcommand{\diff}{{\rm d}}
\newcommand{\del}{\partial}
\renewcommand{\Im}{\mathop{\mathrm{Im}}}
\renewcommand{\Re}{\mathop{\mathrm{Re}}}
\newcommand{\ddc}{{\rm dd^c}}
\newcommand{\dbar}{\overline\partial}
\renewcommand{\GL}{{\rm GL}}
\newcommand{\ep}{\epsilon}
\newcommand{\Leb}{{\rm Leb}}
\newcommand{\Lip}{{\rm Lip}}
\newcommand{\Arg}{{\rm Arg}}
\newcommand{\Cc}{\cali{C}}
\newcommand{\Hc}{\cali{H}}
\newcommand{\Tc}{\cali{T}}
\newcommand{\W}{\cali{W}}
\newcommand{\B}{\mathbb{B}}
\newcommand{\DD}{\mathbb{D}}
\newcommand{\D}{\mathbb{D}}
\newcommand{\C}{\mathbb{C}}
\newcommand{\N}{\mathbb{N}}
\newcommand{\Z}{\mathbb{Z}}
\newcommand{\R}{\mathbb{R}}
\renewcommand\P{\mathbb{P}}
\newcommand{\E}{\mathbf{E}}
\newcommand{\omegaFS}{ \omega_{\mathrm{FS}}}
\newcommand{\lp}{\langle}
\newcommand{\rp}{\rangle}
\newcommand{\norm}[1]{\lVert#1\rVert}
\newcommand{\oA}{\mathcal{A}}
\newcommand{\oB}{\mathcal{B}}
\newcommand{\oG}{\mathcal{G}}
\newcommand{\oP}{\mathcal{P}}
\newcommand{\oN}{\mathcal{N}}
\newcommand{\oE}{\mathcal{E}}
\newcommand{\oQ}{\mathcal{Q}}
\newcommand{\oL}{\mathcal{L}}
\title{Random walks on $\SL_2(\C)$: spectral gap and limit theorems}
\author{Tien-Cuong Dinh}
\address{T.-C. Dinh, L. Kaufmann \& H. Wu - Department of Mathematics,  National University of Singapore - 10, Lower Kent Ridge Road - Singapore 119076}
\email{matdtc@nus.edu.sg}
\author{Lucas Kaufmann}
\address{L. Kaufmann - Center for Complex Geometry - Institute for Basic Science (IBS) - 55 Expo-ro Yuseong-gu Daejeon 34126 South Korea \& Institut Denis Poisson, CNRS, Université d’Orléans, Rue de Chartres B.P.
6759, 45067 Orléans Cedex 2 France. }
\email{lucas.kaufmann@univ-orleans.fr}
\author[1]{Hao Wu}
\email{matwu@nus.edu.sg}
\thanks{This work was supported by the NUS and MOE grants  AcRF Tier 1 R-146-000-248-114, R-146-000-259-114, R-146-000-299-114 and MOE-T2EP20120-0010. L. Kaufmann was supported by the Institute for Basic Science (IBS-R032-D1)}
\begin{document}

\begin{abstract}
We obtain various new limit theorems for random walks on $\SL_2(\C)$ under low moment conditions.  For non-elementary measures with a finite second moment we prove a Local Limit Theorem for the norm cocycle,  yielding the optimal version of a theorem of É. Le Page.  For measures with a finite third moment, we obtain the Local Limit Theorem for the matrix coefficients, improving  a recent result of Grama-Quint-Xiao and the authors, and Berry-Esseen bounds with optimal rate $O(1 / \sqrt n)$ for the norm cocycle and the matrix coefficients. The main tool is a detailed study of the spectral properties of the Markov operator and its purely imaginary perturbations acting on different function spaces. We introduce, in particular, a new function space derived from the Sobolev space $W^{1,2}$ that provides uniform estimates. 
\end{abstract}

\clearpage\maketitle

\thispagestyle{empty}

{\centering\footnotesize \textit{------ \hskip5pt In memory of Professor Nessim Sibony \hskip5pt ------} \par}

\vskip20pt

\noindent\textbf{Keywords:} random walks on Lie groups, random matrices, local limit theorem, Berry-Eseen bound, spectral gap.

\medskip

\noindent\textbf{Mathematics Subject Classification 2010:} \texttt{60B15,60B20,37C30}.

\setcounter{tocdepth}{1}
\tableofcontents

\section{Introduction}

The theory of random walks on Lie groups is a classical and well developed topic. Initiated from the work of Furstenberg and Kesten in the 1960s \cite{furstenberg-kesten,furstenberg}, it was later developed by Guivarc'h, Kifer, Le Page, Raugi, Margulis, Goldsheid and others.  See \cite{bougerol-lacroix} for an account of the classical theory. Even after important progress in the last decades, there are questions whose answers were found only recently, see e.g. \cite{benoist-quint:CLT,grama-quint-xiao}, and others remain unsolved. This topic is still very active and a modern overview can be found in the book \cite{benoist-quint:book}. In this work we focus on the group $G :=\SL_2(\C)$ acting by linear transformations on $\C^2$ and its induced action on the complex projective line $\P^{1}:= \C \P^1$.

The general problem can be described as follows. Let $\mu$ be a probability measure on $G$. Then, $\mu$ induces a random walk on $G$: for $n \geq 1$ we let $$S_n: = g_n \cdots g_1,$$ where the $g_j$'s are independent random elements of $G$ with law given by $\mu$. One also has an induced random walk on $\P^1$: for a point $x \in \P^1$ we look at its trajectory under $S_n$, i.e.\, $S_n \cdot x =  g_n \cdots g_1 \cdot x$. The general goal is to describe the asymptotic behaviour of these random walks and study questions such as the growth and distribution of the norm of $S_n$ or of its coefficients. A remarkable fact is that these non-commutative random processes satisfy analogues of  many of the classical limit theorems for sums of  i.i.d.\ random variables.

The action of a matrix $g \in G$ on $\C^2$ can be described jointly by its action on the set of directions, i.e. $\P^1$, and by its effect on the length of vectors of $\C^2$.  The study of the action on $\P^1$ is done via the so-called stationary measures defined below. In order to study the length of vectors,  we consider the \textit{norm cocycle} defined by $$\sigma(g,x) = \sigma_g(x):= \log \frac{\norm{gv}}{\norm{v}}, \quad \text{for }\,\, v \in \C^2 \setminus \{0\}, \, x = [v] \in \P^1   \, \text{ and }\,  g \in G.$$
We use here the standard euclidean norm on $\C^2$.  Notice that $\|\sigma_g\|_\infty = \log \|g\|$, where $\|g\|$  denotes the operator norm of the matrix $g$. 

When $\mu$ has a finite first moment, that is, $\int_G \log \|g\| \, \diff \mu(g) < \infty$,  its \textit{(upper) Lyapunov exponent} is the finite number defined by 
$$\gamma := \lim_{n \to \infty} \frac1n \E \big( \log\|S_n\| \big)=\lim_{n \to \infty} \frac1n \int \log\|g_n \cdots g_1\| \, \diff \mu(g_1) \cdots \diff \mu(g_n).$$

 We note that the standard moment conditions for random matrices are expressed in terms of the quantity $\log N(g)$,  where $N(g):= \max(\|g\|,  \|g^{-1}\|)$,  but  since $\|g\| = \|g^{-1}\|$ for $g \in \SL_2(\C)$,  we only need to consider here the integrability of $\log \|g\|$.

A fundamental result of Furstenberg-Kesten says that we actually have that $\frac1n \log\|S_n\|$ converges to $\gamma$ almost surely as $n \to \infty$. This is the analogue in this setting of the Strong Law of Large Numbers for sums of i.i.d.\ random variables. 

In order to obtain finer limit theorems and study the action on $\P^1$, some conditions on $\mu$ need to be imposed. We need the support of $\mu$ to be somewhat non-trivial (which leads to the definition of non-elementary measures below) and, as in the case of sums of i.i.d.'s, the random variables must verify some moment conditions.

We say that $\mu$ is \textit{non-elementary} if its support does not preserve a finite subset of $\P^1$ and if the semi-group it generates is not relatively compact in $G$.  This notion corresponds to the more standard assumptions that the support of $\mu$ is proximal and strongly irreducible,  which readily generalizes to the higher dimensional setting,  see \cite{benoist-quint:book}.  Being non-elementary is a weak and easily verifiable condition that holds for generic $\mu$. It is an important result of Furstenberg that a non-elementary measure admits a unique \textit{stationary measure}, also called the \textit{Furstenberg measure}.  This is the only probability measure $\nu$ on $\P^1$ such that $$\int_G g_* \nu \, \diff \mu(g)= \nu.$$

To date, many limit theorems are known both for the sequences $\sigma(S_n,x)$ and $\log \|S_n\|$. However, most of them require the strong condition of \textit{finite exponential moment}, that is, $\int_G \|g\|^\alpha \, \diff \mu(g) < \infty$  for some $\alpha > 0$. These results include the Central Limit Theorem (CLT), the Law of Iterated Logarithm (LIL), the  Local Limit Theorem (LLT), etc. See \cite{benoist-quint:book} for a complete treatment  in the more general setting of reductive groups over local fields.  These results rely on the fundamental work of Le Page \cite{lepage:theoremes-limites}, which studies in detail the Markov operator and its perturbations for measures with a finite exponential moment.  

It is expected that the above exponential moment condition is too strong, since the classical versions of the aforementioned limit theorems all hold under the much weaker condition of finite second moment. A recent breakthrough by Benoist-Quint, based on martingale approximation, led to the proof of the optimal version of the CLT under the second moment condition $\int_G \log^2 \|g\| \diff \mu(g) < \infty$, see \cite{benoist-quint:CLT} and Theorem \ref{thm:CLT} below. See also \cite{cuny-dedecker-jan} for other results under low moment conditions based on their techniques. This kind of method, however, does not apply to other questions, such  as the Local Limit Theorem and sharp Berry-Esseen bounds, treated here, and  the decay of Fourier coefficients studied in \cite{li:fourier,DKW:fourier}, to name a few. For these questions, the spectral analysis of the Markov operator is more effective, and most likely indispensable.

Up to now, the above mentioned approach has not been successful when the moment conditions are weak. The main challenge is to obtain good spectral properties of the Markov operator and its perturbations without the exponential moment hypothesis.  Before this work,  there was no evidence supporting the belief that such good spectral properties should be valid  under low moment conditions.  The challenge is to find a Banach space equipped with a ``good norm'' on which the Markov operator has a spectral gap. Here,  by a ``good norm'' we mean a norm that either dominates the $\|\cdot\|_{\infty}$-norm  or one that allows us to obtain uniform estimates after additional arguments. A recent progress was made by the authors in \cite{DKW:PAMQ}, where the methods of \cite{DKW:IJM} were further developed and allowed us to obtain a spectral gap theorem for the Markov operator under a first moment condition acting on a Sobolev space, see Theorem \ref{thm:DKW-spectral-gap} below.

In this work, we continue to develop the ideas of \cite{DKW:PAMQ} and apply them to prove new limit theorems.

\vskip3pt

Our first main result is the Local Limit Theorem for the norm cocycle under the optimal second moment condition.  We recall that being non-elementary is equivalent to being proximal and strongly irreducible.

\begin{mainthm}[Local Limit Theorem for the norm cocycle] \label{thm:LLT}
Let $\mu$ be a non-elementary probability measure on $G =\SL_2(\C)$. Assume $\mu$ has a finite second moment, that is, $\int_G \log^2 \|g\| \, \diff \mu(g) < \infty$. Then, the associated norm cocycle satisfies the Local Limit Theorem.

More precisely, let $S_n =g_n \cdots g_1$ where the $g_i$ are i.i.d.\ with law $\mu$.  Let $\gamma$ be the Lyapunov exponent associated with $\mu$ and $\nu$ be the corresponding stationary measure. Then, there exists a number $a > 0$ such that for every continuous function $f$ with compact support on $\R \times \P^1$,   we have
\begin{equation} \label{eq:LLT}
\begin{split}
\lim_{n \to \infty} \sup_{(t,x) \in \R \times \P^1} \Big| \sqrt{2 \pi n}\, a \,  \E \Big( f\big( & t + \sigma(S_n, x) - n \gamma , S_n x\big) \Big)  \\ 
- \quad &e^{ -\frac{t^2}{2 a^2 n}} \int_{\R \times \P^1} f(s,  y)\, \diff s \, \diff \nu( y) \Big| = 0.
\end{split}
\end{equation}
\end{mainthm}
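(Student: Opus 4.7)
The plan is to follow the spectral approach pioneered by Le Page, but carried out on the new Sobolev-type function space $\W$ derived from $W^{1,2}$ that is introduced in this paper. The central object is the Markov operator $P$ acting on $\W$ by $P\phi(x) = \int_G \phi(g\cdot x) \, \diff \mu(g)$, together with its purely imaginary perturbations $P_{it}\phi(x) = \int_G e^{it\sigma(g,x)} \phi(g\cdot x)\,\diff\mu(g)$ for $t\in\R$. Under the second moment hypothesis, Theorem \ref{thm:DKW-spectral-gap} provides a spectral gap for $P$ on $\W$: the eigenvalue $1$ is simple and the rest of the spectrum lies in a disk of radius strictly less than $1$.

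The first step is to reduce the LLT to a Fourier-inversion identity. Taking the Fourier transform of $f$ in the first variable gives
$$\E\big(f(t + \sigma(S_n,x) - n\gamma, S_n\cdot x)\big) = \frac{1}{2\pi}\int_\R e^{i(t - n\gamma)\xi}\, \big(P_{i\xi}^n \hat f(\xi,\cdot)\big)(x)\, \diff\xi,$$
where $\hat f(\xi, y)=\int_\R e^{-is\xi}f(s,y)\,\diff s$. By a standard smoothing/density argument (Stone's method), it suffices to prove the claimed asymptotic uniformly in $(t,x)$ for test functions whose Fourier transform in the first variable is supported in a fixed compact set $[-A, A]$.

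The technical heart of the proof is a detailed spectral analysis of the family $\{P_{it}\}_{t\in\R}$ on $\W$, with three ingredients.
\textbf{(i)} Continuity of $t\mapsto P_{it}$ at $t=0$ in operator norm, which under only a second moment amounts to bounding $\|P_{it}-P\|_{\W\to\W}$ via $|e^{it\sigma_g}-1|\leq|t|\,|\sigma_g|$ together with a Cauchy--Schwarz estimate exploiting the $W^{1,2}$-structure of $\W$; Kato's perturbation theory then yields, for $|t|$ small, a unique simple eigenvalue $\lambda_{it}$ near $1$ with spectral projector $\Pi_{it}$, and a quasi-compact decomposition $P_{it}^n = \lambda_{it}^n\, \Pi_{it} + Q_{it}^n$ with $\|Q_{it}^n\|$ decaying geometrically in $n$.
\textbf{(ii)} A second-order expansion $\lambda_{it} = 1 + i\gamma t - \tfrac{a^2}{2} t^2 + o(t^2)$ where $a^2 > 0$ is the asymptotic variance, obtained by pushing the perturbation analysis to second order and cross-checking with the CLT of Benoist--Quint (Theorem \ref{thm:CLT}).
\textbf{(iii)} A uniform spectral-radius bound: for every compact $K\subset\R\setminus\{0\}$ the spectral radius of $P_{it}$ on $\W$ is strictly less than $1$ on $K$. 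This is established by showing that $P_{it}\phi = e^{i\theta}\phi$ with $\phi\in\W\setminus\{0\}$ would yield a coboundary equation for $\sigma$ forcing $\mu$ to preserve an arithmetic structure on $\P^1$, contradicting non-elementarity.

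With (i)--(iii) in hand the proof finishes by splitting the Fourier integral at $|\xi|=\delta$. On $|\xi|\leq\delta$, inserting the spectral decomposition of $P_{i\xi}^n$ and changing variables $\xi=u/\sqrt{n}$, the expansion in (ii) gives $\lambda_{i\xi}^n \to e^{i\gamma u\sqrt n - a^2 u^2/2}$, which combined with $e^{i(t-n\gamma)\xi}$ and a Gaussian integration produces exactly the factor $\big(\sqrt{2\pi n}\,a\big)^{-1}\, e^{-t^2/(2a^2 n)}\int_{\R\times\P^1}f\,\diff s\,\diff\nu(y)$, the limit of $\Pi_{i\xi}\hat f(\xi,\cdot)$ at $\xi=0$ being $\int f\,\diff s\,\diff\nu$ by the spectral-gap data for $P$. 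On $\delta\leq|\xi|\leq A$, the uniform bound from (iii) yields an exponentially decaying contribution. The main obstacle is (i): on the classical Hölder spaces used by Le Page, norm continuity of $t\mapsto P_{it}$ at $t=0$ already requires a finite exponential moment, so the whole scheme rests on the uniform estimates afforded by the new $W^{1,2}$-based space $\W$, which is precisely the innovation advertised in the abstract.
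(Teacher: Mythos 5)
Your overall scheme is the one the paper follows (Fourier inversion, spectral decomposition near $\xi=0$, contraction away from $\xi=0$, density of nice test functions), but step (iii) as you state it contains a genuine gap. You assert that the spectral radius of $\oP_{i t}$ on $\W$ is $<1$ for $t\neq 0$ and justify this by excluding eigenfunctions via the cohomological equation and non-arithmeticity. That implication is only automatic when the operator is quasi-compact (e.g.\ via a Doeblin--Fortet inequality, which in Le Page's H\"older setting requires an exponential moment). Under a second moment no such quasi-compactness is available, so ``no peripheral eigenfunction'' does not by itself yield ``spectral radius $<1$''. The paper's Section \ref{sec:spec-Pt} has to work much harder: it first shows $\sup_n\|\oP_\xi^n\|_{W^{1,2}}<\infty$ (Lemma \ref{lemma:iterates-bounded}, using the $L^2_{(1,0)}$-contraction and a Moser--Trudinger/Young entropy estimate), then runs a compactness-and-maximality argument on the set $\mathcal F_\infty$ of weak limits of $\oP_\xi^{n_\ell}\varphi_\ell$ to manufacture a \emph{weak} (measurable) solution of the cohomological equation whenever the iterates do not tend to zero, upgrades it to a continuous solution via the $\log^p$-interpolation (Corollary \ref{cor:logp-pre-equidistribution}), and only then invokes non-arithmeticity. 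Moreover the conclusion is a spectral radius bound on $W^{1,2}$, not on $\W$; the paper never claims a spectral gap on $\W$ for large $\xi$. In the LLT proof the regime $\xi_0\leq|\xi|\leq M$ is handled by combining the exponential decay of $\|\oP_\xi^n\|_{W^{1,2}}$ with the merely polynomial growth of $\|\oP_\xi^n\|_{\log^{p-1}}$ and then interpolating to a sup-norm bound via Moser--Trudinger (Corollary \ref{cor:logp-pre-equidistribution}); your argument should be restructured the same way, since a sup-norm decay of $\oP_\xi^n\psi$ is all that is needed there.

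A smaller but real imprecision is in step (ii): under only a second moment the family $\xi\mapsto\oP_\xi$ is just once differentiable on $W^{1,2}$ (Proposition \ref{prop:P_t-regularity}), so one cannot ``push the perturbation analysis to second order'' in the usual sense. The paper circumvents this (Lemma \ref{lemma:lambda_t-expansion}) by writing $\lambda_\xi=\lp\nu,\oP_\xi\mathbf 1\rp+\lp\nu,(\oP_\xi-\oP_0)(\psi_\xi-\mathbf 1)\rp$, where the first term has a second-order Taylor expansion by dominated convergence thanks to $\int\log^2\|g\|\,\diff\mu<\infty$ and the second term is $O(\xi^2)$ with a limit because each factor is $O(\xi)$; the identification $A=a^2+\gamma^2$ then comes from the CLT as you indicate. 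Your cross-check with Benoist--Quint is the right idea, but without this splitting the existence of the second-order expansion itself is not justified at the stated moment level.
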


The square of the number $a > 0$ above coincides with the variance in the Central Limit Theorem of Benoist-Quint, see Theorem \ref{thm:CLT} below.   It is not difficult to see that Theorem \ref{thm:LLT} can be extended to all functions $f$ that can be approximated in $L^1(\Leb \otimes \nu)$, both from above and below, by continuous functions with compact support. In particular, by considering the indicator function of an interval $[b_1,b_2]$, it follows that, for any $-\infty<b_1<b_2<+\infty$ in $\R$ and  $x \in \P^1$, we have
\begin{equation} \label{eq:LLT-prob}
\lim_{n\to \infty} \sup_{t\in\R} \bigg|\sqrt{2\pi n}\,a \, \mathbf P\Big(  t+\sigma(S_n, x) - n\gamma \in [b_1,b_2] \Big) -e^{ -{ t^2\over 2a^2 n}}(b_2-b_1)\bigg|=0.
\end{equation}

When $\mu$ has a finite exponential moment this result is known since Le Page \cite{lepage:theoremes-limites}. His theorem also holds for matrices of any dimension. See also \cite[Chapter 16]{benoist-quint:book} for the case of random walks with finite exponential moments on more general Lie groups. The second moment condition in Theorem \ref{thm:LLT} is optimal, since this is also the case for the classical LLT for sums of i.i.d.'s. We can also deduce a weak version of Theorem \ref{thm:LLT} for the random variables $\log \|S_n\|$,  see Remark \ref{rmk:LLT-norms} below.

\vskip5pt

Next, we state a related LLT for the coefficients of the random matrices. Dealing with the coefficients is more challenging than studying the norm cocycle or the operator norm. Since the absolute value of a given coefficient can vanish, this  introduces singularities when taking the logarithm. Here, $\lp \cdot , \cdot \rp$ stands for the standard hermitian inner product on $\C^2$.

\begin{mainthm}[Local Limit Theorem for the matrix coefficients] \label{thm:LLT-coeff}
Let $\mu$ be a non-elementary probability measure on $G=\SL_2(\C)$. Assume that $\mu$ has a finite moment of order three, that is, $\int_G \log^3 \|g\| \, \diff \mu(g) < \infty$. Then, the coefficients of $S_n$ satisfy the Local Limit Theorem.

More precisely,  let $\gamma$ be the Lyapunov exponent associated with $\mu$ and $a>0$ be as in Theorem \ref{thm:LLT}.  Then, for any  $v,w\in\C^2 \setminus \{0\}$, $x=[v]$ and every continuous function $f$ with compact support on $\R \times \P^1$,   we have
$$\lim_{n\to \infty} \sup_{t\in\R} \bigg|\sqrt{2\pi n}\,a \, \mathbf E\Big(  f \big( t+\log {|  \lp S_n v, w \rp   |\over \|v\|\|w\|}-n\gamma , S_n x \big) \Big) -  e^{ -\frac{t^2}{2 a^2 n}} \int_{\R \times \P^1} f(s,  y)\, \diff s \, \diff \nu(y) \bigg|=0,$$  uniformly in $x$.
\end{mainthm}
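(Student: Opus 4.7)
The plan is to reduce the statement to Theorem~\ref{thm:LLT} via the identity
\begin{equation*}
\log \frac{|\langle S_n v, w\rangle|}{\|v\|\|w\|} \;=\; \sigma(S_n, x) \;+\; \psi(S_n \cdot x, y),
\end{equation*}
where $x := [v]$, $y := [w]$, and $\psi(z, y) := \log |\langle \tilde z, \tilde y\rangle|$ (computed on unit representatives) is the ``angle function'' on $\P^1 \times \P^1$: a non-positive continuous function with a logarithmic blow-up precisely at $z = y^\perp$. The target probability is thus
\begin{equation*}
\mathbf P \bigl(b_1 \le \sigma(S_n, x) + \psi(S_n \cdot x, y) - n\gamma \le b_2\bigr),
\end{equation*}
and the idea is to apply Theorem~\ref{thm:LLT} at $t = 0$ with test function $f(s, z) := \mathbf{1}_{[b_1, b_2]}(s + \psi(z, y))$. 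Since $\int_{\R} f(s, z)\,ds = b_2 - b_1$ for every $z \neq y^\perp$ and $\nu$ is non-atomic, this would immediately yield the claimed limit $b_2 - b_1$.

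The obstructions are that $f$ is an indicator, and that it is not compactly supported in $s$ because $\psi(\cdot, y)$ is unbounded below near $y^\perp$, whereas Theorem~\ref{thm:LLT} requires continuous compactly supported test functions. I would handle this by a two-step regularization. First, truncate the singularity: set $\psi_M := \max(\psi, -M)$, which is continuous on $\P^1$, and replace $f$ by $f_M(s, z) := \mathbf{1}_{[b_1, b_2]}(s + \psi_M(z, y))$, now compactly supported in $s$. Second, sandwich the indicator in $s$ between smooth bumps $\chi_\varepsilon^\pm$ whose supports slightly enlarge or shrink $[b_1,b_2]$ by $\varepsilon$, producing admissible test functions $f_M^{\pm, \varepsilon}$. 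Theorem~\ref{thm:LLT} applied to these yields two-sided asymptotics whose gap vanishes as $\varepsilon \to 0$, thanks to the non-atomicity of $\nu$ and the continuity of $\psi_M(\cdot, y)$.

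The hard part is controlling the truncation error
\begin{equation*}
\sqrt{n}\cdot \mathbf P \bigl( \psi(S_n \cdot x, y) \le -M \bigr) \;=\; \sqrt{n}\cdot \mathbf P \bigl( d(S_n \cdot x, y^\perp) \le e^{-M} \bigr)
\end{equation*}
and showing it tends to zero for a suitable $M = M_n \to \infty$, uniformly in $x$ and $y$. While the H\"older regularity of $\nu$ gives $\nu(B(y^\perp, \varepsilon)) \le C\varepsilon^\alpha$, one needs to transfer this bound to the distribution of $S_n \cdot x$, effectively in $n$ and uniformly in $x, y$. This is precisely the step where the refined Sobolev-type function space $W^{1,2}$ announced in the abstract should enter, because the natural test function $z \mapsto \mathbf{1}_{\{d(z, y^\perp) \le e^{-M}\}}$ has only logarithmic modulus of continuity and cannot be handled in a standard H\"older space; the third moment assumption presumably enters through the higher-order cocycle moment bounds needed to run the spectral perturbation argument on this enlarged space and to secure an error of the correct order $o(1/\sqrt n)$.

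Once this uniform truncation estimate is in hand, the sandwich bounds combine with Theorem~\ref{thm:LLT} to conclude. The same scheme should in fact establish a functional LLT for test functions of the form $f(s, z) = g(s + \psi(z, y))\, h(z)$, which is presumably the content of Theorem~\ref{t:coeff-llt}, and Theorem~\ref{thm:LLT-coeff} then follows as the special case $g = \mathbf{1}_{[b_1, b_2]}$, $h \equiv 1$.
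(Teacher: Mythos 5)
Your reduction is the right one: the identity $\log\frac{|\lp S_n v,w\rp|}{\|v\|\|w\|}=\sigma(S_n,x)+\log d(S_n\cdot x,y^\star)$ with $y^\star=[(-\overline w_2,\overline w_1)]$ is exactly how the paper concludes (proof of Theorem \ref{thm:LLT-coeff} via Corollary \ref{c:coeff-llt}), and you correctly identify the two obstructions (indicator test function, unbounded support in $s$ caused by the logarithmic singularity) as well as the role of the measure regularity $\nu(\D(y^\star,r))=o(|\log r|^{-2})$ furnished by the third moment (Proposition \ref{regularity}). The genuine gap is in the final step, where you assert that ``the sandwich bounds combine with Theorem \ref{thm:LLT} to conclude.'' The truncation level cannot be kept fixed: since the law of $S_n\cdot x$ converges weakly to $\nu$, for fixed $M$ the truncation error $\sqrt{n}\,\mathbf P\big(d(S_n\cdot x,y^\star)\leq e^{-M}\big)$ diverges whenever $\nu\big(\D(y^\star,e^{-M})\big)>0$, which happens as soon as $y^\star\in\supp\nu$. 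You therefore must take $M=M_n\to\infty$ (the paper effectively uses $M_n\sim n^{1/4}$, the threshold at which $\sqrt n\cdot o(M_n^{-2})\to0$), but then the test functions $f_{M_n}^{\pm,\varepsilon}$ depend on $n$ and their supports in $s$ have width growing like $M_n$. Theorem \ref{thm:LLT} is a purely qualitative statement for a single fixed $f$ and provides no rate, so it cannot be invoked along such a sequence; one needs a quantitative LLT whose error beats the growth of the support.

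This quantitative input is exactly what the paper supplies and what your sketch is missing. Proposition \ref{p:prod-est} gives, for product observables $\varphi(u)\psi(z)$ with $\widehat\varphi$ compactly supported and $\psi\in\W$, an error at most $c_K\,n^{-1/2}\|\widehat\varphi\|_{\Cc^1}\|\psi\|_{\W}$; a general admissible $f$ is then cut into $O(n^{1/4})$ localized pieces indexed by $k$, each with $\|\psi_k\circ\Phi\|_{\W}\lesssim|k|+1$, so the accumulated error is $O\big(n^{-1/2}\sum_{|k|\lesssim\zeta n^{1/4}}(|k|+1)\big)=O(\zeta^2)$ (Proposition \ref{p:product-case}), while the pieces with $|k|>\zeta n^{1/4}$ are absorbed by the tail estimate of Lemma \ref{l:admis}. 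Obtaining the $O(n^{-1/2})$ rate is where the third moment enters a second time, beyond the tail control you mention: it is needed for the order-three expansion $\lambda_\xi=1+i\gamma\xi-A\xi^2/2+O(|\xi|^3)$ of the leading eigenvalue (Lemma \ref{lemma:lambda_t-expansion-3}) and for the Lipschitz continuity of $\xi\mapsto\oN_\xi$ on $\W$ (Lemma \ref{lemma:Pt-regularity-3-moments}). Without a rate version of the LLT for observables in $\W$, your argument does not close.
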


As before,  it is not hard to deduce from the above theorem that \eqref{eq:LLT-prob} holds for $\log {|  \lp S_n v, w \rp   |\over \|v\|\|w\|}$ instead of $\sigma(S_n,x)$. The analogous result in higher dimensions is very recent and it is only known to hold in the case of finite exponential moment.  This was obtained by the authors in the work \cite{DKW:BE-LLT-coeff}, which generalizes a weaker version of the  LLT for the coefficients obtained in  Grama-Quint-Xiao \cite{grama-quint-xiao}.

\vskip7pt

Our methods also yield an optimal Berry-Esseen bound for the norm cocycle. Recall that the classical  Berry-Esseen Theorem says that, for a sequence of i.i.d.'s with a finite moment of order three, the rate of convergence in the Central Limit Theorem is of order $n^{-1 \slash 2}$. The following result is its analogue in our setting.

\begin{mainthm}[Berry-Esseen bound for the norm cocycle] \label{thm:berry-esseen}
Let $\mu$ be a non-elementary probability measure on $G=\SL_2(\C)$. Assume that $\mu$ has a finite moment of order three, that is, $\int_G \log^3 \|g\| \, \diff \mu(g) < \infty$. Let $\gamma$ be the Lyapunov exponent associated with $\mu$ and $a>0$ be as in Theorem \ref{thm:LLT}. Then, there is a constant $C>0$ such that, for any interval $J \subset \R$, any $x \in \P^1$ and all $n \geq 1 $, we have
\begin{equation}\label{eq:berry-esseen}
\bigg| \mathbf P \Big( \frac{\sigma(S_n,x) - n \gamma}{\sqrt n} \in J \Big)  -  \frac{1}{\sqrt{2 \pi} \, a}\int_{J} e^{-\frac{s^2}{2 a^2}} \diff s \bigg| \leq \frac{C}{\sqrt n}.
\end{equation}
\end{mainthm}

Our proof uses the Nagaev-Guivarch'h spectral method, as  presented in \cite{gouezel:spectral-methods}. Before our work, the optimal $O(n^{-1 \slash 2})$ bound was known only under a finite exponential moment condition by a result of Le Page, see \cite{lepage:theoremes-limites,bougerol-lacroix} and also \cite{xiao-grama-liu}. Under the optimal third moment condition, this result was expected, but the best known bound so far was  $O(n^{-1 \slash 4} \sqrt{\log n})$ obtained in \cite{cuny-dedecker-jan} using martingale approximation methods in the spirit of \cite{benoist-quint:CLT}.  After the appearance of the first version of our work,  Cuny-Dedecker-Merlev\`ede-Peligrad \cite{cuny-dedecker-merlevede-peligrad} announced the improvement to a  $O(n^{-1 \slash 2})$ (resp. $O((\log n)^{1/2}n^{-1 \slash 2})$) bound under a fourth (resp.  third) moment condition.  These results are valid in any dimension.  We note that the optimal bound for $2 \times 2$ matrices under a third moment condition given  by Theorem \ref{thm:berry-esseen} is still unknown for matrices of higher order. 

Theorem \ref{thm:berry-esseen} also yields a Berry-Essen bound for $\log \|S_n\|$ with rate $\log n \slash \sqrt n$, see Remark \ref{rmk:berry-esseen-norm}. In the exponential moment case, this is was proven by Xiao-Grama-Liu \cite{xiao-grama-liu:berry-eseen}. Cuny-Dedecker-Merlev\`ede-Peligrad \cite{cuny-dedecker-merlevede-peligrad} recently obtained the same rate under a third moment condition.

\medskip

We also obtain an optimal Berry-Esseen bound for the matrix coefficients under a third moment condition. We note that, in higher dimensions, the optimal $O(1 / \sqrt n)$ bound is only known in the exponential moment case \cite{DKW:BE-LLT-coeff}. An $O(1 / n^{(p-1)/2p})$ bound was recently announced in \cite{cuny-dedecker-merlevede-peligrad-2} under a $p^{th}$ moment condition,  $p \geq 3$.

\begin{mainthm}[Berry-Esseen bound for the matrix coefficients]\label{thm:BE-coeff}
	Let $\mu$ be a non-elementary probability measure on $G=\SL_2(\C)$. Assume that $\mu$ has a finite moment of order three, that is, $\int_G \log^3 \|g\| \, \diff \mu(g) < \infty$. Let $\gamma$ be the Lyapunov exponent associated with $\mu$ and $a>0$ be as in Theorem \ref{thm:LLT}.  Then, there is a constant $C>0$ such that, for any   $v,w\in\C^2 \setminus \{0\}$, any interval $J\subset\R$, and all $n\geq 1$, we have
	$$\bigg| \mathbf P \Big(   \log{ |\lp  S_n v , w \rp | \over \norm{v} \norm{w}} - n \gamma\in \sqrt n J \Big)  -  \frac{1}{\sqrt{2 \pi} \,  a }  \int_{J} e^{-\frac{s^2}{2  a ^2}} \, \diff s \bigg| \leq \frac{C}{\sqrt n}.$$
\end{mainthm}

\medskip

 The proofs of our main theorems partly follow the earlier works \cite{lepage:theoremes-limites,gouezel:spectral-methods,grama-quint-xiao,DKW:BE-LLT-coeff} and, as mentioned before, rely on the study of the perturbed Markov operators $$\oP_\xi u  :=  \int_G e^{i \xi \sigma_g} g^*u  \, \diff \mu(g), \quad \xi \in \R$$
 acting on different spaces of functions on $\P^1$. These perturbations allow us to encode characteristic functions of the random variables we are interested in, and enables the use of Fourier analysis. In this way, the spectral properties of $\oP_\xi$ provide an effective way of obtaining various limit theorems. See \cite{gouezel:spectral-methods} for an overview of this method and \cite{bianchi-dinh} for recent applications in complex dynamics.
 
In this work, we obtain various results about the family of operators $\oP_\xi$ acting on different function spaces. The key properties are the regularity of the family $\xi \mapsto \oP_\xi$ (Propositions \ref{prop:P_t-regularity} and \ref{prop:P_t-logp-regularity}) and the fact that the spectral radius of $\oP_\xi$ acting on the Sobolev space $W^{1,2}$ is smaller than one for all $\xi \neq 0$ (Theorem \ref{thm:P_t-contracting}). Along the way, we introduce in Section \ref{sec:W} a new function space, denoted by $\W$, that mixes the Sobolev norm with a $\log^p$-norm (see Section \ref{subsec:log^p}) and where we still have a spectral gap. It follows from the classical exponential estimate of Moser-Trudinger that the  new norm $\|\cdot\|_\W$ is stronger than the $\Cc^0$-norm (see Theorem \ref{thm:spectral-gap-W}). This allows us to obtain uniform estimates more directly. The main feature of our work is that we require low (and often optimal) moment conditions, whereas the available results on the spectrum of these operators, obtained mainly by Le Page \cite{lepage:theoremes-limites}, all require strong exponential moment conditions.

The spectral method can be used together with the above properties of the operators $\oP_\xi$ to prove an Almost Sure Invariance Principle (ASIP)  for the norm cocycle under a $(2+\varepsilon)$-moment condition.  This approach, however, do not provide the best bounds obtained in \cite{cuny-dedecker-jan} via martingale approximation. Their result also holds under a second moment condition.  See also \cite{cuny-dedecker-merlevede}.

\medskip

It is worth mentioning that our study also applies to random walks on $\SL_2(\R)$, that is, when the random matrices have real coefficients. This corresponds to the particular case where the support of $\mu$ is contained in $\SL_2(\R)$. The case of matrices in $\SL_2(\Z)$ can be studied similarly.  Random matrices in $\GL_2(\C)$ can be rescaled and the study of their products is similar to that of $\SL_2(\C)$-random walks,  treated here, together with that of sums of real  i.i.d.\ random variables, which is standard in classical probability theory. All the above theorems hold for $\GL_2(\C)$ instead of $\SL_2(\C)$, with the condition that the probability measure $\mu$ in $\GL_2(\C)$ is proximal and strongly irreducible (see e.g. \cite[Chapter 4]{benoist-quint:book} for the definition). The proofs are similar to the ones presented here, under some simple modifications. We chose to work in $\SL_2(\C)$ for simplicity.  Other Lie groups closely related to $\SL_2(\C)$, such as $\text{Sp}(2,\C)$, $\text{SO}(3,1)$ etc., can be easily fitted into our framework. We hope that our methods can be generalized to cover more general Lie groups as in \cite{benoist-quint:book} and provide a good spectral theory in the low moment case, which is currently missing in the literature. This would provide new tools for proving limit theorems for the corresponding random walks.  It seems challenging however to find good analogues of the function spaces used here in higher dimensions,  see the comments below.

In the case of $\SL_2(\R)$, further applications of the methods we develop here can be given. In this case, the action on $\P^1$ preserves the real projective line $\R \P^1$, which via a stereographic projection is naturally identified with a circle. It follows that the stationary measure $\nu$ is supported by $\R \P^1$. These observations allow us to study the Fourier coefficients of $\nu$ in the spirit of \cite{li:fourier}, where the spectral properties of $\oP_\xi$ also play a crucial role. We deal with this question in the paper \cite{DKW:fourier}. 
\medskip

 It is natural to ask whether our results can be extended to the case of higher dimensions and higher rank Lie groups.  Under exponential moment conditions,  most of the above results are known in any dimension, see for instance \cite{benoist-quint:book, bougerol-lacroix, cuny-dedecker-merlevede-peligrad-2,  DKW:BE-LLT-coeff,  lepage:theoremes-limites,  xiao-grama-liu:berry-eseen}.  This is mainly due to a good spectral theory in this setting.  Under low moment conditions, however,  such spectral theory is still missing.  Within the complex analytic point of view adopted here, this is reflected by the fact the intrinsic Sobolev norms can be defined on the Riemann surface $\P^1$ (see Subsection \ref{subsec:(1,0)-forms} and Proposition \ref{prop:equinorm}) and  a spectral gap result can be proved for them (Theorem \ref{thm:DKW-spectral-gap}),  but good intrinsic norms adapted to our problem are not yet available in higher dimensional manifolds.  We also note that,  since our spaces use the complex and differentiable structure of $\P^1$ (not only its metric structure) it is hard to transfer our proofs to the case of more general local fields as in \cite{benoist-quint:book}.

\medskip

\noindent \textbf{Organization.} The paper is organized as follows. In Section \ref{sec:preliminary}, we recall simple facts of the theory of random walks on $\SL_2(\C)$ and Fourier analysis.  In particular, we introduce a useful approximation result by functions with compactly supported Fourier transforms.

In Section \ref{sec:markov-op}, we discuss the Markov operator $\oP$ and study its action on the Sobolev space $W^{1,2}$ and on the space $\Cc^{\log^p}$ of  $\log^p$-continuous functions. The main properties are that $\oP$ has a spectral gap in $W^{1,2}$ and acts continuously in $\Cc^{\log^p}$.

Section \ref{sec:perturbed-markov-op} concerns the study of the perturbations $\oP_\xi$ acting on the same function spaces. We show that the family $\xi \mapsto \oP_\xi$ has some regularity in $\xi$ (continuity, Lipschitz continuity, differentiability, etc.) depending on the moment conditions on $\mu$. See Propositions \ref{prop:P_t-regularity} and \ref{prop:P_t-logp-regularity} below. This allows us to apply the theory of perturbations of linear operators \cite{kato:book} and obtain good spectral properties of $\oP_\xi$ for small values of $\xi$ (cf. Corollary \ref{cor:P_t-decomp}). 
 
 In Section \ref{sec:spec-Pt}, we study the spectrum of $\oP_\xi$ for \textit{all} non-zero (possibly large) values of $\xi$. The main result is that the spectral radius of $\oP_\xi$ is strictly smaller than one for all $\xi \neq 0$,  cf. Theorem \ref{thm:P_t-contracting}. This is a crucial ingredient in the proof of the LLTs and is related to the absence of some strong invariance properties of the cocycle $\sigma(g,x)$, see Proposition \ref{prop:non-arithmetic}.
 
 In Section \ref{sec:W}, we introduce the function space $\W$, that lies between $W^{1,2} \cap \Cc^{\log^{p-1}}$ and $W^ {1,2}\cap \Cc^0$ and where some good spectral properties from $W^{1,2}$ persist. This automatically yields uniform estimates, which are needed in the proofs of Theorems \ref{thm:LLT} and \ref{thm:LLT-coeff}. 
 
 Section \ref{sec:LLT-norm} is devoted to the proof of Theorem \ref{thm:LLT}. We follow Le Page's strategy, which works here thanks to the results from Sections \ref{sec:markov-op} to \ref{sec:W}. In Section \ref{sec:LLT-coeff}, we prove Theorem \ref{thm:LLT-coeff} by following the approach from \cite{DKW:BE-LLT-coeff}. Theorem \ref{thm:berry-esseen} is proved in Section \ref{sec:berry-esseen} using Nagaev-Guivarc'h's approach and the proof of Theorem \ref{thm:BE-coeff} is given in Section \ref{sec:berry-esseen-coeff}.
 
 An appendix is included in the end of the text, where some technical results about different notions of convergence and pointwise values of functions in $W^{1,2}$ are discussed.

\medskip

\noindent\textbf{Notations:} Throughout this paper, the symbols $\lesssim$ and $\gtrsim$ stand for inequalities up to a multiplicative constant.  The dependence of these constants on certain parameters (or lack thereof),  if not explicitly stated,  will be clear from the context.  We denote by $\mathbf E$ the expectation and $\mathbf P$ the probability.

\medskip

 \noindent\textbf{Acknowledgments:} The authors would like to thank the anonymous referees whose remarks helped us improve the presentation and clarify some of the arguments.

\section{Preliminary results} \label{sec:preliminary}
\subsection{Norm cocycle,  Lyapunov exponent and the CLT}
We start with some basic results and notations. We refer to \cite{bougerol-lacroix} for the proofs of the results described here.

 Let $\mu$ be a probability measure on $G=\SL_2(\C)$.   For $n \geq 1$,  we define the convolution measure by  $\mu^{*n} := \mu * \cdots * \mu$ ($n$ times) which is the push-forward of the product measure $\mu^{\otimes n}$ on $G^n$ by the map $(g_1, \ldots, g_n) \mapsto g_n \cdots g_1$.  If $g_j$ are i.i.d.  with law $\mu$ then $\mu^{*n} $ is the law of $S_n = g_n \cdots g_1$.

Denote by $\norm{g}$ the operator norm of the matrix $g$. Note that since $g \in \SL_2(\C)$ we have that $\|g\| \geq 1$. We say that  $\mu$  has a \textit{finite exponential moment} if  $\int_G \|g\|^\alpha \, \diff \mu(g) < \infty$  for some $\alpha > 0$, and that $\mu$ has a \textit{finite moment of order} $p >0$ (or a \textit{finite $p^{th}$ moment}) if  $\int_G \log^p \|g\| \, \diff \mu(g) < \infty$.

For measures with finite first moment we define the  \textit{(upper) Lyapunov exponent} by $$
\gamma := \lim_{n \to \infty} \frac1n \E \big( \log\|S_n\| \big)=\lim_{n \to \infty} \frac1n \int \log\|g_n \cdots g_1\| \, \diff \mu(g_1) \cdots \diff \mu(g_n)$$
and the \textit{norm cocycle} by $$\sigma(g,x) = \sigma_g(x):= \log \frac{\norm{gv}}{\norm{v}}, \quad \text{for }\,\, v \in \C^2 \setminus \{0\}, \, x = [v] \in \P^1   \, \text{ and } g \in G.$$

We say that $\mu$ is \textit{non-elementary} if its support does not preserve a finite subset of $\P^1$ and if the semi-group it generates is not relatively compact in $\SL_2(\C)$.  A non-elementary measure admits a unique \textit{stationary measure}, that is, a probability measure $\nu$ on $\P^1$ satisfying $$\int_G g_* \nu \, \diff \mu(g)= \nu.$$

The stationary measure,  the norm cocycle and the Lyapunov exponent are related by Furstenberg's formula $$\gamma = \int_{\P^1} \int_G \sigma_g(x) \diff \mu(g) \diff \nu(x).$$ Moreover, one can show that for non-elementary measures we always have $\gamma > 0$.

\textit{Cartan's decomposition} says that every matrix $g \in \SL_2(\C)$ can be written as
\begin{equation} \label{eq:cartan-decomposition}
g=k_g a_g \ell_g, \quad \text{ where } \, \, k_g,\ell_g \in \text{SU}(2) \,\, \text{ and } \,\, a_g=\left(\begin{smallmatrix} \lambda & 0 \\ 0 & \lambda^{-1} \end{smallmatrix}\right) \,\, \text{ for some } \,\, \lambda \geq 1.
\end{equation}
Observe that $\|g\| = \|g^{-1}\| = \lambda$.

\vskip5pt

Recall the \textit{Central Limit Theorem} of Benoist-Quint \cite{benoist-quint:CLT}. See  Dinh-Kaufmann-Wu \cite{DKW:PAMQ} for a proof using the spectral gap of the Markov operator.

\begin{theorem}  \label{thm:CLT}
Let $\mu$ be a probability measure on $G=\SL_2(\C)$. Assume that $\mu$ is non-elementary and has a finite second moment, i.e.\ $\int_G \log^2 \|g\| \, \diff \mu(g) < \infty$. Let $\gamma$ be the Lyapunov exponent of $\mu$. Then there exists a number $a >0$ such that for any $x \in \P^1$
\begin{equation} \label{eq:CLT}
\frac{1}{\sqrt n} \big( \sigma(g_n \cdots g_1,x) - n \gamma \big) \longrightarrow \cali N(0;a^2) \quad \text{ in law},
\end{equation}
where $\cali N(0;a^2)$ is the centered normal distribution with variance $a^2$.
\end{theorem}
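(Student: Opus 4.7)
The plan is to prove the CLT via the spectral method, following the strategy developed in \cite{DKW:PAMQ}. The idea is to express the characteristic function of $\sigma(S_n,x)$ as iterates of the perturbed Markov operator $\oP_\xi$ and extract the Gaussian asymptotics from a second-order expansion of its leading eigenvalue at $\xi = 0$. Concretely, the cocycle identity $\sigma(g_2 g_1, x) = \sigma(g_2, g_1\cdot x) + \sigma(g_1,x)$ gives, for any bounded Borel $u$ on $\P^1$,
$$\mathbf E\big(e^{i \xi \sigma(S_n,x)} u(S_n\cdot x)\big) = \oP_\xi^n u(x), \qquad \xi \in \R,$$
so that taking $u \equiv \mathbf{1}$ reduces \eqref{eq:CLT} to the pointwise convergence
$$e^{-i\xi \sqrt n\, \gamma}\, \oP_{\xi/\sqrt n}^n \mathbf{1}(x) \longrightarrow e^{-a^2 \xi^2 /2} \quad \text{as } n\to\infty,$$
which by L\'evy's continuity theorem will imply the desired convergence in law.

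The key technical input is a perturbative spectral decomposition for $\oP_\xi$ on a Banach space compatible with the spectral gap of $\oP_0 = \oP$; the natural candidate is $W^{1,2}(\P^1)$, on which the DKW spectral gap theorem (Theorem \ref{thm:DKW-spectral-gap}) ensures that $1$ is a simple eigenvalue of $\oP$ and the rest of the spectrum lies in a disk of radius strictly less than $1$. Combined with $C^2$ regularity of $\xi \mapsto \oP_\xi$ at $\xi = 0$---which the second moment hypothesis $\int_G \log^2\|g\|\,\diff\mu(g) < \infty$ is exactly calibrated to yield, since differentiating once or twice in $\xi$ introduces multipliers $\sigma_g$ or $\sigma_g^2$ whose $L^1(\mu)$ norms are bounded by $\int \log\|g\|\,\diff\mu$ and $\int \log^2\|g\|\,\diff\mu$ respectively---Kato's perturbation theory produces, for all sufficiently small $|\xi|$, a simple eigenvalue $\lambda(\xi)$ near $1$, a rank-one projector $N_\xi$ that tends to $N_0 u = \big(\int u\, \diff\nu\big)\mathbf{1}$, and a remainder $R_\xi$ of spectral radius bounded away from $1$ uniformly in $\xi$.

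Using Furstenberg's formula for $\gamma$ and the fact that $N_0$ projects onto the constants, a direct computation yields the expansion $\log \lambda(\xi) = i\gamma\xi - \tfrac12 a^2\xi^2 + o(\xi^2)$, where $a^2 = \lim_n \tfrac1n \mathbf E\big((\sigma(S_n,x)-n\gamma)^2\big)$ is independent of $x$. Non-elementarity of $\mu$ rules out a $\nu$-a.e.\ coboundary relation $\sigma_g(x) - \gamma = \psi(g\cdot x) - \psi(x)$ and thereby ensures $a > 0$. Substituting $\xi/\sqrt n$ in place of $\xi$ in the spectral decomposition gives
$$\oP_{\xi/\sqrt n}^n \mathbf{1}(x) = \lambda(\xi/\sqrt n)^n\, N_{\xi/\sqrt n}\mathbf{1}(x) + R_{\xi/\sqrt n}^n \mathbf{1}(x),$$
where the remainder decays geometrically in $n$ and the first term converges to $e^{i\gamma\xi\sqrt n - a^2\xi^2/2}$, yielding the convergence stated above.

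The hardest step will be establishing the $C^2$ regularity of $\xi \mapsto \oP_\xi$ on a space where $\oP_0$ already has a spectral gap, using only a finite second moment. The multiplier $e^{i\xi\sigma_g}$ is merely Lipschitz in $x$ with constant of order $\|g\|$, so it does not preserve smoother function spaces, and $W^{1,2}(\P^1)$ is essentially the sharpest Sobolev space on which the first two $\xi$-derivatives of $\oP_\xi$ can be controlled without appealing to an exponential moment. This borderline regularity is why the entire technical apparatus of Sections \ref{sec:markov-op}--\ref{sec:W} (the Sobolev estimates, the auxiliary space $\W$, the $\log^p$-norms) has to be developed before the CLT and its finer relatives can be proved at the optimal moment threshold.
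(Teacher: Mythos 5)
Your overall strategy is the one the paper points to: Theorem \ref{thm:CLT} is not proved here but quoted from Benoist--Quint, with \cite{DKW:PAMQ} cited for a proof via the spectral gap, and your outline (characteristic functions as $\oP_\xi^n\mathbf 1$, Kato perturbation around the spectral gap of $\oP_0$ on $W^{1,2}$, second-order expansion of the leading eigenvalue, non-arithmeticity to get $a>0$) matches that route. However, there is a genuine gap in your central technical claim. You assert that the finite second moment is ``exactly calibrated'' to give $C^2$ regularity of $\xi\mapsto\oP_\xi$ on $W^{1,2}$. It is not: by the paper's own Proposition \ref{prop:P_t-regularity}, a finite moment of order $p$ yields only $\lfloor p-1\rfloor$ derivatives of the operator family, so a second moment gives exactly one derivative, and $C^2$ regularity would require a third moment. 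The obstruction is not the $L^1(\mu)$-norm of the multiplier $\sigma_g^k$ but the Sobolev part of the norm: $\del\big(\sigma_g^k e^{i\xi\sigma_g}g^*u\big)$ contains the term $\sigma_g^k e^{i\xi\sigma_g}\, g^*u\,\del\sigma_g$, and already for $k=0$ one only has $\|g^*u\,\del\sigma_g\|_{L^2}\lesssim 1+\log\|g\|$ (Lemma \ref{lemma:sigma-estimates}-(4)), so the $k$-th $\xi$-derivative costs $\log^{k+1}\|g\|$ rather than $\log^{k}\|g\|$.

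The device that rescues the argument at the optimal threshold, and which your write-up is missing, is the one in Lemma \ref{lemma:lambda_t-expansion}: even though the operator family is only once differentiable, the leading eigenvalue still admits the second-order expansion $\lambda_\xi=1+i\gamma\xi-A\xi^2/2+o(\xi^2)$. One writes $\lambda_\xi=\lp\nu,\oP_\xi\mathbf 1\rp+\lp\nu,(\oP_\xi-\oP_0)(\psi_\xi-\mathbf 1)\rp$ with $\psi_\xi$ the normalized eigenfunction; the first term is the scalar integral of $e^{i\xi\sigma_g}$ against $\mu\otimes\nu$ and can be Taylor-expanded to order two using only $\int\log^2\|g\|\,\diff\mu<\infty$ and dominated convergence, while the second term is a product of two factors each of size $O(\xi)$ thanks to the $C^1$ regularity of $\oP_\xi$ and $\oN_\xi$. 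Without this (or an equivalent argument), your expansion of $\log\lambda(\xi)$ to order $\xi^2$ is unjustified under the stated hypothesis. A secondary point you gloss over: $W^{1,2}$ on $\P^1$ does not embed into $\Cc^0$, so point evaluation at $x$ is not continuous on $W^{1,2}$, and the decomposition $\oP_{\xi/\sqrt n}^n\mathbf 1(x)=\lambda_{\xi/\sqrt n}^n\,\oN_{\xi/\sqrt n}\mathbf 1(x)+\oQ_{\xi/\sqrt n}^n\mathbf 1(x)$ does not immediately yield convergence at every fixed $x$; controlling pointwise values is exactly what the space $\W$ and the good-representative machinery of the Appendix are built for.
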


Recall that $\P^1$ is equipped with a natural distance given by 
\begin{equation} \label{eq:distance-def}
d(x,y) : = \frac{| \det(v,w)\,  |}{\|v\| \|w\|}, \quad \text{where} \quad v,w \in \C^2 \setminus \{0\}, \, x = [v], \,\, y = [w] \in \P^1.
\end{equation}

We have that  $(\P^1, d)$ has diameter one and that $\text{SU}(2)$ acts by holomorphic isometries. We will denote by $\D(x,r)$ the associated open disc of center $x$ and radius $r$ in $\P^1$.

\subsection{Fourier transforms}

The proofs of our main theorems will depend on some Fourier analysis. We gather here some basic facts and simple results on the Fourier transform that will be useful later.

Recall that the Fourier transform of an integrable function $f$, denoted by $\widehat f$, is defined by
$$\widehat f(\xi):=\int_{-\infty}^{+\infty}f(u)e^{-i u\xi} \diff u$$ 
and the inverse Fourier transform is $$f(u)={1\over {2\pi}}\int_{-\infty}^{+\infty} \widehat f (\xi) e^{ i u\xi} \diff \xi.$$
With these definitions, the Fourier transform of $\widehat f(\xi)$ is $2\pi f(-u)$ and the convolution operator satisfies 
\begin{equation} \label{eq:convolution-fourier-product}
\widehat{f_1*f_2}=\widehat f_1\cdot \widehat f_2.
\end{equation}

\subsection{Approximation} \label{subsec:regularization} We will often have to approximate functions above and below by functions whose Fourier transforms are compactly supported. For that end, we introduce here a useful approximation procedure. See \cite{grama-condi} for a related approach.

We start with the following elementary lemma.

\begin{lemma} \label{l:vartheta}
There is a smooth strictly positive even function $\vartheta$ on $\R$ with $\int_\R \vartheta(u) \diff u=1$ such that its Fourier transform $\widehat\vartheta$ is  a smooth function supported by $[-1,1]$.
\end{lemma}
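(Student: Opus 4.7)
The plan is to construct $\vartheta$ as the square of the inverse Fourier transform of a suitably chosen bump on the frequency side, using the convolution identity to secure the compact support of $\widehat\vartheta$.

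To begin, I would fix a smooth, even, nonnegative function $\phi \in C^\infty_c(\R)$ with $\supp(\phi) \subset [-1/2, 1/2]$ and $\phi \not\equiv 0$. Applying the convolution identity $\widehat{fg} = \frac{1}{2\pi}\widehat f * \widehat g$ (in the convention of the paper) to $f = g = \check\phi$ yields
\[
\widehat{(\check\phi)^2} \;=\; \tfrac{1}{2\pi}\,\phi * \phi.
\]
Hence the function $\psi := (\check\phi)^2$ is smooth, real-valued, nonnegative, even, and its Fourier transform is the smooth bump $\frac{1}{2\pi}\phi*\phi$, supported in $[-1, 1]$. Its integral equals $\widehat\psi(0) = \frac{1}{2\pi}\int \phi^2 > 0$, so after rescaling by a positive constant it has integral one.

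All the requested properties are now in place except strict positivity. Since $\check\phi$ is entire of exponential type (by Paley--Wiener), its real zeros form a discrete set, and $\psi$ may vanish there. To eliminate these zeros, I would pick a second auxiliary bump $\phi_2 \in C^\infty_c(\R)$, even and nonnegative, supported in $[-1/2, 1/2]$, not identically zero, such that the real zero sets of $\check\phi$ and $\check\phi_2$ are disjoint. Such $\phi_2$ exists by genericity: for any fixed discrete subset of $\R$, a generic small perturbation of $\phi_2$ has an inverse Fourier transform that avoids it. I would then set
\[
\vartheta(u) \;:=\; c\bigl((\check\phi(u))^2 + (\check\phi_2(u))^2\bigr),
\]
with $c > 0$ chosen so that $\int_\R \vartheta\,du = 1$. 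The sum is strictly positive on $\R$ since the two square terms never vanish simultaneously; $\vartheta$ is smooth, even, integrable; and
\[
\widehat\vartheta \;=\; \tfrac{c}{2\pi}\,(\phi*\phi + \phi_2*\phi_2)
\]
is smooth, even, and supported in $[-1, 1]$.

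The only point that requires care is the genericity argument separating the zero sets of $\check\phi$ and $\check\phi_2$, and this is where I would expect the most thought; the rest reduces to standard applications of Fourier inversion and the convolution theorem. One can also bypass the genericity step by exhibiting an explicit pair of bumps (for instance via a Fourier-side translation, replacing $\phi_2(\xi)$ by $\cos(t_0\xi)\phi(\xi)$ and choosing $t_0$ so that the corresponding shifts of $\check\phi$ do not all vanish at the zeros of $\check\phi$), so this subtlety is not a serious obstacle.
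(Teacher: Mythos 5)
Your construction is correct in outline, but it takes a genuinely different and noticeably heavier route than the paper's. Both proofs secure the compact support of $\widehat\vartheta$ the same way, by realizing $\vartheta$ (essentially) as $|\check\phi|^2$ with $\supp\phi\subset[-1/2,1/2]$, so that $\widehat\vartheta$ is a multiple of $\phi*\phi$. The difference is in how strict positivity is obtained. You patch the discrete zero set of $\check\phi$ by adding a second square $(\check\phi_2)^2$ whose inverse transform does not vanish there; this forces you into a genericity (Baire-category) argument, or into the translation trick with $\cos(t_0\xi)\phi(\xi)$, where for each zero $z$ of $\check\phi$ one must check that $t\mapsto \check\phi(z+t)+\check\phi(z-t)$ is not identically zero (it is not, since otherwise $\check\phi$ would be anti-periodic, contradicting its decay) and then choose $t_0$ outside a countable bad set. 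This can all be made rigorous, but it is the soft spot of your write-up and you rightly flag it. The paper avoids the issue entirely with one stroke: it defines $\widehat\vartheta(\xi):=(\widehat\chi*\widehat\chi)(\xi)\cdot\sqrt{2\pi}\,e^{-\xi^2/2}$, i.e.\ it multiplies your Fourier-side bump by a Gaussian. This does not enlarge the support (so $\supp\widehat\vartheta\subset[-1,1]$ still), while on the physical side it replaces $\chi^2$ by the convolution $\chi^2*e^{-u^2/2}$; convolving a nonnegative, not identically zero function with a strictly positive one is automatically strictly positive everywhere. So the paper's argument buys strict positivity with no zero-set analysis at all, whereas your approach buys nothing extra in generality at the cost of an additional (completable but delicate) step. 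If you keep your version, you should fully justify either the Baire argument or the choice of $t_0$; better yet, adopt the Gaussian-multiplier trick.
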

\begin{proof}
Choose a  smooth even function $\widehat\chi$ supported by $[-1/2,1/2]$ which is not identically zero.
Define $\widehat\vartheta(\xi):=(\widehat\chi*\widehat\chi)(\xi)\cdot (\sqrt{2\pi} e^{-\xi^2/2})$ and let $\vartheta$ its inverse Fourier transform. Clearly, the function $\widehat\vartheta(\xi)$ is  smooth, even and supported on $[-1,1]$. It follows that $\vartheta$ is smooth and integrable.  We have 
$$\vartheta(u)=\chi^2*e^{-u^2/2}$$
which is clearly positive and even.  Multiplying it by a positive constant gives us a function satisfying $\int_\R \vartheta(u) \diff u=1$.
\end{proof}

Let $\vartheta$ be as in the preceding lemma and define for $0<\delta\leq 1$,
$$\vartheta_\delta(u):=\delta^{-2}\vartheta(u/\delta^2).$$
Observe that  $\widehat{\vartheta_\delta}(\xi)=\widehat \vartheta(\xi \delta^2)$. Therefore,  $\supp(\widehat{\vartheta_\delta})$  is contained in  $[-\delta^{-2},\delta^{-2}]$ and $\|\widehat\vartheta_\delta\|_{\Cc^1}$ is bounded by a constant independent of $0<\delta\leq 1$. Notice that, since $\widehat\vartheta$ is smooth and supported in $[-1,1]$, $\vartheta$ has fast decay at infinity.

For any locally bounded function $\psi$ on $\R$ and $0<\delta\leq 1$, define
\begin{equation} \label{eq:def-phi-delta-sup}
\psi^+_{[\delta]}(u):=\sup_{|u'-u|\leq \delta} \psi(u') .
\end{equation}
We have the following simple lemma whose proof is left to the reader, see also \cite[Lemma 5.2]{grama-condi}.

\begin{lemma}\label{l:conv-Fourier}
	Let $\psi$ be a continuous integrable non-negative function on $\R$. Then, for any $0<\delta\leq 1$,  there exists a positive  constant $c_\delta = O(\delta)$ independent of $\psi$,  such that
	$$ \psi(u)\leq (1+c_\delta) \psi^+_{[\delta]} *\vartheta_\delta(u) \quad\text{for all}\quad u\in \R.$$
\end{lemma}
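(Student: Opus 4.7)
The plan is to exploit two facts: $\vartheta_\delta$ is an approximate identity concentrated at scale $\delta^2 \ll \delta$, and $\varphi^+_{[\delta]}$ dominates $\varphi$ on any interval of length $\delta$. First I would write the convolution explicitly and restrict integration to the window $|s|\leq \delta$, which is legitimate since both $\varphi^+_{[\delta]}$ and $\vartheta_\delta$ are non-negative:
$$\varphi^+_{[\delta]} * \vartheta_\delta(u)\ \geq\ \int_{|s|\leq\delta} \varphi^+_{[\delta]}(u-s)\,\vartheta_\delta(s)\, ds.$$
For $|s|\leq\delta$ the point $u$ lies within distance $\delta$ of $u-s$, so the very definition of $\varphi^+_{[\delta]}$ yields $\varphi^+_{[\delta]}(u-s)\geq \varphi(u)$. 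Pulling this constant out of the integral gives
$$\varphi^+_{[\delta]} * \vartheta_\delta(u)\ \geq\ \varphi(u) \int_{|s|\leq\delta}\vartheta_\delta(s)\, ds.$$

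Next, the substitution $t=s/\delta^2$ converts the remaining mass integral into $\int_{|t|\leq 1/\delta}\vartheta(t)\, dt = 1-\epsilon_\delta$, where
$$\epsilon_\delta := \int_{|t|>1/\delta}\vartheta(t)\, dt.$$
The key point is that $\epsilon_\delta=O(\delta)$. This follows because $\widehat\vartheta$ is smooth and compactly supported (Lemma \ref{l:vartheta}), so $\vartheta$ is a Schwartz function; hence $|\vartheta(t)|\leq C_N(1+|t|)^{-N}$ for every $N$, which yields $\epsilon_\delta\leq C_N'\,\delta^{N-1}$ for any $N\geq 2$, far better than the required $O(\delta)$. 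Inverting the inequality and setting $c_\delta:=\epsilon_\delta/(1-\epsilon_\delta)$ (valid for $\delta$ sufficiently small, and harmlessly enlarged to a uniform constant for the remaining $\delta\leq 1$) produces the desired estimate $\varphi(u)\leq (1+c_\delta)\,\varphi^+_{[\delta]}*\vartheta_\delta(u)$ with $c_\delta=O(\delta)$ independent of $\varphi$.

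There is no serious obstacle; the lemma is essentially an approximate identity estimate. The only point that requires verification is that $\vartheta$ is Schwartz, which is immediate from the construction: the inverse Fourier transform of a smooth compactly supported function lies in the Schwartz class. The continuity and integrability of $\varphi$ are used only to ensure that $\varphi^+_{[\delta]}$ is well defined and locally bounded, so that the convolution makes sense pointwise; the inequality itself is insensitive to the global behavior of $\varphi$.
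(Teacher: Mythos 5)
Your proof is correct. The paper leaves this lemma to the reader (pointing to \cite{grama-condi}), and your argument — restricting the convolution to $|s|\le\delta$ where $\varphi^+_{[\delta]}(u-s)\ge\varphi(u)$ by definition of the sup, then bounding the tail mass $\int_{|t|>1/\delta}\vartheta(t)\,dt=O(\delta)$ via the Schwartz decay of $\vartheta$ (note that in fact $\epsilon_\delta<1$ for \emph{every} $\delta\le 1$ since $\vartheta>0$, so no separate enlargement of $c_\delta$ is needed) — is exactly the intended one.
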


We can now state our main approximation result.

\begin{lemma} \label{lemma:conv-Fourier-2} 
Let $\psi$ be a continuous function with support in a compact set $K$ in $\R$.  Assume that $\|\psi\|_\infty \leq 1$. Then, for every  $0< \delta \leq 1$ there exist smooth functions $\psi^\pm_\delta$  such that $\widehat {\psi^\pm_\delta}$ have supports in $[-\delta^{-2},\delta^{-2}]$,  $$\psi^-_\delta \leq\psi\leq \psi^+_\delta,\quad \lim_{\delta \to 0} \psi^\pm_\delta =\psi    \quad \text{and} \quad  \lim_{\delta \to 0} \big \|\psi^\pm_\delta -\psi \big \|_{L^1} = 0.$$ 
Moreover,  $\norm{\psi_\delta^\pm}_\infty$, $\norm{\psi_\delta^\pm}_{L^1}$ and $\|\widehat{\psi^\pm_\delta}\|_{\Cc^1}$ are bounded by a constant which only depends on $K$.
\end{lemma}

\begin{proof}
Observe that $\psi=\max(\psi,0)-\max(-\psi,0)$ and $\|\max(\pm\psi,0)\|_\infty \leq 1$. Therefore, for simplicity, we can assume that $\psi\geq 0$.

Let $\vartheta$, $\vartheta_\delta$, $\psi^\pm_{[\delta]}$ and $c_\delta$ be as above. Since $\vartheta$ is strictly positive,  $\psi$ is supported by $K$ and $\|\psi\|_\infty \leq 1$,  one can find a constant $c>0$ depending only on $K$ such that $c\vartheta-\psi\geq 0$. 
Define
$$\psi^+_\delta(u) :=(1+c_\delta) \psi_{[\delta]}^+ *\vartheta_\delta(u) \quad\text{and}\quad \psi^-_\delta(u) :=c\vartheta(u)-(1+c_\delta)(c\vartheta-\psi)_{[\delta]}^+*\vartheta_\delta(u).$$
By Lemma \ref{l:conv-Fourier}  applied to $\psi$ and $c\vartheta-\psi$, we have $\psi^-_\delta \leq \psi\leq \psi^+_\delta$.  It is clear that $\psi^\pm_\delta$ converge pointwise to $\psi$ as $\delta$ tends to zero and the $L^1$ convergence follows from well-known properties of convolutions.  By the convolution formula \eqref{eq:convolution-fourier-product} and the fact that $\widehat\vartheta_\delta$  is supported by $[-\delta^{-2},\delta^{-2}]$,  it follows that  $\widehat{\psi^\pm_\delta}$ have support in $[-\delta^{-2},\delta^{-2}]$.  This proves the first assertions.

We now prove the last estimates. From the definition of convolution, one can easily see that $\norm{\psi_\delta^\pm}_\infty$ and $\norm{\psi_\delta^\pm}_{L^1}$ are bounded by a constant only depending on $c$, which depends only on $K$. For the last assertion, we note that,  since $|\psi|\leq 1$ and $\psi$ has compact support in $K$, we have that $|\psi^+_{[\delta]}|\leq 1$ and $\psi^+_{[\delta]}$ is supported by $K+[-1,1]$ for all $0< \delta \leq 1$. It follows that $\|\widehat{\psi^+_{[\delta]}}\|_{\Cc^1}$ is bounded by a constant depending only on $K$. It is then easy to see that $\|\widehat{\psi^+_{\delta}}\|_{\Cc^1}$ is bounded by a constant which only depends on $K$. We use here that $\|\widehat\vartheta_\delta\|_{\Cc^1}$ is bounded by a constant independent of $0<\delta\leq 1$.  Finally, since $\widehat\vartheta$ has compact support, $\vartheta$ has fast decay at infinity and the same holds for $c\vartheta-\psi$ and hence for  $(c\vartheta-\psi)^+_{[\delta]}$, uniformly in $\delta$.  We deduce that the Fourier transform of $(c\vartheta-\psi)^+_{[\delta]}$ has  a $\Cc^1$-norm bounded by a constant which only depends on $K$. 
Thus, by the same argument as above we get that $\|\widehat{\psi^-_{\delta}}\|_{\Cc^1}$ is bounded by a constant which only depends on $K$.  
\end{proof}

\section{The Markov operator $\oP$} \label{sec:markov-op}

Let $\mu$ be a non-elementary probability measure on $G=\SL_2(\C)$. Consider the associated Markov operator 
\begin{equation} \label{eq:makov-op}
\oP u := \int_G g^* u \, \diff \mu(g) 
\end{equation}
acting on functions on $\P^1$. Observe that, for every integer $n \geq 1$, the iterate $\oP^n$ corresponds to the Markov operator associated with the convolution power $\mu^{\ast n}$.

In this work, we'll study the action of $\oP$ on different spaces of functions and differential forms. We always assume $\mu$ to be non-elementary. However, the moment conditions required on $\mu$ will depend on the situation.

\subsection{Action on the Sobolev space and on $(1,0)$-forms} \label{subsec:(1,0)-forms}

The main result of \cite{DKW:PAMQ} is that, under a finite first moment condition, $\oP$ acts continuously on $W^{1,2}$ with a spectral gap. We now give more details. Consider first the space $L^2_{(1,0)}$ of $(1,0)$-forms on $\P^1$ with $L^2$ coefficients equipped with the intrinsic norm $$\|\phi\|_{L^2} : = \Big( \int_{\P^1} i \phi \wedge \overline \phi \,  \Big)^{1 \slash 2}.$$

Above,  $i \phi \wedge \overline \phi$ defines a positive measure on $\P^1$ which is then integrated over $\P^1$.  More precisely,  if in a local coordinate $z$ we have $\phi = f(z,\overline z) \, \diff z$ for an $L^2$-function $f$,  then $i \phi \wedge \overline \phi =  |f(z,\overline z)|^2 \,  i \diff z \wedge \diff \overline{ z}$.  One can easily check by using the change of variables formula that the integral of $i \phi \wedge \overline \phi$ is independent of the chosen holomorphic coordinate.  In particular, the above integral can be globally defined and does not require the choice of a background metric or measure on $\P^1$.

Then, we  define the Sobolev space $W^{1,2}$ as the space of complex-valued measurable functions on $\P^ 1$ with finite  $\|\cdot\|_{W^{1,2}}$-norm, where
$$\|u\|_{W^{1,2}} := \Big| \int_{\P^1}  u \, \omegaFS \Big| + \frac12 \|\partial u\|_{L^2} + \frac12 \|\partial \overline u\|_{L^2}$$ 
and $\omegaFS$ stands for the Fubini-Study form on $\P^1$,  that is,  the unique $\U(2)$ invariant volume form on $\P^1$ of total volume one.  Alternatively,  via the identification between $\P^1$ and the two-dimensional sphere,   $\omegaFS$ corresponds to the normalized volume form associated with the round metric.  As before,  the norms $\|\partial u\|_{L^2}$ and $ \|\partial \overline u\|_{L^2}$ are intrinsic and do not depend on the choice of a background metric or measure on $\P^1$.

 Observe that, when $u$ is real-valued, the last two terms in the definition of $\| u \|_{W^{1,2}}$ above add up to $\|\partial u\|_{L^2}$ and we recover the original Sobolev norm used in \cite{DKW:PAMQ}. In the present work, we need to work with complex-valued functions. We can easily verify that all the results from \cite{DKW:PAMQ} also apply to complex-valued functions.  In particular,  we stress the choice of the Fubini-Study form in the above definition is not important,  as different  smooth measures on $\P^1$  can be used and yield equivalent norms.    We can also replace the quantity $\big| \int_{\P^1}  u \, \omegaFS \big|$ in the  definition by $L^1$ or $L^2$ norms,  as the following result shows.

 \begin{proposition} [\cite{DKW:PAMQ}] \label{prop:equinorm}
		Let $U$ be a non-empty open subset of $\P^1$. Then the following norms on $W^{1,2}$ are equivalent to the norm $\|\cdot\|_{W^{1,2}}$.

\begin{enumerate}
        \item $\|u\|_1 :=  \int_{\P^1} |u| \, \omegaFS  +  \frac12 \|\partial u\|_{L^2} + \frac12 \|\partial \overline u\|_{L^2}$ \medskip
        \item $ \|u\|_2 :=  \Big(\int_{\P^1} |u|^2 \, \omegaFS \Big)^{1/2} +  \frac12 \|\partial u\|_{L^2} + \frac12 \|\partial \overline u\|_{L^2}$ \medskip
        \item $\|u\|_3 := |\int_U u \, \omegaFS|+  \frac12 \|\partial u\|_{L^2} + \frac12 \|\partial \overline u\|_{L^2}$ \medskip
        \item $\|u\|_4 := \int_U |u| \,  \omegaFS+  \frac12 \|\partial u\|_{L^2} + \frac12 \|\partial \overline u\|_{L^2}$.
\end{enumerate}
 \end{proposition}
 
We refer to the Appendix and \cite{dinh-sibony:decay-correlations, vigny:dirichlet,DKW:PAMQ,dinh-marinescu-vu} for further properties of $W^{1,2}$ space and its higher dimensional analogues.

 Recall that the essential spectrum of an operator is the subset of the spectrum obtained by removing its isolated points corresponding to eigenvalues of finite multiplicity.
The essential spectral radius $\rho_{\rm ess}$ is then the radius of the smallest disc centred at the origin which contains the essential spectrum.

The following spectral gap theorem is the main result of \cite{DKW:PAMQ}. We note that it also holds under the weaker condition of finite moment of order $\frac12$, see Remark \ref{rem:1/2-moment} below. 

\begin{theorem}[\cite{DKW:PAMQ}] \label{thm:DKW-spectral-gap}
Let $\mu$ be a non-elementary probability measure on $G=\SL_2(\C)$. Assume that $\mu$ has a finite first moment, i.e.,  $\int_G \log \|g\| \, \diff \mu(g) <\infty$. Then $\oP$ defines a bounded operator on $W^{1,2}$.  Moreover, $\oP$ has a spectral gap, that is,  $\rho_{\rm ess}(\oP)<1$ and $\oP$ has a single eigenvalue of modulus $\geq 1$ located at $1$. It is an isolated eigenvalue of multiplicity one.

If $\nu$ denotes the unique $\mu$-stationary measure, then the linear functional $\oN u := \int_{\P^1} u \, \diff \nu$ defined for smooth $u$ extends to $W^{1,2}$ as a continuous linear functional with respect to both the weak and strong topologies of $W^{1,2}$. Moreover, $\|\oP^n - \oN\|_{W^{1,2}} \leq c \rho^n$ for some constants $c > 0$ and $0< \rho < 1$.
\end{theorem}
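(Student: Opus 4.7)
The natural route is the classical Ionescu-Tulcea-Marinescu / Hennion quasi-compactness scheme: obtain a Lasota-Yorke (Doeblin-Fortet) inequality on $W^{1,2}$ relative to $L^2$, deduce a strict bound on the essential spectral radius via Hennion's theorem, and then analyze the peripheral spectrum. First, I would establish that $\oP$ is bounded on $W^{1,2}$. Since differentiation commutes with pullback, $\partial(g^* u) = g^*(\partial u)$, so estimating $\|\oP u\|_{W^{1,2}}$ reduces to studying the auxiliary operator $\alpha\mapsto \int g^*\alpha \,\diff \mu(g)$ on $(1,0)$-forms measured in $L^2$ with respect to $\omegaFS$. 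The pointwise distortion of $g$ as a Möbius transformation, controlled via the Cartan decomposition \eqref{eq:cartan-decomposition} by a power of $\|g\|$, combined with the change of variables formula on $\P^1$, should give a raw bound polynomial in $\log\|g\|$ which is integrable against $\mu$ under the first moment hypothesis.

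The core technical step is a Lasota-Yorke type inequality
$$\|\oP^n u\|_{W^{1,2}} \;\leq\; C\rho_0^n\, \|u\|_{W^{1,2}} \;+\; C_n \|u\|_{L^2}$$
for some $\rho_0 < 1$ and $n$ sufficiently large. The contractive factor on the gradient part should come from the distance distortion formula $d(gx,gy)=e^{-\sigma_g(x)-\sigma_g(y)}d(x,y)$ together with Furstenberg's positivity $\gamma>0$ (which uses non-elementarity): after $n$ iterations, $\oP^n$ contracts $\|\partial u\|_{L^2}$ on average like $e^{-2n\gamma}$, up to a remainder that is controlled in $L^2$. Combined with the Rellich-Kondrachov compact embedding $W^{1,2}\hookrightarrow L^2$ on $\P^1$, Hennion's theorem then yields $\rho_{\rm ess}(\oP)<1$.

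For the peripheral spectrum, I would argue that $1$ is the unique eigenvalue of modulus $\geq 1$ and is simple. Since $\oP$ is Markov ($\oP 1 = 1$) and positivity-preserving, any additional peripheral eigenvector would produce a non-trivial invariant (or quasi-invariant) direction, contradicting Furstenberg's uniqueness of the stationary measure $\nu$ for non-elementary $\mu$. Once the spectral gap is known, the Riesz projector associated to the eigenvalue $1$ defines a rank-one continuous operator $\oN$ on $W^{1,2}$; identifying its image with the constants and comparing with the dynamics forces $\oN u = \int u\,\diff\nu$ for smooth $u$, hence for all $u\in W^{1,2}$ by continuity. The exponential convergence $\|\oP^n-\oN\|_{W^{1,2}}\leq c\rho^n$ is automatic from the spectral decomposition.

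The main obstacle is the Lasota-Yorke inequality under only a first moment assumption. The standard literature on random walks on Lie groups uses exponential moments, which convert pointwise bounds on $\|g\|$ directly into operator bounds; here no such luxury is available. The whole difficulty is to convert the weak $L^1$ integrability of $\log\|g\|$ against $\mu$ into a genuine quasi-compactness statement. This should rest on the borderline Sobolev embedding in real dimension two, and in particular on Moser-Trudinger's exponential integrability for functions with bounded $W^{1,2}$ norm, together with a careful splitting of the integration over $G$ according to the size of $\log \|g\|$: the ``bad'' matrices contribute a small tail under the first moment hypothesis, while the ``good'' matrices can be treated by the averaged contraction of the gradient. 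Matching the sharp function space $W^{1,2}$ with the sharp moment condition is the whole content of the theorem, and it is there that the bulk of the work lies.
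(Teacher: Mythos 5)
Your overall architecture (boundedness, then a Lasota--Yorke inequality relative to the compact embedding $W^{1,2}\hookrightarrow L^2$, then Hennion, then peripheral spectrum) is a legitimate framework, but the mechanism you propose for the crucial contraction of the gradient part does not work in this function space, and this is precisely where the content of the theorem lies. The norm $\|\partial u\|_{L^2}=\bigl(\int_{\P^1} i\,\partial u\wedge\overline{\partial u}\bigr)^{1/2}$ is a conformal invariant: since $\partial(g^*u)=g^*(\partial u)$ and $i\,g^*\phi\wedge\overline{g^*\phi}=g^*(i\,\phi\wedge\overline\phi)$, every $g\in\SL_2(\C)$ acts as an \emph{exact isometry} on $L^2_{(1,0)}$. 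Consequently the distance distortion $d(gx,gy)=e^{-\sigma_g(x)-\sigma_g(y)}d(x,y)$ and the positivity $\gamma>0$ are invisible at this level — the Jacobian exactly compensates the contraction of distances — and no bound of the form $e^{-2n\gamma}$ on $\|\partial\oP^n u\|_{L^2}$ can be extracted this way. (That mechanism is the one driving Le~Page's argument in H\"older spaces, and it is exactly what one cannot exploit under a first moment hypothesis.) The contraction that actually powers the theorem is Proposition \ref{prop:L^2_1,0-contraction}: the average $\phi\mapsto\int_G g^*\phi\,\diff\mu(g)$ of isometries of $L^2_{(1,0)}$ has norm $\le 1$, with equality forcing the forms $g^*\phi$ to align in $L^2$, and non-elementarity is used to exclude such alignment after replacing $\mu$ by a convolution power $\mu^{*N}$. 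This is an interference/averaging phenomenon, needs no moment condition at all, and is the missing idea in your plan.

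Two further remarks. First, once $\|\oP^N\|_{L^2_{(1,0)}}<1$ is available you do not need Hennion or a peripheral-spectrum analysis: working with the equivalent norm $\|u\|_\nu=|\langle\nu,u\rangle|+\|\partial u\|_{L^2}$ (whose equivalence with $\|\cdot\|_{W^{1,2}}$ is where the continuity of $\nu$ on $W^{1,2}$ enters, cf.\ Proposition \ref{p:WPC}), stationarity gives $\langle\nu,\oP^n u\rangle=\langle\nu,u\rangle$ while the derivative part decays geometrically, so $\|\oP^n-\oN\|\le c\rho^n$ and the full spectral statement follow at once. Second, for boundedness the derivative part costs nothing (isometry); the moment condition is needed only for the zeroth-order part of the norm, and a change-of-variables/distortion estimate gives a bound polynomial in $\|g\|$, not in $\log\|g\|$ — the logarithmic bound $\|g^*u\|_{L^2}\lesssim 1+\log^{1/2}\|g\|$ genuinely requires Moser--Trudinger (Lemma \ref{lemma:g^*u-L^2}), as you anticipate at the end of your plan.
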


We stress that, for random matrices of higher order, the only known general spectral gap result is the one of Le Page \cite{lepage:theoremes-limites}, which requires an exponential moment condition. 

The above theorem is a consequence of the following contraction result, which does not require any moment condition on $\mu$. 

\begin{proposition}[\cite{DKW:PAMQ}] \label{prop:L^2_1,0-contraction}
Let $\mu$ be a non-elementary probability measure on $G=\SL_2(\C)$. Then the operator $\oP:L^2_{(1,0)} \to L^2_{(1,0)}$ defined by $\oP \phi := \int_G g^* \phi \, \diff \mu(g)$ is bounded and, after replacing $\mu$ by some convolution power $\mu^{\ast N}$ if necessary,  it satisfies $\|\oP\|_{L^2_{(1,0)}} < 1$.
\end{proposition}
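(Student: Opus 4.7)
My plan rests on two facts: (i) every pullback $T_g \phi := g^*\phi$ is a unitary operator on $L^2_{(1,0)}$ because $g \in \SL_2(\C)$ acts by biholomorphisms on $\P^1$, which immediately gives boundedness of $\oP$ with norm at most $1$; and (ii) non-elementarity of $\mu$ rules out nonzero finite $H$-invariant measures on $\P^1$, where $H$ is the subgroup generated by $\supp \mu$, which will yield strict contraction after passing to a convolution power.

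For (i), a direct change-of-variables computation shows
\[
\int_{\P^1} i\, g^*\phi \wedge \overline{g^*\phi} = \int_{\P^1} g^*(i \phi \wedge \bar\phi) = \int_{\P^1} i \phi \wedge \bar\phi,
\]
since $g$ is an orientation-preserving diffeomorphism of $\P^1$. Hence each $T_g$ is an $L^2$-isometry and by the triangle inequality $\|\oP\phi\|_{L^2} \leq \int_G \|T_g \phi\|_{L^2} \diff \mu(g) = \|\phi\|_{L^2}$, so $\oP$ is a Hilbert-space contraction.

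Next, I would rule out vectors $\phi \neq 0$ with $\|\oP\phi\|_{L^2} = \|\phi\|_{L^2}$. Applying pointwise Cauchy--Schwarz to $\oP\phi(x) = \int T_g\phi(x)\diff\mu(g)$ and integrating yields equality only when $T_g\phi$ is essentially independent of $g$ on $\supp \mu$; that is, $T_{g^{-1}h}\phi = \phi$ for $\mu \otimes \mu$-a.e.\ $(g,h)$, so $\phi$ is invariant under every element of $H$. A short computation, using that under $h \in H$ the density $|\phi|^2_{FS}$ transforms with Jacobian factor $|h'|_{FS}^{-2}$ while $\omegaFS$ transforms with $|h'|_{FS}^{2}$, shows that the positive finite measure $\lambda_\phi := |\phi|^2_{FS}\,\omegaFS$ on $\P^1$ is $H$-invariant. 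But a non-elementary subgroup of $\SL_2(\C)$ admits no nonzero invariant finite measure on $\P^1$: iterating a loxodromic element of $H$ concentrates any such measure on its pair of fixed points, which $H$-invariance then forces to be $H$-stable, contradicting non-elementarity. Hence $\phi = 0$.

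Finally, to promote this pointwise statement to the operator bound $\|\oP^N\| < 1$ for some $N$, I would use the identity $\|\oP^N \phi\|_{L^2}^2 = \langle \phi,\, \oP_{\check\mu^{*N} * \mu^{*N}}\,\phi\rangle$ and argue by contradiction: if $\|\oP^N\| = 1$ for every $N$, there exist unit vectors $\phi_N \in L^2_{(1,0)}$ with $\|\oP^N \phi_N\|_{L^2} \to 1$. The probability measures $\lambda_N := |\phi_N|^2_{FS} \,\omegaFS$ on the compact space $\P^1$ admit a weakly convergent subsequence with limit a probability measure $\lambda$; a careful passage to the limit, exploiting that $\supp \mu^{*N}$ progressively covers $H$ together with continuity of the pullback action, should show $\lambda$ is $H$-invariant, contradicting the previous paragraph. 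The main obstacle is precisely this limiting step: one must ensure that no mass escapes and that the approximate invariance of the $\phi_N$ survives in the limit, which is where the specific geometry of the $\SL_2(\C)$-action on $\P^1$ has to be used carefully, with no recourse to any moment hypothesis.
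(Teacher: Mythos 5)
Your first step is correct and unavoidable: each $g^*$ is a unitary of $L^2_{(1,0)}$ because $i\,g^*\phi\wedge\overline{g^*\phi}=g^*(i\phi\wedge\overline\phi)$ has the same total mass as $i\phi\wedge\overline\phi$, whence $\|\oP\|\leq 1$. The difficulty is entirely in the passage from this to $\|\oP^N\|<1$, and there your argument has concrete gaps. First, equality (or near-equality) in $\|\oP\phi\|^2=\int\!\!\int\langle g^*\phi,h^*\phi\rangle\,\diff\mu(g)\diff\mu(h)$ only forces $(hg^{-1})^*\phi=\phi$ for $\mu\otimes\mu$-a.e.\ $(g,h)$; it does \emph{not} make $\phi$ invariant under every element of $H=\langle\supp\mu\rangle$ (take $\supp\mu$ a single matrix: the products $hg^{-1}$ are all trivial). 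So the group under which $i\phi\wedge\overline\phi$ is invariant is the one generated by $\supp\mu\cdot(\supp\mu)^{-1}$, and you must separately prove that this group is non-elementary (it is, essentially because it is normalized by $H$ and $H$ modulo it is cyclic, so it cannot be elementary without $H$ being elementary) before you can invoke the non-existence of invariant probability measures. Second, and more seriously, the limiting step in your last paragraph is not a technicality one can wave at: the statement "$\supp\mu^{*N}$ progressively covers $H$" is false (the supports generate a semigroup, and a fixed $k\in H$ need not lie in any $\supp\mu^{*N}$), and the approximate identities $(hg^{-1})^*\phi_N\approx\phi_N$ hold only for $(g,h)$ in a set of large $\mu^{*N}\otimes\mu^{*N}$-measure that changes with $N$, so for no fixed group element does approximate invariance survive as $N\to\infty$. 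Without an additional mechanism (e.g.\ using the monotonicity $\|\oP^{N+1}\|\leq\|\oP^{N}\|$ to transfer near-invariance from level $N$ to every fixed level $k$, a diagonal extraction, and then the normal-subgroup argument above) the limit measure $\lambda$ has no invariance property at all, and the contradiction never materializes. In short: knowing $\|\oP\phi\|<\|\phi\|$ for every $\phi\neq 0$ does not bound the operator norm away from $1$, and the part of your proof meant to bridge that gap is exactly the part that is missing.

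For comparison, the present paper does not reprove the proposition but quotes it from \cite{DKW:PAMQ}, where the argument is quantitative rather than soft: one estimates $|\langle g^*\phi,h^*\phi\rangle|$ directly and uniformly over the unit ball of $L^2_{(1,0)}$ by applying Cauchy--Schwarz on a partition of $\P^1$ adapted to the pair $(g,h)$, using that for a non-elementary $\mu$ and $N$ large, two independent $\mu^{*N}$-matrices are, with high probability, large and contract $\P^1$ towards transverse directions; this makes the probability measures $g^*(i\phi\wedge\overline\phi)$ and $h^*(i\phi\wedge\overline\phi)$ nearly mutually singular and yields a bound $\|\oP^N\|_{L^2_{(1,0)}}\leq 1-\delta$ with $\delta>0$ independent of $\phi$. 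That uniformity is precisely what your compactness scheme is trying to substitute for, and is where the work lies.
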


The following equidistribution result is a consequence of Theorem \ref{thm:DKW-spectral-gap}. We note that the moment condition in the next theorem can be relaxed to a $(\frac12 + \varepsilon)$-moment condition,  see Remarks \ref{rem:1/2-moment} and  \ref{rem:holder-equidistribution} below.

\begin{theorem}[\cite{DKW:PAMQ}] \label{thm:equi-dis}
		Let $\mu$ be a non-elementary probability measure on $G=\SL_2(\C)$. Assume that $\mu$ has a finite $(1+\varepsilon)$-moment for some $\varepsilon>0$,  i.e.,  $\int_G \log^{1+\varepsilon} \|g\| \, \diff \mu(g) <\infty$. Let $\nu$ be the associated stationary measure. Then, for every $u \in \Cc^1(\P^1)$, we have 
		$$\Big\| \oP^n u - \int_{\P^1} u \,\diff \nu\Big\|_\infty \leq C \lambda^n\norm{u}_{\Cc^1}$$ for some constants $C>0$ and $0<\lambda<1$ independent of $u$.
\end{theorem}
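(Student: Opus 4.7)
The plan is to strengthen the $W^{1,2}$ spectral gap from Theorem \ref{thm:DKW-spectral-gap} to a uniform estimate by combining it with a quantitative equicontinuity bound for the iterates $\oP^n u$ and then interpolating. First, I would reduce to the centered case. Writing $u = \oN u + (u - \oN u)$ and observing $\oN(u-\oN u)=0$, we may assume $\oN u = 0$. Theorem \ref{thm:DKW-spectral-gap} together with the trivial inequality $\|u\|_{W^{1,2}} \lesssim \|u\|_{\Cc^1}$ then yields
\[
\|\oP^n u\|_{W^{1,2}} \leq c\rho^n \|u\|_{\Cc^1}
\]
for some $0 < \rho < 1$. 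By Sobolev embedding on the real $2$-dimensional manifold $\P^1$, this gives $\|\oP^n u\|_{L^q} \leq C_q \rho^n \|u\|_{\Cc^1}$ for every finite $q$, but not directly an $L^\infty$ estimate.

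The second ingredient I would establish is a uniform-in-$n$ H\"older bound
\[
\bigl| \oP^n u(x) - \oP^n u(y) \bigr| \leq C \|u\|_{\Cc^1}\, d(x,y)^\alpha \qquad \text{for some } \alpha \in (0,1),
\]
using the $(1+\varepsilon)$-moment hypothesis. The starting point is
\[
\bigl|\oP^n u(x)-\oP^n u(y)\bigr| \leq \E\Big[ \min\!\bigl( 2\|u\|_{\infty},\; \|u\|_{\Cc^1}\, e^{-\sigma(S_n,x)-\sigma(S_n,y)}\, d(x,y) \bigr) \Big],
\]
which follows from the cocycle identity $d(gx,gy)=e^{-\sigma_g(x)-\sigma_g(y)}d(x,y)$ and the $\Cc^1$-regularity of $u$. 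Splitting the expectation according to whether $\log\|S_n\|$ is below or above a threshold $M$, one bounds the first piece deterministically by $\|u\|_{\Cc^1}\, e^{2M}\, d(x,y)$ and the second by $2\|u\|_\infty \cdot \P(\log \|S_n\| > M)$. The $(1+\varepsilon)$-moment assumption, combined with Marcinkiewicz--Zygmund type inequalities for sums of i.i.d.\ variables with moments of order $1+\varepsilon$, supplies a polynomial tail control on the distribution of $\log\|S_n\|$ that is uniform in $n$ after appropriate normalization. Choosing $M$ as a suitable power of $\log(1/d(x,y))$ and optimizing then yields the desired uniform H\"older bound.

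Finally, I would close the proof by standard interpolation: on a compact surface, any function $f$ with finite $\Cc^\alpha$-norm satisfies
\[
\|f\|_\infty \leq C\, \|f\|_{\Cc^\alpha}^{\theta}\, \|f\|_{L^2}^{1-\theta}, \qquad \theta = \frac{2}{2+\alpha}.
\]
Applied to $f = \oP^n u$ this turns the uniform H\"older bound and the exponential $L^2$ decay (from the first step) into an exponential $\Cc^0$ decay $\|\oP^n u\|_\infty \leq C \lambda^n \|u\|_{\Cc^1}$ with $\lambda := \rho^{1-\theta} \in (0,1)$.

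The hardest step will be the second one: with only a $(1+\varepsilon)$-moment, the tails of $\log \|S_n\|$ are merely polynomial rather than exponential, so one cannot mimic the Le Page-type H\"older contraction estimates that require $\int \|g\|^\alpha d\mu < \infty$. One must therefore be careful to exploit the full strength of the cocycle identity and to choose the cutoff $M$ so that both the "good" and "bad" contributions decay like a positive power of $d(x,y)$, uniformly in $n$. Doing this under the sharp moment condition claimed in Remark \ref{rem:holder-equidistribution} (namely a $(\tfrac12 + \varepsilon)$-moment) would require an additional refinement, but the argument above should work comfortably in the $(1+\varepsilon)$ regime stated in the theorem.
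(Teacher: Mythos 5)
There is a genuine gap in your step 2, and it is fatal to the plan as written. Your probabilistic splitting gives
\[
\bigl|\oP^n u(x)-\oP^n u(y)\bigr| \;\leq\; \|u\|_{\Cc^1}\,e^{2M}\,d(x,y) \;+\; 2\|u\|_\infty\,\mathbf P\bigl(\log\|S_n\|>M\bigr),
\]
but under a $(1+\varepsilon)$-moment the tail $\mathbf P(\log\|S_n\|>M)$ decays only \emph{polynomially} in $M$ (of order $n^{1+\varepsilon}M^{-(1+\varepsilon)}$ by Markov, or $n\,(M-Cn)^{-(1+\varepsilon)}$ by Marcinkiewicz--Zygmund). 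To keep the first term of size $d(x,y)^{\alpha}$ you are forced to take $M\asymp\log(1/d(x,y))$, and then the second term is only of order $n^{1+\varepsilon}\bigl(\log(1/d(x,y))\bigr)^{-(1+\varepsilon)}$ --- a logarithmic modulus of continuity with a constant that \emph{grows polynomially in $n$}, not a uniform H\"older bound. This is not a technical obstacle you can optimize away: $\oP u$ need not be H\"older continuous for any exponent at all without exponential moments, since $[g^*u]_{\Cc^\alpha}$ costs a factor $\|g\|^{2\alpha}$ which is not $\mu$-integrable. The correct regularity class here is $\Cc^{\log^{p}}$, and what your computation actually proves is essentially Proposition \ref{prop:Pu-logp} of the paper, namely $[\oP^n u]_{\log^{1+\varepsilon}}\lesssim n^{q}\|u\|_{\Cc^1}$.

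Once step 2 is downgraded to a logarithmic modulus, your step 3 also fails: interpolating a $\log^p$ modulus of continuity against an exponentially small $L^2$-norm does not give an exponentially small sup-norm (the function could take a value $A$ on a disc of radius $e^{-A^{1/p}}$, which is invisible in $L^2$). This is precisely where the paper's argument diverges from yours: it replaces the $L^2$/H\"older interpolation by the pair ($W^{1,2}$-norm, $\log^p$-seminorm) together with the Moser--Trudinger inequality \eqref{eq:moser-trudinger-estimate}, which forbids a $W^{1,2}$-bounded function from being large on an exponentially small disc (Proposition \ref{prop:moser-trudinger-logp}). Combining the exponential $W^{1,2}$ decay from Proposition \ref{prop:L^2_1,0-contraction} with the polynomially growing $\log^p$-seminorm then yields the uniform exponential decay (Corollaries \ref{cor:logp-pre-equidistribution} and \ref{cor:logp-equidistribution}); note that this needs only $p>1/2$, which is why Remark \ref{rem:holder-equidistribution} gets the $(\tfrac12+\varepsilon)$-moment version. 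Your first step (reduction to $\oN u=0$ and the $W^{1,2}$ spectral gap) is fine, and as a side remark the interpolation exponent in your step 3 should be $\theta=1/(1+\alpha)$ rather than $2/(2+\alpha)$ on a real $2$-dimensional manifold, but the essential missing idea is the substitution of H\"older regularity by $\log^p$-continuity and of the $L^2$ interpolation by Moser--Trudinger.
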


The next regularity property of the stationary measure follows from the work of Benoist-Quint \cite{benoist-quint:CLT}.  The result is valid in any dimension and is a simple consequence of the proof of \cite[Proposition 4.5]{benoist-quint:CLT}  after noting that, from the proof of \cite[Theorem 2.2]{benoist-quint:CLT},  the constants $C_n$ appearing there satisfy $\lim_{n\to \infty} n^{p-1} C_{n,\epsilon}=0$.

\begin{proposition}\label{regularity}
Let $\mu$ be a non-elementary probability measure on $G=\SL_2(\C)$. Assume that $\int_G \log^p \|g\| \, \diff \mu(g) <\infty$ for some $p >1$. Let $\nu$ be the associated stationary measure. Then $$\nu\big(\D(x,r)\big)= o\big(1/|\log r|^{p-1}\big) \quad\text{as}\quad r \to 0 \quad\text{uniformly in } \, x\in \P^1.$$
\end{proposition}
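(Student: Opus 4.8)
The plan is to exploit the stationarity of $\nu$ together with the exponential contraction of the distance on $\P^1$ under the random walk, which is governed by the norm cocycle. Recall from the formula $d(gx,gy) = e^{-\sigma_g(x)-\sigma_g(y)} d(x,y)$ that the action of a matrix $g$ shrinks small discs, and that by Furstenberg's formula the cocycle grows linearly with rate $\gamma > 0$. The key quantitative input will be that, since $\mu$ has a finite $p$-th moment with $p>1$, one controls not just the mean but the tail of $\sigma(S_n,x)$ well enough to run a Borel--Cantelli-type argument. Concretely, I would first fix $x \in \P^1$ and a small $r > 0$, and write, using $\nu = \oP^{*n}\nu$ (i.e. $\nu = \int_G g_*\nu\,\diff\mu^{*n}(g)$),
\[
\nu\big(\D(x,r)\big) = \int_G \nu\big(g^{-1}\D(x,r)\big)\,\diff\mu^{*n}(g) = \mathbf E\big[\nu\big(S_n^{-1}\D(x,r)\big)\big].
\]
Now $S_n^{-1}\D(x,r)$ is a disc whose radius is comparable to $r\, e^{\sigma(S_n^{-1}, \,\cdot\,)}$; more precisely, using the Cartan decomposition and the contraction formula, $S_n^{-1}$ maps $\D(x,r)$ into a disc of radius $\lesssim r e^{2\log\|S_n\|}$ (a crude bound) but, away from a bad direction, of radius $\lesssim r\, e^{\,\text{const}}$ times a factor controlled by the cocycle. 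The honest statement I want is: for $x$ fixed there is a disc $\D(x', r')$ with $r' \le C r \,e^{c\,\omega_n}$ containing $S_n^{-1}\D(x,r)$, where $\omega_n$ is a random variable with $\mathbf E[\omega_n] \le C(1+n)$ coming from $\log\|S_n\|$ and its fluctuations.

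The second step is to choose $n = n(r)$ growing like a constant times $|\log r|$ — say $n = \lfloor \epsilon |\log r|\rfloor$ for small $\epsilon$ — so that the ``typical'' image disc $S_n^{-1}\D(x,r)$ has radius bounded below away from $0$ (it has become macroscopic), but we still gain from the event where the cocycle has not deviated too much from its mean. I would split the expectation according to whether $\log\|S_n\| \le A n$ or not, for a suitable constant $A$. On the good event, the image disc has controlled (not too small) radius, and one invokes the a priori continuity of $\nu$ — namely $\nu$ has no atoms, so $\sup_x \nu(\D(x,\rho)) =: \eta(\rho) \to 0$ as $\rho \to 0$ — to bound the contribution by $\eta(c)$ for a fixed small $c$ depending on $\epsilon$; letting $\epsilon \to 0$ afterwards makes this as small as we like. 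On the bad event $\{\log\|S_n\| > A n\}$, we bound $\nu(\cdot) \le 1$ and use a moment/Markov estimate: since $\int \log^p\|g\|\,\diff\mu < \infty$, one has $\mathbf P(\log\|S_n\| > A n) = o(1/n^{p-1})$ — this is the key probabilistic lemma, a Baum--Katz / Marcinkiewicz--Zygmund type tail estimate for the subadditive sequence $\log\|S_n\|$, which follows from the finite $p$-th moment of the increments $\log\|g_i\|$ by a standard truncation argument (the subadditivity $\log\|S_n\| \le \sum_i \log\|g_i\|$ reduces it to the i.i.d.\ case). Combining, $\nu(\D(x,r)) \lesssim \eta(c_\epsilon) + o(1/n^{p-1}) = \eta(c_\epsilon) + o(1/|\log r|^{p-1})$, and since $\epsilon$ (hence $c_\epsilon$) is arbitrary, letting $r\to 0$ first and then $\epsilon \to 0$ gives $\nu(\D(x,r)) = o(1/|\log r|^{p-1})$.

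The main obstacle I anticipate is making the geometric estimate on $S_n^{-1}\D(x,r)$ clean and uniform in $x$: one must handle the fact that $S_n^{-1}$ does not expand all directions equally — near its most contracted direction for the forward map (equivalently the most expanded direction for $S_n^{-1}$) the image of a disc can be very large, and one must check that the crude bound ``image disc has radius $\le C r e^{2\log\|S_n\|}$'' together with the tail control of $\log\|S_n\|$ is enough, rather than needing the sharper cocycle bound along the specific point $x$. I believe the crude bound suffices because the factor $e^{2\log\|S_n\|} = \|S_n\|^2$ is still controlled in $L^{p/2}$-type sense once we are on the good event $\log\|S_n\| \le An$, and on that event $r\|S_n\|^2 \le r e^{2An} \le r^{1-2A\epsilon}$, which tends to $0$ as $r \to 0$ provided $A\epsilon < 1/2$; hence the image disc is genuinely small and we can even iterate the argument, but a single application with the no-atoms input is already enough. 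A secondary technical point is verifying the tail bound $\mathbf P(\log\|S_n\| - n\gamma' > \delta n) = o(1/n^{p-1})$ uniformly — for this one uses $\log\|S_n\| \le \sum_{i=1}^n \log\|g_i\|$, truncates $\log\|g_i\|$ at level $n$, controls the truncated sum by Markov applied after centering, and controls the untruncated part by $\sum_i \mathbf P(\log\|g_i\| > n) \le n\,\mathbf P(\log\|g_1\|>n) = o(1/n^{p-1})$, which is exactly where the finite $p$-th moment $\int \log^p\|g\|\,\diff\mu < \infty$ enters.
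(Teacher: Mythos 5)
Your strategy (stationarity $\nu=\int g_*\nu\,\diff\mu^{*n}(g)$, the Lipschitz bound $d(g^{-1}x,g^{-1}y)\le\|g\|^2d(x,y)$, the scale choice $n\approx\epsilon|\log r|$, and a large-deviation estimate for $\log\|S_n\|$) is genuinely different from the paper's, but as written it does not prove the stated rate. The problem is the good-event term. After splitting, your bound reads
$$\nu\big(\D(x,r)\big)\;\le\;\sup_{x'}\nu\big(\D(x',\rho(r))\big)\;+\;o\big(1/|\log r|^{p-1}\big),\qquad \rho(r)=r^{1-2A\epsilon},$$
and the only input you invoke for the first term is that $\nu$ has no atoms, i.e.\ $\eta(\rho):=\sup_{x'}\nu(\D(x',\rho))\to0$ with no rate. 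Multiplying by $|\log r|^{p-1}$, the first term becomes $\eta(\rho(r))\,|\log r|^{p-1}$, which you cannot show tends to zero; your closing step (``letting $r\to0$ first and then $\epsilon\to0$'') only re-derives $\nu(\D(x,r))\to0$, i.e.\ the no-atoms statement you started from, not the $o(1/|\log r|^{p-1})$ rate. The statement to be proved is itself a quantitative modulus of continuity for $\nu$, so reducing it to the qualitative continuity of $\nu$ at a smaller scale is circular unless you can close a bootstrap. And the natural bootstrap does not close sharply here: one application of your inequality replaces the scale $|\log r|$ by $\theta|\log r|$ with $\theta=1-2A\epsilon$, while the probabilistic cost at each scale is comparable after this rescaling ($\theta^{-(p-1)}$ per step accumulates geometrically over the $\sim\log|\log r|$ steps needed to reach a macroscopic scale). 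This is essentially the scheme of \cite{DKW:PAMQ}, and it yields $\nu(\D(x,r))\lesssim|\log r|^{-\beta}$ only for some unspecified $\beta>0$ — precisely the weaker conclusion the remark after the proposition attributes to that paper — not the exponent $p-1$.

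The paper's proof avoids all of this by importing the quantitative regularity input directly: by \cite[Proposition 4.5]{benoist-quint:CLT}, the $p$-th moment hypothesis gives $\int_{\P^1}|\log d(x,y)|^{p-1}\,\diff\nu(y)<\infty$ for every $x$; Markov's inequality then gives $M^{p-1}\nu\{|\log d(x,\cdot)|\ge M\}\le\int_{\{|\log d(x,\cdot)|\ge M\}}|\log d(x,y)|^{p-1}\diff\nu(y)$, whose right-hand side tends to $0$ as $M\to\infty$ because it is the tail of a convergent integral, and setting $M=|\log r|$ finishes. If you want a self-contained argument along your lines, the missing work is exactly the proof of that finiteness statement (or an equivalent sharp large-deviation/moment bound for $\log d(S_n\cdot x, x)$), which is where the exponent $p-1$ actually comes from. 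Your secondary lemma $\mathbf P(\log\|S_n\|>An)=o(1/n^{p-1})$ is fine and is the right kind of tool, but it only controls the bad event, not the main term.
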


We note that the exponent $p-1$ in the above result was obtained by Benoist-Quint, while in \cite{DKW:PAMQ} the authors were able to obtain another positive exponent that has not been computed explicitly.

\medskip

Functions in $W^{1,2}$ satisfy the following fundamental exponential integrability property, known as \textit{ Moser-Trudinger's estimate}, see \cite{moser:trudinger}:  there exist constants $\alpha >0$ and $C>0$ such that
\begin{equation} \label{eq:moser-trudinger-estimate}
\int_{\P^1} e^{\alpha |u|^2} \, \omegaFS < C, \quad \text{ for all } u \in W^{1,2} \,   \text{ such that } \,\,  \|u\|_{W^{1,2}} \leq 1.
\end{equation}

We record here the following consequence that  will be frequently used in the sequel. Its proof is contained  in  the proof of Proposition 2.3 in \cite{DKW:PAMQ} and is a direct application of \eqref{eq:moser-trudinger-estimate}.

\begin{lemma} \label{lemma:g^*u-L^2}
Let $u \in W^{1,2}$ be such that $\|u\|_{W^{1,2}} \leq 1$ and let $g \in G$. Then, there exists a constant $\kappa > 0$ independent of $u$ and $g$ such that $\|g^*u\|_{L^2} \leq \kappa (1 + \log^{1 \slash 2} \|g\|)$.
\end{lemma}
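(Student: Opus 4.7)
The plan is to reduce the assertion to the Moser--Trudinger inequality \eqref{eq:moser-trudinger-estimate} via a weighted change of variables on $\P^1$. Since $\SL_2(\C)$ acts conformally on $(\P^1,\omega_{FS})$, the distance formula $d(gx,gy)=e^{-\sigma_g(x)-\sigma_g(y)}d(x,y)$ yields the conformal-area identity $g^*\omega_{FS}=e^{-4\sigma_g}\omega_{FS}$. Substituting $y=gx$ and using the cocycle relation $\sigma_{g^{-1}}(gx)=-\sigma_g(x)$ I would rewrite
\[
\|g^*u\|_{L^2}^2=\int_{\P^1}|u(gx)|^2\,\omega_{FS}(x)=\int_{\P^1}|u(y)|^2\,e^{-4\sigma_{g^{-1}}(y)}\,\omega_{FS}(y).
\]
When $\|g\|\leq e$ the weight is uniformly bounded by $e^4$, and the inequality then follows at once from the continuous embedding $W^{1,2}\hookrightarrow L^2$. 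So from here on I assume $\|g\|\geq e$.

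For the main case I would apply H\"older's inequality with conjugate exponents $p,q>1$:
\[
\|g^*u\|_{L^2}^2\leq\Big(\int_{\P^1}|u|^{2p}\omega_{FS}\Big)^{1/p}\Big(\int_{\P^1} e^{-4q\sigma_{g^{-1}}}\omega_{FS}\Big)^{1/q}.
\]
Changing variables once more in the weight factor,
\[
\int_{\P^1} e^{-4q\sigma_{g^{-1}}(y)}\omega_{FS}(y)=\int_{\P^1} e^{(4q-4)\sigma_g(x)}\omega_{FS}(x)\leq\|g\|^{4q-4},
\]
where the last inequality uses only the crude pointwise bound $\sigma_g(x)\leq\log\|g\|$ together with $\omega_{FS}(\P^1)=1$. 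Hence the weight has $L^q$-norm at most $\|g\|^{4-4/q}$. For the $L^{2p}$-norm of $u$, I would expand the exponential in \eqref{eq:moser-trudinger-estimate} as a power series: for $\|u\|_{W^{1,2}}\leq 1$ and integer $p$,
\[
\int_{\P^1}|u|^{2p}\omega_{FS}\leq\frac{p!}{\alpha^p}\int_{\P^1} e^{\alpha|u|^2}\omega_{FS}\leq C\,\frac{p!}{\alpha^p},
\]
and Stirling's formula gives $\big(\int|u|^{2p}\omega_{FS}\big)^{1/p}\leq C_0\,p$.

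Combining the two bounds yields $\|g^*u\|_{L^2}^2\leq C_0\,p\cdot\|g\|^{4/p}$. I would then optimize by choosing $p=\log\|g\|$ (legitimate since $\|g\|\geq e$); this renders $\|g\|^{4/p}=e^4$ and produces the desired estimate $\|g^*u\|_{L^2}^2\lesssim\log\|g\|$, which combined with the easy small-$\|g\|$ case gives the lemma.

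The main technical obstacle is the passage from the exponential integrability provided by Moser--Trudinger to the polynomial bound $\|\,|u|^{2}\|_{L^p}\lesssim p$; this is the only step where the hypothesis $u\in W^{1,2}$ is genuinely used. The remaining mechanics are elementary, but the choice $p\sim\log\|g\|$ is crucial: it is precisely what turns the naive polynomial control $\|g\|^{4/p}$ into the logarithmic growth required by the statement.
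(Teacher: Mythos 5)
Your argument is correct, and it reaches the stated bound by a genuinely different mechanism than the one the paper relies on. You use the conformal identity $g^*\omegaFS=e^{-4\sigma_g}\omegaFS$ to write $\|g^*u\|_{L^2}^2=\int_{\P^1}|u|^2\rho\,\omegaFS$ with $\rho:=e^{-4\sigma_{g^{-1}}}$, and then apply H\"older with exponents $(p,q)$, the moment bounds $\int|u|^{2p}\omegaFS\leq C\,p!\,\alpha^{-p}$ extracted from the power series of Moser--Trudinger, and the choice $p\sim\log\|g\|$ to kill the factor $\|g\|^{4/p}$. The proof the paper points to (and the identical computation it carries out for $\|g^*u\,\del\sigma_g\|_{L^2}$ in Lemma \ref{lemma:sigma-estimates}-(4)) instead applies Young's inequality $ab\leq e^a+b\log b$ directly to $\alpha|u|^2\cdot\rho$, which reduces everything to $\int e^{\alpha|u|^2}\omegaFS\leq C$ and the entropy bound $\int\rho\log\rho\,\omegaFS\leq\log\|\rho\|_\infty=4\log\|g\|$ (using $\int\rho\,\omegaFS=1$). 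The two routes are dual faces of the same $\exp(L)$ versus $L\log L$ pairing: yours makes the duality explicit through an optimization over $p$, which is slightly longer but yields the same constant structure; the paper's is a one-line application. Two small points to tidy up in your write-up: the power-series bound $\alpha^p t^p/p!\leq e^{\alpha t}$ is stated for integer $p$, so either round $p$ up to $\lceil\log\|g\|\rceil$ (which only changes constants) or replace $p!$ by $\Gamma(p+1)$; and the change of variables in the weight factor, $\int e^{-4q\sigma_{g^{-1}}}\omegaFS=\int e^{(4q-4)\sigma_g}\omegaFS$, deserves one line of justification via $(g^{-1})^*\omegaFS=e^{-4\sigma_{g^{-1}}}\omegaFS$ and $\sigma_{g^{-1}}\circ g=-\sigma_g$, which you have available. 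Neither is a gap.
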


\begin{remark} \label{rem:1/2-moment} Notice that in \cite{DKW:PAMQ} the stated bound is $\log \|g\|$, while its proof actually gives the better bound $\log^{1 \slash 2} \|g\|$ stated above. In particular, our spectral gap theorem (Theorem \ref{thm:DKW-spectral-gap}) also holds under the weaker moment condition $\int_G \log^{1 \slash 2} \|g\| \, \diff \mu(g) < \infty$.
\end{remark}

\subsection{Action on $\text{log}^p$-continuous functions} \label{subsec:log^p}

For $p>0$ we introduce the following semi-norm on functions on $\P^1$:
$$[u]_{\log^p} := \sup_{x \neq y \in \P^1} \big|u(x) - u(y)\big| \big(\log^\star d(x,y)\big)^p,$$
where from now on we'll use the notation $\log^\star t: = 1 + |\log t \, |$ for $t > 0$. Here, $d$ denotes the distance defined in \eqref{eq:distance-def}.

We say that $u$ is \textit{$\log^p$-continuous} if $[u]_{\log^p} < + \infty$. We denote by $\Cc^{\log^p}$ the space of $\log^p$-continuous functions and equip it with the norm $$\|u\|_{\log^p}: = \|u\|_{\infty} + [u]_{\log^p}.$$

\begin{lemma} \label{lemma:log^p-product}
Let $u$ and $v$ be  $\log^p$-continuous functions on $\P^1$. Then $uv$ is also $\log^p$-continuous and $[uv]_{\log^p} \leq \|u\|_\infty [v]_{\log^p} + [u]_{\log^p} \|v\|_\infty$. In particular, $\|uv\|_{\log^p} \leq \|u\|_\infty \|v\|_{\log^p} + \|u\|_{\log^p} \|v\|_\infty$.
\end{lemma}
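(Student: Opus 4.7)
The plan is to proceed by the standard product-rule argument for moduli of continuity. The quantities $\|u\|_\infty$, $\|v\|_\infty$, $[u]_{\log^p}$, $[v]_{\log^p}$ are all finite by hypothesis, so the bound on the seminorm $[uv]_{\log^p}$ will immediately give finiteness and hence $uv\in\Cc^{\log^p}$.

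First, for $x\neq y$ in $\P^1$, I would write the telescoping identity
\begin{equation*}
u(x)v(x)-u(y)v(y)=u(x)\bigl(v(x)-v(y)\bigr)+\bigl(u(x)-u(y)\bigr)v(y)
\end{equation*}
and apply the triangle inequality together with the trivial pointwise bounds $|u(x)|\leq\|u\|_\infty$ and $|v(y)|\leq\|v\|_\infty$. Multiplying both sides by $(\log^\star d(x,y))^p$ and invoking the defining inequalities of $[u]_{\log^p}$ and $[v]_{\log^p}$, I obtain
\begin{equation*}
\bigl|u(x)v(x)-u(y)v(y)\bigr|\bigl(\log^\star d(x,y)\bigr)^p\leq \|u\|_\infty[v]_{\log^p}+[u]_{\log^p}\|v\|_\infty.
\end{equation*}
Taking the supremum over all $x\neq y$ yields the first stated inequality $[uv]_{\log^p}\leq \|u\|_\infty[v]_{\log^p}+[u]_{\log^p}\|v\|_\infty$.

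For the second inequality, I simply add the obvious estimate $\|uv\|_\infty\leq\|u\|_\infty\|v\|_\infty$, regroup, and use $[u]_{\log^p}\leq\|u\|_{\log^p}$ and $[v]_{\log^p}\leq\|v\|_{\log^p}$ to arrive at $\|uv\|_{\log^p}\leq \|u\|_\infty\|v\|_{\log^p}+\|u\|_{\log^p}\|v\|_\infty$. There is no real obstacle here; the statement is a direct Leibniz-type estimate for the seminorm, and the only thing worth checking is that the factor $(\log^\star d(x,y))^p$ passes through the triangle inequality unchanged, which it does because it depends only on the pair $(x,y)$ and not on $u$ or $v$.
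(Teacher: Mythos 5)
Your proposal is correct and follows essentially the same argument as the paper: the same telescoping decomposition of $u(x)v(x)-u(y)v(y)$, the triangle inequality with the sup-norm bounds, and the observation that the weight $(\log^\star d(x,y))^p$ depends only on the pair of points. Nothing further is needed.
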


\begin{proof}
For any $x,y \in \P^1$ we have
\begin{align*}
|u(x)v(x) - u(y)v(y)| &= |u(x)(v(x) - v(y)) - (u(y) - u(x)) v(y)| \\ &\leq \|u\|_\infty\cdot |v(x) - v(y)| + |u(y) - u(x)| \cdot \|v\|_\infty \\ & \leq  \big( \|u\|_\infty [v]_{\log^p} + [u]_{\log^p} \|v\|_\infty \big) \, (\log^\star d(x,y))^{-p}.
\end{align*}

The lemma follows.
\end{proof}

\begin{lemma} \label{lemma:norm-g^*u}
Let $u$ be a $\log^p$-continuous function on $\P^1$ and $g \in \SL_2(\C)$. Then,  there is a constant $C>0$ independent of $g$ and $u$ such that $[g^* u]_{\log^p} \leq C (1 + \log^ p \|g\|) [u]_{\log^p}$.
\end{lemma}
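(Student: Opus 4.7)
The plan is to exploit the cocycle identity $d(gx,gy) = e^{-\sigma_g(x) - \sigma_g(y)} d(x,y)$ recorded earlier in Section~\ref{sec:preliminary}, together with the elementary fact that $|\sigma_g(x)| \leq \log\|g\|$ for every $x \in \P^1$ (valid because $\|g^{-1}\| = \|g\|$ in $\SL_2(\C)$). Taking logarithms, this gives immediately
\begin{equation*}
\bigl|\, \log d(gx,gy) - \log d(x,y) \, \bigr| \leq 2\log\|g\|.
\end{equation*}
Everything then reduces to comparing $\log^\star d(gx,gy)$ with $\log^\star d(x,y)$, and a natural dichotomy appears according to the size of $|\log d(x,y)|$ relative to $\log\|g\|$.

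First I would fix two points $x\neq y$ in $\P^1$ and split into two cases. In the \emph{large distortion regime} $|\log d(x,y)| \geq 4 \log\|g\|$, the reverse triangle inequality combined with the displayed estimate above yields $|\log d(gx,gy)| \geq \frac12 |\log d(x,y)|$, hence
\begin{equation*}
\log^\star d(gx,gy) \;\geq\; \tfrac12\, \log^\star d(x,y),
\end{equation*}
so that $(\log^\star d(gx,gy))^{-p} \leq 2^p (\log^\star d(x,y))^{-p}$. Feeding this into the defining inequality $|u(gx)-u(gy)| \leq [u]_{\log^p}(\log^\star d(gx,gy))^{-p}$ gives the desired bound with constant $2^p$, and in particular this case requires nothing from the $\log\|g\|$ factor.

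In the \emph{small distortion regime} $|\log d(x,y)| < 4\log\|g\|$, one has $\log^\star d(x,y) \leq 1+4\log\|g\|$, and the elementary inequality $(1+t)^p \leq C_p(1+t^p)$ for $t \geq 0$ gives
\begin{equation*}
(\log^\star d(x,y))^p \;\leq\; C_p\bigl(1 + 4^p \log^p\|g\|\bigr).
\end{equation*}
Here I would just use the trivial bound $|u(gx)-u(gy)| \leq [u]_{\log^p}(\log^\star d(gx,gy))^{-p}\leq [u]_{\log^p}$ (since $\log^\star \geq 1$), rewrite $[u]_{\log^p} = (\log^\star d(x,y))^p (\log^\star d(x,y))^{-p} [u]_{\log^p}$, and absorb the factor $(\log^\star d(x,y))^p$ into $C(1+\log^p\|g\|)$.

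Combining the two regimes produces the claimed inequality with $C$ depending only on $p$, hence independent of $g$ and $u$. I do not expect any genuine obstacle: the only subtle point is the choice of the threshold $4\log\|g\|$ in the dichotomy, which is dictated by the need to absorb the correction $2\log\|g\|$ coming from the cocycle into a half of $|\log d(x,y)|$ in the first case while still obtaining a clean power-of-$\log\|g\|$ bound in the second.
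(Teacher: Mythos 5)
Your proof is correct and is essentially the same as the paper's: the threshold $|\log d(x,y)|\gtrless 4\log\|g\|$ is exactly the paper's dichotomy $d(x,y)\gtrless \|g\|^{-4}$, and the two regimes are handled by the same two estimates (absorbing $(\log^\star d(x,y))^p$ into $1+\log^p\|g\|$ in one case, and showing $\log^\star d(gx,gy)\geq \tfrac12\log^\star d(x,y)$ in the other). The only cosmetic difference is that you derive the latter from the exact cocycle identity and $\|\sigma_g\|_\infty=\log\|g\|$, whereas the paper uses only the one-sided Lipschitz bound $d(gx,gy)\leq\|g\|^2 d(x,y)$.
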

\begin{proof}
First of all, we notice that the Lipschitz norm of the map induced by $g$  is bounded by $\|g\|^2$, that is,
\begin{equation} \label{eq:lipschitz-g}
d(gx,gy) \leq  \|g\|^2 d (x,y),  \quad \text{ for all } x,y \in \P^ 1.
\end{equation}
This follows from \cite[III.4.2]{bougerol-lacroix} and is also valid in higher dimension.

In order to prove the desired estimate,  we may assume that  $[u]_{\log^p} = 1$. We need to show that $$|u(gx) - u(gy)| \cdot (\log^\star d (x,y))^p \leq C (1+ \log^ p \|g\|).$$

\noindent \textbf{Case 1:} $d(x,y) \geq \frac{1}{\|g\|^4}$.  It follows that $$(\log^\star d(x,y))^p \leq (\log^\star ( 1 \slash \|g\|^4))^p \leq c_1 (1 + \log^p \|g\|)$$ for some constant $c_1>0$. Since $[u]_{\log^p} = 1$, $|u(gx) - u(gy)|$   is bounded by a constant independent of $g$. Therefore, for some constants $c>0$ and $c_2>0$, we have that
\begin{align*}
|u(gx) - u(gy)| \cdot (\log^\star d (x,y))^p \leq c \, (\log^\star d (x,y))^p \leq  c_2  (1+\log^ p \|g\|).
\end{align*}
 
\noindent \textbf{Case 2:} $d(x,y) \leq \frac{1}{\|g\|^4}$. Then $\log \|g\|^2 = \frac12 \log \|g\|^4 \leq - \frac12 \log d(x,y)$. Hence
$$-\log (\|g^2\| d(x,y)) = - \log \|g\|^2  -\log d(x,y)  \geq - \frac12 \log d(x,y).$$

Using (\ref{eq:lipschitz-g}) one gets
\begin{align*}
|u(gx) - u(gy)| \cdot (\log^\star d (x,y))^p \leq (\log^\star d (gx,gy))^{-p}  (\log^\star d (x,y))^p \\ \leq (\log^\star (\|g\|^2 d (x,y) ))^{-p}  (\log^\star d (x,y))^p \leq \frac{ (1-\log d (x,y))^p}{(1-\frac12 \log d (x,y))^p}
\end{align*}
and the last quantity is bounded by a constant independent of $g$, $u$ $x$ and $y$. In particular, it is bounded by $C(1 + \log^ p \|g\|)$ for $C>0$ large enough. This finishes the proof.
\end{proof}

\begin{proposition} \label{prop:Pu-logp}
Let $p>0$ and let $u$ be a $\log^p$-continuous function on $\P^1$. Assume that $\mu$ has a finite moment of order $p$. Then, there is a constant $C>0$,  independent of $\mu$,  such that $$[\oP u]_{\log^p} \leq C \big(1+M_p(\mu)\big) [u]_{\log^p}, \quad\text{where}\quad M_p(\mu):= \int_G \log^p \|g\| \, \diff \mu(g).$$ In particular, $\oP$ defines a bounded operator on $ \Cc^{\log^p}$.
\end{proposition}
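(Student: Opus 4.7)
The plan is to bound $[\oP u]_{\log^p}$ by integrating the pointwise estimate of Lemma~\ref{lemma:norm-g^*u} against $\mu$, and to handle the sup-norm by the trivial observation that $g$ acts on $\P^1$ as a bijection.

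First, I would fix $x \neq y$ in $\P^1$ and write
$$|\oP u(x) - \oP u(y)| \leq \int_G \bigl|g^*u(x) - g^*u(y)\bigr| \, \diff\mu(g).$$
By definition of the semi-norm $[\cdot]_{\log^p}$, each integrand is at most $[g^*u]_{\log^p}\,(\log^\star d(x,y))^{-p}$, and Lemma~\ref{lemma:norm-g^*u} provides a constant $C_1 > 0$ with $[g^*u]_{\log^p} \leq C_1(1 + \log^p \|g\|)[u]_{\log^p}$, uniformly in $g$. Pulling the common factor $(\log^\star d(x,y))^{-p}[u]_{\log^p}$ out of the integral and using the moment assumption $M_p(\mu) < \infty$ gives
$$|\oP u(x) - \oP u(y)| \leq C_1 [u]_{\log^p} \bigl(1 + M_p(\mu)\bigr) \bigl(\log^\star d(x,y)\bigr)^{-p},$$
which is exactly the desired bound $[\oP u]_{\log^p} \leq C_1(1+M_p(\mu))[u]_{\log^p}$.

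To conclude boundedness on $\Cc^{\log^p}$ I would note that since $g \in \SL_2(\C)$ acts by a homeomorphism on $\P^1$, we have $\|g^*u\|_\infty = \|u\|_\infty$, so $\|\oP u\|_\infty \leq \int_G \|g^*u\|_\infty \diff\mu(g) = \|u\|_\infty$. Combining this with the semi-norm bound and the definition $\|u\|_{\log^p} = \|u\|_\infty + [u]_{\log^p}$ yields $\|\oP u\|_{\log^p} \leq C(1+M_p(\mu))\|u\|_{\log^p}$ for some constant $C > 0$.

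I do not expect any serious obstacle here: the proposition is essentially a direct integration of Lemma~\ref{lemma:norm-g^*u}, and the only ingredient needed beyond that is the finiteness of the $p$-th moment of $\mu$, which makes the integral $\int_G (1+\log^p\|g\|)\diff\mu(g)$ finite and equal to $1 + M_p(\mu)$.
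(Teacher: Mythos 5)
Your proof is correct and follows exactly the paper's argument: the semi-norm bound is obtained by integrating Lemma \ref{lemma:norm-g^*u} via the triangle inequality, and boundedness on $\Cc^{\log^p}$ follows from the trivial estimate $\|\oP u\|_\infty \leq \|u\|_\infty$. Nothing is missing.
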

\begin{proof}
The first statement follows directly from Lemma \ref{lemma:norm-g^*u} and the triangle inequality. The last statement follows from the first one and the fact that $\|\oP u\|_{\infty} \leq \|u\|_{\infty}$ for every $u \in \Cc^0$.
\end{proof}

The next results are crucial for us, as they allow us to obtain pointwise estimates from a simultaneous control on the Sobolev and the $\log^p$-norms.

\begin{proposition} \label{prop:moser-trudinger-logp}
Let $p > 0$. Let $\mathcal F$ be a bounded subset of  $W^{1,2}$. Then, there exists a constant $A=A(\mathcal F) >0$ such that for every  function $u \in \mathcal F$ with $[u]_{\log^p} \leq M$, $M \geq 1$,  we have that $$\|u\|_\infty \leq AM^{\frac{1}{2p}}.$$
\end{proposition}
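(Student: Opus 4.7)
The plan is to combine the Moser-Trudinger estimate \eqref{eq:moser-trudinger-estimate} with the $\log^p$-modulus of continuity of $u$, choosing a small disk around the maximum of $u$ on which $u$ stays close to $\|u\|_\infty$ and forcing the resulting exponential integral to remain bounded. Set $B := \sup_{u \in \mathcal F} \|u\|_{W^{1,2}}$, which is finite by hypothesis. Applying \eqref{eq:moser-trudinger-estimate} to $u/B$ yields constants $\alpha, C' > 0$, depending only on $\mathcal F$, such that
$$ \int_{\P^1} e^{\alpha u^2 / B^2} \, \omegaFS \le C' \quad \text{for every } u \in \mathcal F. $$

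Since $[u]_{\log^p}\le M<\infty$, the function $u$ has a continuous representative (its pointwise values being unambiguously defined, as discussed in the appendix), so we may fix $x_0 \in \P^1$ with $|u(x_0)| = \|u\|_\infty =: N$. For any $r \in (0,1)$ and $y \in \D(x_0, r)$ the definition of $[\,\cdot\,]_{\log^p}$ gives $|u(y)| \ge N - M (\log^\star r)^{-p}$. Assuming $N \le 2M$, we choose $r = e^{1 - (2M/N)^{1/p}} \in (0,1]$, so that $\log^\star r = (2M/N)^{1/p}$ and $M (\log^\star r)^{-p} = N/2$; thus $|u| \ge N/2$ on $\D(x_0, r)$. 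Inserting this into the integral above and using that $\int_{\D(x_0, r)} \omegaFS \ge c_0 r^2$ for a universal constant $c_0 > 0$ coming only from the Fubini-Study metric on $\P^1$, we obtain
$$ C' \;\ge\; c_0 \, r^2 \, e^{\alpha N^2 / (4 B^2)}. $$
Taking logarithms and substituting the value of $r$ gives $\alpha N^2 / (4 B^2) \le 2 (2M/N)^{1/p} + O(1)$, which (provided $N$ exceeds a threshold depending only on $\mathcal F$) rearranges to $N^{2 + 1/p} \lesssim M^{1/p}$, so $N \lesssim M^{1/(2p+1)} \le M^{1/(2p)}$ for $M \ge 1$.

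The complementary case $N > 2M$ is easier: for any fixed $r \in (0,1)$ one has $M (\log^\star r)^{-p} \le M \le N/2$, so $|u| \ge N/2$ on a disk of definite area and the Moser-Trudinger estimate immediately bounds $N$ by a constant depending only on $\mathcal F$. The main subtlety is the balance between the Moser-Trudinger exponential and the $\log^p$-decay, dictating the choice of $r$; the computation actually yields the sharper exponent $1/(2p+1)$, which implies the stated bound $1/(2p)$ since $M \ge 1$.
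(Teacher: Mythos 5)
Your argument is correct and follows essentially the same route as the paper: use the $\log^p$-modulus of continuity to show $|u|\geq \|u\|_\infty/2$ on a disc whose radius is quantified in terms of $M$ and $\|u\|_\infty$, and then play the resulting lower bound for $\int e^{\alpha |u|^2/B^2}\,\omegaFS$ against the Moser--Trudinger estimate. The only differences are cosmetic — you optimize the radius in terms of both $M$ and $N=\|u\|_\infty$ and solve the inequality directly (obtaining the slightly sharper exponent $1/(2p+1)$ before relaxing to $1/(2p)$ via $M\geq 1$), whereas the paper fixes $r=e^{-M^{1/p}}$ and argues by contradiction with a large parameter $A$.
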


\begin{proof}
We can assume that $\mathcal F$ is contained in the unit ball of $W^{1,2}$. Suppose the conclusion is false. Then, for every constant $A>1$, we can find  a function $u \in \mathcal F$ and a  point $x$  such that $|u(x)| \geq  AM^{\frac{1}{2p}}$. 
Fix a constant $A$ large enough. We claim that there is a constant $0<r<1$ such that $|u| \geq  \frac{1}{2} A (- \log r)^{1 \slash 2}$ on the disc $\D(x,r)$ of radius $r$ centered at $x$. 

In order to prove the claim, let $r := e^{- M^{1 \slash p}} \leq 1 \slash e$ and consider an arbitrary point $y \in \D(x,r)$. Then, we have\begin{align*}
|u(y)| &\geq - |u(y) - u(x)| + |u(x)| \geq - M\big(  \log^\star d(y,x)\big)^{-p} +  A M^{\frac{1}{2p}} \\
&\geq -1 +   A M^{\frac{1}{2p}} \geq \frac{1}{2}  A M^{\frac{1}{2p}}  =  \frac{1}{2} A( -\log  r)^{1 \slash 2},
\end{align*}
which proves the claim.

From  Moser-Trudinger's estimate \eqref{eq:moser-trudinger-estimate} there exist $\alpha > 0$ and $C> 0$ independent of $u$ such that $\int_{\P^1} e^{\alpha |u|^2} \omegaFS \leq C$. On the other hand, the above lower bound for $u$ gives that $$\int_{\DD(a,r)} e^{\alpha |u|^2} \omegaFS \gtrsim r^{2 - \frac{\alpha A^2}{4}} \geq e^{-2 + \frac{\alpha A^2}{4}}.$$

 Since  $A$ can be taken arbitrarily large, we get a contradiction. This ends the proof of the proposition.
\end{proof}

\begin{corollary} \label{cor:logp-pre-equidistribution}
Let $p > 1 \slash 2$. Let $u_n$, $n \geq 1$, be a sequence of continuous functions. Assume that there are constants $c_1,c_2 > 0$, $0<\lambda<1$ and $q > 0$ such that $\|u_n\|_{W^{1,2}} \leq c_1 \lambda^n$ and $[u_n]_{\log^p} \leq c_2 n^q$. Then, there are constants $c > 0$ and $0< \tau < 1$ depending on $c_1,c_2,\lambda,p$ and $q$ such that  $$\|u_n \|_\infty \leq c \tau^n \quad \text{ for every } n \geq 1.$$ 
\end{corollary}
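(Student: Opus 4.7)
The strategy is to use Proposition \ref{prop:moser-trudinger-logp} after a suitable rescaling, so that it applies to a normalized family sitting in a fixed bounded subset of $W^{1,2}$. The hypothesis $p > 1/2$ will then translate into the fact that the polynomial blow-up of the $\log^p$-seminorm is dominated by the exponential decay of the $W^{1,2}$-norm.

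More precisely, I would set $\tilde u_n := (c_1 \lambda^n)^{-1} u_n$, so that $\|\tilde u_n\|_{W^{1,2}} \leq 1$ for every $n$; hence $\mathcal F := \{\tilde u_n\}$ is a bounded subset of $W^{1,2}$. The seminorm is rescaled to
$$[\tilde u_n]_{\log^p} \leq \frac{c_2 n^q}{c_1 \lambda^n} =: M_n,$$
which tends to $+\infty$ and, in particular, satisfies $M_n \geq 1$ for all $n$ large. Applying Proposition \ref{prop:moser-trudinger-logp} to $\tilde u_n$ with $M = M_n$ gives a constant $A = A(\mathcal F) > 0$ such that $\|\tilde u_n\|_\infty \leq A M_n^{1/(2p)}$ for $n$ large.

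Undoing the normalization, this yields
$$\|u_n\|_\infty \leq c_1 \lambda^n \cdot A \cdot M_n^{1/(2p)} = A \, c_1^{1 - 1/(2p)} c_2^{1/(2p)} \, n^{q/(2p)} \, \lambda^{n(1 - 1/(2p))}.$$
Since $p > 1/2$, we have $1 - 1/(2p) > 0$, so $\lambda^{1 - 1/(2p)} < 1$. Choosing any $\tau$ with $\lambda^{1 - 1/(2p)} < \tau < 1$, the polynomial factor $n^{q/(2p)}$ is absorbed into $(\tau/\lambda^{1 - 1/(2p)})^n$, giving $\|u_n\|_\infty \leq c \, \tau^n$ for $n$ large enough. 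For the finitely many remaining $n$, one uses that each $u_n$ is continuous on the compact space $\P^1$ and therefore bounded, adjusting $c$ accordingly (alternatively, noting that $\|u_n\|_\infty$ can be bounded in terms of $\|u_n\|_{W^{1,2}}$ and $[u_n]_{\log^p}$ via the argument above without any smallness hypothesis on $M_n$).

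There is no real obstacle: the whole content is a bookkeeping of exponents, with the assumption $p > 1/2$ appearing precisely as the condition for the exponential decay to survive the Moser--Trudinger trade-off between the $W^{1,2}$-norm and the $\log^p$-seminorm.
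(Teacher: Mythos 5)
Your proposal is correct and follows essentially the same route as the paper: rescale by the exponentially decaying $W^{1,2}$-bound to land in a bounded family, apply Proposition \ref{prop:moser-trudinger-logp} to trade the inflated $\log^p$-seminorm for a sup-norm bound with exponent $1/(2p)$, and observe that $p>1/2$ makes the surviving exponential factor $\lambda^{(1-\frac{1}{2p})n}$ decay. Your extra care about $M_n\geq 1$ and the finitely many initial indices is a harmless refinement of the paper's argument, not a different method.
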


\begin{proof}
Let $\widetilde u_n := \lambda^{-n} u_n$. Then the family $\{\widetilde u_n : n \geq 1 \}$ is bounded in $W^{1,2}$. By  hypothesis  $[\widetilde u_n]_{\log^p} \leq c_2 \lambda^{-n} n^q$ and  from Proposition \ref{prop:moser-trudinger-logp}, there is a constant $A >0$ such that $\|\widetilde u_n\|_\infty \leq A c_2^{1 \slash (2p)}  n^{q \slash (2p)} \lambda^{-n \slash (2p)}$. This gives $$\| u_n\|_\infty \leq \lambda^n A c_2^{1 \slash (2p)}  n^{q \slash (2p)} \lambda^{-n \slash (2p)} = A c_2^{1 \slash (2p)} n^{q \slash (2p)} \lambda^{(1-\frac{1}{2p}) n}.$$

Since $p > 1 \slash 2$ one can find constants $c>0$ and $0<\tau<1$ such that  the last quantity is bounded by $c \tau^n$. This finishes the proof.
\end{proof}

When applied to $u_n = \oP^n u$ the above result gives an equidistribution theorem for  $\log^p$-continuous observables in $W^{1,2}$, improving Theorem \ref{thm:equi-dis} above.

\begin{corollary} \label{cor:logp-equidistribution}
Let $p > 1 \slash 2$ and assume that $\mu$ has a finite moment of order $p$, that is, $\int_G \log^p \|g\| \, \diff \mu(g) < \infty$. Let $u$ be a $\log^p$-continuous function on $W^{1,2}$ such that $\lp \nu, u \rp = 0$, $\|u\|_{W^{1,2}} \leq 1$ and $\|u\|_{\log^p} \leq 1$. Then there exist constants $c>0$  and $0<\tau<1$ independent of $u$ such that $$\|\oP^n u\|_\infty \leq c \tau^n \quad \text{ for all } n \geq 1 .$$ 
\end{corollary}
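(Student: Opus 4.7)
The plan is to recognize the statement as an immediate corollary of Corollary \ref{cor:logp-pre-equidistribution} once we establish two bounds on the sequence $u_n := \mathcal{P}^n u$: exponential decay in the $W^{1,2}$ norm and polynomial growth in the $\log^p$ seminorm. Since $p > 1/2$, the moment condition $\int_G \log^p \|g\|\, \diff\mu(g) < \infty$ implies $\int_G \log^{1/2}\|g\|\, \diff\mu(g) < \infty$ (as $\log^{1/2}\|g\| \leq 1 + \log^p\|g\|$ for $\|g\| \geq 1$), so Theorem \ref{thm:DKW-spectral-gap} applies via Remark \ref{rem:1/2-moment}.

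For the Sobolev bound, the hypothesis $\langle \nu, u\rangle = 0$ means $\mathcal{N}u = 0$, so Theorem \ref{thm:DKW-spectral-gap} yields
\[
\|\mathcal{P}^n u\|_{W^{1,2}} = \|(\mathcal{P}^n - \mathcal{N})u\|_{W^{1,2}} \leq c \rho^n \|u\|_{W^{1,2}} \leq c \rho^n
\]
for some $c > 0$ and $0 < \rho < 1$ independent of $u$.

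For the $\log^p$ bound, the key observation is that $\mathcal{P}^n$ is the Markov operator associated with the convolution $\mu^{*n}$, so Proposition \ref{prop:Pu-logp} applied to $\mu^{*n}$ gives
\[
[\mathcal{P}^n u]_{\log^p} \leq C\bigl(1 + M_p(\mu^{*n})\bigr)[u]_{\log^p} \leq C\bigl(1 + M_p(\mu^{*n})\bigr).
\]
The main (and only) subtlety is controlling $M_p(\mu^{*n})$ polynomially in $n$. Using $\log\|g_n\cdots g_1\| \leq \sum_{i=1}^n \log\|g_i\|$ together with convexity/subadditivity of $t \mapsto t^p$ on $\R_+$: when $p \geq 1$, Jensen gives $(\sum t_i)^p \leq n^{p-1}\sum t_i^p$, hence $M_p(\mu^{*n}) \leq n^p M_p(\mu)$; when $0 < p < 1$, subadditivity gives $(\sum t_i)^p \leq \sum t_i^p$, hence $M_p(\mu^{*n}) \leq n M_p(\mu)$. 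In either case $M_p(\mu^{*n}) \leq c' n^{\max(1,p)}$, and therefore
\[
[\mathcal{P}^n u]_{\log^p} \leq c_2 \, n^{q}, \qquad q := \max(1,p).
\]

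With both bounds in hand, Corollary \ref{cor:logp-pre-equidistribution} applied to $u_n = \mathcal{P}^n u$ with the above constants $c_1, \lambda = \rho$, $c_2$ and $q$ (which depend only on $\mu$ and $p$, not on $u$) yields constants $c > 0$ and $0 < \tau < 1$ such that $\|\mathcal{P}^n u\|_\infty \leq c \tau^n$ for all $n \geq 1$, as required. The only nontrivial step is the polynomial bound on $M_p(\mu^{*n})$, which replaces the naive (exponential) iteration of Proposition \ref{prop:Pu-logp} and is essential for obtaining the effective decay.
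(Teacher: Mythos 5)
Your proof is correct and follows essentially the same route as the paper: both reduce the statement to Corollary \ref{cor:logp-pre-equidistribution} by combining an exponential decay of $\|\oP^n u\|_{W^{1,2}}$ (valid under the $p>1/2$ moment condition via Remark \ref{rem:1/2-moment}) with a polynomial bound on $[\oP^n u]_{\log^p}$ obtained from $M_p(\mu^{*n})$ and sub-additivity of $\log\|g\|$. The only differences are cosmetic: the paper passes through the equivalent norm $\|\cdot\|_\nu$ and Proposition \ref{prop:L^2_1,0-contraction} rather than quoting Theorem \ref{thm:DKW-spectral-gap} directly, and your exponent $\max(1,p)$ for $M_p(\mu^{*n})$ is marginally sharper than the paper's $p+1$, which is immaterial here.
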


\begin{proof}  We may work with the norm $\|u\|_\nu = |\lp \nu, u \rp| +\frac12 \|\del u\|_{L^2} + \frac12 \|\del \overline u\|_{L^2}$, which by \cite[Corollary 2.13]{DKW:PAMQ} is equivalent to $\|\cdot\|_{W^{1,2}}$. The proof given there requires  $p \geq 1$, but by Lemma \ref{lemma:g^*u-L^2} and Remark \ref{rem:1/2-moment} above, the result also holds when $p \geq 1 \slash 2$. Let $u_n: = \oP^n u$. Since $\lp \nu, u \rp = 0$ and $\nu$ is stationary we get that  $\lp \nu, u_n \rp = 0$ for every $n$. Hence $\|u_n\|_\nu = \frac12 \|\del u_n\|_{L^2} + \frac12 \|\del \overline{u_n}\|_{L^2}$. By Proposition \ref{prop:L^2_1,0-contraction}, the last quantity is bounded by $c_1 \lambda^n$ for some constants $c_1>0$ and $0< \lambda <1$. From Proposition \ref{prop:Pu-logp} we deduce that $$[u_n]_{\log^p} \lesssim 1 + M_p(\mu^{\ast n}) \lesssim n^{p+1},$$ where the last inequality follows from the sub-additivity of $\log\|g\|$ and the inequalities $(t_1 + \cdots + t_n)^p \leq n^p (\max t_i)^p \leq n^p (t_1^p + \cdots + t_n^p)$. The result then follows from Corollary \ref{cor:logp-pre-equidistribution}.
\end{proof}

\begin{remark} \label{rem:holder-equidistribution}
By noting that $\Cc^1 \subset W^{1,2} \cap \Cc^{\log^p}$, Corollary \ref{cor:logp-equidistribution} shows that Theorem \ref{thm:equi-dis} also holds under the weaker moment condition $\int_G \log^{1 \slash 2 + \varepsilon} \|g\| \, \diff \mu(g) < \infty$ for some $\varepsilon >0$.
\end{remark}

\section{The perturbations $\oP_\xi$ of the Markov operator} \label{sec:perturbed-markov-op}
 For $\xi \in \R$ consider the \textit{perturbed Markov operator}
\begin{equation} \label{eq:def-P_t}
\oP_\xi u  (x) :=   \int_G e^{i \xi \sigma_g(x)} g^*u (x) \, \diff \mu(g).
\end{equation}

Notice that $\oP_0= \oP$ is the original Markov operator (\ref{eq:makov-op}). A direct computation using the cocycle relation $\sigma(g_2g_1,x) = \sigma(g_2,g_1 \cdot x) + \sigma(g_1,x)$ gives that
\begin{align}
\oP^n_\xi u (x) &= \int_{G^n} e^{i \xi \sigma_{g_n g_{n-1} \cdots g_1} (x) } \, u (g_n g_{n-1}  \cdots g_1 \cdot x)\, \diff \mu(g_1,\ldots,g_n) \nonumber \\ &= \int_G e^{i\xi \sigma_g (x)} \, u(gx) \, \diff \mu^{\ast n} (g). \label{eq:P_t^n}
\end{align} 
In other words, $\oP^n_\xi$ corresponds to the perturbed Markov operator associated with the convolution power $\mu^{\ast n}$.

The following estimates on $\sigma_g$ will be essential in the study of the action of $\oP_\xi$ on Sobolev and $\log^p$-continuous functions.

\begin{lemma} \label{lemma:sigma-estimates}
There exists a constant $C > 0$ such that the following estimates hold for any $g \in \SL_2(\C)$:
\begin{itemize}
\item[(1)] $\|\sigma_g\|_\infty = \log \|g\|$;
\item[(2)] $\|\del \sigma_g \|_{L^2} \leq C \,  (1+\log^{1 \slash 2} \|g\|)$;

\item[(3)] The positive measure $\Psi_g := i \del \sigma_g \wedge \overline {\del \sigma_g}$ has a density $\rho_g$ with respect to the Fubini-Study form (that is, $\Psi_g = \rho_g \, \omegaFS$)   satisfying $$\int_{\P^1} \rho_g \, \log \rho_g \, \omegaFS \leq C \, (1 + \log^2 \|g\| ).$$

\item[(4)] For $u \in W^{1,2}$ we have $$\|g^* u \, \del \sigma_g \|_{L^2}  \leq C \,   (1 + \log \|g\|) \cdot \|u\|_{W^{1,2}}.$$

\item[(5)] For $p > 0$, there is a constant $C_p >0$ such that $$[\sigma_g]_{\log^p} \leq C_p \, (1+\log^{p+1} \|g\|).$$
\end{itemize}
\end{lemma}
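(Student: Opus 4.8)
The plan is to reduce everything to the case of a diagonal matrix via Cartan's decomposition \eqref{eq:cartan-decomposition}, and then exploit two elementary features of the norm cocycle: it is uniformly bounded by $\log\|g\|$, and $i\ddbar\sigma_g$ is a difference of two probability measures. Item (1) is immediate: $\sigma_g([v])=\log\|g\hat v\|$ for $\hat v$ the unit vector in the direction of $v$, so $\sup\sigma_g=\log\|g\|$ and $\inf\sigma_g=-\log\|g^{-1}\|=-\log\|g\|$ since $\det g=1$.

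Write $g=k_g a_g\ell_g$ as in \eqref{eq:cartan-decomposition}. As $k_g,\ell_g$ are unitary, $\sigma_g=\sigma_{a_g}\circ\ell_g$ on $\P^1$, where $\ell_g$ now denotes the induced holomorphic isometry of $(\P^1,\omegaFS)$. In an affine chart $z$ one computes $\sigma_{a_g}=\tfrac12\log(1+|\,\|g\|^{-2}z\,|^2)-\tfrac12\log(1+|z|^2)+\mathrm{const}$, so $i\ddbar\sigma_g=\ell_g^*\,i\ddbar\sigma_{a_g}=c_0\big((a_g\ell_g)^*\omegaFS-\omegaFS\big)$ for an absolute constant $c_0$; in particular $i\ddbar\sigma_g$ has total variation at most $2c_0$. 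I will also record the pointwise Lipschitz estimate $|\sigma_g(x)-\sigma_g(y)|\le\|g\|^2 d(x,y)$: taking unit representatives $v,w$ of $x,y$ one has $\big|\,\|gv\|-\|gw\|\,\big|\le\|g\|\,\|v-w\|\lesssim\|g\|\,d(x,y)$ while $\|gv\|,\|gw\|\ge\|g\|^{-1}$, so applying the mean value theorem to $\log$ gives the claim; equivalently $|\nabla\sigma_g|\le\|g\|^2$ pointwise, whence the density $\rho_g$ of $\Psi_g=i\del\sigma_g\wedge\overline{\del\sigma_g}$ with respect to $\omegaFS$ satisfies $\rho_g\lesssim\|g\|^4$ (this also follows from the explicit formula for $\sigma_{a_g}$).

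Given this, (2) follows by integration by parts on the closed surface $\P^1$: since $\int_{\P^1}\ddbar(\sigma_g^2)=0$, one gets $\|\del\sigma_g\|_{L^2}^2=\int i\del\sigma_g\wedge\overline{\del\sigma_g}=-\int\sigma_g\,i\ddbar\sigma_g$, and the right-hand side is bounded by $\|\sigma_g\|_\infty$ times the total variation of $i\ddbar\sigma_g$, i.e.\ by $2c_0\log\|g\|$; thus $\|\del\sigma_g\|_{L^2}\lesssim\log^{1/2}\|g\|\le 1+\log^{1/2}\|g\|$. For (3), $\log^+\rho_g\lesssim 1+\log\|g\|$ by the bound $\rho_g\lesssim\|g\|^4$, so discarding the nonpositive contribution of $\{\rho_g<1\}$ and using $\int_{\P^1}\rho_g\,\omegaFS=\|\del\sigma_g\|_{L^2}^2\lesssim 1+\log\|g\|$ from (2), we obtain $\int_{\P^1}\rho_g\log\rho_g\,\omegaFS\le(\sup\log^+\rho_g)\int_{\P^1}\rho_g\,\omegaFS\lesssim(1+\log\|g\|)^2$. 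For (5), one may assume $\|g\|$ large (otherwise $[\sigma_g]_{\log^p}$ is bounded in terms of $p$) and split according to whether $d(x,y)\ge\|g\|^{-2}$ or $d(x,y)\le\|g\|^{-2}$: in the first case use $|\sigma_g(x)-\sigma_g(y)|\le 2\log\|g\|$ together with $(\log^\star d(x,y))^p\le(1+2\log\|g\|)^p\lesssim_p 1+\log^p\|g\|$; in the second use the Lipschitz estimate and the monotonicity of $t\mapsto t(\log^\star t)^p$ near $0$, so that $d(x,y)(\log^\star d(x,y))^p\le\|g\|^{-2}(1+2\log\|g\|)^p\lesssim_p\|g\|^{-2}(1+\log^p\|g\|)$. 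Both cases give the bound $C_p(1+\log^{p+1}\|g\|)$.

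The main work is (4). Set $h:=|g^*u|^2$ and $Q:=\|g^*u\,\del\sigma_g\|_{L^2}^2=\int_{\P^1}h\,i\del\sigma_g\wedge\overline{\del\sigma_g}$, and assume $\|u\|_{W^{1,2}}\le 1$ (the general estimate then follows by homogeneity). Using $i\del\sigma_g\wedge\overline{\del\sigma_g}=\tfrac i2\ddbar(\sigma_g^2)-\sigma_g\,i\ddbar\sigma_g$, split $Q$ into two terms. In the term $-\int h\sigma_g\,i\ddbar\sigma_g$, bound $|h\sigma_g|\le h\log\|g\|$ and use $i\ddbar\sigma_g=c_0\big((a_g\ell_g)^*\omegaFS-\omegaFS\big)$: here $\int h\,\omegaFS=\|g^*u\|_{L^2}^2\lesssim 1+\log\|g\|$ by Lemma \ref{lemma:g^*u-L^2}, while the change of variables $y=(a_g\ell_g)x$ (under which $g\cdot x=k_g\cdot y$) gives $\int h\,(a_g\ell_g)^*\omegaFS=\int|u\circ k_g|^2\,\omegaFS=\|u\|_{L^2}^2\lesssim 1$, so this term is $\lesssim(1+\log\|g\|)^2$. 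In the term $\tfrac i2\int h\,\ddbar(\sigma_g^2)$, integrate by parts once to get $-i\int\sigma_g\,\del h\wedge\overline{\del\sigma_g}$; bounding $|\sigma_g|\le\log\|g\|$, using $|\del h|\lesssim|g^*u|\,|\nabla(g^*u)|$ pointwise and then Cauchy--Schwarz, this is $\lesssim\log\|g\|\cdot Q^{1/2}\cdot\|\del(g^*u)\|_{L^2}$; the key point is the conformal invariance of the Dirichlet energy in real dimension two, i.e.\ $\del(g^*u)=g^*(\del u)$ and hence $\|\del(g^*u)\|_{L^2}=\|\del u\|_{L^2}\le 1$, with no loss in $\|g\|$. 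Altogether $Q\lesssim\log\|g\|\cdot Q^{1/2}+(1+\log\|g\|)^2$, and solving this quadratic inequality in $Q^{1/2}$ yields $Q\lesssim(1+\log\|g\|)^2$, which is (4). The step I expect to require the most care is making the integrations by parts and the identity for $i\ddbar\sigma_g$ legitimate for $u$ merely in $W^{1,2}$ — so that $g^*u\in W^{1,2}$ (using the conformal invariance and Lemma \ref{lemma:g^*u-L^2}) and $h\in W^{1,1}$; it is cleanest to first prove the estimate for smooth $u$ and then pass to the limit, the final bound depending only on $\|u\|_{W^{1,2}}$.
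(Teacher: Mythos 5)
Your proof is correct, and on the main item (4) it takes a genuinely different route from the paper. The paper proves (4) in two lines: from $g_*\sigma_g=-\sigma_{g^{-1}}$ one gets $\|g^*u\,\del\sigma_g\|_{L^2}^2=\int_{\P^1}|u|^2\rho_{g^{-1}}\,\omegaFS$, and then Young's inequality $ab\le e^a+b\log b$ combined with Moser--Trudinger for $u$ and the entropy bound (3) for $\rho_{g^{-1}}$ gives the result; so in the paper (3) is the real content of (4). You instead integrate by parts against $i\ddbar(\sigma_g^2)$, use that $i\ddbar\sigma_g$ is a fixed multiple of the difference of two unit-mass forms $(a_g\ell_g)^*\omegaFS-\omegaFS$, invoke the conformal invariance $\|\del(g^*u)\|_{L^2}=\|\del u\|_{L^2}$, and close a quadratic inequality in $Q^{1/2}$; Moser--Trudinger enters only through Lemma \ref{lemma:g^*u-L^2}, and item (3) is not used at all. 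The paper's route is shorter; yours makes transparent that the whole estimate rests on the cohomological identity for $i\ddbar\sigma_g$, and your remark about first proving the bound for smooth $u$ and passing to the limit correctly handles the a priori finiteness of $Q$ needed to solve the quadratic inequality. You also give self-contained proofs of (2) (integration by parts, where the paper simply cites \cite[Lemma A.6]{DKW:PAMQ}) and of (5) (the two-case argument, where the paper cites \cite[Lemma 5]{cuny-dedecker-jan}); both are sound, and (1), (3) match the paper. One slip to correct: in your inline affine formula the exponent is wrong --- since $a_g$ acts as $z\mapsto\|g\|^{2}z$, the term should read $\tfrac12\log\big(1+|\,\|g\|^{2}z\,|^2\big)$, not $\|g\|^{-2}z$ (check the value at $z=\infty$). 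The displayed identity $i\ddbar\sigma_g=c_0\big((a_g\ell_g)^*\omegaFS-\omegaFS\big)$, which is the one you actually use in the change of variables for the term $-\int h\sigma_g\,i\ddbar\sigma_g$, is the correct one, so nothing downstream is affected.
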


\begin{proof}
Identity (1) follows directly from the definition of $\sigma_g$. Estimate (2) is contained in the proof of \cite[Lemma A.6]{DKW:PAMQ}. Let us prove (3). Let $g = k_g a_g \ell_g$ be a Cartan's decomposition as in \eqref{eq:cartan-decomposition}.  Using that $k_g$ and $\ell_g$ preserve the euclidean norm we obtain $\sigma_g = \ell_g^* \sigma_{a_g}$. Since $\text{SU}(2)$ is compact we can assume for our problem that $g$ is of the form $\left(\begin{smallmatrix} \lambda & 0 \\ 0 & \lambda^{-1} \end{smallmatrix}\right)$ for some $\lambda \geq 1$. Observe that it is enough to consider $\lambda$ large.

Let $z = [z:1]$ be the standard affine coordinate in $\P^1 \setminus \{\infty\}$. In this coordinate, we have $g(z) = \lambda^2 z$, so  $$\sigma_g(z) = \frac{1}{2}\log \frac{\lambda^4 |z|^2 + 1}{|z|^2 + 1} = \frac{1}{2}\log (\lambda^4 |z|^2 + 1) - \frac{1}{2}\log( |z|^2 + 1)$$ and $$\del \sigma_g = \frac12 \Big( \frac{\lambda^4 \overline z}{\lambda^4 |z|^2 + 1} -   \frac{\overline z}{ |z|^2 + 1} \Big) \diff z= \frac{(\lambda^4-1) \overline z}{2(\lambda^4|z|^2+1)(|z|^2+1)} \, \diff z.$$ Hence $$ i\del \sigma_g \wedge\overline{\partial \sigma_g} =\frac{(\lambda^4-1)^2|z|^2}{4(\lambda^4|z|^2+1)^2(|z|^2+1)^2}i \diff z\wedge \diff \overline z =\frac{(\lambda^4-1)^2|z|^2}{(\lambda^4|z|^2+1)^2}\omegaFS.$$
In other words, $$\rho_g(z) = \frac{(\lambda^4-1)^2|z|^2}{(\lambda^4|z|^2+1)^2}.$$ From estimate (2) we have that $\|\rho_g \|_{L^1} = \|\del \sigma_g\|_{L^2}^2 \lesssim 1 + \log\|g\| = 1 + \log \lambda$. The function $s \mapsto  \frac{s}{(s+1)^2}$ has $\frac{1}{4}$ as its maximal value for $s \in [0,\infty)$. This gives that $\|\rho_g \|_{\infty} \leq \frac{(\lambda^4-1)^2}{4 \lambda^4} \lesssim \frac{\lambda^4}{4}$, so $\|\log \rho_g\|_\infty \lesssim \log \lambda$. We conclude that $$\|\rho_g \log \rho_g\|_{L^1} \leq \|\rho_g \|_{L^1} \|\log \rho_g\|_\infty  \lesssim 1 + \log^2 \lambda = 1 + \log^2\|g\|,$$ giving (3).

\vskip5pt

We now prove (4). We may assume that $\|u\|_{W^{1,2}} \leq 1$. From Moser-Trudinger's estimate we have $\int_{\P^1} e^{\alpha |u|^2} \, \omegaFS \leq A$ for some constants $\alpha, A >0$ independent of $u$. Notice that  $g_* \sigma_{g} = - \sigma_{g^{-1}}$, so $g_* \Psi_g = \Psi_{g^{-1}}.$ Using Young's inequality $ab\leq e^a+b\log b-b \leq  e^a+b\log b$ for $a,b >0$ we get
\begin{align*}
\|g^*u \, \del \sigma_g \|_{L^2}^2 &= \int_{\P^1} |u|^2 \, \Psi_{g^{-1}} =  \int_{\P^1} |u|^2 \, \rho_{g^{-1}} \, \omegaFS \\ &\leq \alpha^{-1} \int_{\P^1} (e^{\alpha |u|^2} + \rho_{g^{-1}} \, \log \rho_{g^{-1}} ) \, \omegaFS  \lesssim (1+ \log^2\|g\|),
\end{align*}
where we have used estimate (3) and the fact that $\|g^{-1}\| = \|g\|$ for $g \in \SL_2(\C)$. Taking the square root, we obtain (4).

\vskip5pt

Estimate (5) is proven in \cite[Lemma 5]{cuny-dedecker-jan}. The lemma follows.
\end{proof}

\begin{proposition} \label{prop:P_t-bounded}
Assume that $\int_G \log \|g\| \, \diff \mu(g) <\infty$. Then  $\oP_\xi: W^{1,2} \to W^{1,2}$ is a bounded operator for every $\xi\in \R$.
\end{proposition}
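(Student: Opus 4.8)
The plan is to bound the two constituents of the $W^{1,2}$-norm of $\oP_\xi u$ separately --- the mean term $\bigl|\int_{\P^1}\oP_\xi u\,\omegaFS\bigr|$, which we control by $\|\oP_\xi u\|_{L^2}$, and the Dirichlet term $\|\del(\oP_\xi u)\|_{L^2}$ --- and then to combine them using Lemmas \ref{lemma:g^*u-L^2} and \ref{lemma:sigma-estimates} together with Minkowski's integral inequality. Since $|e^{i\xi\sigma_g(x)}|=1$ pointwise, we have $|\oP_\xi u(x)|\le\int_G|g^*u(x)|\,\diff\mu(g)$, so Minkowski's integral inequality gives $\|\oP_\xi u\|_{L^2}\le\int_G\|g^*u\|_{L^2}\,\diff\mu(g)$; by Lemma \ref{lemma:g^*u-L^2} and homogeneity in $u$ this is at most $\kappa\bigl(\int_G(1+\log^{1/2}\|g\|)\,\diff\mu(g)\bigr)\|u\|_{W^{1,2}}$, and the integral is finite because $\log^{1/2}\|g\|\le 1+\log\|g\|$ and $\mu$ has a finite first moment. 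As the Fubini--Study form has total mass one, $\bigl|\int_{\P^1}\oP_\xi u\,\omegaFS\bigr|\le\|\oP_\xi u\|_{L^2}$, so this already settles the mean term.

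For the Dirichlet term I would first record, for each fixed $g$, the pointwise (a.e.) identity
\[
\del\bigl(e^{i\xi\sigma_g}\,g^*u\bigr)=i\xi\,e^{i\xi\sigma_g}(\del\sigma_g)\,g^*u+e^{i\xi\sigma_g}\,g^*(\del u),
\]
valid because $\sigma_g$ is smooth on $\P^1$ and $g$ acts as a holomorphic automorphism, whence $\del(g^*u)=g^*(\del u)$; here $e^{i\xi\sigma_g}g^*u\in W^{1,2}$ since $g$ is a diffeomorphism and multiplication by a smooth function preserves $W^{1,2}$. Integrating over $G$, a routine argument (Fubini, or testing against smooth forms and integrating by parts) shows $\del(\oP_\xi u)=\int_G\del(e^{i\xi\sigma_g}g^*u)\,\diff\mu(g)$ in $W^{1,2}$, once the right-hand side is known to lie in $L^2$. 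Now estimate the two summands. For the second one, $|e^{i\xi\sigma_g}|=1$ and $g$ is a holomorphic automorphism of $\P^1$, hence an orientation-preserving diffeomorphism, so the intrinsic $L^2$-norm of the $(1,0)$-form is unchanged under change of variables: $\|e^{i\xi\sigma_g}\,g^*(\del u)\|_{L^2}=\|g^*(\del u)\|_{L^2}=\|\del u\|_{L^2}$, and integrating against the probability measure $\mu$ contributes exactly $\|\del u\|_{L^2}\le\|u\|_{W^{1,2}}$. For the first one, whose pointwise modulus is $|\xi|\,\bigl|(\del\sigma_g)\,g^*u\bigr|$, Lemma \ref{lemma:sigma-estimates}(4) gives $\|g^*u\,\del\sigma_g\|_{L^2}\le C(1+\log\|g\|)\|u\|_{W^{1,2}}$; summing via Minkowski and using $\int_G(1+\log\|g\|)\,\diff\mu(g)<\infty$ yields $\|\del(\oP_\xi u)\|_{L^2}\le\bigl(1+|\xi|\,C(1+M_1(\mu))\bigr)\|u\|_{W^{1,2}}$.

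Adding the two bounds gives $\|\oP_\xi u\|_{W^{1,2}}\lesssim(1+|\xi|)\|u\|_{W^{1,2}}$, proving the proposition. The only non-routine point is the justification that $\oP_\xi u$ genuinely lies in $W^{1,2}$ with the displayed formula for $\del(\oP_\xi u)$, i.e.\ that $\del$ may be passed through the integral over $G$; I expect this --- rather than any of the individual estimates, which are immediate applications of the lemmas --- to be the main (though still standard) obstacle. It can be handled either by approximating $u$ by smooth functions and checking $L^2$-convergence of the functions together with their derivatives, or directly by testing against smooth forms and applying Fubini once the $L^2$-integrability of the right-hand side has been established as above.
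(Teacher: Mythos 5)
Your proof is correct and follows essentially the same route as the paper's: bound $\|e^{i\xi\sigma_g}g^*u\|_{L^2}$ via Lemma \ref{lemma:g^*u-L^2}, expand $\del(e^{i\xi\sigma_g}g^*u)$ by the product rule, and control the two resulting terms by Lemma \ref{lemma:sigma-estimates}-(4) and the unitarity of $g^*$ on $L^2_{(1,0)}$, then integrate against $\mu$ using the first moment condition. The extra care you take in justifying the interchange of $\del$ with the integral over $G$ is a standard point the paper leaves implicit.
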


\begin{proof}
Let $u \in W^{1,2}$ such that $\|u\|_{W^{1,2}} \leq 1$. We need to show that $\|\oP_\xi u\|_{W^{1,2}}$ is bounded independently of $u$. Recall that $\| u \|_{W^{1,2}} \lesssim \| u \|_{L^2} + \frac12 \|\del u\|_{L^2} + \frac12 \|\del \overline u\|_{L^2}$. From the triangle inequality and the assumption that $\int_G \log \|g\| \, \diff \mu(g) <\infty$ it is enough to show that  $\|e^{i \xi \sigma_g} g^* u \|_{L^2}$, $\| \del \big(e^{i\xi \sigma_g} g^* u \big) \|_{L^2}$ and $\| \del \big(\overline{e^{i\xi \sigma_g} g^* u} \big) \|_{L^2}$ are bounded by a constant times $1 + \log \|g\|$.

We have,  by Lemma \ref{lemma:g^*u-L^2}, that $$\|e^{i \xi \sigma_g} g^* u \|_{L^2} = \| g^* u \|_{L^2} \lesssim 1 + \log^{1\slash 2} \|g\| \lesssim 1 + \log  \|g\|,$$ giving the first estimate.
 Now, \begin{equation} \label{eq:del-e^itsigma_g-g^*u}
   \del \big(e^{i\xi \sigma_g} g^* u \big)  =   i \xi \, e^{i\xi \sigma_g} \del \sigma_g \, g^* u  +  e^{i\xi \sigma_g} g^* \del u.
\end{equation}
For the first term in the right hand side we have $$ \|i \xi \, e^{i\xi \sigma_g} \del \sigma_g \, g^* u\|_{L^2}  = |\xi| \cdot  \| g^* u \, \del \sigma_g \|_{L^2}  \lesssim  |\xi|  (1 + \log \|g\|),$$ where in the last step we have used Lemma \ref{lemma:sigma-estimates}-(4). For the second term, we have $$ \|e^{i\xi \sigma_g} g^* \del u\| _{L^2}  =  \| g^*\del u\|_{L^2}   = \|\del u\|_{L^2} \leq 1,$$ where we have used that $g$ acts unitarily on $L^2_{(1,0)}$. This gives that $\| \del \big(e^{i\xi \sigma_g} g^* u \big) \|_{L^2} \lesssim 1 + \log \|g\|$. The estimate on $\| \del \big(\overline{e^{i\xi \sigma_g} g^* u} \big) \|_{L^2}$ is analogous. This finishes the proof.
\end{proof}

\begin{proposition} \label{prop:P_t-regularity}
Let $p \geq 1$ and assume that $\int_G \log^p \|g\| \, \diff \mu(g) <\infty$. Then  the family $\xi \mapsto \oP_\xi$ of bounded operators on $W^{1,2}$ is continuous in $\xi$. If $p \geq 2$ the  family is $\lfloor p-1 \rfloor$-times differentiable and $\frac{\diff^k}{\diff \xi^k} \oP_\xi = \oP^{(k)}_\xi$ for $k=0,1,\ldots, \lfloor p-1 \rfloor$, where
\begin{equation} \label{eq:def-P_t^k}
\oP^{(k)}_\xi u := \int_G (i  \sigma_g)^k \, e^{i\xi \sigma_g} g^* u \, \diff \mu(g).
\end{equation}
\end{proposition}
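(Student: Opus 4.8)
The plan is to estimate the relevant operators directly from the pointwise bounds of Lemma~\ref{lemma:sigma-estimates} and Lemma~\ref{lemma:g^*u-L^2}, and to justify differentiating under the integral over $G$ by a dominated-convergence argument that is uniform on the unit ball of $W^{1,2}$. Throughout I will use the equivalent expression $\|u\|_{W^{1,2}}\lesssim\|u\|_{L^2}+\|\del u\|_{L^2}$ and the fact that $g$ acts unitarily on $L^2_{(1,0)}$, so that $\|g^*\del u\|_{L^2}=\|\del u\|_{L^2}$. For the continuity assertion, which uses only the first moment hypothesis (valid since $p\ge1$), I fix $u$ in the unit ball of $W^{1,2}$, write $(\oP_{\xi'}-\oP_\xi)u=\int_G(e^{i\xi'\sigma_g}-e^{i\xi\sigma_g})\,g^*u\,\diff\mu(g)$, and apply $\del$; the dominant contribution is $i(\xi'e^{i\xi'\sigma_g}-\xi e^{i\xi\sigma_g})\,\del\sigma_g\,g^*u$. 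Combining $\|e^{i\xi'\sigma_g}-e^{i\xi\sigma_g}\|_\infty\le\min(2,|\xi-\xi'|\log\|g\|)$ with Lemma~\ref{lemma:sigma-estimates}(4) and Lemma~\ref{lemma:g^*u-L^2}, one bounds $\|(\oP_{\xi'}-\oP_\xi)u\|_{W^{1,2}}$ by $\int_G\varepsilon_{\xi,\xi'}(g)\,\diff\mu(g)$, where $\varepsilon_{\xi,\xi'}(g)\to0$ as $\xi'\to\xi$ while $\varepsilon_{\xi,\xi'}(g)\lesssim(1+|\xi|)(1+\log\|g\|)$ once $|\xi'-\xi|\le1$; the right-hand side being $\mu$-integrable, dominated convergence yields continuity of $\xi\mapsto\oP_\xi$.

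Next I would show that $\oP^{(k)}_\xi$ is a bounded operator on $W^{1,2}$ for every $0\le k\le\lfloor p-1\rfloor$ (assuming now $p\ge2$). Expanding $\del(\sigma_g^k e^{i\xi\sigma_g}g^*u)$ by the product rule gives four terms, and using $\|\sigma_g\|_\infty=\log\|g\|$, Lemma~\ref{lemma:sigma-estimates}(4), Lemma~\ref{lemma:g^*u-L^2} and the unitarity on $L^2_{(1,0)}$, each has $L^2$-norm $\lesssim(1+|\xi|)(1+\log^{k+1}\|g\|)\|u\|_{W^{1,2}}$; the binding term is $i\xi\,\sigma_g^k\,\del\sigma_g\,e^{i\xi\sigma_g}g^*u$, where differentiating the exponential costs---through Lemma~\ref{lemma:sigma-estimates}(4)---a further power of $\log\|g\|$ on top of the $\log^k\|g\|$ coming from $\sigma_g^k$. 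Since $k+1\le\lfloor p-1\rfloor+1\le p$, this bound is $\mu$-integrable, so $\oP^{(k)}_\xi$ is bounded; repeating the argument of the continuity step (now with an extra factor $\sigma_g^k$, requiring a $(k+1)$-moment) shows in addition that $\xi\mapsto\oP^{(k)}_\xi$ is continuous.

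It then remains to prove, for $0\le k\le\lfloor p-1\rfloor-1$, that $\tfrac1h(\oP^{(k)}_{\xi+h}-\oP^{(k)}_\xi)\to\oP^{(k+1)}_\xi$ in $\mathcal L(W^{1,2})$ as $h\to0$; chaining these identities---the base case $k=0$ being Proposition~\ref{prop:P_t-bounded}---gives $\tfrac{\diff^k}{\diff\xi^k}\oP_\xi=\oP^{(k)}_\xi$ for $k=0,\dots,\lfloor p-1\rfloor$. Writing $R_h(g,x):=h^{-1}(e^{ih\sigma_g(x)}-1-ih\sigma_g(x))$, one has
\[
\tfrac1h\big(\oP^{(k)}_{\xi+h}-\oP^{(k)}_\xi\big)u-\oP^{(k+1)}_\xi u=\int_G(i\sigma_g)^k e^{i\xi\sigma_g}R_h(g)\,g^*u\,\diff\mu(g).
\]
The Taylor estimates $|R_h|\le\min(\tfrac12|h|\log^2\|g\|,\,2\log\|g\|)$ and $\del R_h=i\,\del\sigma_g\,(e^{ih\sigma_g}-1)$, hence $|\del R_h|\le|\del\sigma_g|\min(2,|h|\log\|g\|)$, together with the same lemmas as above, show that the $W^{1,2}$-norm of the integrand is $\le\delta_h(g)\|u\|_{W^{1,2}}$ with $\delta_h(g)\to0$ as $h\to0$ for each fixed $g$ and $\delta_h(g)\lesssim(1+|\xi|)(1+\log^{k+2}\|g\|)$ uniformly for $|h|\le1$. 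As $k+2\le\lfloor p-1\rfloor+1\le p$, this dominating function is $\mu$-integrable, so $\int_G\delta_h(g)\,\diff\mu(g)\to0$; since $\delta_h$ does not depend on $u$ in the unit ball, the difference quotient converges in operator norm, which is what is needed.

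I expect the main obstacle to be exactly the legitimacy of differentiating under the integral over $G$ in the $W^{1,2}$-topology: one must control $\del$ of a product of up to four factors---$\sigma_g^k$, $e^{i\xi\sigma_g}$, the remainder $R_h$ (or the difference of two exponentials in the continuity step), and $g^*u$ or $g^*\del u$---and exhibit a $\mu$-integrable dominant that does not depend on $u$ in the unit ball. This is where the quantitative estimates of Lemma~\ref{lemma:sigma-estimates}, especially part~(4), are essential, and where the loss of one moment---hence the exponent $\lfloor p-1\rfloor$ rather than $\lfloor p\rfloor$---is forced.
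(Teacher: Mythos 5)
Your argument is correct and follows essentially the same route as the paper: differentiation under the integral over $G$ justified by dominated convergence, with the integrand controlled in $W^{1,2}$ via Lemma~\ref{lemma:sigma-estimates}(1),(4), Lemma~\ref{lemma:g^*u-L^2}, and the unitarity of $g^*$ on $L^2_{(1,0)}$, yielding the same dominating function $\lesssim(1+|\xi|)(1+\log^{k+1}\|g\|)$ (resp.\ $\log^{k+2}\|g\|$ for the difference quotient) and the same moment bookkeeping $k+1\le p$. The only difference is that you make the pointwise-in-$g$ convergence quantitative through the explicit Taylor remainder $R_h$, where the paper simply invokes uniform convergence of the difference quotient for fixed $g$ before applying the same dominated-convergence step.
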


\begin{proof}
For the first assertion, we need to prove that $\lim_{\xi \to \xi_0} \|\oP_\xi - \oP_{\xi_0}\|_{W^{1,2}}=0$ for any $\xi_0 \in \R$ which is equivalent to show that  $\lim_{\xi \to \xi_0} \|\oP_\xi u - \oP_{\xi_0}u \|_{W^{1,2}} = 0$ uniformly in $\B = \{u : \|u\|_{W^{1,2}} \leq 1\}$. By Lebesgue's dominated convergence theorem and the moment condition, the last property will follow if we show that
\begin{itemize}
\item[(i)] For fixed $g \in G$, we have $\|e^{i\xi \sigma_g} \, g^*u - e^{i\xi_0 \sigma_g} \, g^*u \|_{W^{1,2}} \to 0$ as $\xi \to \xi_0$ uniformly in $u \in \B$, and
 \item[(ii)] $\| e^{i\xi \sigma_g} \, g^*u \|_{W^{1,2}} \leq C( 1 + \log \|g\|)$ for some $C>0$ independent of $u \in \B$ and $\xi$ close to $\xi_0$.
\end{itemize}

Notice that $e^{i\xi \sigma_g} \to e^{i\xi_0 \sigma_g}$ and  $\xi e^{i\xi \sigma_g} \to \xi_0 e^{i\xi_0 \sigma_g}$ uniformly as $\xi \to \xi_0$ when $g$ is fixed.  As $$\|e^{i\xi \sigma_g} \, g^*u - e^{i\xi_0 \sigma_g} \, g^*u\|_{L^2} \leq \|e^{i\xi \sigma_g}   - e^{i\xi_0 \sigma_g}\|_\infty \|g^* u\|_{L^2} \leq 2 \|g^* u\|_{L^2}$$ and $\|g^* u\|_{L^2}$ is bounded by $1 + \log^{1 \slash 2}\|g\|$ times a constant independent of $u \in \B$ (Lemma \ref{lemma:g^*u-L^2}) we conclude that $\|e^{i\xi \sigma_g} \, g^*u - e^{i\xi_0 \sigma_g} \, g^*u\|_{L^2}$ tends to zero uniformly in $u \in \B$. Using (\ref{eq:del-e^itsigma_g-g^*u}), we get
\begin{align*}
\|\del \big (e^{i\xi \sigma_g} \, g^*u \big) - \del \big(e^{i\xi_0 \sigma_g}  \, g^*u \big)\|_{L^2} \leq \|i (\xi \, e^{i\xi \sigma_g} - \xi_0 \, e^{i\xi_0 \sigma_g}  ) \, g^* u \, \del \sigma_g\|_{L^2}  + \|(e^{i\xi \sigma_g}   - e^{i\xi_0 \sigma_g}) g^* \del u\|_{L^2} \\ \leq  \|\xi \, e^{i\xi \sigma_g} - \xi_0 \, e^{i\xi_0 \sigma_g}\|_\infty \, \|g^*u \, \del \sigma_g\|_{L^2} + \|  e^{i\xi \sigma_g} -   \, e^{i\xi_0 \sigma_g}\|_\infty \, \|g^* \del u\|_{L^2}
\end{align*}
and, arguing as before, the last two terms tend to zero as $\xi \to \xi_0$ uniformly in $u \in \B$. An analogous argument applies to $\|\del \big (\overline{e^{i\xi \sigma_g} \, g^*u} \big) - \del \big( \overline{e^{i\xi_0 \sigma_g}  \, g^*u} \big)\|_{L^2}$. This yields (i). Estimate (ii) was shown in the proof of Proposition \ref{prop:P_t-bounded}. This proves the first part of the proposition.

\vskip5pt

Assume now that $p \geq 2$. We'll use induction. Fix an integer $1 \leq k \leq \lfloor p-1 \rfloor$ and assume that $\oP_\xi$ is $k-1$ times differentiable and $\frac{\diff^{k-1}}{\diff \xi^{k-1}} \oP_\xi = \oP^{(k-1)}_\xi$. If $k=1$, this means that $\xi \mapsto \oP_\xi$ is continuous, which we have shown in the first part of the proposition.

 Let us prove that $\oP_\xi$ is $k$ times differentiable and $\frac{\diff^{k}}{\diff \xi^k} \oP_\xi = \frac{\diff}{\diff \xi} \oP^{(k-1)}_\xi = \oP^{(k)}_\xi$, or equivalently, that $ \lim_{h \to 0} h^{-1} \big( \oP_{\xi+h}^{(k-1)} - \oP_{\xi}^{(k-1)} \big) = \oP_{\xi}^{(k)}$ in $W^{1,2}$. Similarly as before,  according to Lebesgue's dominated convergence theorem, this will follow if we show that 
\begin{itemize}
\item[(iii)] For fixed $g \in G$,  we have
$$ \lim_{h \to 0} \frac{(i \sigma_g)^{k-1} \, e^{i(\xi + h) \sigma_g} - (i \sigma_g)^{k-1} \, e^{i \xi \sigma_g}}{h} g^* u =  (i \sigma_g)^k \, e^{i \xi \sigma_g} g^* u \quad \text{in } W^{1,2} \, \text{ uniformly in }\,\, u \in \B; $$
 \item[(iv)] $\| (i \sigma_g)^k e^{i\xi \sigma_g} \, g^*u \|_{W^{1,2}} \lesssim 1 + \log^{k+1} \|g\|$ for $\xi$ in a fixed compact subset of $\R$.
\end{itemize}

In order to prove (iii) we observe that when $g$ is fixed $$h^{-1}\big((i \sigma_g)^{k-1} \, e^{i(\xi+h) \sigma_g} - (i \sigma_g)^{k-1} \, e^{i \xi \sigma_g}\big)$$ converges in $\Cc^1$-norm to $(i \sigma_g)^k \, e^{i\xi \sigma_g}$ as $h \to 0$. Arguing as in the proof of (i) we get (iii).

Let us now prove (iv). We first observe that $\|(i \sigma_g)^k \, e^{i\xi \sigma_g} g^* u\|_{L^2} \leq \|\sigma_g^k \|_\infty \|g^*u\|_{L^2} \lesssim 1 + \log^{k+1\slash 2}\|g\| \lesssim 1 + \log^{k+1}\|g\|$ by Lemmas \ref{lemma:g^*u-L^2} and \ref{lemma:sigma-estimates}-(1). In order to estimate the derivative, we first compute
\begin{align*}
\del \big( \sigma_g^k \, e^{i\xi \sigma_g} g^* u \big) &= k \, \sigma_g^{k-1} e^{i\xi \sigma_g} \, g^*u \, \del \sigma_g + i\xi \sigma_g^{k} e^{i\xi \sigma_g} g^*u \,\del \sigma_g + \sigma_g^k  e^{i\xi \sigma_g} \, g^* \del u \\  & =: \phi_g^{(1)} +  \phi_g^{(2)} +  \phi_g^{(3)}.
\end{align*}

We have that $\| \phi_g^{(1)} \|_{L^2} \leq k \, \|\sigma_g^{k-1}\|_\infty \, \|g^*u \, \del \sigma_g\|_{L^2} \lesssim 1 + \log^{k} \|g\|$ by Lemma \ref{lemma:sigma-estimates}. Analogously, we have $\| \phi_g^{(2)} \|_{L^2} \leq |\xi| \cdot  \|\sigma_g^{k}\|_\infty \, \|g^*u \, \del \sigma_g\|_{L^2} \lesssim 1 + \log^{k+1} \|g\|$ and also  $\| \phi_g^{(3)} \|_{L^2} \leq \|\sigma_g^{k}\|_\infty \, \|g^*\del u \|_{L^2}\lesssim 1 + \log^{k} \|g\|$. We conclude that $\big\|\del \big( (i\sigma_g)^k \, e^{i \xi \sigma_g} g^* u \big)\big\|_{L^2}$ $\lesssim 1 + \log^{k+1}\|g\|$, which, together with an analogous computation for $\del \big( \overline{\sigma_g^k \, e^{i\xi \sigma_g} g^* u} \big) $ and the previous $L^2$ estimate, gives (iv). This finishes the proof of the proposition.
\end{proof}

\begin{remark}
Observe that the proof of the above proposition also shows that for every $\xi \in \R$ and $0\leq k \leq p-1$, $\oP_\xi^{(k)}$ defines a bounded operator on $W^{1,2}$.
\end{remark}

The regularity of the family $\xi \mapsto \oP_\xi$ provided by the above proposition allows us to apply the theory of perturbations of linear operators, see \cite{kato:book} and also \cite{gouezel:spectral-methods}. The general theory implies that the spectral properties of $\oP_0 =\oP$ (namely, the existence of a spectral gap, cf. Theorem \ref{thm:DKW-spectral-gap}) persist for small values of $\xi$. Moreover,  the spectral decomposition inherits the same regularity properties. The spectral properties of $\oP_\xi$ for large values of $\xi$ will be studied in Section \ref{sec:spec-Pt}. 

\begin{corollary} \label{cor:P_t-decomp}
Let $\mu$ be a non-elementary probability measure on $G=\SL_2(\C)$. Assume that $\int_G \log^p \|g\| \, \diff \mu(g) <\infty$ for some $p \geq 2$. Then, there exists an $\epsilon_0 > 0$ such that, for $\xi \in [-\epsilon_0,\epsilon_0]$, one has a decomposition
\begin{equation} \label{eq:P_t-decomp}
\oP_\xi = \lambda_\xi \oN_\xi + \oQ_\xi,
\end{equation}
 where $\lambda_\xi \in \C$, $\oN_\xi$ and $\oQ_\xi$ are bounded operators on $W^{1,2}$ and 
\begin{enumerate}
\item $\lambda_0 = 1$ and $\oN_0 u = \int u \, \diff \nu$, where $\nu$ is the unique $\mu$-stationary measure;
\item $\rho:= \displaystyle \lim_{n \to \infty } \|\oP_0^n - \oN_0\|_{W^{1,2}}^{1 \slash n} < 1$;

\item $\lambda_\xi$ is the unique eigenvalue of maximum modulus of $\oP_\xi$, $\oN_\xi$ is a rank-one projection and $\oN_\xi \oQ_\xi = \oQ_\xi \oN_\xi = 0$;

\item the maps $\xi \mapsto \lambda_\xi$,  $\xi \mapsto \oN_\xi$ and $\xi \mapsto \oQ_\xi$ are $\lfloor  p-1 \rfloor $-times differentiable;

\item  $|\lambda_\xi| > \frac{2 + \rho}{3}$ and for every $k=0,\ldots,\lfloor  p-1 \rfloor$, there exists a constant $c > 0$ such that $$\bigg \| \frac{\diff^k \oQ_\xi^n}{\diff \xi^k} \bigg \|_{W^{1,2}} \leq c \Big( \frac{1 + 2 \rho}{3} \Big)^n \quad \text{ for every } n \geq 0.$$
\end{enumerate}
\end{corollary}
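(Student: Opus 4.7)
The plan is to apply the Kato--Dunford analytic perturbation theory for linear operators, using Theorem \ref{thm:DKW-spectral-gap} as the unperturbed spectral data and Proposition \ref{prop:P_t-regularity} for the regularity of the family $\xi \mapsto \oP_\xi$. By Theorem \ref{thm:DKW-spectral-gap}, $\oP_0 = \oP$ has $1$ as an isolated simple eigenvalue with eigenfunction the constant $\mathbf 1$, and the rest of its spectrum is contained in a closed disc of radius $\rho < 1$; part (2) of the statement is just the definition of this $\rho$, and $\oN_0 u = \int u \, \diff \nu$, $\lambda_0 = 1$ are read directly from the same theorem.

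To build the decomposition, I would fix two circles in $\C$: an inner circle $\gamma_0$ around $1$ of small radius less than $(1-\rho)/3$, and an outer circle $\gamma_\infty$ centered at $0$ of radius $(1+2\rho)/3$. By the continuity of $\xi \mapsto \oP_\xi$ in operator norm (first part of Proposition \ref{prop:P_t-regularity}) and the second resolvent identity, there exists $\epsilon_0 > 0$ such that for all $|\xi| \leq \epsilon_0$ the resolvent $(z - \oP_\xi)^{-1}$ exists and is uniformly bounded on $\gamma_0 \cup \gamma_\infty$. I would then set
$$
\oN_\xi := \frac{1}{2\pi i} \oint_{\gamma_0} (z - \oP_\xi)^{-1}\, \diff z.
$$
This is a projection commuting with $\oP_\xi$; its rank equals the integer-valued trace and depends continuously on $\xi$, so after possibly shrinking $\epsilon_0$ one gets $\mathrm{rank}(\oN_\xi) = 1$. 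Consequently $\oP_\xi \oN_\xi = \lambda_\xi \oN_\xi$ for a unique scalar $\lambda_\xi$, and defining $\oQ_\xi := \oP_\xi - \lambda_\xi \oN_\xi$ yields $\oN_\xi \oQ_\xi = \oQ_\xi \oN_\xi = 0$ together with an invariant splitting $W^{1,2} = \mathrm{Range}(\oN_\xi) \oplus \mathrm{Ker}(\oN_\xi)$; since $\lambda_\xi \in \gamma_0$ one has $|\lambda_\xi| > (2+\rho)/3$ automatically, and $\lambda_\xi$ is the unique eigenvalue of $\oP_\xi$ of maximum modulus.

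For the regularity and iterate estimates, the iterated resolvent identity $(z - \oP_\xi)^{-1} - (z - \oP_{\xi_0})^{-1} = (z - \oP_\xi)^{-1}(\oP_\xi - \oP_{\xi_0})(z - \oP_{\xi_0})^{-1}$ combined with the $\lfloor p-1\rfloor$-fold differentiability of $\xi \mapsto \oP_\xi$ from Proposition \ref{prop:P_t-regularity} yields, by induction, $\lfloor p-1\rfloor$-fold differentiability of $\xi \mapsto (z - \oP_\xi)^{-1}$ in operator norm, uniformly in $z$ on each contour. Differentiating $k$ times under the contour integral transfers this regularity to $\oN_\xi$, then to $\lambda_\xi$, and finally to $\oQ_\xi$, giving (4). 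For (5) I would use that the spectrum of $\oP_\xi$ outside $\gamma_0$ sits strictly inside $\gamma_\infty$, so by the functional calculus
$$
\oQ_\xi^n \;=\; \frac{1}{2\pi i} \oint_{\gamma_\infty} z^n (z - \oP_\xi)^{-1}\, \diff z,
$$
and the uniform bound on $(z - \oP_\xi)^{-1}$ on $\gamma_\infty$ together with $|z| = (1+2\rho)/3$ there yields $\|\oQ_\xi^n\|_{W^{1,2}} \lesssim ((1+2\rho)/3)^n$; differentiating this contour representation in $\xi$ gives the analogous bound for $\frac{\diff^k}{\diff \xi^k}\oQ_\xi^n$. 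The only step requiring real care — and the closest thing here to an obstacle — is the transfer of differentiability through the contour integrals, where one must check uniformity in $z$ of the derivatives of the resolvent; no genuinely new analytic input is needed beyond Proposition \ref{prop:P_t-regularity} and Theorem \ref{thm:DKW-spectral-gap}.
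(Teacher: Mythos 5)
Your proposal is correct and follows essentially the same route as the paper: the authors state Corollary \ref{cor:P_t-decomp} as a direct application of the perturbation theory of linear operators from \cite{kato:book}, combined with the spectral gap of $\oP_0$ (Theorem \ref{thm:DKW-spectral-gap}) and the $\lfloor p-1\rfloor$-fold differentiability of $\xi\mapsto\oP_\xi$ (Proposition \ref{prop:P_t-regularity}), which is precisely the Riesz-projection and contour-integral argument you spell out. The only point worth keeping explicit is the upper semicontinuity of the spectrum, which guarantees that for $|\xi|\leq\epsilon_0$ the spectrum of $\oP_\xi$ splits into the single point $\lambda_\xi$ inside $\gamma_0$ and a part strictly inside $\gamma_\infty$; you use this implicitly and it is part of the standard theory.
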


Notice that in Theorem \ref{thm:LLT} we require a second moment condition. Proposition \ref{prop:P_t-regularity} shows in this case that $\xi \mapsto \oP_\xi$ is one time differentiable and, by the above corollary, the same regularity holds for the leading eigenvalue $\lambda_\xi$ near zero.  However, the next lemma will show that $\lambda_\xi$ is slightly more regular, admitting an order two expansion. This will be crucial in the proof of Theorem \ref{thm:LLT}. Recall that $\gamma >0$ is the Lyapunov exponent (cf. Section \ref{sec:preliminary}).

\begin{lemma}  \label{lemma:lambda_t-expansion}
Assume that  $\int_G \log^2 \|g\| \, \diff \mu(g) <\infty$. Then, the leading eigenvalue $\lambda_\xi$ of $\oP_\xi$ satisfies
\begin{equation} \label{eq:lambda_t}
\lambda_\xi = \int_{\P^1} \int_G e^{i\xi \sigma_g(x)} \diff \mu(g) \diff \nu(x) + \phi(\xi),
\end{equation}
where $\phi$ is a continuous function such that $\xi^{-2}\phi(\xi)$ has a finite limit at $\xi=0$.

Moreover, there is a constant  $A \geq \gamma^2$ and a continuous function $\psi$ such that
\begin{equation} \label{eq:lambda-expansion}
\lambda_\xi = 1 + i  \gamma \xi - A \frac{\xi^2}{2}+ \psi(\xi),
\end{equation}
and $
\lim_{\xi\to 0 }\xi^{-2}\psi(\xi) =0$.
\end{lemma}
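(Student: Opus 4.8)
Since $p=2$, Proposition~\ref{prop:P_t-regularity} and Corollary~\ref{cor:P_t-decomp} apply with $\lfloor p-1\rfloor=1$: for $|\xi|\le\epsilon_0$ one has $\oP_\xi=\lambda_\xi\oN_\xi+\oQ_\xi$, the maps $\xi\mapsto\oP_\xi,\lambda_\xi,\oN_\xi,\oQ_\xi$ are differentiable near $0$, $\tfrac{\diff}{\diff\xi}\oP_\xi=\oP_\xi^{(1)}$ is a bounded operator on $W^{1,2}$, $\oN_\xi$ is a rank-one projection onto the $\lambda_\xi$-eigenspace, $\lambda_0=1$, $\oN_0u=\lp\nu,u\rp\mathbf{1}$, and $\oN_0\oQ_0=\oQ_0\oN_0=0$; here $\lp\nu,u\rp:=\int_{\P^1}u\,\diff\nu$ denotes the bounded linear functional on $W^{1,2}$ from Theorem~\ref{thm:DKW-spectral-gap} and $\mathbf{1}$ the constant function. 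In particular $\oN_0\oP_0=\oN_0$, so $\lp\nu,\oP_0u\rp=\lp\nu,u\rp$ for all $u\in W^{1,2}$. As $\lp\nu,\oN_0\mathbf{1}\rp=\lp\nu,\mathbf{1}\rp=1$, after shrinking $\epsilon_0$ we may assume $\lp\nu,\oN_\xi\mathbf{1}\rp\neq0$; put $w_\xi:=\oN_\xi\mathbf{1}/\lp\nu,\oN_\xi\mathbf{1}\rp$ and $v_\xi:=w_\xi-\mathbf{1}$. Then $w_\xi$ lies in the range of $\oN_\xi$, hence $\oP_\xi w_\xi=\lambda_\xi w_\xi$, while $\lp\nu,w_\xi\rp=1$, $w_0=\mathbf{1}$, and $\xi\mapsto w_\xi$ is differentiable at $0$. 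Pairing with $\lp\nu,\cdot\rp$ gives $\lambda_\xi=\lp\nu,\oP_\xi w_\xi\rp$; and since $(\oP_\xi\mathbf{1})(x)=\int_G e^{i\xi\sigma_g(x)}\,\diff\mu(g)$ is a continuous function in $W^{1,2}$, the functional $\lp\nu,\cdot\rp$ evaluates on it as the ordinary integral (cf. the Appendix), so by Fubini $\lp\nu,\oP_\xi\mathbf{1}\rp=\int_{\P^1}\int_G e^{i\xi\sigma_g(x)}\,\diff\mu(g)\,\diff\nu(x)$. Subtracting, we obtain \eqref{eq:lambda_t} with
\[
\phi(\xi):=\lambda_\xi-\int_{\P^1}\int_G e^{i\xi\sigma_g(x)}\,\diff\mu(g)\,\diff\nu(x)=\lp\nu,\oP_\xi v_\xi\rp,
\]
which is continuous near $0$.

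\textbf{The $o(\xi^2)$ estimate for $\phi$.} Note that $v_0=0$ and $\lp\nu,v_\xi\rp=\lp\nu,w_\xi\rp-\lp\nu,\mathbf{1}\rp=0$. Using $\lp\nu,\oP_0v_\xi\rp=\lp\nu,v_\xi\rp=0$ we rewrite
\[
\phi(\xi)=\lp\nu,(\oP_\xi-\oP_0)\,v_\xi\rp.
\]
By differentiability at $0$, $\oP_\xi-\oP_0=\xi\,\oP_0^{(1)}+o(\xi)$ in operator norm on $W^{1,2}$, and $v_\xi=\xi\,\dot v_0+o(\xi)$ in $W^{1,2}$, where $\dot v_0$ denotes the derivative of $\xi\mapsto w_\xi$ at $\xi=0$. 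Since $\oP_0^{(1)}$ is bounded and $\lp\nu,\cdot\rp$ is a bounded functional, multiplying these two expansions yields
\[
\phi(\xi)=\xi^2\,\lp\nu,\oP_0^{(1)}\dot v_0\rp+o(\xi^2),
\]
so $\xi^{-2}\phi(\xi)$ tends to the finite number $\lp\nu,\oP_0^{(1)}\dot v_0\rp$. This proves \eqref{eq:lambda_t}.

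\textbf{Expanding the integral term and concluding.} Set $m(\xi):=\int_{\P^1}\int_G e^{i\xi\sigma_g(x)}\,\diff\mu(g)\,\diff\nu(x)$. Since $|\sigma_g(x)|\le\log\|g\|$ and $\int_G\log^2\|g\|\,\diff\mu<\infty$, the functions $\sigma_g(x)$ and $\sigma_g(x)^2$ are $\mu\otimes\nu$-integrable, so differentiating twice under the integral sign is legitimate and $m$ is $\Cc^2$ near $0$ with $m(0)=1$, $m'(0)=i\int_{\P^1}\int_G\sigma_g(x)\,\diff\mu\,\diff\nu=i\gamma$ by Furstenberg's formula, and $m''(0)=-\int_{\P^1}\int_G\sigma_g(x)^2\,\diff\mu\,\diff\nu=:-B$. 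Taylor's formula gives $m(\xi)=1+i\gamma\xi-\tfrac{B}{2}\xi^2+o(\xi^2)$, hence, combining with the previous step,
\[
\lambda_\xi=m(\xi)+\phi(\xi)=1+i\gamma\xi+\Big(\lp\nu,\oP_0^{(1)}\dot v_0\rp-\tfrac{B}{2}\Big)\xi^2+o(\xi^2).
\]
Setting $A:=B-2\lp\nu,\oP_0^{(1)}\dot v_0\rp\in\C$ and $\psi(\xi):=\lambda_\xi-1-i\gamma\xi+\tfrac{A}{2}\xi^2$ gives \eqref{eq:lambda-expansion} with $\psi$ continuous and $\xi^{-2}\psi(\xi)\to0$.

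\textbf{Main obstacle.} The delicate point is that under only a second moment the family $\oP_\xi$ and its spectral data are merely once differentiable, so $\lambda_\xi$ a priori has only a first-order expansion. The mechanism for gaining the extra order is the representation $\phi(\xi)=\lp\nu,(\oP_\xi-\oP_0)v_\xi\rp$, a product of two quantities each of size $O(\xi)$: the factor $v_\xi=w_\xi-\mathbf{1}$ because $w_0=\mathbf{1}$, and the freedom to replace $\oP_\xi$ by $\oP_\xi-\oP_0$ thanks to the stationarity identity $\lp\nu,\oP_0v_\xi\rp=\lp\nu,v_\xi\rp=0$; reading off the leading coefficient $\lp\nu,\oP_0^{(1)}\dot v_0\rp$ then uses only differentiability at $0$, never a second derivative. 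The remaining points---that $\lp\nu,\cdot\rp$ evaluated on the continuous $W^{1,2}$-function $\oP_\xi\mathbf{1}$ returns its $\nu$-integral, and that the differentiations under the integral sign are licit---are routine given the moment hypothesis and the Appendix.
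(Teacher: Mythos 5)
Your proof is correct and follows essentially the same route as the paper: the same eigenvector normalization $w_\xi=\oN_\xi\mathbf 1/\lp\nu,\oN_\xi\mathbf 1\rp$, the same identity $\phi(\xi)=\lp\nu,(\oP_\xi-\oP_0)(w_\xi-\mathbf 1)\rp$ exploiting stationarity to exhibit $\phi$ as a product of two $O(\xi)$ factors, and the same second-order expansion of the double integral. The only (immaterial) difference is that you justify the expansion of $m(\xi)$ by differentiating twice under the integral sign, whereas the paper estimates the pointwise Taylor remainder of $e^{i\xi\sigma_g}$ and applies dominated convergence directly.
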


\begin{proof}
We keep the notations of Corollary \ref{cor:P_t-decomp}. Let $\psi_\xi := \frac{\oN_\xi \mathbf 1}{ \lp \nu, \oN_\xi \mathbf 1 \rp}$. Then $\psi_\xi$ is an eigenvector of $\oP_\xi$ associated with $\lambda_\xi$ and $\lp \nu, \psi_\xi \rp = 1$. Here $ \lp \nu, \psi_\xi \rp$ stands for the value at $\psi_\xi$ of the linear functional defined by $\nu$ (as in Theorem \ref{thm:DKW-spectral-gap}). By Lemma \ref{lemma:int-good-rep} in the Appendix, this value coincides with $\int_{\P^1} \psi_\xi\, \diff \nu$ for a good representative of $\psi_\xi$. Using that $\lp \nu , \oP_0 \psi_\xi \rp = \lp \nu , \psi_\xi \rp = 1$ we can write 
\begin{equation*}
\lambda_\xi = \lp \nu, \lambda_\xi \psi_\xi \rp = \lp \nu,  \oP_\xi \psi_\xi \rp =  \lp \nu, \oP_\xi \mathbf 1 \rp + \lp \nu, (\oP_\xi -\oP_0)(\psi_\xi - \mathbf 1)\rp.
\end{equation*}
The first term equals $\int_{\P^1} \int_G e^{i\xi \sigma_g(x)} \diff \mu(g) \diff \nu(x)$. Denote by $\phi(\xi)$ the second term. From Proposition \ref{prop:P_t-regularity} and Corollary \ref{cor:P_t-decomp} it follows that $\phi$ is continuous. We need to show that $\xi^{-2}\phi(\xi)$ has a limit at $\xi=0$. We have $$\frac{\phi(\xi)}{\xi^2} =  \frac{1}{ \lp \nu, \oN_\xi \mathbf 1 \rp} \Big \lp \nu, \Big( \frac{\oP_\xi -\oP_0}{\xi} \Big) \Big( \frac{\oN_\xi \mathbf 1 - \lp \nu, \oN_\xi \mathbf 1   \rp}{\xi} \Big) \Big \rp $$
and $\oN_\xi = \oN_0 + \xi \oN'_0 + o(|\xi|)$ as operators on $W^{1,2}$ by Corollary \ref{cor:P_t-decomp}-(4). It follows that $$\lim_{\xi \to 0} \frac{\oN_\xi \mathbf 1 - \lp \nu, \oN_\xi \mathbf 1 \rp}{\xi} = \oN'_0 \mathbf 1 - \lp \nu, \oN'_0 \mathbf 1\rp  \quad \text{in } W^{1,2}.$$
Therefore, $ \big( \frac{\oP_\xi -\oP_0}{\xi} \big) \big( \frac{\oN_\xi \mathbf 1 - \lp \nu, \oN_\xi \mathbf 1   \rp}{\xi} \big) $ tends to $\oP'_0 \big(\oN'_0 \mathbf 1 - \lp \nu,  \oN'_0 \mathbf 1\rp \big)$ in $W^{1,2}$. This shows that $\xi^{-2}\phi(\xi)$ has a finite limit at $\xi=0$, giving (\ref{eq:lambda_t}).

Set $$\beta:= 2 \lim_{\xi \to 0} \xi^{-2} \phi(\xi)\quad\text{and}\quad A :=  \int_{\P^1} \int_G \sigma^2_g(x) \diff \mu(g) \diff \nu(x) - \beta.$$
 Recall Furstenberg's formula $\gamma = \int_{\P^1} \int_G \sigma_g(x) \diff \mu(g) \diff \nu(x)$. From (\ref{eq:lambda_t}) we have
\begin{align*}
\lambda_\xi - 1 - i\gamma \xi + A \frac{\xi^2}{2} =  \int_{\P^1} \int_G \Big[ e^{i\xi \sigma_g(x)} - 1 -  i\xi \sigma_g(x) + \frac{\xi^2}{2} \sigma_g^2(x)  \Big] \, \diff \mu(g) \diff \nu(x) - \beta \frac{\xi^2}{2} + \phi(\xi).
\end{align*}

By the the definition of $\beta$, one has $\lim_{\xi \to 0}\frac{1}{\xi^2} \big(\beta \frac{\xi^2}{2} + \phi(\xi)\big) = 0$.
Observe that, for fixed $g$, we have 
$$\lim_{\xi\to 0} \frac{1}{\xi^2} \Big(e^{i \xi \sigma_g} -1 - i\xi \sigma_g + \frac{\xi^2}{2} \sigma^2_g\Big) = 0$$ uniformly on $\P^1$. Also 
$$\frac{1}{\xi^2}\Big| e^{i \xi \sigma_g} -1 - i\xi \sigma_g + \frac{\xi^2}{2} \sigma^2_g \Big| \leq   \frac{1}{\xi^2}\big| e^{i \xi \sigma_g} -1 - i\xi \sigma_g \big| + \frac12 \sigma^ 2_g \leq c \sigma_g^2$$ for some constant $c > 0$ independent of $g$, where the second inequality follows by looking at the error term on  Taylor's expansion of order one. By Lebesgue's dominated convergence theorem we conclude that $\xi^{-2} \phi_2(\xi)$ tends to zero as $\xi \to 0$, where $$\phi_2(\xi):=  \int_{\P^1} \int_G \Big[ e^{i\xi \sigma_g(x)} - 1 -  i\xi\sigma_g(x) + \frac{\xi^2}{2} \sigma_g^2(x)  \Big] \, \diff \mu(g) \diff \nu(x).$$ Putting $\psi(\xi):= \phi_2(\xi)   -  \beta \frac{\xi^2}{2} + \phi(\xi)$  gives  (\ref{eq:lambda-expansion}).

 It remains to show that $A$ is real  and $A \geq \gamma^2$.   We begin by noticing that,  for $\xi$ small,  if  $\psi_\xi$ is as above,  then  $\overline{\lambda_\xi \psi_\xi} = \overline{\oP_\xi \psi_\xi} = \oP_{-\xi} \overline{\psi_\xi}$.  Thus, the leading eigenvalue of $\oP_{-\xi}$ is $\lambda_{-\xi} = \overline{\lambda_{\xi}}$.   Observe now that $|\lambda_\xi| \leq 1$,  because $|\lambda_\xi| \lp \nu,  |\psi_\xi| \rp = \lp \nu,  |P_\xi \psi_\xi| \rp \leq   \lp \nu,  P_0 | \psi_\xi| \rp = \lp \nu, |\psi_\xi| \rp$.  Then,  using  (\ref{eq:lambda-expansion}) we get $0 \leq 1 - |\lambda_\xi|^2  = 1 - \lambda_\xi \lambda_{-\xi}  = (A-\gamma^2) \xi^2 + o(|\xi^2|)$, so $A-\gamma^2 = \lim_{\xi \to 0} \xi^{-2} (1 - |\lambda_\xi|^2) \geq 0$.  Hence $A$ is real and $A \geq \gamma^2$.  The proof is finished.
\end{proof}

\begin{lemma} \label{lemma:lambda-expansion-2}
Consider the sequence of random variables $Z_n:= \frac{1}{\sqrt n} (\sigma(g_n \cdots g_1,x) - n\gamma )$,  where the $g_j$ are i.i.d. with law $\mu$ and $x$ has law $\nu$.  Then $Z_n$ converges in law to a centered normal distribution  $\cali N(0;a^2)$ with variance  $a^2  := A - \gamma^2$. 
\end{lemma}

\begin{proof}
We note that the first part follows from the CLT for the norm cocycle (Theorem \ref{thm:CLT}).  We provide here another proof for the CLT for $Z_n$,  which will give the desired formula for the variance.

For a continuous function $\varphi$ on $G^{\N^*} \times \P^1$, denote $\mathbf E_\nu[\varphi] := \int \varphi(\omega,x) \, \diff \mu^{\N^*}(\omega) \, \diff \nu(x)$,  where  $\mu^{\N^*}$ is the product measure on $G^{\N^*} $ obtained from $\mu$.  For small $\xi$ we have $$ \mathbf E (e^{i \xi Z_n}) =  \mathbf E_\nu \big[ e^{i\xi (\sigma(g_n \cdots g_1,x) - n\gamma )} \big] = e^{-in\xi \gamma} \lp \nu, \oP^n_\xi \mathbf 1 \rp =  e^{-in\xi \gamma} \lambda_\xi^n \lp \nu, \oN_\xi \mathbf 1 \rp +  e^{-in\xi \gamma} \lp \nu,\oQ^n_\xi \mathbf 1 \rp.$$

Fix $\tau \in \R$ and let $\xi_n = \frac{\tau}{\sqrt n}$ for $n$ large enough so that the above identity  holds for $\xi=\xi_n$. From the expansions $$\lambda_{\tau \slash \sqrt n}= 1 + i \gamma \frac{\tau}{\sqrt n} - \frac{A}{2}\cdot \frac{\tau^2}{n} + o \big(\frac{1}{n}\big)\quad\text{and} \quad\log (1 + s) = s - \frac{s^2}{2} + o(s^2)$$ we obtain $$\log \lambda_{\tau \slash \sqrt n}^n = n \log \lambda_{\tau  \slash \sqrt n} = n \Big( i \gamma \frac{\tau}{\sqrt n}+ (-A  + \gamma^2) \frac{\tau^2}{2n} + o \big(\frac{1}{n}\big)  \Big),$$ 
so $\lambda_{\tau \slash \sqrt n}^n = e^{i \sqrt n  \gamma \tau} e^{- \frac{1}{2} (A - \gamma^2) \tau^2 + o(1) }$.
This gives
$$\mathbf E_\nu \big[ e^{i \frac{\tau}{\sqrt n} (\sigma(g_n \cdots g_1,x) - n\gamma )} \big] =    e^{- \frac{1}{2} (A - \gamma^2) \tau^2 + o(1) }  \lp \nu, \oN_{\tau \slash \sqrt n} \mathbf 1 \rp +  e^{-i \tau \sqrt n \gamma} \lp \nu, \oQ^n_{\tau \slash \sqrt n} \mathbf 1 \rp.$$

As $\oN_{\tau \slash \sqrt n} \mathbf 1 \to \oN_0 \mathbf 1 = \mathbf 1$ and $\oQ^n_{\tau \slash \sqrt n} \mathbf 1  \to 0$ in $W^{1,2}$ as $n \to \infty$  and $\nu$ acts continuously on $W^{1,2}$ (Proposition \ref{p:WPC}),  we conclude that $  \mathbf E( e^{i \tau Z_n}) =   \mathbf E_\nu \big[ e^{i \frac{\tau}{\sqrt n} (\sigma(g_n \cdots g_1,x) - n\gamma )} \big]$  converges to $ e^{- \frac{1}{2} (A - \gamma^2)\tau^2} = e^{- \frac{1}{2} a^2 \tau^2} $ as $n \to \infty$.  Since $e^{- \frac{1}{2} a^2 \tau^2}$ is the characteristic function of the centered normal distribution with variance $a^2$,  we conclude that  $Z_n$ converges in law to $\cali N(0;a^2)$.  This concludes the proof. 
\end{proof}

\begin{remark}
The number  $a^2$ above coincides with the one in Theorem \ref{thm:CLT}.  In particular it is strictly positive.
\end{remark}

The following version of \eqref{eq:lambda-expansion} will be used in the proof of Theorem \ref{thm:LLT-coeff}.

\begin{lemma}  \label{lemma:lambda_t-expansion-3}
	Assume that  $\int_G \log^3 \|g\| \, \diff \mu(g) <\infty$. Then
	\begin{equation*} 
	\lambda_\xi = 1 + i  \gamma \xi - A \frac{\xi^2}{2}+O(|\xi|^3) \quad\text{as}\quad \xi\to 0,
	\end{equation*}
	where $A = a^2 + \gamma^2$.
\end{lemma}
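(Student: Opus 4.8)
The plan is to follow exactly the structure of Lemma \ref{lemma:lambda_t-expansion}, but to push every Taylor expansion one order further, using the now-available third moment to control the remainders. First I would invoke Proposition \ref{prop:P_t-regularity} with $p=3$: this gives that $\xi \mapsto \oP_\xi$ is $\lfloor p-1\rfloor = 2$-times differentiable with $\frac{\diff^k}{\diff\xi^k}\oP_\xi = \oP_\xi^{(k)}$ for $k=0,1,2$, and by Corollary \ref{cor:P_t-decomp}-(4) the same $C^2$-regularity is inherited by $\lambda_\xi$, $\oN_\xi$ and $\oQ_\xi$. In particular the normalized eigenvector $\psi_\xi := \oN_\xi\mathbf 1 / \lp\nu,\oN_\xi\mathbf 1\rp$ is a $C^2$ curve in $W^{1,2}$ with $\lp\nu,\psi_\xi\rp = 1$ and $\psi_0 = \mathbf 1$. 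As in the proof of Lemma \ref{lemma:lambda_t-expansion}, using $\lp\nu,\oP_0\psi_\xi\rp = \lp\nu,\psi_\xi\rp = 1$ I would write
\begin{equation*}
\lambda_\xi = \lp\nu,\oP_\xi\mathbf 1\rp + \lp\nu,(\oP_\xi - \oP_0)(\psi_\xi - \mathbf 1)\rp =: \lp\nu,\oP_\xi\mathbf 1\rp + \phi(\xi).
\end{equation*}

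Next I would estimate the correction term $\phi(\xi)$. Since $\psi_\xi$ is $C^2$ and $\psi_0 = \mathbf 1$, we have $\psi_\xi - \mathbf 1 = \xi\psi_0' + O(\xi^2)$ in $W^{1,2}$; since $\oP_\xi$ is $C^2$ and we only need its action on a bounded set, $\oP_\xi - \oP_0 = \xi\,\oP_0^{(1)} + O(\xi^2)$ as operators on $W^{1,2}$ (here the $O(\xi^2)$ is in operator norm, valid because the second derivative $\oP_\xi^{(2)}$ is locally bounded — cf. the Remark after Proposition \ref{prop:P_t-regularity}). Hence $\phi(\xi) = \xi^2\lp\nu,\oP_0^{(1)}\psi_0'\rp + O(|\xi|^3)$. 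In particular $\phi(\xi) = \beta\frac{\xi^2}{2} + O(|\xi|^3)$ with $\beta := 2\lp\nu,\oP_0^{(1)}\psi_0'\rp$, which is precisely the constant $\beta$ appearing in Lemma \ref{lemma:lambda_t-expansion}; the point of the third moment is that the remainder in this Taylor expansion of the smooth curve $\phi$ is now genuinely $O(|\xi|^3)$, not merely $o(\xi^2)$.

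Then I would expand the main term $\lp\nu,\oP_\xi\mathbf 1\rp = \int_{\P^1}\int_G e^{i\xi\sigma_g(x)}\,\diff\mu(g)\diff\nu(x)$. Using the pointwise bound $\bigl|e^{it} - 1 - it + \tfrac{t^2}{2} + i\tfrac{t^3}{6}\bigr| \leq c\,|t|^3$ with $t = \xi\sigma_g(x)$, together with the third moment hypothesis $\int_G \log^3\|g\|\,\diff\mu(g) = \int_G \|\sigma_g\|_\infty^3\,\diff\mu(g) < \infty$ (Lemma \ref{lemma:sigma-estimates}-(1)) and Furstenberg's formula $\gamma = \int\int\sigma_g\,\diff\mu\diff\nu$, I get
\begin{equation*}
\lp\nu,\oP_\xi\mathbf 1\rp = 1 + i\gamma\xi - \Bigl(\int_{\P^1}\int_G\sigma_g^2(x)\,\diff\mu(g)\diff\nu(x)\Bigr)\frac{\xi^2}{2} + O(|\xi|^3).
\end{equation*}
Combining the two displays and recalling $A := \int\int\sigma_g^2\,\diff\mu\diff\nu - \beta$ from Lemma \ref{lemma:lambda_t-expansion} yields $\lambda_\xi = 1 + i\gamma\xi - A\frac{\xi^2}{2} + O(|\xi|^3)$, and the identification $A = a^2 + \gamma^2$ is already furnished by Lemma \ref{lemma:lambda-expansion-2}. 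The main obstacle — really the only delicate point — is justifying that the remainder in $\phi(\xi) = \beta\frac{\xi^2}{2} + O(|\xi|^3)$ is $O(|\xi|^3)$ and not just $o(\xi^2)$; this requires that the second-order remainders of the curves $\xi\mapsto\psi_\xi$ (in $W^{1,2}$) and $\xi\mapsto\oP_\xi$ (in operator norm on a bounded set) are controlled, i.e. that these maps are $C^2$ with locally bounded second derivative, which is exactly what Proposition \ref{prop:P_t-regularity} (for $\oP_\xi$, via its proof showing $\oP_\xi^{(2)}$ is bounded) and Corollary \ref{cor:P_t-decomp}-(4) (for $\oN_\xi$, hence $\psi_\xi$) provide under the third moment assumption. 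Everything else is a routine repetition of the arguments already carried out for Lemmas \ref{lemma:lambda_t-expansion} and \ref{lemma:lambda-expansion-2}.
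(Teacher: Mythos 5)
Your proposal is correct and follows essentially the same route as the paper's proof: the same decomposition $\lambda_\xi=\lp\nu,\oP_\xi\mathbf 1\rp+\phi(\xi)$, a third-order Taylor expansion of $e^{i\xi\sigma_g}$ controlled by the third moment for the main term, and the second-order differentiability of $\xi\mapsto\oP_\xi$ and $\xi\mapsto\oN_\xi$ on $W^{1,2}$ (available since $\lfloor p-1\rfloor=2$ for $p=3$) to upgrade the remainder in $\phi(\xi)-\beta\xi^2/2$ from $o(\xi^2)$ to $O(|\xi|^3)$. The only cosmetic difference is that you phrase the correction term via the normalized eigenvector $\psi_\xi$ while the paper works with $\oN_\xi\mathbf 1-\lp\nu,\oN_\xi\mathbf 1\rp$ and divides by $\lp\nu,\oN_\xi\mathbf 1\rp=1+O(|\xi|)$ afterwards; these are equivalent.
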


\begin{proof}
	We keep the notations of the proofs of Lemmas \ref{lemma:lambda_t-expansion} and \ref{lemma:lambda-expansion-2}. We have
	$$   \lambda_\xi - 1 - i\gamma \xi + A \frac{\xi^2}{2} =  \phi_2(\xi)   - \beta \frac{\xi^2}{2} + \phi(\xi). $$
	Using the Taylor's expansion and Lebesgue's dominated convergence theorem, we see that 
	$$\lim_{\xi\to 0 } {\phi_2(\xi)\over\xi^3} =\int_{\P^1}\int_G  -{i\over 6}\sigma_g^3(x)   \, \diff \mu(g) \diff \nu(x),$$
	which is finite due to our moment assumption. Therefore, it is enough to show that $\xi^{-3}\big( \phi(\xi)-\beta {\xi^2\over 2}\big)$ is bounded when $\xi \to 0$.
	
	From  Corollary \ref{cor:P_t-decomp}, we have
	$$\oP_\xi =\oP_0+\xi\oP_0' + \frac{\xi^2}{2}\oP_0''+o(\xi^2)\quad \text{and}\quad \oN_\xi =\oN_0+\xi\oN_0' + \frac{\xi^2}{2} \oN_0''+o(\xi^2)$$
	as operators on $W^{1,2}$. It follows that 
  \begin{align*}
	\phi(\xi)-\beta {\xi^2\over 2}&=   \frac{1}{ \lp \nu, \oN_\xi \mathbf 1 \rp} \big\lp \nu, ( \oP_\xi -\oP_0 ) \big( \oN_\xi \mathbf 1 - \lp \nu, \oN_\xi \mathbf 1   \rp \big)\big \rp   -\beta {\xi^2\over 2}\\
	&= \frac{\xi^2}{ \lp \nu, \oN_\xi \mathbf 1 \rp}\oP'_0 \big(\oN'_0 \mathbf 1 - \lp \nu,  \oN'_0 \mathbf 1\rp \big) +O(|\xi|^3) -\beta{\xi^2\over 2} =O(|\xi|^3),
	\end{align*}
	where in the last step we have use that $\beta= 2 \lim_{\xi \to 0} \xi^{-2} \phi(\xi)$ and that $\lp \nu, \oN_\xi \mathbf 1 \rp = 1 + O(|\xi|)$.  We deduce that $\xi^{-3}\big( \phi(\xi)-\beta {\xi^2\over 2}\big)$ is bounded when $\xi \to 0$. The proof of the lemma is finished.
\end{proof}

As a consequence, we get the  following useful estimates.
\begin{lemma} \label{lemma:lambda-estimates}
	Assume that  $\int_G \log^3 \|g\| \, \diff \mu(g) <\infty$. Then, there exists a constant $\xi_0>0$ such that, for all $n \in \N$ large enough, one has $$\big|\lambda_{{\xi\over \sqrt n}}^n\big|\leq e^{-{a^2\xi^2\over 3}} \quad\text{for}\quad  |\xi|\leq \xi_0\sqrt n,$$
	$$\Big|  e^{-i\xi\sqrt n \gamma}\lambda_{{\xi\over \sqrt n}}^n-e ^{-{a^2\xi^2\over 2}}  \Big|\leq {c\over \sqrt n}|\xi|^3e^{-{a^2\xi^2\over 2}} \quad\text{for}\quad |\xi|\leq \sqrt[6] n,$$  
	$$\Big| e^{-i\xi\sqrt n \gamma} \lambda_{{\xi\over \sqrt n}}^n-e ^{-{a^2\xi^2\over 2}}  \Big|\leq {c\over \sqrt n}e^{-{a^2\xi^2\over 4}} \quad\text{for}\quad \sqrt[6] n<|\xi|\leq \xi_0\sqrt n,$$  
	where  $c>0$ is a constant independent of $n$.
\end{lemma}

\begin{proof}
	By Lemma \ref{lemma:lambda_t-expansion-3}, we have the following expansion for $|\xi|$ small
	$$\lambda _\xi=1+ i\gamma \xi -(a^2+\gamma^2)\xi^2/2+O(|\xi|^3).$$  
	It follows that,
	$$\log |\lambda_\xi|=\log \sqrt{1-a^2\xi^2+O(|\xi|^3)}=-{a^2\xi^2\over 2} + O(|\xi|^3) .$$
	In particular, by replacing $\xi$ by $\xi/\sqrt n$ and taking $\xi_0 > 0$ small enough, we deduce that
	$$
	\big|\lambda_{{\xi\over \sqrt n}}\big|^n=e^{n\log |\lambda_{\xi/ \sqrt n}|}\leq e^{-{a^2\xi^2\over 3}}   \quad\text{for}\quad |\xi|\leq \xi_0\sqrt n.
	$$
	This proves the first inequality.
	
	When $|\xi|\leq \sqrt[6]{n}$, the sequence  $n|\xi/\sqrt n|^3$ is bounded. Using the above estimate on $\log |\lambda_\xi|$ and that $e^{O(|t|)} = 1 + O(|t|)$ for $|t|$ bounded, we have 
	$$|\lambda_{\xi\over \sqrt n}|^n=e^{n\log |\lambda_{\xi/\sqrt n}|}= e^{-{a^2\xi^2\over 2} +O(n|\xi/\sqrt n|^3)}=e^{-{a^2\xi^2\over 2}}+O\big(n|\xi/\sqrt n|^3e^{-{a^2\xi^2\over 2}}\big).$$
	The above expansion of $\lambda_\xi$ implies that 
		$$\Arg \lambda_\xi -\gamma\xi =\arctan {\gamma\xi +O(|\xi|^3)\over 1+O(\xi^2)} -\gamma\xi =\arctan\big(\gamma\xi+O(|\xi|^3)\big)-\gamma\xi=O(|\xi|^3),$$ which gives
	\begin{align*}
	&e^{-i \xi \sqrt n\gamma}\lambda_{\xi\over \sqrt n}^n=\big|\lambda_{\xi\over \sqrt n}\big|^n e^{in(\Arg \lambda_{\xi/\sqrt n} \, -\gamma\xi/\sqrt n)}=\big|\lambda_{\xi\over \sqrt n}\big|^n e^{in\,O(|\xi/\sqrt n|^3)}\\
	&= e^{-{a^2\xi^2\over 2}}+O\big(n|\xi/\sqrt n|^3e^{-{a^2\xi^2\over 2}}\big)
	=e^{-{a^2\xi^2\over 2}}+{1\over\sqrt{n}} O\big(|\xi|^3e^{-{a^2\xi^2\over 2}}\big).
	\end{align*}
	The second inequality follows.

	When $|\xi| > \sqrt[6]{n}$. The above bound on $\lambda_{{\xi\over \sqrt n}}$ yields 
	$$\big| e^{-i \xi\sqrt n \gamma} \lambda_{{\xi\over \sqrt n}}^n \big| \leq e^{-{a^2\xi^2\over 3}}= e^{-{a^2\xi^2\over 12}}e^{-{a^2\xi^2\over 4}}  \leq e^{-{a^2\sqrt[3]{n}\over 12}} e^{-{a^2\xi^2\over 4}} \leq  {c\over \sqrt{n}} e^{-{a^2\xi^2\over 4}}$$ 
	and since, in this case,
	$$ e^{-{a^2\xi^2\over 2}}  = e^{-{a^2\xi^2\over 4}} e^{-{a^2\xi^2\over 4}}  \leq e^{-{a^2\sqrt[3]{n}\over 4}}e^{-{a^2\xi^2\over 4}} \leq {c\over \sqrt n}  e^{-{a^2\xi^2\over 4}}$$ 
	for some constant $c>0$, the third inequality follows. This finishes the proof of the lemma.
\end{proof}

\vskip5pt

We now study the action of the operators $\oP_\xi$ on the space of $\log^p$-continuous functions.

\begin{proposition} \label{prop:P_t-logp}
Let $p > 1$ and let $u$ be a $\log^{p-1}$-continuous function on $\P^1$. Assume that $M_{p}(\mu):= \int_G \log^{p} \|g\| \, \diff \mu(g) < \infty $. Then,  $$\|\oP_\xi u\|_{\log^{p-1}} \leq c \, (1 + |\xi|) M_{p}(\mu) \|u\|_{\log^{p-1}}$$ for some some constant $c>0$ independent of $u,\xi$ and $\mu$.  In particular, for every $\xi \in \R$, the operator $\oP_\xi$ acts continuously on the space of $\log^{p-1}$-continuous functions.  Moreover, for every $\xi \in \R$ and every $n \geq 1$ we have $$\|\oP^n_\xi\|_{\log^{p-1}} \leq c \, ( 1 + |\xi|) \, n^{p} \, M_{p}(\mu).$$
\end{proposition}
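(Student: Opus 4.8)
The plan is to reduce everything to a pointwise-in-$g$ estimate on the integrand $e^{i\xi\sigma_g}\,g^*u$ and then integrate against $\mu$ (and against $\mu^{*n}$ for the iterates). For fixed $g\in\SL_2(\C)$ I would first note that $\|e^{i\xi\sigma_g}\,g^*u\|_\infty\le\|u\|_\infty$, and, using $|e^{ia}-e^{ib}|\le|a-b|$ together with Lemma \ref{lemma:sigma-estimates}-(5) applied with the exponent $p-1>0$, that $[e^{i\xi\sigma_g}]_{\log^{p-1}}\le|\xi|\,[\sigma_g]_{\log^{p-1}}\le|\xi|\,C_{p-1}\bigl(1+\log^{p}\|g\|\bigr)$. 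Combining this with Lemma \ref{lemma:norm-g^*u} (again with exponent $p-1$), which gives $[g^*u]_{\log^{p-1}}\le C\bigl(1+\log^{p-1}\|g\|\bigr)[u]_{\log^{p-1}}$, and with the product inequality of Lemma \ref{lemma:log^p-product}, I obtain $[e^{i\xi\sigma_g}\,g^*u]_{\log^{p-1}}\le C\bigl(1+\log^{p-1}\|g\|\bigr)[u]_{\log^{p-1}}+|\xi|\,C_{p-1}\bigl(1+\log^{p}\|g\|\bigr)\|u\|_\infty$. Since $\log^{p-1}\|g\|\le 1+\log^{p}\|g\|$, the right-hand side is $\lesssim(1+|\xi|)\bigl(1+\log^{p}\|g\|\bigr)\,\|u\|_{\log^{p-1}}$.

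Next I would integrate this over $G$ against $\mu$. For any family $h_g$ one has $\bigl[\int_G h_g\,\diff\mu(g)\bigr]_{\log^{p-1}}\le\int_G[h_g]_{\log^{p-1}}\,\diff\mu(g)$, the $\log^{p-1}$-seminorm being a supremum of absolute values of increments, which passes inside $\int\diff\mu$ by the triangle inequality. Applying this to $h_g=e^{i\xi\sigma_g}\,g^*u$ and using $\int_G(1+\log^p\|g\|)\,\diff\mu(g)=1+M_p(\mu)$ yields $[\oP_\xi u]_{\log^{p-1}}\lesssim(1+|\xi|)\bigl(1+M_p(\mu)\bigr)\|u\|_{\log^{p-1}}$, which together with $\|\oP_\xi u\|_\infty\le\|u\|_\infty$ gives the first estimate (with $1+M_p(\mu)$ in place of $M_p(\mu)$, which is immaterial for the constant $c$). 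Continuity of $\oP_\xi$ on $\Cc^{\log^{p-1}}$ is then immediate; note that finiteness of the seminorm already forces $\oP_\xi u$ to be continuous, since $(\log^\star d(x,y))^{-(p-1)}\to 0$ as $d(x,y)\to 0$.

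For the bound on iterates I would invoke formula \eqref{eq:P_t^n}, which identifies $\oP_\xi^n$ with the perturbed Markov operator attached to the convolution power $\mu^{*n}$. The estimate just obtained, applied with $\mu$ replaced by $\mu^{*n}$, gives $\|\oP_\xi^n\|_{\log^{p-1}}\lesssim(1+|\xi|)\bigl(1+M_p(\mu^{*n})\bigr)$, so it only remains to control $M_p(\mu^{*n})$. Here I would use the subadditivity $\log\|g_n\cdots g_1\|\le\sum_{i=1}^n\log\|g_i\|$ together with the power-mean inequality $(t_1+\cdots+t_n)^p\le n^{p-1}(t_1^p+\cdots+t_n^p)$ for $p\ge 1$; integrating against $\mu^{\otimes n}$ then yields $M_p(\mu^{*n})\le n^{p-1}\cdot n\cdot M_p(\mu)=n^{p}M_p(\mu)$, hence $\|\oP_\xi^n\|_{\log^{p-1}}\lesssim(1+|\xi|)\,n^{p}M_p(\mu)$.

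The argument is essentially bookkeeping around the three earlier estimates (Lemmas \ref{lemma:log^p-product}, \ref{lemma:norm-g^*u} and \ref{lemma:sigma-estimates}-(5)), and I do not expect any real difficulty. The two points needing a little care are: \emph{(i)} the matching of exponents — controlling a $\log^{p-1}$-continuous observable forces, through $[\sigma_g]_{\log^{p-1}}$, an estimate involving $\log^{p}\|g\|$, which is precisely why the $p$-th moment, rather than the $(p-1)$-th, is the natural hypothesis here; and \emph{(ii)} using the sharp power-mean inequality for the iterates, so that the growth in $n$ is $n^{p}$ and not the weaker $n^{p+1}$ coming from the cruder bound $(t_1+\cdots+t_n)^p\le n^{p}(\max_i t_i)^{p}\le n^{p}(t_1^p+\cdots+t_n^p)$.
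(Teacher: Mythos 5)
Your proposal is correct and follows essentially the same route as the paper's proof: the same pointwise-in-$g$ combination of Lemmas \ref{lemma:log^p-product}, \ref{lemma:norm-g^*u} and \ref{lemma:sigma-estimates}-(5), followed by integration against $\mu$, and for the iterates the identification of $\oP_\xi^n$ with the operator of $\mu^{*n}$ together with $M_p(\mu^{*n})\le n^{p}M_p(\mu)$ obtained from subadditivity of $\log\|\cdot\|$ and convexity of $t\mapsto t^p$. The small discrepancy you note (obtaining $1+M_p(\mu)$ rather than $M_p(\mu)$) is also present in the paper's own argument and is harmless for all uses of the estimate.
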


\begin{proof}
We may assume that $\|u\|_{\log^{p-1}} \leq 1$. By Lemma \ref{lemma:log^p-product} we have that $$[e^{i \xi \sigma_g}g^*u]_{\log^{p-1}} \leq [g^*u]_{\log^{p-1}} + [e^{i \xi \sigma_g}]_{\log^{p-1}} \|g^* u\|_\infty$$ and $[g^*u]_{\log^{p-1}} \leq C (1+ \log^{p-1}\|g\|)$ by Lemma \ref{lemma:norm-g^*u}. Now $[\sigma_g]_{\log^{p-1}} \leq C_{p-1} (1 + \log^{p} \|g\|)$ by Lemma \ref{lemma:sigma-estimates}-(5). Since $t \mapsto e^{i\xi t}$ is $|\xi|$-Lipschitz, it follows that $$[e^{i \xi \sigma_g}]_{\log^{p-1}}  \leq C_{p-1} |\xi| (1+ \log^{p} \|g\|).$$ We conclude that $[e^{i \xi \sigma_g}g^*u]_{\log^{p-1}} \lesssim (1+ |\xi|)(1 +  \log^{p} \|g\|)$. As $\|e^{i \xi \sigma_g}g^* u\|_{\infty} = \|u\|_{\infty} \leq 1$ one gets  that $$\|e^{i \xi \sigma_g}g^*u\|_{\log^{p-1}}   \leq c(1 + |\xi|)(1+ \log^{p} \|g\|)$$ for some constant $c > 0$ independent of $u,\xi$ and $g$. The first assertion follows by applying the triangle inequality to the integral (\ref{eq:def-P_t}) defining $\oP_\xi u$.

The last assertion follows from the fact that $\oP_\xi^n$ is the perturbed Markov operator associated with $\mu^{\ast n}$ (cf. (\ref{eq:P_t^n})) and that $M_p(\mu^{\ast n}) \leq n^p M_p(\mu)$ due to the sub-additivity of $\log \|g\|$ and the convexity of $t \mapsto t^p$ for $p >1$.
\end{proof}

\begin{remark}
The main feature of the last part of the above proposition is that the $\log^{p-1}$-norm of $\oP^n_\xi u$ grows polynomially in $n$, instead of the exponential growth obtained by iterating the estimate obtained in the first part.
\end{remark}

\begin{proposition} \label{prop:P_t-logp-regularity}
Let $p > 1$ and assume that $\int_G \log^{p} \|g\| \, \diff \mu(g) < \infty $ (resp. $\int_G \log^{p+1} \|g\| \, \diff \mu(g) < \infty $). Then, the family $\xi \mapsto \oP_\xi$ acting on $\Cc^{\log^{p-1}}$ is continuous (resp. locally Lipschitz continuous) in $\xi$.
\end{proposition}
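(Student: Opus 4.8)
The plan is to run the same argument as in the proof of Proposition \ref{prop:P_t-regularity}, with the Sobolev estimates replaced by the $\log^{p-1}$-estimates of Lemmas \ref{lemma:norm-g^*u} and \ref{lemma:sigma-estimates}-(5) and by the bound $[e^{i\xi\sigma_g}]_{\log^{p-1}}\leq C_{p-1}|\xi|(1+\log^p\|g\|)$ obtained inside the proof of Proposition \ref{prop:P_t-logp}. Fix $\xi_0\in\R$ and take $u$ with $\|u\|_{\log^{p-1}}\leq 1$. Writing $(\oP_\xi-\oP_{\xi_0})u=\int_G(e^{i\xi\sigma_g}-e^{i\xi_0\sigma_g})\,g^*u\,\diff\mu(g)$ and applying Lemma \ref{lemma:log^p-product}, the integrand satisfies
\[
\big\|(e^{i\xi\sigma_g}-e^{i\xi_0\sigma_g})g^*u\big\|_{\log^{p-1}}\leq \big\|e^{i\xi\sigma_g}-e^{i\xi_0\sigma_g}\big\|_\infty\,\|g^*u\|_{\log^{p-1}}+\big\|e^{i\xi\sigma_g}-e^{i\xi_0\sigma_g}\big\|_{\log^{p-1}}\,\|g^*u\|_\infty .
\]
Here $\|g^*u\|_\infty\leq 1$ and $\|g^*u\|_{\log^{p-1}}\leq 1+C(1+\log^{p-1}\|g\|)$ by Lemma \ref{lemma:norm-g^*u}, while $t\mapsto e^{i\xi t}$ being $|\xi|$-Lipschitz gives $\|e^{i\xi\sigma_g}-e^{i\xi_0\sigma_g}\|_\infty\leq\min(2,|\xi-\xi_0|\log\|g\|)$ and $\|e^{i\xi\sigma_g}-e^{i\xi_0\sigma_g}\|_{\log^{p-1}}\lesssim 1+(|\xi|+|\xi_0|)(1+\log^p\|g\|)$.

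For the continuity statement, I would restrict $\xi$ to a compact neighbourhood of $\xi_0$, so the displayed quantity is $\lesssim 1+\log^p\|g\|$ uniformly in such $\xi$ and in $u$ in the unit ball; this is $\mu$-integrable under the $p$-th moment hypothesis. By Lebesgue's dominated convergence theorem it then suffices to show that, for each fixed $g$, the integrand tends to $0$ in $\Cc^{\log^{p-1}}$ as $\xi\to\xi_0$, uniformly in $u$. Here the naive bound above does not vanish, so I would factor $e^{i\xi\sigma_g}-e^{i\xi_0\sigma_g}=(e^{i(\xi-\xi_0)\sigma_g}-1)\,e^{i\xi_0\sigma_g}$ and use Lemma \ref{lemma:log^p-product} once more to write the $\log^{p-1}$-norm of $(e^{i\xi\sigma_g}-e^{i\xi_0\sigma_g})g^*u$ as at most $\|e^{i(\xi-\xi_0)\sigma_g}-1\|_\infty\,\|e^{i\xi_0\sigma_g}g^*u\|_{\log^{p-1}}+\|e^{i(\xi-\xi_0)\sigma_g}-1\|_{\log^{p-1}}\,\|e^{i\xi_0\sigma_g}g^*u\|_\infty$. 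For fixed $g$ the first factor in each term tends to $0$ as $\xi\to\xi_0$ (using $\|e^{i(\xi-\xi_0)\sigma_g}-1\|_\infty\leq|\xi-\xi_0|\log\|g\|$ and $[e^{i(\xi-\xi_0)\sigma_g}-1]_{\log^{p-1}}\leq C_{p-1}|\xi-\xi_0|(1+\log^p\|g\|)$), while the second factor is bounded by a constant depending only on $g$ and $\xi_0$ (Lemmas \ref{lemma:log^p-product} and \ref{lemma:norm-g^*u}), uniformly in $u$. This gives $\|\oP_\xi-\oP_{\xi_0}\|_{\log^{p-1}}\to0$.

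For the local Lipschitz statement I would instead keep the factor $|\xi-\xi_0|$ throughout. Fix a compact $I\ni\xi_0$ and assume $\int_G\log^{p+1}\|g\|\,\diff\mu(g)<\infty$. Then for $\xi\in I$ the first term above is $\lesssim|\xi-\xi_0|(1+\log^p\|g\|)$, and for the second, using again $e^{i\xi\sigma_g}-e^{i\xi_0\sigma_g}=(e^{i(\xi-\xi_0)\sigma_g}-1)e^{i\xi_0\sigma_g}$ together with $\|e^{i(\xi-\xi_0)\sigma_g}-1\|_\infty\leq|\xi-\xi_0|\log\|g\|$, $\|e^{i(\xi-\xi_0)\sigma_g}-1\|_{\log^{p-1}}\lesssim|\xi-\xi_0|(1+\log^p\|g\|)$ and $\|e^{i\xi_0\sigma_g}\|_{\log^{p-1}}\lesssim 1+\log^p\|g\|$, one gets $\|e^{i\xi\sigma_g}-e^{i\xi_0\sigma_g}\|_{\log^{p-1}}\lesssim|\xi-\xi_0|(1+\log^{p+1}\|g\|)$. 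Hence the integrand is $\lesssim|\xi-\xi_0|(1+\log^{p+1}\|g\|)$ uniformly in $u$, and integrating in $g$ yields $\|(\oP_\xi-\oP_{\xi_0})u\|_{\log^{p-1}}\lesssim|\xi-\xi_0|(1+M_{p+1}(\mu))$, i.e. local Lipschitz continuity.

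The only genuinely delicate point is the one already highlighted: a direct estimate of $\|e^{i\xi\sigma_g}-e^{i\xi_0\sigma_g}\|_{\log^{p-1}}$ stays bounded away from $0$ as $\xi\to\xi_0$, so one must extract the factor $e^{i(\xi-\xi_0)\sigma_g}-1$ before invoking dominated convergence; and one has to track carefully which power of $\log\|g\|$ is produced by each application of Lemma \ref{lemma:sigma-estimates}-(5), which is exactly what makes the Lipschitz version cost one extra moment.
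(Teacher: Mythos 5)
Your proposal is correct and follows essentially the same route as the paper: reduce to the integrand via the triangle inequality, use Lemma \ref{lemma:log^p-product} together with Lemmas \ref{lemma:norm-g^*u} and \ref{lemma:sigma-estimates}-(5), apply dominated convergence for continuity, and track the extra power of $\log\|g\|$ for the Lipschitz bound. The only (harmless) variation is that you obtain the key estimate $\|e^{i\xi\sigma_g}-e^{i\xi_0\sigma_g}\|_{\log^{p-1}}\lesssim|\xi-\xi_0|(1+\log^{p+1}\|g\|)$ by factoring out $e^{i(\xi-\xi_0)\sigma_g}-1$ and applying the product lemma, whereas the paper uses the integral representation $e^{i\xi\sigma_g}-e^{i\xi_0\sigma_g}=(\xi-\xi_0)\int_0^1 i\sigma_g e^{i(\xi_0+t(\xi-\xi_0))\sigma_g}\,\diff t$; both give the same bound.
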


\begin{proof}

For the continuity, we need to prove that  for any $\xi_0 \in \R$ we have $\lim_{\xi \to \xi_0} \|\oP_\xi u - \oP_{\xi_0} u \|_{\log^{p-1}} = 0$, uniformly in $u$ with $\|u\|_{\log^{p-1}} \leq 1$. Assume, from now until the end of the proof, that $\|u\|_{\log^{p-1}} \leq 1$. By Lebesgue's dominated convergence theorem and the moment condition, the above convergence will follow if we show that
\begin{itemize}
\item[(i)] For fixed $g \in G$, we have $\lim_{\xi \to \xi_0} e^{i\xi \sigma_g} \, g^*u = e^{i\xi_0 \sigma_g} \, g^*u$ in $\Cc^{\log^{p-1}}$ and
 \item[(ii)] $\| e^{i\xi \sigma_g} \, g^*u \|_{\log^{p-1}} \lesssim 1 + \log^{p} \|g\|$.
\end{itemize}

The first assertion is a consequence of the fact that for fixed $g \in G$ we have   $e^{i \xi \sigma_g} \to e^{i \xi_0 \sigma_g}$  in $\Cc^1$-norm as $\xi \to \xi_0$ and that $\|\cdot\|_{\log^ p} \lesssim \|\cdot\|_{\Cc^ 1}$, see also Lemma \ref{lemma:log^p-product}. Estimate (ii) is given in the proof of Proposition \ref{prop:P_t-logp}. This gives the first part of the proposition.

\medskip

We now show the Lipschitz continuity. It is enough to work with $\xi$ in a fixed bounded interval $J$ of $\R$. We need to show that $\|(\oP_\xi - \oP_\eta) u\|_{\log^{p-1}} \lesssim |\xi- \eta|$ for $\xi, \eta \in J$  uniformly in $u$. We have $$(\oP_\xi - \oP_\eta) u  = \int_G \big(e^{i \xi \sigma_g } - e^{i \eta \sigma_g} \big)  g^*u  \,  \diff \mu(g).$$
	
	Using Lemma \ref{lemma:sigma-estimates}-(1), we have $$\big\|\big(e^{i\xi \sigma_g} - e^{i\eta \sigma_g}\big) g^*u \big\|_\infty \leq \|e^{i\xi \sigma_g} - e^{i\eta \sigma_g}\|_\infty \leq |\xi- \eta|\cdot \log \|g\|,$$ so $\|(\oP_\xi - \oP_\eta) u \|_\infty \leq M_1(\mu) |\xi- \eta|$.
	
	 It remains to bound $[(\oP_\xi - \oP_\eta) u ]_{\log^{p-1}}$. From Lemma \ref{lemma:log^p-product},
	we have  $$[(e^{i\xi \sigma_g} - e^{i\eta \sigma_g}) g^*u]_{\log^{p-1}} \leq [(e^{i\xi \sigma_g} - e^{i\eta \sigma_g})]_{\log^{p-1}} + \|e^{i\xi \sigma_g} - e^{i\eta \sigma_g}\|_\infty [g^*u]_{\log^{p-1}}.$$
	
	As before, $\|e^{i\xi \sigma_g} - e^{i\eta \sigma_g}\|_\infty \leq |\xi- \eta|\cdot \log \|g\|$ and $[g^*u]_{\log^{p-1}} \lesssim 1+\log^{p-1}\|g\|$  by Lemma \ref{lemma:norm-g^*u}, so the last term above is bounded by a constant times $|\xi- \eta| \cdot (1+\log^p \|g\|)$.
	
	We now estimate the quantity $[(e^{i\xi \sigma_g} - e^{i\eta \sigma_g})]_{\log^{p-1}}$.  Observe that 
$$ e^{i\xi \sigma_g}-e^{i\eta \sigma_g}=(\xi-\eta)\int_0^1  i\sigma_g e^{i(\eta+t(\xi-\eta))\sigma_g}dt . $$ 

 Arguing as in the proof of Proposition \ref{prop:P_t-logp} and recalling that $\eta,\xi$ belong to a bounded  interval we get that $[i\sigma_g e^{i(\eta+t(\xi-\eta))\sigma_g}  ]_{\log^{p-1}}$ is bounded by 
 $$ [\sigma_g ]_{\log^{p-1}}+ \|\sigma_g \|_\infty  [e^{i(\eta+t(\xi-\eta))\sigma_g}  ]_{\log^{p-1}} \lesssim 1+  \log^p \norm{g} + \log\norm{g}  (1+\log^p\norm{g})\lesssim 1+\log^{p+1}\norm{g}.$$ 
  By the triangle inequality applied to the above integral, we obtain 
$$[(e^{i\xi \sigma_g} - e^{i\eta \sigma_g})]_{\log^{p-1}} \lesssim   |\xi-\eta|\cdot(  1+\log^{p+1}\norm{g}).$$ Integrating with respect to $g$, we conclude that $\|(\oP_\xi - \oP_\eta) u\|_{\log^{p-1}} \leq c |\xi- \eta| \cdot M_{p+1}(\mu)$ for $\xi, \eta \in J$, where $c>0$ is independent of  $\xi, \eta$ and $u$ in the unit ball of $\Cc^ {\log^{p-1}}$. This concludes the proof.	
\end{proof}

\section{Spectrum of $\oP_\xi$ on the Sobolev space $W^{1,2}$} \label{sec:spec-Pt}

In this section, we study the spectrum of $\oP_\xi$ acting on $W^{1,2}$. The main result is that, when $\xi \neq 0$, the spectral radius of $\oP_\xi$ is strictly less than one. This is a crucial property in the proofs of all of our main theorems.
 
 \begin{theorem} \label{thm:P_t-contracting}
Let $\mu$ be a probability measure on  $G=\SL_2(\C)$.   Assume that $\mu$ is non-elementary and that $\int_G \log^2 \|g\| \, \diff \mu(g) <\infty$.  Let $\xi$ be a non-zero real number. Then the spectral radius of $\oP_\xi$ acting on $W^{1,2}$ is less than one. Furthermore, for every compact subset $K \subset \R \setminus \{0\}$, there exist $0 < \delta < 1$ and $C>0$  such that $\norm{\oP_\xi^n}_{W^{1,2}} <C( 1 - \delta)^n$ for every $\xi \in K$ and $n\in\N$.
 \end{theorem}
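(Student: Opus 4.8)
The plan is to prove that the spectral radius of $\oP_\xi$ on $W^{1,2}$ is strictly less than one, and that this holds uniformly when $\xi$ ranges over a compact set avoiding the origin. The natural strategy is a quasi-compactness argument combined with a ``no eigenvalue on the unit circle'' statement. Since $\oP_\xi$ is a purely imaginary perturbation, $\|\oP_\xi u\|_{L^2} \le \|\oP u'\|$-type bounds show $\oP_\xi$ has the same essential spectral radius as $\oP$, which is $<1$ by Theorem~\ref{thm:DKW-spectral-gap} (this uses that the perturbation $u\mapsto (e^{i\xi\sigma_g}-1)g^*u$ is, in the appropriate sense, a lower-order term; concretely $|e^{i\xi\sigma_g}|=1$ so the $L^2$ part is unchanged and the derivative picks up a $\del\sigma_g$ term controlled by Lemma~\ref{lemma:sigma-estimates}). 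Hence the spectrum of $\oP_\xi$ on $\{|z|\ge 1\}$ consists of finitely many eigenvalues of finite multiplicity, and it suffices to rule out eigenvalues of modulus $\ge 1$.

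So suppose $\oP_\xi u = \lambda u$ with $|\lambda| = 1$, $u \in W^{1,2}$, $u \ne 0$, and $\xi \ne 0$. First I would pass to the modulus: from $|\lambda u(x)| = |\oP_\xi u(x)| = |\int_G e^{i\xi\sigma_g(x)} u(gx)\,\diff\mu(g)| \le \int_G |u(gx)|\,\diff\mu(g) = \oP|u|(x)$, together with $\oP|u| \ge |u|$ pointwise (after passing to a good representative) forcing equality $\oP|u| = |u|$ a.e. (integrate against $\nu$ and use stationarity). Since $|u| \in W^{1,2}$ is a nonnegative $\oP$-invariant function and $\oP$ has a spectral gap with the constant $\mathbf 1$ spanning the top eigenspace, $|u|$ is constant, say $|u| \equiv 1$ after normalization. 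Then the equality case in the triangle inequality forces, for $\mu$-a.e.\ $g$ and $\nu$-a.e.\ $x$, the alignment $e^{i\xi\sigma_g(x)} u(gx) = \lambda u(x)$; writing $u = e^{i\theta}$ with $\theta: \P^1 \to \R/2\pi\Z$ measurable, this becomes a coboundary-type relation $\xi\sigma_g(x) + \theta(gx) - \theta(x) \equiv \Arg\lambda \pmod{2\pi}$ on the full support.

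The heart of the matter is then to show this is impossible for $\xi \ne 0$, i.e.\ that the norm cocycle $\sigma$ is \emph{non-arithmetic} — there is no nontrivial way to write $\xi\sigma$ as a constant plus a measurable coboundary modulo $2\pi\Z$. This is exactly Proposition~\ref{prop:non-arithmetic} referenced in the introduction, so I would invoke it directly (its proof presumably uses the non-elementarity of $\mu$: one finds loxodromic elements $g$ in the semigroup generated by $\supp\mu$, evaluates the cocycle identity along the attracting/repelling fixed points where $\theta(gx)=\theta(x)$ can be arranged or controlled, and deduces that $\xi \cdot (\text{translation lengths})$ must lie in $2\pi\Z$ for a rich enough family of $g$, contradicting $\xi\ne0$ unless these translation lengths generate a discrete subgroup of $\R$ — which non-elementarity rules out). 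This handles the pointwise statement: $\rho(\oP_\xi) < 1$ for each fixed $\xi \ne 0$.

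For the uniform bound over a compact $K \subset \R\setminus\{0\}$, I would argue by contradiction/compactness. By Proposition~\ref{prop:P_t-regularity} the map $\xi \mapsto \oP_\xi \in \mathcal L(W^{1,2})$ is continuous, and the essential spectral radius is uniformly bounded below $1$ on $K$ (again because the $L^2$ part is $\xi$-independent and the $\del$-part is uniformly controlled for $\xi \in K$ bounded). By upper semicontinuity of the spectrum under norm-continuous perturbations, $\sup_{\xi\in K}\rho(\oP_\xi) < 1$: if not, there would be $\xi_n \in K$ with $\rho(\oP_{\xi_n}) \to 1$ and, after extracting, $\xi_n \to \xi_\infty \in K$ with a limit eigenvalue of modulus $1$ for $\oP_{\xi_\infty}$, contradicting the previous paragraph. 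Finally, to convert $\rho(\oP_\xi) \le 1-2\delta$ uniformly into the operator-norm bound $\|\oP_\xi^n\| \le C(1-\delta)^n$ uniformly in $\xi \in K$, I would use the spectral decomposition of $\oP_\xi$ on $\{|z| \ge 1-\delta\}$ (finitely many eigenvalues, all now inside $\{|z|\le 1-2\delta\}$), write $\oP_\xi^n$ via the resolvent integral over a fixed circle of radius $1-\delta$, and bound $\|(\,z - \oP_\xi)^{-1}\|$ uniformly on that circle — uniformity again from continuity of $\xi\mapsto\oP_\xi$ and compactness of $K$ times the circle. The main obstacle, and the step I expect to be genuinely substantial, is the non-arithmeticity input (Proposition~\ref{prop:non-arithmetic}); everything else is soft spectral theory plus the already-established spectral gap and regularity.
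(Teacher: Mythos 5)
Your proposal has two genuine gaps, both at load-bearing points.

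First, the quasi-compactness step. You assert that $\oP_\xi$ has the same essential spectral radius as $\oP$ because the perturbation is ``lower order''; but the operator $u\mapsto\int_G(e^{i\xi\sigma_g}-1)g^*u\,\diff\mu(g)$ is neither compact nor small in norm on $W^{1,2}$, and equality of essential spectral radii does not follow from $|e^{i\xi\sigma_g}|=1$ plus control of $\del\sigma_g$. To get $\rho_{\rm ess}(\oP_\xi)<1$ one would need a Lasota--Yorke inequality $\|\oP_\xi^n u\|_{W^{1,2}}\le Cr^n\|u\|_{W^{1,2}}+C_n\|u\|_{\rm weak}$ with a compactly embedded weak norm; the available estimate (the one behind Lemma \ref{lemma:iterates-bounded}) only bounds the remainder $\Theta_\xi^{(n)}u=\del\oP_\xi^nu-\oP_\xi^n\del u$ by a constant times $\|u\|_{W^{1,2}}$ itself, via Moser--Trudinger applied to $\oP_0^{m}|u|$. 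The paper never establishes (and does not need) quasi-compactness of $\oP_\xi$ for $\xi\neq0$: instead it proves directly that $\oP_\xi^n u\to0$, first weakly via a maximality argument over weak limit points $\mathcal F_\infty$ (Lemma \ref{lemma:maximal-functions}) and then strongly (Lemma \ref{lemma:strong-convergence}), which yields $\rho(\oP_\xi)<1$ without ever isolating eigenvalues. Without your essential-spectrum claim, the reduction to ``rule out unit-modulus eigenvalues'' collapses.

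Second, even granting an eigenfunction $u$ with $\oP_\xi u=\lambda u$, $|\lambda|=1$, your argument only gives $|u|$ constant $\nu$-almost everywhere (from $|u|\le\oP|u|$ and stationarity one gets equality $\nu$-a.e., not Lebesgue-a.e., since $\nu$ need not have full support), and hence a \emph{measurable} solution of the cohomological equation holding $\mu\otimes\nu$-a.e. But Proposition \ref{prop:non-arithmetic} rules out \emph{continuous} solutions on $\supp\mu\times\supp\nu$; an a.e.\ identity for a merely measurable $\varphi$ cannot be promoted to an identity on the supports. Upgrading the weak solution to a continuous one is precisely the substantial part of the paper's proof: one shows the eigenfunction is a limit of a telescoping series $\sum(\widetilde\oP_\xi^nu-\widetilde\oP_\xi^{n-1}u)$ whose terms decay exponentially in $W^{1,2}$ and grow only polynomially in $\Cc^{\log^1}$ (Proposition \ref{prop:P_t-logp}), whence uniform exponential decay by Corollary \ref{cor:logp-pre-equidistribution} and continuity of the limit. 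This step, which you omit, is where the second moment hypothesis and the $\log^p$/Moser--Trudinger machinery enter. Your treatment of the uniformity over compact $K$ (upper semicontinuity of $\xi\mapsto\rho(\oP_\xi)$ from norm continuity) is fine and matches the paper.
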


When $\mu$ has a finite exponential moment,  an analogous result holds for $\oP_\xi$ acting on a space of H\"older continuous functions, see  \cite{lepage:theoremes-limites}, \cite[Chapter 15]{benoist-quint:book} and also \cite{li:fourier-2}.

The proof of Theorem \ref{thm:P_t-contracting} is divided into several steps.  The idea is to first derive basic estimates to show that the sequence $\|\oP_\xi^n\|_{W^{1,2}}$ is bounded for $n\geq 0$,  then prove weak convergence to zero and,  in the end,  combine this convergence with the first basic estimates to improve it to strong convergence,  which will ultimately give the desired result.  For this reason,  similar reasoning and related estimates are used repeatedly. 

We now begin the proof of Theorem \ref{thm:P_t-contracting}. The next lemma is the analogue of Proposition \ref{prop:L^2_1,0-contraction} for $\oP_\xi$.

\begin{lemma} \label{lemma:Pt-L^2_1,0-contraction}
Let $\mu$ be a non-elementary probability measure on $G =\SL_2(\C)$. Then the operator $\oP_\xi:L^2_{(1,0)} \to L^2_{(1,0)}$ defined by $\oP_\xi \phi := \int_G e^{i \xi \sigma_g} g^* \phi \, \diff \mu(g)$ is bounded. Furthermore, there is an $N \geq 1$ independent of $\xi$ such that, after replacing $\mu$ by the convolution power $\mu^{\ast N}$, one has $\|\oP_\xi\|_{L^2_{(1,0)}} < 1 \slash 2$.
\end{lemma}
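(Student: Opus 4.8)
\emph{Proof strategy.} The plan is to reduce the statement, uniformly in $\xi$, to the already-known case $\xi=0$ (Proposition \ref{prop:L^2_1,0-contraction}), using only that the twist $e^{i\xi\sigma_g}$ has modulus one.

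First I would recall that every $g\in\SL_2(\C)$ acts by an isometry of $L^2_{(1,0)}$. Indeed $g$ is a holomorphic automorphism of $\P^1$, so the pointwise Fubini-Study norm transforms by $\abs{g^*\phi}_{\mathrm{FS}}(x)=\norm{dg_x}\,\abs{\phi}_{\mathrm{FS}}(gx)$ with conformal factor $\norm{dg_x}=e^{-2\sigma_g(x)}$, read off from $d(gx,gy)=e^{-\sigma_g(x)-\sigma_g(y)}d(x,y)$, while $g^*\omegaFS=e^{-4\sigma_g}\omegaFS$; a change of variables together with the cocycle identity $\sigma_g(g^{-1}y)=-\sigma_{g^{-1}}(y)$ then gives $\norm{g^*\phi}_{L^2}=\norm{\phi}_{L^2}$, which is the unitarity already used in the proof of Proposition \ref{prop:P_t-bounded}. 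Since $\abs{e^{i\xi\sigma_g}}\equiv1$ the same holds for $e^{i\xi\sigma_g}g^*\phi$, so Minkowski's integral inequality yields $\norm{\oP_\xi\phi}_{L^2}\le\int_G\norm{e^{i\xi\sigma_g}g^*\phi}_{L^2}\,\diff\mu(g)=\norm{\phi}_{L^2}$. Hence $\oP_\xi$ is bounded on $L^2_{(1,0)}$ with norm at most $1$, with no moment assumption, and the same applies to every convolution power.

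For the strict contraction I would pass to a scalar majorant. As the fibre of the $(1,0)$-cotangent bundle at each $x$ is one-dimensional over $\C$ and $\abs{e^{i\xi\sigma_g(x)}}=1$, for every $\phi$ and every $\xi$ the pointwise triangle inequality gives
$$\abs{(\oP_\xi\phi)_x}_{\mathrm{FS}}\ \le\ \int_G\abs{(g^*\phi)_x}_{\mathrm{FS}}\,\diff\mu(g)\ =\ \bigl(\Pi\,\abs{\phi}_{\mathrm{FS}}\bigr)(x),$$
where $\Pi$ is the $\xi$-independent positive operator $\Pi h(x):=\int_G e^{-2\sigma_g(x)}h(gx)\,\diff\mu(g)$ on nonnegative functions. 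Therefore $\norm{\oP_\xi\phi}_{L^2}\le\norm{\Pi\abs{\phi}_{\mathrm{FS}}}_{L^2}$ for all $\xi$; moreover, by the cocycle relation the $n$-th iterate of $\Pi$ is the operator of the same form attached to $\mu^{*n}$, so this majoration persists at every level. It is thus enough to find an integer $N$, independent of $\xi$, with $\norm{\Pi^{N}}_{L^2\to L^2}<1/2$.

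This last point is precisely what the proof of Proposition \ref{prop:L^2_1,0-contraction} provides: that argument already reduces to the positive majorant operator $\Pi$ and shows, using only non-elementarity of $\mu$, that $\norm{\Pi^{N_0}}_{L^2\to L^2}=:\rho_0<1$ for some $N_0\ge1$ --- a nearly $\Pi$-invariant nonnegative function would produce a nearly invariant absolutely continuous measure on $\P^1$, which is incompatible with non-elementarity. Then $\norm{\Pi^{kN_0}}_{L^2\to L^2}\le\rho_0^{\,k}$; choosing $k$ with $\rho_0^{\,k}<1/2$ and setting $N:=kN_0$ --- a choice depending neither on $\xi$ nor on the twist --- the previous paragraph gives, after replacing $\mu$ by $\mu^{*N}$, that $\norm{\oP_\xi}_{L^2_{(1,0)}}\le\rho_0^{\,k}<1/2$ for all $\xi$. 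The one delicate point is the uniformity in $\xi$ of both the gap and the exponent $N$; it is handled by the observation that the unimodular factor $e^{i\xi\sigma_g}$ disappears under the pointwise triangle inequality, leaving the single positive operator $\Pi$ and its quantitative contraction inherited verbatim from the case $\xi=0$.
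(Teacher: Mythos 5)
Your argument is correct and is essentially the paper's own proof made explicit: the paper simply cites the proof of Proposition~\ref{prop:L^2_1,0-contraction} (from \cite[Proposition 2.9]{DKW:PAMQ}) and notes that ``the extra factor $e^{i\xi\sigma_g}$ doesn't affect the proof,'' which is exactly your observation that the unimodular twist disappears under the pointwise triangle inequality, leaving the $\xi$-independent positive majorant $\Pi$ whose contraction is established in the $\xi=0$ case. The only point to be aware of is that your step ``$\|\Pi^{N_0}\|_{L^2\to L^2}<1$'' relies on the internal structure of that cited proof rather than on the bare statement of Proposition~\ref{prop:L^2_1,0-contraction} (which bounds the form operator, not its scalar majorant) --- but this is the same appeal the paper itself makes.
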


\begin{proof}
The case $\xi = 0$ follows from Proposition \ref{prop:L^2_1,0-contraction}.  The case of general $\xi \in \R$ can be proven as in \cite[Proposition 2.9]{DKW:PAMQ}. The extra factor $e^{i \xi \sigma_g}$ doesn't affect the proof.
\end{proof}

 \begin{lemma} \label{lemma:iterates-bounded}
 Assume that $\mu$ is non-elementary and that $\int_G \log^2 \|g\| \, \diff \mu(g) <\infty$. Fix $\xi \in \R$. Then the sequence $\|\oP_\xi^n\|_{W^{1,2}}$ is bounded for $n\geq 0$.
\end{lemma}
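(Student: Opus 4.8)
The goal is to show that for fixed $\xi \in \R$ the operator norms $\|\oP_\xi^n\|_{W^{1,2}}$ stay bounded as $n \to \infty$. The natural strategy is to write $\oP_\xi^n u$ as $\int_G e^{i\xi\sigma_g}\, g^*u \, \diff\mu^{*n}(g)$ (formula \eqref{eq:P_t^n}) and estimate the two parts of the Sobolev norm separately, exploiting the contraction on $(1,0)$-forms provided by Lemma~\ref{lemma:Pt-L^2_1,0-contraction}. First I would reduce to the case where the contraction $\|\oP_\xi\|_{L^2_{(1,0)}} < 1/2$ already holds by replacing $\mu$ with $\mu^{*N}$ if necessary; since only the boundedness of the whole sequence is at stake, proving it along the subsequence $n = kN$ and then filling the finitely many gaps by applying the bounded operators $\oP_\xi, \dots, \oP_\xi^{N-1}$ is harmless.

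For the $L^1$ (or $L^2$) part of $\|\oP_\xi^n u\|_{W^{1,2}}$: by Lemma~\ref{lemma:g^*u-L^2}, $\|g^*u\|_{L^2} \lesssim (1 + \log^{1/2}\|g\|)\,\|u\|_{W^{1,2}}$, and $\int_G \log^{1/2}\|g\|\,\diff\mu^{*n}(g) \leq \int_G \log^{1/2}\|g\|\,\diff\mu^{*n}$, which by subadditivity of $\log\|g\|$ and concavity of $t\mapsto t^{1/2}$ is at most $\sqrt n$ times a constant; so this part grows at most like $\sqrt n$. That by itself is not bounded, so the key point is the gradient term, where the contraction must beat this polynomial growth. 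Differentiating as in \eqref{eq:del-e^itsigma_g-g^*u} gives
\begin{equation*}
\del\big(e^{i\xi\sigma_g}\,g^*u\big) = i\xi\, e^{i\xi\sigma_g}\,\del\sigma_g\, g^*u + e^{i\xi\sigma_g}\, g^*\del u,
\end{equation*}
so after integrating against $\mu^{*n}$,
\begin{equation*}
\del(\oP_\xi^n u) = \oP_\xi^{(1),n\text{-type}} \text{ term} + \oP_\xi\big(\text{acting on forms}\big)(\del u)\ \text{applied }n\text{ times},
\end{equation*}
i.e. $\|\del(\oP_\xi^n u)\|_{L^2} \leq \|\oP_\xi\|_{L^2_{(1,0)}}^n \|\del u\|_{L^2} + (\text{contribution of the }\del\sigma_g\text{ terms})$. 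The first summand is $\leq 2^{-n}\|\del u\|_{L^2}$. The real work is the second summand: one wants to bound $\int_G \|g^*u\,\del\sigma_g\|_{L^2}\,\diff\mu^{*n}(g)$ — by Lemma~\ref{lemma:sigma-estimates}(4) this is $\lesssim \int_G (1+\log\|g\|)\,\diff\mu^{*n}(g)\cdot\|u\|_{W^{1,2}} \lesssim n$, again only polynomial — but crucially this term still gets hit by the remaining $(n{-}k)$-fold contraction of $\oP_\xi$ on forms. Writing $\del(\oP_\xi^n u)$ telescopically as $\sum_{k=0}^{n-1} (\oP_\xi|_{\text{forms}})^{\,n-1-k}\big(i\xi\, e^{i\xi\sigma_\bullet}\del\sigma_\bullet\, (\oP_\xi^k u)^\bullet\big)$-type sum and estimating each slot by $\|\oP_\xi\|_{L^2_{(1,0)}}^{\,n-1-k}\cdot |\xi|\cdot C(1+\log\text{-moment at step }1)\cdot\|\oP_\xi^k u\|_{W^{1,2}}$, one gets a recursion
\begin{equation*}
\|\oP_\xi^n u\|_{W^{1,2}} \leq C\sqrt n\,\|u\|_{W^{1,2}} + C|\xi|\sum_{k=0}^{n-1} 2^{-(n-1-k)}\,\|\oP_\xi^k u\|_{W^{1,2}}.
\end{equation*}
The geometric weights $2^{-(n-1-k)}$ have summable total mass, so a standard discrete Grönwall / generating-function argument upgrades "each $\|\oP_\xi^k u\|$ is $O(\sqrt k)$" (which already follows from boundedness of each single $\oP_\xi$, Proposition~\ref{prop:P_t-bounded}, iterated — wait, that only gives exponential a priori) to a uniform bound.

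The point I expect to be the main obstacle is precisely making this bootstrap rigorous: a priori Proposition~\ref{prop:P_t-bounded} only says each $\oP_\xi$ is bounded, so iterating naively gives $\|\oP_\xi^n\| \leq \|\oP_\xi\|^n$, an exponential bound, which is too weak to feed into the recursion above and conclude. The fix is to run the telescoping estimate directly on $\del(\oP_\xi^n u)$ without pre-iterating: since the form $\del(\oP_\xi^k u)$ equals $\oP_\xi$ (on forms) applied to $\del(\oP_\xi^{k-1}u)$ plus an error controlled by $\|\oP_\xi^{k-1}u\|_{W^{1,2}}$, and $\oP_\xi$ on forms contracts by $1/2$, one gets $\|\del(\oP_\xi^n u)\|_{L^2} \leq \tfrac12\|\del(\oP_\xi^{n-1}u)\|_{L^2} + C(1+|\xi|)\|\oP_\xi^{n-1}u\|_{W^{1,2}}$ where $\|\oP_\xi^{n-1}u\|_{W^{1,2}} \leq C\sqrt{n-1}\,\|u\|+\|\del(\oP_\xi^{n-1}u)\|_{L^2}$ from the $L^1$-part estimate above. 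Setting $b_n := \|\del(\oP_\xi^n u)\|_{L^2}$, this is $b_n \leq (\tfrac12 + C(1+|\xi|))\,$— no, $b_n \leq \tfrac12 b_{n-1} + C(1+|\xi|)(\sqrt n + b_{n-1})$, and $\tfrac12 + C(1+|\xi|)$ may exceed $1$. So instead one should iterate the contraction on forms for \emph{blocks} of length $m = m(\xi)$ large enough that $2^{-m}$ beats the accumulated error constant; i.e., estimate $b_{n}$ in terms of $b_{n-m}$ with a genuine contraction factor $2^{-m}(\text{something}) < 1$ plus a polynomial-in-$n$ additive term, whose fixed point is a polynomial, hence — together with the trivial bound $\|\oP_\xi^n\|\lesssim 1$ on each of the $m$ residue classes — gives that $\|\oP_\xi^n\|_{W^{1,2}}$ grows at most polynomially; and then a second, finer pass (or directly choosing $m$ so that the additive term is itself absorbed) pins it down to a uniform constant. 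I would organize the final write-up around this block-contraction recursion, as it cleanly separates the "slow polynomial growth of log-moments" from the "exponential contraction on forms" and needs only Lemmas~\ref{lemma:g^*u-L^2}, \ref{lemma:sigma-estimates}(4) and \ref{lemma:Pt-L^2_1,0-contraction}.
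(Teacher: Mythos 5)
Your decomposition is structurally the one the paper uses: write $\del\oP_\xi^n u - \oP_\xi^n\del u$ as a telescoping sum $i\xi\sum_{k=1}^n\oP_\xi^{k-1}\phi_{n-k}$, with $\phi_\ell:=\int_G e^{i\xi\sigma_g}g^*(\oP_\xi^\ell u)\,\del\sigma_g\,\diff\mu(g)$, and let the contraction $\|\oP_\xi\|_{L^2_{(1,0)}}<1/2$ supply the geometric weights $2^{-(k-1)}$. But there is a genuine gap in how you estimate each slot. Bounding $\|g^*(\oP_\xi^k u)\,\del\sigma_g\|_{L^2}\lesssim(1+\log\|g\|)\,\|\oP_\xi^k u\|_{W^{1,2}}$ via Lemma~\ref{lemma:sigma-estimates}-(4) feeds the unknown quantity back into the recursion with total coefficient $\approx C(1+|\xi|)\sum_j 2^{-j}$, which need not be small; your block-contraction fix does not repair this, because iterating over a block of length $m$ only shrinks the coefficient of $b_{n-m}$, while the summed coefficients multiplying the intermediate $b_{n-j}$'s stay of order $C(1+|\xi|)$, so the recursion still permits exponential growth whenever that quantity exceeds $1$. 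The paper breaks the circularity differently: it bounds $\|\phi_\ell\|_{L^2}$ by a constant \emph{independent of $\ell$ and of any norm of $\oP_\xi^\ell u$}, using the pointwise domination $|\oP_\xi^\ell u|\le\oP_0^\ell|u|$, the uniform bound $\|\oP_0^\ell|u|\|_{W^{1,2}}\le M$ coming from the spectral gap of the unperturbed operator (Theorem~\ref{thm:DKW-spectral-gap}), and then Moser--Trudinger together with Young's inequality $ab\le e^a+b\log b$ applied against the density $\rho_{g^{-1}}$, whose entropy $\int_{\P^1}\rho_{g^{-1}}\log\rho_{g^{-1}}\,\omegaFS\lesssim 1+\log^2\|g\|$ is integrable precisely under the second moment hypothesis. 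This is where the moment condition actually enters, and it is the step your plan is missing; no Gr\"onwall argument is then needed, since $\sum_k 2^{-(k-1)}\|\phi_{n-k}\|_{L^2}$ is trivially bounded.

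A second, smaller gap: your bound $\|\oP_\xi^n u\|_{L^2}\lesssim\sqrt n$ for the zero-order part of the norm is too weak; even with a bounded gradient term it would only yield $\|\oP_\xi^n\|_{W^{1,2}}=O(\sqrt n)$, and no "finer pass" on the gradient can remove it. The same domination $|\oP_\xi^n u|\le\oP_0^n|u|$ fixes this: the paper works with the equivalent norm $\|u\|_\nu=|\lp\nu,u\rp|+\|\del u\|_{L^2}$ and uses the stationarity of $\nu$ to get $\lp\nu,|\oP_\xi^n u|\rp\le\lp\nu,|u|\rp$, uniformly in $n$.
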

\begin{proof}
We'll work with the norm  $\|u\|_\nu = |\lp \nu, u \rp| +\frac12 \|\del u\|_{L^2} + \frac12 \|\del \overline u\|_{L^2}$, which is equivalent to $\|\cdot\|_{W^{1,2}}$ by \cite[Corollary 2.13]{DKW:PAMQ}.

Let $u \in W^{1,2}$ be such that $\|u\|_{W^{1,2}}\leq 1$. Using the invariance of $\nu$, we have 
\begin{equation} \label{e:abs-Pt}
 \langle\nu, |\oP^n_\xi u|\rangle \leq \langle\nu,\oP^n_0|u|\rangle =  \langle\nu, |u|\rangle,
\end{equation}
which is bounded by a constant. Therefore, we only need to show that $\|\partial \oP_\xi^n u\|_{L^2}$ and $\|\partial \overline{\oP_\xi^n u}\|_{L^2}$ are bounded for $n\geq 0$.  By Lemma \ref{lemma:Pt-L^2_1,0-contraction} we can replace $\oP_\xi$ by an iterate and assume that $\|\oP_\xi\|_{L^2_{(1,0)}}<1/2$. Then,   $\|\oP_\xi^n\partial u\|_{L^2}$ and  $\|\oP_\xi^n\partial \overline u\|_{L^2}$ tend to zero exponentially fast. Therefore,  in order to show that $\|\partial \oP_\xi^n u\|_{L^2}$ (resp. $\|\partial \overline{\oP_\xi^n u}\|_{L^2}$) is bounded in $n$, it is enough to show the same property for  $\|\Theta_\xi^{(n)}u \|_{L^2}$ (resp. $\|\Xi_\xi^{(n)}u \|_{L^2}$), where $ \Theta_\xi^{(n)}u : = \partial \oP_\xi^n u -\oP_\xi^n\partial u$ (resp. $ \Xi_\xi^{(n)}u : = \partial \overline{\oP_\xi^n u} -\oP_\xi^n\partial \overline u$). We'll only handle $\Theta_\xi^{(n)}u$, since the estimates for  $\Xi_\xi^{(n)}u$ are analogous.

We have
$$\oP_\xi^n u =\int_{G^n} e^{i\xi(\sigma_{g_1}+ g_1^* \sigma_{g_2}+\cdots+ g_1^*\ldots g_{n-1}^*\sigma_{g_n})} g_1^*\ldots g_n^*(u) \, \diff \mu^{\otimes n},$$
so, using that $g^* \partial u = \partial (g^*u)$ for all $g \in G$, we get $$\Theta_\xi^{(n)}u = \partial \oP_\xi^n u -\oP_\xi^n\partial u = i\xi \sum_{k=1}^n \mathcal R^{(k)}_\xi u, $$ where $$\mathcal R^{(k)}_\xi u  := \int_{G^n} e^{i\xi(\sigma_{g_1}+ g_1^* \sigma_{g_2}+\cdots+ g_1^*\ldots g_{n-1}^*\sigma_{g_n})} g_1^*\ldots g_n^*(u) \, g_1^*\ldots g_{k-1}^*(\partial \sigma_{g_k})\, \diff \mu^{\otimes n}.$$

The above expression can be rewritten as $$\mathcal R^{(k)}_\xi u = \oP_\xi^{k-1} \phi_{n-k},\quad \text{where} \quad \phi_{\ell}:= \int_G e^{i \xi \sigma_g} g^*(\oP_\xi^{\ell} u) \partial \sigma_g \, \diff \mu(g) \in L^2_{(1,0)}.$$

From the fact that $\|\oP_\xi\|_{L^2_{(1,0)}}<1/2$ we get
\begin{equation} \label{eq:R_t-contraction}
\|\mathcal R^{(k)}_\xi u\|_{L^2} \leq \frac{1}{2^{k-1}} \|\phi_{n-k}\|_{L^2}.
\end{equation}

By  Cauchy-Schwarz inequality, one has $$i \phi_\ell \wedge \overline {\phi_\ell} \leq i  \int_G g^*|\oP_\xi^{\ell} u|^2 \,  \partial \sigma_g \wedge \overline{\partial \sigma_g}\, \diff \mu(g)$$

Using that $g_* \sigma_{g} = - \sigma_{g^{-1}}$ and the fact that $g$ acts unitarily on $L^2_{(1,0)}$, we see that $\|\phi_\ell\|^2_{L^2}$ is bounded by the mass of the positive measure $$\int_G |\oP_\xi^{\ell} u|^2 \,  i \partial \sigma_{g^{-1}} \wedge \overline{\partial \sigma_{g^{-1}}}\, \diff \mu(g) = | \oP_\xi^\ell u|^2 \check \rho \, \omegaFS,$$
where $\check \rho: = \int_G \rho_{g^{-1}} \, \diff \mu(g)$. See Lemma \ref{lemma:sigma-estimates}-(3) for the notation. Notice that, from that lemma, we have $\check \rho \log \check \rho \in L^1(\omegaFS)$ and $\|\check \rho \log \check \rho\|_{L^1}\lesssim 1 + \int_G \log^2 \|g\| \, \diff \mu(g)$. Recall that $\|g^{-1}\| = \|g\|$.

From $|\oP_\xi^\ell u| \leq \oP_0^\ell |u|$ we have  $|\oP_\xi^\ell u|^2 \leq \big( \oP_0^\ell |u| \big)^2$, so
\begin{equation} \label{eq:phi-ell-phi-ell-bar}
\|\phi_\ell\|_{L^2}^2 \leq \int_{\P^1}  | \oP_\xi^\ell u|^2 \,  \check \rho \, \omegaFS \leq \int_{\P^1}  \big(\oP_0^\ell |u|\big)^2 \check \rho \, \omegaFS.
\end{equation}

Since $\|u\|_{W^{1,2}}\leq 1$ we have $\|\, |u| \, \|_{W^{1,2}}\leq M$ for some constant $M >0$ (see \cite[Proposition 4.1]{dinh-sibony:decay-correlations}). Hence $\|\, P^{n-k}_0 |u|\, \|_{W^{1,2}}\leq M$.  From Moser-Trudinger's estimate we have $\int_{\P^1} e^{\alpha (P_0^{n-k}|u|)^2} \, \omegaFS \leq A$ for some constants $\alpha, A >0$ independent of $u$, $n$ and $k$. Using Young's inequality $ab\leq e^a+b\log b-b \leq  e^a+b\log b$ for $a,b >0$ and \eqref{eq:phi-ell-phi-ell-bar} we get $$\|\phi_{n-k}\|_{L^2}^2 \leq  \alpha^{-1} \Big( \int_{\P^1} e^{\alpha (\oP_0^{n-k}|u|)^2} \, \omegaFS + \int_{\P^1} \check \rho \log \check \rho \, \omegaFS \, \Big),$$ so  $$\|\phi_{n-k}\|_{L^2} \lesssim  \Big(1 + \int_G \log^2 \|g\| \, \diff \mu(g)\Big)^{1/2},$$ which is a constant independent of $n,k$ and $u$.

We conclude, using (\ref{eq:R_t-contraction}), that
\begin{equation} \label{eq:norm-Q^n}
\|\Theta_\xi^{(n)} u\|_{L^2} \leq |\xi| \sum_{k=1}^n \|\mathcal R^{(k)}_\xi u\|_{L^2} \leq |\xi| \sum_{k=1}^n C \frac{1}{2^{k-1}} \leq 2 C |\xi|,
\end{equation}
showing that $\|\Theta_\xi^{(n)} u\|_{L^2} $ is bounded in $n$. This finishes the proof.
\end{proof} 

Observe that the previous proposition already implies that $\rho(\oP_\xi) \leq 1$ for every $\xi \in \R$. The main obstruction to the non-existence of eigenvalues of unit modulus of $\oP_\xi$ with $\xi \neq 0$ is a strong invariance of the cocycle $\sigma$. This invariance  is encoded by the so-called cohomological equation.  

\begin{definition} \rm 
 We say that $\sigma$ admits a \textit{(strong) solution} to the cohomological equation at a parameter $\xi \in \R$ if there exist a continuous function $\varphi$ on $\P^1$, not identically zero on $\supp \, \nu$, and a number $s \in [0,2 \pi)$ such that  
\begin{equation} \label{eq:cohomological}
 e^{i\xi \sigma_g(x)} \varphi (gx) = e^{is} \varphi(x) \quad \text{ for all }  (g,x) \in \supp\, \mu \times \supp \, \nu.
\end{equation}
 
 We say that $\sigma$ admits a \textit{weak solution} to the cohomological equation  at a parameter $\xi \in \R$ if the there exist a measurable function $\varphi$ on $\P^1$, not identically zero nor $\pm \infty$, $\nu$-almost everywhere, and a number $s \in [0,2 \pi)$ such that
\begin{equation} \label{eq:weak-cohomological}
 e^{i\xi \sigma_g(x)} \varphi (gx) = e^{is} \varphi(x) \quad \text{ for $\mu \otimes \nu$-almost all }  (g,x) \in G \times \P^1.
\end{equation}
\end{definition}

The following result is well-known and is related to some ``non-arithmeticity'' of the support of non-elementary measures.

\begin{proposition} \label{prop:non-arithmetic}
Let $\mu$ be a non-elementary probability measure on $\SL_2(\C)$. Then, for every $\xi \neq 0$, the cohomological equation (\ref{eq:cohomological}) admits no non-zero continuous solution.
\end{proposition}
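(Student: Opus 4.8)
The strategy is to argue by contradiction: suppose that for some $\xi \neq 0$ there exists a non-zero continuous function $\varphi$ on $\P^1$ (not identically zero on $\supp\,\nu$) and a number $s \in [0,2\pi)$ satisfying the cohomological equation \eqref{eq:cohomological} for all $(g,x) \in \supp\,\mu \times \supp\,\nu$. I would first observe that taking absolute values in \eqref{eq:cohomological} gives $|\varphi(gx)| = |\varphi(x)|$ for all $(g,x) \in \supp\,\mu \times \supp\,\nu$, so the continuous function $|\varphi|$ is invariant under the semigroup $\Gamma_\mu$ generated by $\supp\,\mu$. Since $\mu$ is non-elementary, the action of $\Gamma_\mu$ on $\supp\,\nu$ is minimal (the support of the unique stationary measure is the unique minimal closed invariant subset), hence $|\varphi|$ is constant on $\supp\,\nu$; after rescaling we may assume $|\varphi| \equiv 1$ there. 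Thus $\varphi$ takes values in the unit circle on $\supp\,\nu$, and \eqref{eq:cohomological} becomes a genuine relation between phases.

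The heart of the argument is then to exploit the relation at loxodromic (diagonalizable, non-elliptic) elements and their fixed points. Since $\mu$ is non-elementary, $\Gamma_\mu$ contains a loxodromic element $g_0$ with attracting fixed point $x_+ \in \supp\,\nu$; iterating \eqref{eq:cohomological} along $g_0^n$ and passing to the limit, the cocycle values $\sigma_{g_0^n}(x)$ grow like $n$ times $2\log\lambda_0$ (where $\lambda_0 > 1$ is the top eigenvalue modulus of $g_0$), which forces $e^{i\xi \cdot 2n\log\lambda_0}\varphi(x_+) = e^{ins}\varphi(x_+)$ in the limit, giving the quantization $2\xi\log\lambda_0 \equiv s \pmod{2\pi}$. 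The same kind of relation applied to other loxodromic elements of $\Gamma_\mu$ (and to products, using the cocycle identity and the Furstenberg-type density of fixed points) yields a whole family of arithmetic constraints on $\xi$. The key point — this is the ``non-arithmeticity'' alluded to in the statement — is that for a non-elementary subsemigroup of $\SL_2(\C)$ these constraints are incompatible unless $\xi = 0$: the set $\{\log\lambda_g : g \in \Gamma_\mu \text{ loxodromic}\}$ generates a non-discrete subgroup of $\R$, so $2\xi\log\lambda_g \in s + 2\pi\Z$ cannot hold for all of them with a fixed $\xi \neq 0$. I expect to invoke a known structural result (e.g. from Benoist--Quint's book or the theory of Zariski-dense subgroups) guaranteeing that the relevant length spectrum, or the joint distribution of $(\log\lambda_g, \text{attracting fixed point})$, is rich enough to rule out such a resonance; alternatively one can run a cleaner functional argument directly on $\P^1$ using two elements with distinct fixed-point configurations.

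The main obstacle will be making the ``incompatibility of arithmetic constraints'' step rigorous and self-contained, since the naive approach of only looking at a single loxodromic element gives just one equation on $\xi$ and does not immediately yield a contradiction. The cleanest route is probably to use the continuity of $\varphi$ together with minimality to propagate the phase relation: fix two loxodromic $g_1, g_2 \in \Gamma_\mu$ whose four fixed points are in general position, derive the quantization condition at each of the (at least three) distinct relevant fixed points, and then use a conjugate $h g_1 h^{-1}$ with $h \in \Gamma_\mu$ — whose translation length equals that of $g_1$ but whose fixed points are moved — to force $\varphi$ to be locally constant along stable/unstable directions, ultimately contradicting that $\varphi$ is non-constant on the (perfect, non-finite) set $\supp\,\nu$. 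I would cite the standard fact that a non-elementary measure has $\supp\,\nu$ infinite (indeed a perfect set) and that $\Gamma_\mu$ is not virtually abelian, which is exactly what obstructs \eqref{eq:cohomological}. The remaining details — convergence of $\sigma_{g_0^n}(x)$, density of fixed points, the general-position choice — are routine once this skeleton is in place.
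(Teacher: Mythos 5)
The paper does not actually prove this proposition: its ``proof'' is a citation to Conze--Guivarc'h (Proposition 3.2) and to Chapter 17 of Benoist--Quint. Your outline is precisely the skeleton of those cited proofs, so in that sense you are on the same route as the paper. The first step is fine: $|\varphi|$ is invariant and continuous, $\supp\nu$ is the unique minimal closed invariant set for a non-elementary $\mu$, so $|\varphi|\equiv\const>0$ there and you may normalize. The quantization at attracting fixed points is also fine once you iterate \eqref{eq:cohomological} via the cocycle relation to elements $g=g_n\cdots g_1$ of the semigroup (with $e^{is}$ replaced by $e^{ins}$); note the small slip that at the attracting fixed point $x_+$ one has $\sigma_{g_0^n}(x_+)=n\log|\lambda_0|$, not $2n\log|\lambda_0|$ --- the factor $2$ belongs to the contraction rate of the distance, not to the norm cocycle.

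The genuine gap is the one you flag yourself: the ``incompatibility of the arithmetic constraints'' is not a routine detail, it \emph{is} the theorem being cited. A single loxodromic element gives one congruence $\xi\log|\lambda_{g_0}|\equiv \ell(g_0)s\pmod{2\pi}$ and no contradiction; your proposed fix via a conjugate $hg_1h^{-1}$ does not help either, since conjugation changes neither $\log|\lambda|$ nor the word-length bookkeeping in a way that breaks the congruences. The mechanism that actually closes the argument (and is what Conze--Guivarc'h and Benoist--Quint prove) is: for two proximal $g_1,g_2$ in general position, $g_1^ng_2^n$ is proximal for large $n$ and $\log|\lambda_{g_1^ng_2^n}|-\log|\lambda_{g_1^n}|-\log|\lambda_{g_2^n}|$ converges to an explicit cross-ratio term $C(g_1,g_2)$; since word lengths add, the congruences force $\xi\,C(g_1,g_2)\in 2\pi\Z$ for all such pairs, and for a non-elementary semigroup the set of values $C(g_1,g_2)$ is not contained in any discrete subgroup of $\R$ (the fixed points form a perfect set and $C$ varies continuously and non-trivially in them). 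Unless you carry out this asymptotic expansion and the density statement, your write-up would have to fall back on citing exactly the result the paper cites --- which is acceptable, but then the proposal adds nothing beyond the paper's own two-line proof.
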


\begin{proof}
See \cite[Proposition 3.2]{conze-guivarch} or \cite[Chapter 17]{benoist-quint:book}.
\end{proof}

In what follows,  we will prove several auxiliary results which aim to rule out the existence  of eigenvalues of unit modulus of $\oP_\xi$.  The idea is to use the non-existence of solutions to the cohomological equation given by Proposition \ref{prop:non-arithmetic}.  A main difficulty is that,  at first,  we work with Sobolev functions. This gives identities that hold only almost everywhere and provide only weak solutions.  However, using the estimates of Lemma \ref{lemma:iterates-bounded},  we can ultimately produce strong solutions,  which is done in the end of the proof of Theorem \ref{thm:P_t-contracting}.  When doing that,  we often need to apply similar argument more than once,  but using stronger forms of convergence at the each step.

 \begin{lemma} \label{lemma:|P_t^nu|=1}
 Let $\xi \in \R$. Let $\varphi$ be a measurable function such that $|\varphi|= 1 = |\oP_\xi^ n \varphi|$, $\nu$-almost everywhere for every $n \geq 1$. Then $\varphi$ is a weak solution to the cohomological equation  at $\xi$.
 \end{lemma}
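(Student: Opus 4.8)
The plan is to show that the assumption $|\varphi| = 1 = |\oP_\xi^n \varphi|$ $\nu$-a.e.\ forces the pointwise inequality underlying \eqref{e:abs-Pt} to be an equality, which in turn forces the integrand to be constant in phase. First I would write out $\oP_\xi \varphi(x) = \int_G e^{i\xi\sigma_g(x)} \varphi(gx)\,\diff\mu(g)$ and apply the triangle inequality for integrals to get $|\oP_\xi\varphi(x)| \leq \int_G |\varphi(gx)|\,\diff\mu(g) = \oP_0|\varphi|(x) = 1$ for $\nu$-a.e.\ $x$ (using $|\varphi| = 1$ $\nu$-a.e.\ and the stationarity of $\nu$, which guarantees that $g_*$ sends $\nu$-null sets to $\mu\otimes\nu$-null sets). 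Since we are assuming $|\oP_\xi\varphi| = 1$ $\nu$-a.e., this triangle inequality is an equality for $\nu$-a.e.\ $x$.

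Next I would invoke the equality case of the triangle inequality for the integral of a $\C$-valued function: if $\big|\int h\,\diff\mu\big| = \int |h|\,\diff\mu < \infty$ with $\int|h|\,\diff\mu > 0$, then $h/|h|$ is $\mu$-a.e.\ equal to a constant unimodular number (namely $\int h\,\diff\mu \big/ \big|\int h\,\diff\mu\big|$). Applied to $h(g) = e^{i\xi\sigma_g(x)}\varphi(gx)$ — which has modulus $1$ for $\mu$-a.e.\ $g$ since $|\varphi| = 1$ $\nu$-a.e.\ and $g_*\nu \ll$ the relevant measure — this yields, for $\nu$-a.e.\ $x$, a unimodular number $e^{is(x)}$ (which is just $\oP_\xi\varphi(x)$) with
$$ e^{i\xi\sigma_g(x)}\varphi(gx) = e^{is(x)}\varphi(x) \quad \text{for $\mu$-a.e.\ } g, $$
i.e.\ for $\mu\otimes\nu$-a.e.\ $(g,x)$ after dividing by $\varphi(x)$ (which is nonzero a.e.). It remains to check that $s(x)$ can be taken constant. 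For this I would use that $\oP_\xi^n\varphi$ also has modulus $1$: iterating, or more directly observing that $\oP_\xi\varphi = e^{is(\cdot)}\varphi$ combined with $|\oP_\xi^2\varphi| = 1$ gives $|\oP_\xi(e^{is(\cdot)}\varphi)| = 1$, and then running the same equality-of-triangle-inequality argument shows $e^{i\xi\sigma_g(x)}e^{is(gx)}\varphi(gx)$ has $g$-independent phase; comparing with the relation above forces $e^{is(gx)}$ to be $g$-independent $\mu$-a.e., and non-elementarity (the support of $\mu$ acting with a dense enough orbit structure on $\supp\nu$) propagates this to show $s$ is $\nu$-a.e.\ constant. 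Setting $s$ to that constant value (reduced mod $2\pi$ to lie in $[0,2\pi)$) and noting $\varphi$ is not identically zero nor $\pm\infty$ $\nu$-a.e.\ since $|\varphi| = 1$, we conclude $\varphi$ is a weak solution to the cohomological equation at $\xi$ in the sense of \eqref{eq:weak-cohomological}.

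The main obstacle I anticipate is the measure-theoretic bookkeeping needed to justify evaluating things "$\nu$-a.e." and then pushing forward under $g$: one needs that for $\mu$-a.e.\ $g$ the function $x \mapsto \varphi(gx)$ is well-defined $\nu$-a.e.\ and has modulus $1$, which follows from stationarity $\int g_*\nu\,\diff\mu(g) = \nu$ (so $g_*\nu \ll \nu$ for $\mu$-a.e.\ $g$ is not quite true, but the averaged statement suffices: a $\nu$-null set $E$ has $\int \nu(g^{-1}E)\,\diff\mu(g) = 0$, hence $\nu(g^{-1}E) = 0$ for $\mu$-a.e.\ $g$). The second delicate point is the promotion of the a.e.-constant phase $s(x)$ to a genuine constant; this is where non-elementarity enters, and care is needed because we only control $\oP_\xi^n\varphi$ rather than having continuity. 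A clean way around it is to note that the relation $e^{i\xi\sigma_g(x)}\varphi(gx) = e^{is(x)}\varphi(x)$ together with the cocycle identity $\sigma_{g'g}(x) = \sigma_{g'}(gx) + \sigma_g(x)$ gives $e^{is(x)}$ as a multiplicative cocycle over the $\mu$-walk, and stationarity forces such a cocycle to be cohomologically trivial, i.e.\ $e^{is(x)} = e^{is} w(gx)\overline{w(x)}$; absorbing $w$ into $\varphi$ makes $s$ constant. I would present this absorption step carefully as it is the one genuinely non-formal point.
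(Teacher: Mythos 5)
Your first two steps coincide with the paper's: the equality case of the triangle inequality gives that $e^{i\xi\sigma_g(x)}\varphi(gx)$ is $\mu$-a.e.\ independent of $g$ for $\nu$-a.e.\ $x$, hence $\oP_\xi\varphi(x)=e^{is(x)}\varphi(x)$ for a measurable phase $s$, and the hypothesis $|\oP_\xi^n\varphi|=1$ for \emph{all} $n$, fed through the same argument, shows that $e^{is(gx)}$ is independent of $g$ almost surely with respect to $\mu^{*n}$ for every $n$. The measure-theoretic bookkeeping you flag (null sets under $g^{-1}$ via the averaged stationarity identity) is handled correctly. The gap is in the last step, the promotion of $s$ to a genuine constant, which you yourself identify as the non-formal point but do not actually close. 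Your route (a), ``non-elementarity with a dense enough orbit structure propagates this,'' cannot work as stated: $s$ is only measurable, so density of orbits in $\supp\nu$ gives nothing without continuity. Your route (b) asserts that stationarity forces the multiplicative cocycle to be cohomologically trivial; this is unjustified and in fact runs against the grain of the whole section --- Proposition \ref{prop:non-arithmetic} says precisely that the cocycle $e^{i\xi\sigma}$ is \emph{not} trivializable by a continuous transfer function when $\xi\neq 0$, and no general principle makes measurable cocycles over the walk coboundaries. Moreover, even granting the trivialization $e^{is(x)}=e^{is}w(gx)\overline{w(x)}$, absorbing $w$ would only show that $\varphi\overline{w}$ solves \eqref{eq:weak-cohomological}, not $\varphi$ itself, which is what the lemma claims and what Lemmas \ref{lemma:maximal-functions} and \ref{eq:coboundary-unique} subsequently use.

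What the paper actually does at this point is ergodic-theoretic: it pulls $s$ back by the Furstenberg boundary map $Z:G^{\N^*}\to\P^1$ (which satisfies $Z_*\mu^{\N^*}=\nu$), observes that the $g$-independence of $e^{is(gx)}$ under every $\mu^{*n}$ translates into $e^{i\,s\circ Z}$ being almost surely constant on the fibers of $T^{-n}$ for every $n$, where $T$ is the Bernoulli shift, and then invokes exactness of $T$ (triviality of the tail $\sigma$-algebra) to conclude that $e^{i\,s\circ Z}$, hence $e^{is}$, is constant almost everywhere. Some ingredient of this kind --- converting fiberwise invariance at all depths into tail-measurability and then using mixing/exactness --- is needed, and your proposal does not supply a substitute for it.
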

 
 \begin{proof}
We begin by observing that $1 = |\oP_\xi \varphi | \leq \oP_0 |\varphi| = 1$, $\nu$-almost everywhere. This is only possible if $e^{i\xi \sigma_g(x)}\varphi(gx)$ is independent of $g$ almost surely for $\nu$-almost every $x$.

The equality $|\oP_\xi \varphi |  = 1 = |\varphi|$ implies that $\oP_\xi \varphi (x) = e^{is(x)} \varphi(x)$ for some real measurable function $s(x)$ with values in $[0,2\pi)$. We claim that $s$ is constant  $\nu$-almost everywhere. By hypothesis, we have  $1 = |\oP_\xi^2 \varphi| = |\oP_\xi(e^ {i s}\varphi)| \leq \oP_0|\varphi| = 1$, so  the last inequality must be an equality, that is  
$$\big|\oP_\xi^2 \varphi(x)\big| = \Big|\int_G e^ {i\xi \sigma_g(x)} \varphi(gx) e^{i s(gx)} \, \diff \mu(g)\Big| = 1.$$ As $e^{i\xi \sigma_g(x)}\varphi(gx)$ is independent of $g$ by the above,  we get that  $\big|\int_G  e^{i s(gx)} \, \diff \mu(g)\big| = 1$, which is only possible if $e^{is(gx)}$ is independent of $g$ almost surely  for $\nu$-almost every $x$. Repeating the above argument for every $n$ gives that $e^{is(gx)}$ is independent of $g$ almost surely with respect to $\mu^{\ast n}$  for $\nu$-almost every $x$.

Let $\Omega = G^{\N^ \ast}$ and consider the the Furstenberg map $Z:\Omega \to \P^1$ defined by the property that $\lim_{n \to \infty} g_1(\omega)_* \cdots g_n(\omega)_* \nu = \delta_{Z(\omega)}$, see \cite[III.3]{bougerol-lacroix}. We have $Z_* \mu^{\N^\ast}  = \nu$. Let $\widetilde s := s \circ Z$ and $T: \Omega \to \Omega$ be the Bernoulli shift. The above property of $s$ implies that  $e^{i\widetilde s}$ is constant almost surely on the fibers $T^{-n}(\omega)$ for every $n \geq 1$ and $\mu^{\N^\ast}$-almost every $\omega$. This invariance and the fact that $T$ is an exact endomorphism implies that  $e^{i \widetilde s}$ is constant $\mu^{\N^\ast}$-almost everywhere, see e.g. \cite{walters:ergodic-theory}. As $Z_* \mu^{\N^\ast}  = \nu$ one concludes that $e^{is}$ is constant $\nu$-almost everywhere. Since $s$ has values in $[0,2 \pi)$, we obtain that $s$ is constant $\nu$-almost everywhere.

We have seen above that $e^{i\xi \sigma_g(x)} \varphi(gx)$ is independent of $g$ almost surely  for $\nu$-almost every $x$. Together with $\oP_\xi \varphi (x) = e^{is(x)} \varphi(x)$ and the fact that $s$ is constant $\nu$-almost everywhere one concludes that   $$e^{i\xi \sigma_g(x)}\varphi(gx) =  e^{is} \varphi(x) ,\quad\mu \otimes \nu\text{-almost everywhere}.$$ In other words, $\varphi$ is a weak solution to the cohomological equation  at  $\xi$. This proves the lemma.
 \end{proof}
 
 \begin{lemma} \label{eq:coboundary-unique}
Let $\varphi_1$ and $\varphi_2$ be two  measurable functions such that $|\varphi_j| = 1$, $\nu$-almost everywhere, $j=1,2$. Assume that $\varphi_1$ and $\varphi_2$ satisfy (\ref{eq:weak-cohomological}) for some $s_1$, $s_2$, respectively, instead of $s$. Then there is a complex number $b$ such that $|b|=1$ and $\varphi_1 = b \,  \varphi_2$, $\nu$-almost everywhere. 
\end{lemma}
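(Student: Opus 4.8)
The plan is to reduce the statement to the rigidity argument already carried out in the proof of Lemma~\ref{lemma:|P_t^nu|=1}. Since $|\varphi_2|=1$ $\nu$-a.e., the ratio $\psi:=\varphi_1/\varphi_2$ is well defined $\nu$-a.e.\ and satisfies $|\psi|=1$ $\nu$-a.e. Let $s_1,s_2\in[0,2\pi)$ be the constants attached to $\varphi_1$ and $\varphi_2$ in \eqref{eq:weak-cohomological} (for the same parameter $\xi$). Dividing the two cohomological relations---which is legitimate $\mu\otimes\nu$-a.e.\ because, by stationarity of $\nu$, the law of $gx$ under $\mu\otimes\nu$ is again $\nu$, so $\varphi_2(gx)\neq 0$ and $\varphi_2(x)\neq 0$ for $\mu\otimes\nu$-a.e.\ $(g,x)$---one gets
\[
\psi(gx)=c\,\psi(x)\qquad\text{for }\mu\otimes\nu\text{-a.e.\ }(g,x),\quad\text{with }c:=e^{i(s_1-s_2)}.
\]
It therefore suffices to show that a measurable $\psi$ with $|\psi|=1$ $\nu$-a.e.\ satisfying such a relation is $\nu$-a.e.\ equal to a constant $b$ (necessarily of modulus $1$); then $\varphi_1=b\,\varphi_2$ $\nu$-a.e., which is the claim.

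First I would transport the problem to the Bernoulli system $(\Omega,\mu^{\N^*},T)$ with $\Omega=G^{\N^*}$ and $T$ the shift, via the Furstenberg map $Z:\Omega\to\P^1$ satisfying $Z_*\mu^{\N^*}=\nu$, exactly as in the proof of Lemma~\ref{lemma:|P_t^nu|=1}. The key structural facts are the cocycle identity $Z(\omega)=g_1(\omega)\cdot Z(T\omega)$ and the independence of $g_1(\omega)$ and $Z(T\omega)$ under $\mu^{\N^*}$, the former having law $\mu$ and the latter law $\nu$ (the latter since $Z\circ T$ depends only on $g_2,g_3,\dots$ and $T$ preserves $\mu^{\N^*}$). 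Hence $(g_1(\omega),Z(T\omega))$ has law $\mu\otimes\nu$, and evaluating $\psi(g\cdot y)=c\,\psi(y)$ at this pair yields, for $\mu^{\N^*}$-a.e.\ $\omega$,
\[
\widetilde\psi(\omega)=c\,\widetilde\psi(T\omega),\qquad \widetilde\psi:=\psi\circ Z,
\]
and therefore $\widetilde\psi=c^{\,n}\,(\widetilde\psi\circ T^n)$ for every $n\ge 1$, the a.e.\ identities being preserved under composition with the measure-preserving map $T$.

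I would then conclude by a tail-triviality argument. Since $Z\circ T^n$ depends only on $g_{n+1},g_{n+2},\dots$, the function $\widetilde\psi\circ T^n$---and hence $\widetilde\psi=c^n(\widetilde\psi\circ T^n)$---is measurable with respect to $\sigma(g_{n+1},g_{n+2},\dots)$; as this holds for all $n$, $\widetilde\psi$ is measurable with respect to the tail $\sigma$-algebra of the i.i.d.\ sequence $(g_i)$, which is trivial because $T$ is exact (the same input as in the proof of Lemma~\ref{lemma:|P_t^nu|=1}). Thus $\widetilde\psi=b$ $\mu^{\N^*}$-a.e.\ for some constant $b$, with $|b|=1$ since $|\psi|=1$ $\nu$-a.e.\ and $Z_*\mu^{\N^*}=\nu$; substituting $\widetilde\psi\equiv b$ into $\widetilde\psi=c\,(\widetilde\psi\circ T)$ forces $c=1$, and $\psi\circ Z\equiv b$ together with $Z_*\mu^{\N^*}=\nu$ gives $\psi=b$ $\nu$-a.e., i.e.\ $\varphi_1=b\,\varphi_2$ $\nu$-a.e.

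I do not anticipate any real difficulty: the statement is a uniqueness complement to Lemma~\ref{lemma:|P_t^nu|=1}, and every tool it needs (the Furstenberg map, its cocycle identity, independence, exactness of the shift) already appears there. The only points requiring care are purely measure-theoretic: making sure the "$\mu\otimes\nu$-a.e." relation survives the division of the two equations and the composition with $T$, and---the one genuine subtlety---running the argument through $Z$ and the tail $\sigma$-algebra rather than through the pointwise convergence $g_1\cdots g_n x\to Z(\omega)$, which is unavailable here because $\psi$ is merely measurable and cannot be evaluated along individual orbits.
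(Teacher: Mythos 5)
Your proposal is correct and follows essentially the same route as the paper: form the ratio $\psi=\varphi_1/\varphi_2$, transfer the resulting multiplicative invariance to the Bernoulli shift via the Furstenberg map $Z$, and conclude constancy from exactness/tail-triviality, exactly as in the paper's reduction to the argument of Lemma~\ref{lemma:|P_t^nu|=1}. The only cosmetic difference is that you work directly with the relation $\widetilde\psi=c^{\,n}(\widetilde\psi\circ T^n)$ and tail measurability, whereas the paper first iterates to $\mu^{*n}\otimes\nu$ and phrases the conclusion as constancy on the fibers $T^{-n}(\omega)$; these are the same mechanism.
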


\begin{proof}
By assumption, we have $$ e^{i\xi \sigma_g(x)} \varphi_1 (gx) = e^{is_1} \varphi_1(x)\quad\text{and}\quad e^{i\xi \sigma_g(x)} \varphi_2 (gx) = e^{is_2} \varphi_2(x)$$ for $\mu \otimes \nu$-almost every $(g,x)$ and, after iterating and using the cocycle property of $\sigma_g$,  the same is true for  $\mu^{\ast n} \otimes \nu$-almost every $(g,x)$ for all $n\geq 1$, where $s_1,s_2$ are replaced by $ns_1,ns_2$ modulo $2\pi \Z$. Let $\psi := \varphi_1 \slash \varphi_2$. Then 
$$\psi(gx) = e^{in(s_1 - s_2)}\psi (x)  \quad\text{for }\mu^{\ast n} \otimes \nu\text{-almost every } (g,x) \text{ for all }n\geq 1.$$ 
In particular,  $\psi(gx)$ is independent of $g$ almost surely with respect to $\mu^{\ast n}$  for $\nu$-almost every $x$. Arguing as in the proof of Lemma \ref{lemma:|P_t^nu|=1} above  one concludes that $\psi$ equals a constant $b$, $\nu$-almost everywhere and $s_1 = s_2$. Therefore,  $\varphi_1 = b \varphi_2$, $\nu$-almost everywhere finishing the proof. 
\end{proof}

From now until the end of this section, we fix  $\xi \neq 0$. We also assume that $\mu$ is non-elementary and that $\int_G \log^2 \|g\| \, \diff \mu(g) <\infty$.
\medskip

 Let $\B$ be the unit ball in $W^{1,2}$ and set $$\mathcal F_\infty : = \big\{\varphi_\infty : \varphi_\infty \text{ is a weak limit of } (\oP_\xi^{n_\ell}\varphi_\ell  )_{\ell \geq 1} \text{  with } \varphi_\ell \in \B \text{ and } n_\ell \nearrow \infty \text{ as } \ell \to \infty \big\}.$$

See the Appendix for the notion of weak convergence in $W^{1,2}$. Since $\|\oP_\xi^n\|_{W^{1,2}}$  is uniformly bounded by Lemma \ref{lemma:iterates-bounded}, one has that $\mathcal F_\infty$ is a bounded and weakly compact subset of $W^{1,2}$.

\begin{lemma} \label{lemma:F-inftty-bounded}
Every function in $\mathcal F_\infty$ is bounded.
\end{lemma}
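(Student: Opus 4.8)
The plan is to combine the elementary pointwise domination $|\oP_\xi^{n}\varphi|\le\oP_0^{n}|\varphi|$ with the spectral gap of the unperturbed Markov operator $\oP_0$ on $W^{1,2}$ (Theorem~\ref{thm:DKW-spectral-gap}). Fix $\varphi_\infty\in\mathcal F_\infty$, say $\varphi_\infty$ is the weak limit in $W^{1,2}$ of a sequence $\oP_\xi^{n_\ell}\varphi_\ell$ with $\varphi_\ell\in\B$ and $n_\ell\nearrow\infty$. By formula \eqref{eq:P_t^n} one has $|\oP_\xi^{n_\ell}\varphi_\ell(x)|\le\int_G|\varphi_\ell(gx)|\,\diff\mu^{\ast n_\ell}(g)=\oP_0^{n_\ell}|\varphi_\ell|(x)$ for a.e.\ $x$, so it suffices to bound $\oP_0^{n_\ell}|\varphi_\ell|$ uniformly. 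Since $\|\varphi_\ell\|_{W^{1,2}}\le 1$, we have $\big\|\,|\varphi_\ell|\,\big\|_{W^{1,2}}\le M$ for a constant $M$ independent of $\ell$, by \cite[Proposition 4.1]{dinh-sibony:decay-correlations}.

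Next I would invoke Theorem~\ref{thm:DKW-spectral-gap}: writing $\oN\varphi=\lp\nu,\varphi\rp\mathbf 1$ for the associated rank-one projection, one has $\|\oP_0^{n}-\oN\|_{W^{1,2}}\le c\rho^{n}$ for some $c>0$ and $0<\rho<1$. Hence
\begin{equation*}
\big\|\oP_0^{n_\ell}|\varphi_\ell|-\lp\nu,|\varphi_\ell|\rp\mathbf 1\big\|_{W^{1,2}}=\big\|(\oP_0^{n_\ell}-\oN)|\varphi_\ell|\big\|_{W^{1,2}}\le c\rho^{n_\ell}M,
\end{equation*}
which tends to $0$ as $\ell\to\infty$. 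Moreover $0\le\lp\nu,|\varphi_\ell|\rp\le\|\oN\|_{W^{1,2}}M=:C$ for all $\ell$, since $\nu$ is a probability measure and $|\varphi_\ell|\ge0$. Setting $r_\ell:=\oP_0^{n_\ell}|\varphi_\ell|-\lp\nu,|\varphi_\ell|\rp\mathbf 1$, I obtain $\|r_\ell\|_{W^{1,2}}\to0$ and, combining with the first paragraph, $|\oP_\xi^{n_\ell}\varphi_\ell(x)|\le C+r_\ell(x)$ for a.e.\ $x\in\P^1$.

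Finally, I would pass to the limit. The sequence $(\oP_\xi^{n_\ell}\varphi_\ell)_\ell$ is bounded in $W^{1,2}$ by Lemma~\ref{lemma:iterates-bounded}, and the embedding $W^{1,2}\hookrightarrow L^2(\P^1)$ is compact, so the weak convergence $\oP_\xi^{n_\ell}\varphi_\ell\rightharpoonup\varphi_\infty$ upgrades to strong convergence in $L^2$; likewise $r_\ell\to0$ in $L^2$. Along a subsequence both convergences hold a.e., and letting $\ell\to\infty$ in the inequality above yields $|\varphi_\infty|\le C$ a.e. Thus every $\varphi_\infty\in\mathcal F_\infty$ lies in $L^\infty$, with the uniform bound $\|\varphi_\infty\|_\infty\le C$. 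No step is a genuine obstacle here; the only points requiring a little care are that the pointwise inequalities must be read for suitable representatives of the $W^{1,2}$-classes (as justified by the Appendix), and the upgrade from weak $W^{1,2}$-convergence to a.e.\ convergence, which is the standard Rellich argument on the compact surface $\P^1$.
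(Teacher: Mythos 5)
Your proof is correct and rests on the same two ingredients as the paper's: the pointwise domination $|\oP_\xi^{n}u|\le\oP_0^{n}|u|$ and the spectral gap of $\oP_0$ on $W^{1,2}$ from Theorem~\ref{thm:DKW-spectral-gap}. The only difference is organizational: the paper writes $\varphi_\infty=\oP_\xi^{k}\psi_k$ with $\psi_k$ a weak limit of the tails $\oP_\xi^{n_\ell-k}\varphi_\ell$ and lets $k\to\infty$, whereas you pass to the limit directly in $\ell$ using the compactness of the embedding into $L^2$; both routes are valid.
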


\begin{proof}
Let $\varphi_\infty = \lim \oP_\xi^{n_\ell} \varphi_{\ell} $ be an element of  $\mathcal F_\infty$, where we consider the weak limit in $W^{1,2}$.  Notice that, for every $k \geq 1$ there is $\psi_k \in  \mathcal F_\infty $ such that $\varphi_\infty = \oP_\xi^k \psi_k$. We can choose $\psi_k$ as a limit of $\oP_\xi^{n_\ell - k} \varphi_{\ell}$.  In particular, $|\varphi_\infty| \leq \oP_0^{k} |\psi_k|$. Since $\mathcal F_\infty$ is a bounded subset of $W^{1,2}$ and $\nu$ acts continuously on $W^{1,2}$, the sequences $\| \, |\psi_k| \, \|_{W^{1,2}}$  and $c_k = \lp \nu, |\psi_k| \rp$ are bounded by a constant. Up to passing to a subsequence, we may assume that $c_k$ converges to $c_\infty$. From Theorem \ref{thm:DKW-spectral-gap}, we have that $ \oP_0^{k} |\psi_k|$ converges in $W^{1,2}$-norm to $c_\infty$. It follows that $|\varphi_\infty| \leq c_\infty$. This finishes the proof.
\end{proof}

\begin{remark}
 Functions in $W^{1,2}$ are, by definition,  only defined up to a set of zero Lebesgue measure. However, they admit good representatives outside a polar set.  See the definition in the Appendix.   These are locally the sets included in the pole set of a subharmonic function and are the negligible sets from a complex analytic point view.  We note that polar sets are always of zero Lebesgue measure,  so they should be thought as ``thin'' sets.  We refer to the Appendix for more details. Every function appearing from now until the end of the section is a good representative. The results given in the Appendix show that this can be done rigorously. In particular, notice that for bounded functions, these representatives are preserved by the operators $\oP_\xi$ (Proposition \ref{prop:good-rep-P_t}).  Also, since $\nu$ does not charge polar sets, cf. \cite[Theorem 2.11]{DKW:PAMQ}, statements of the type ``$\varphi = \psi$, $\nu$-almost everywhere'' are meaningful and the value $\lp \nu, \varphi \rp$ coincides with the integral of a good representative of $\varphi$ against $\nu$, see Lemma \ref{lemma:int-good-rep}.
\end{remark}

Define $\mathcal F^M_\infty \subset \mathcal F_\infty$ as the set of functions $\varphi \in \mathcal F_\infty$ such that $\langle \nu, |\varphi| \rangle$ is maximal and denote by $\mathbf{m}$ the corresponding maximal value. As $ \mathcal F_\infty$ is weakly compact and $\nu$ is continuous with respect to the weak topology (cf.  Theorem \ref{thm:DKW-spectral-gap}), this maximum exists.

\begin{lemma} \label{lemma:maximal-functions}
Let $\varphi_\infty \in \mathcal F^M_\infty$. Then $|\varphi_\infty| \leq \mathbf m$ with equality $\nu$-almost everywhere. If $\mathbf m =0$ then $\mathcal F_\infty$ contains only the zero function. If $\mathbf m \neq 0$ then there exists a non-zero $\varphi \in W^{1,2}$ satisfying (\ref{eq:weak-cohomological}) and $|\varphi| =\mathbf m$, $\nu$-almost everywhere.
\end{lemma}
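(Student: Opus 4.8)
The idea is to extract from $\mathcal F^M_\infty$ a function whose modulus is forced to be constant, and then invoke Lemma \ref{lemma:|P_t^nu|=1}. First I would take $\varphi_\infty \in \mathcal F^M_\infty$, written as a weak limit $\varphi_\infty = \lim_\ell \oP_\xi^{n_\ell} \varphi_\ell$ with $\varphi_\ell \in \B$ and $n_\ell \nearrow \infty$. As in the proof of Lemma \ref{lemma:F-inftty-bounded}, for each $k \geq 1$ one can write $\varphi_\infty = \oP_\xi^k \psi_k$ with $\psi_k \in \mathcal F_\infty$ a weak limit of $\oP_\xi^{n_\ell - k} \varphi_\ell$, so that $|\varphi_\infty| \leq \oP_0^k |\psi_k|$ pointwise (using that $\oP_0$ preserves good representatives of bounded functions). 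Since $\langle \nu, |\psi_k| \rangle \leq \mathbf m$ by maximality and $\psi_k \in \mathcal F_\infty$, taking $\langle \nu, \cdot \rangle$ of the inequality $|\varphi_\infty| \leq \oP_0^k|\psi_k|$ and using $\nu$-invariance gives $\langle \nu, |\varphi_\infty|\rangle \leq \langle \nu, |\psi_k|\rangle \leq \mathbf m$; but $\varphi_\infty \in \mathcal F^M_\infty$ forces equality throughout, so $\langle \nu, |\psi_k| \rangle = \mathbf m$ and $\psi_k \in \mathcal F^M_\infty$ as well.

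Next I would upgrade the inequality $|\varphi_\infty| \leq \oP_0^k |\psi_k|$ to control the limit as $k \to \infty$. By the spectral gap of $\oP_0$ on $W^{1,2}$ (Theorem \ref{thm:DKW-spectral-gap}) together with the boundedness of $\{\,|\psi_k|\,\}$ in $W^{1,2}$ (from Lemma \ref{lemma:F-inftty-bounded} and the fact that $u \mapsto |u|$ is bounded on $W^{1,2}$), we have $\oP_0^k |\psi_k| \to \langle \nu, |\psi_k|\rangle = \mathbf m$ in $W^{1,2}$, hence (passing to a subsequence if needed, using the Appendix results on convergence outside polar sets) pointwise $\nu$-almost everywhere. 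Combined with $|\varphi_\infty| \leq \oP_0^k|\psi_k|$ for every $k$, this yields $|\varphi_\infty| \leq \mathbf m$ everywhere (good representative), and since $\langle \nu, |\varphi_\infty|\rangle = \mathbf m$ we must have $|\varphi_\infty| = \mathbf m$, $\nu$-almost everywhere. This proves the first assertion. If $\mathbf m = 0$, then every $\varphi \in \mathcal F_\infty$ satisfies $\langle \nu, |\varphi|\rangle = 0$, hence $\varphi = 0$ $\nu$-almost everywhere; but functions in $\mathcal F_\infty$ lie in $W^{1,2}$ and $\nu$ does not charge polar sets while $|\varphi| \le \oP_0^k|\psi_k| \to 0$, so in fact $\varphi \equiv 0$, giving the second assertion.

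Finally, suppose $\mathbf m \neq 0$ and set $\varphi := \mathbf m^{-1} \varphi_\infty$, so $|\varphi| = 1$, $\nu$-almost everywhere. For the cohomological equation I need $|\oP_\xi^n \varphi| = 1$, $\nu$-almost everywhere, for all $n \geq 1$, so that Lemma \ref{lemma:|P_t^nu|=1} applies. To see this, note $\oP_\xi^n \varphi_\infty = \oP_\xi^n \oP_\xi^k \psi_{k}$; choosing the decomposition appropriately (i.e. using $\varphi_\infty = \oP_\xi^{n+k'}\psi_{n+k'}$ and writing $\oP_\xi^n\varphi_\infty = \oP_\xi^{n}\oP_\xi^{k'}\psi_{n+k'}$ is not quite what is needed — rather, observe that $\oP_\xi^n \varphi_\infty$ is again a weak limit of $\oP_\xi^{n + n_\ell}\varphi_\ell$, hence lies in $\mathcal F_\infty$, and from $|\oP_\xi^n\varphi_\infty| \le \oP_0^n|\varphi_\infty| = \oP_0^n \mathbf m = \mathbf m$ together with $\langle \nu, |\oP_\xi^n\varphi_\infty|\rangle = \langle\nu, |\varphi_\infty|\rangle = \mathbf m$ by $\nu$-invariance, we deduce $|\oP_\xi^n\varphi_\infty| = \mathbf m$ $\nu$-a.e.). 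Hence $|\oP_\xi^n\varphi| = 1$ $\nu$-a.e. for every $n$, and Lemma \ref{lemma:|P_t^nu|=1} gives that $\varphi$ is a weak solution to (\ref{eq:weak-cohomological}) with $|\varphi| = \mathbf m$ after rescaling back — or one applies the lemma to $\varphi$ directly and keeps $|\varphi_\infty| = \mathbf m$. I expect the main technical obstacle to be the careful bookkeeping of good representatives and the passage from $W^{1,2}$-convergence to $\nu$-a.e.\ convergence, which is where the Appendix results are essential; the algebraic part (maximality forcing equalities) is straightforward once the representatives are handled correctly.
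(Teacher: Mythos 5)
Your proof of the first two assertions follows the paper's argument essentially verbatim and is correct: writing $\varphi_\infty=\oP_\xi^k\psi_k$, using maximality to force $\lp\nu,|\psi_k|\rp=\mathbf m$, and invoking the spectral gap to get $\oP_0^k|\psi_k|\to\mathbf m$ is exactly what the paper does.

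The third assertion, however, contains a genuine gap. You apply Lemma \ref{lemma:|P_t^nu|=1} to $\varphi_\infty$ itself, and to verify its hypothesis you claim $\lp \nu, |\oP_\xi^n\varphi_\infty|\rp = \lp\nu, |\varphi_\infty|\rp = \mathbf m$ ``by $\nu$-invariance.'' Stationarity of $\nu$ only gives $\lp\nu,\oP_0 u\rp=\lp\nu,u\rp$; since the modulus does not commute with $\oP_\xi$, one only gets $\lp \nu, |\oP_\xi^n\varphi_\infty|\rp \leq \lp\nu, \oP_0^n|\varphi_\infty|\rp = \mathbf m$, and the missing reverse inequality is precisely the ``no cancellation in the integral defining $\oP_\xi$'' statement you are trying to prove. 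Knowing that $\oP_\xi^n\varphi_\infty\in\mathcal F_\infty$ gives the same upper bound $\leq\mathbf m$, not a lower bound, so maximality does not rescue the step either. The paper avoids this by not working with $\varphi_\infty$: it takes a weak limit $\psi_\infty$ of the $\psi_k$ and uses the sandwich $|\varphi_\infty| = |\oP_\xi^{k-n}(\oP_\xi^n\psi_k)| \leq \oP_0^{k-n}|\oP_\xi^n\psi_k| \leq \oP_0^{k}|\psi_k|$ for $n\leq k$; applying $\lp\nu,\cdot\rp$ squeezes $\lp\nu,|\oP_\xi^n\psi_k|\rp$ between $\mathbf m$ and $\mathbf m$, so $\oP_\xi^n\psi_k\in\mathcal F^M_\infty$, and passing to the limit in $k$ yields $|\oP_\xi^n\psi_\infty|=\mathbf m$ $\nu$-almost everywhere for all $n$, which is what Lemma \ref{lemma:|P_t^nu|=1} needs. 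You should replace your candidate $\varphi_\infty$ by such a $\psi_\infty$ (or supply a genuine lower bound for $\lp\nu,|\oP_\xi^n\varphi_\infty|\rp$, which does not seem available directly).
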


\begin{proof}
Let $\varphi_\infty = \lim \oP_\xi^{n_\ell} \varphi_{\ell} $ be an element of  $\mathcal F^M_\infty$.  As in the proof of the previous lemma, we have $\varphi_\infty = \oP_\xi^k \psi_k$ and $|\varphi_\infty| \leq \oP_0^{k} |\psi_k|$ for every $k \geq 1$ and some $\psi_k \in  \mathcal F_\infty $.

Since $\nu$ is stationary we have $\langle \nu, |\psi_k| \rangle = \langle \nu, \oP_0^{k } |\psi_k| \rangle  \geq \langle \nu, |\varphi_\infty| \rangle = \mathbf m$. By the maximality  of $\mathbf m$, we get $\langle \nu, |\psi_k| \rangle = \mathbf m$ and $\oP_0^{k } |\psi_k| = |\varphi_\infty|$, $\nu$-almost everywhere for every $k \geq 1$. Now, by  Theorem \ref{thm:DKW-spectral-gap}, $\oP_0^{k}|\psi_k|$ converges to $\lim_{k \to \infty} \langle \nu, |\psi_k| \rangle = \mathbf m$ in $W^{1,2}$-norm as $k \to \infty$. We conclude that $|\varphi_\infty| \leq \mathbf m$, which together with $\lp \nu, |\varphi_\infty| \rp = \mathbf m$ gives that $|\varphi_\infty| = \mathbf m$, $\nu$-almost everywhere, proving the first part of the lemma. The second assertion follows directly.

Suppose now that $\mathbf m \neq 0$. Let $\psi_\infty$ be a weak limit of $\psi_k$ as $k \to \infty$, where $\psi_k$ is as above. We have seen that $\psi_k \in \mathcal F^M_\infty$ for every $k$, so $\psi_\infty \in \mathcal F^M_\infty$. We claim that $\psi_\infty$ satisfies (\ref{eq:weak-cohomological}). In order to see that, notice first that, for $n \leq k$
\begin{equation*}
|\varphi_\infty|  = |\oP_\xi^k \psi_k| = |\oP_\xi^{k-n} \oP_\xi^n  \psi_k| \leq \oP_0^{k - n} |\oP^n_\xi \psi_k| \leq \oP_0^{k} |\mathcal \psi_k|.
\end{equation*}

From the maximality of $\mathbf m$ and the fact that both $\varphi_\infty$ and $\psi_k$ belong to $\mathcal F^M_\infty$,  we get that  $\oP_\xi^n \psi_k \in \mathcal F^M_\infty$ for $n \leq k$. Letting $k \to \infty$ and using the continuity of $\oP^n_\xi$ and $\nu$ with respect to the weak convergence  give that $\oP_\xi^n \psi_\infty \in \mathcal F^M_\infty$. From the first part of the lemma one has $ |\oP_\xi^n \psi_\infty| = \mathbf m$, $\nu$-almost everywhere. From Lemma \ref{lemma:|P_t^nu|=1}, we conclude that $\psi_\infty$ satisfies (\ref{eq:weak-cohomological}). This finishes the proof.
\end{proof}

From now on, assume that $\mathbf m \neq 0$. In the end, we'll show that this case cannot occur. We'll do so by contradiction by producing a non-trivial solution to (\ref{eq:cohomological}). Let $\varphi \in W^{1,2}$ be as in the preceding lemma.  Recall that we consider a good representative $\varphi$ as in Definition \ref{def:good-rep}.  Since $\varphi$ satisfies (\ref{eq:weak-cohomological}), one has $\oP_\xi \varphi = e^{is} \varphi$, $\nu$-almost everywhere. Let $\widetilde \oP_\xi := e^{-is} \oP_\xi$, so that $\widetilde \oP_\xi \varphi = \varphi$, $\nu$-almost everywhere. Recall that $|\varphi|= \mathbf m$, $\nu$-almost everywhere.

 Consider the subspace
\begin{equation} \label{eq:def-subspace-E}
E := \{ u \in W^{1,2} : \lp  \varphi \, \nu, \overline u \rp = 0\}.
\end{equation}
Notice that, since $|\varphi| = \mathbf m$, $\nu$-almost everywhere, $E$ is a complex hyperplane in $W^{1,2}$ not containing $\varphi$.

\begin{lemma} \label{lemma:phi-nu-continuous}
The functional $u \mapsto \lp \varphi \, \nu, \overline u \rp $ is continuous in $W^{1,2}$ with respect to both the norm topology and the weak topology in $W^{1,2}$. In particular, $E$ is a closed complex hyperplane of $W^{1,2}$ which is also closed with respect to the weak topology in $W^{1,2}$.   Moreover, $E$ is invariant under $\widetilde \oP_\xi$.
\end{lemma}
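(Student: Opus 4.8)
The plan is to reduce everything to the continuity of the linear functional $\oN u = \lp \nu, u\rp$ on $W^{1,2}$, established in Theorem \ref{thm:DKW-spectral-gap}, together with the fact that $\varphi\in W^{1,2}$ is bounded ($|\varphi|=\mathbf m$ $\nu$-almost everywhere, and $\varphi$ is a good representative). First I would observe that for $u\in W^{1,2}$ the product $\overline\varphi\, u$ need not lie in $W^{1,2}$, so one cannot directly apply $\oN$ to it; instead one should argue by approximation. Fix $u\in W^{1,2}$. Choose smooth functions $u_j$ converging to $u$ in $W^{1,2}$ (smooth functions are dense); then $\overline\varphi\, u_j$ is a well-defined function and $\lp\varphi\,\nu,\overline{u_j}\rp=\int_{\P^1}\overline\varphi\, u_j\,\diff\nu$ in the naive sense. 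The point is to show this sequence converges and that the limit depends only on $u$, not on the approximating sequence; this defines $\lp\varphi\,\nu,\overline u\rp$ unambiguously and yields the bound
\begin{equation*}
\bigl|\lp\varphi\,\nu,\overline u\rp\bigr|\le \mathbf m\,\bigl|\lp\nu,|u|\rp\bigr|\lesssim \mathbf m\,\|u\|_{W^{1,2}},
\end{equation*}
using that $\||u|\|_{W^{1,2}}\lesssim\|u\|_{W^{1,2}}$ (see \cite[Proposition 4.1]{dinh-sibony:decay-correlations}) and the continuity of $\oN$. This gives norm-continuity.

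For weak continuity, I would use the standard fact that on a reflexive (Hilbert) space a linear functional that is norm-continuous is automatically weakly continuous; alternatively, one invokes that $\oN$ itself is weakly continuous by Theorem \ref{thm:DKW-spectral-gap} and runs the same approximation with weakly convergent sequences, using that $W^{1,2}$ embeds compactly into $L^2$ (so weak $W^{1,2}$-convergence gives strong $L^2$-convergence, hence, after passing to good representatives and using that $\nu$ does not charge polar sets, convergence of the integrals against $\varphi\,\nu$). Either way, $E=\ker\bigl(u\mapsto\lp\varphi\,\nu,\overline u\rp\bigr)$ is closed in both topologies, being the kernel of a continuous functional; and since $\varphi\notin E$ (because $\lp\varphi\,\nu,\overline\varphi\rp=\lp\nu,|\varphi|^2\rp=\mathbf m^2>0$), it is a genuine hyperplane.

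Finally, for the $\widetilde\oP_\xi$-invariance of $E$: take $u\in E$ and compute, using that $\nu$ is $\mu$-stationary and $\widetilde\oP_\xi\varphi=\varphi$ $\nu$-almost everywhere,
\begin{equation*}
\lp\varphi\,\nu,\overline{\widetilde\oP_\xi u}\rp=\lp\nu,\varphi\,\overline{\widetilde\oP_\xi u}\rp=\lp\nu,(\widetilde\oP_\xi\varphi)\,\overline{\widetilde\oP_\xi u}\rp.
\end{equation*}
Here the key computation is that $\widetilde\oP_\xi^*(\varphi\,\nu)=\varphi\,\nu$ as measures: since $\widetilde\oP_\xi u(x)=e^{-is}\int_G e^{i\xi\sigma_g(x)}u(gx)\,\diff\mu(g)$, one has
\begin{equation*}
\int(\widetilde\oP_\xi\varphi)\,\overline{\widetilde\oP_\xi u}\,\diff\nu=\int\int\int e^{i\xi(\sigma_g+\sigma_h)}\varphi(gx)\overline{u(hx)}\,e^{\text{phase}}\,\diff\mu(g)\diff\mu(h)\diff\nu(x),
\end{equation*}
and the pointwise identity $e^{i\xi\sigma_g(x)}\varphi(gx)=e^{is}\varphi(x)$ (valid on $\supp\mu\times\supp\nu$, hence $\mu\otimes\nu$-a.e.) collapses the $g$-integral, leaving $\int\varphi\,\overline{\widetilde\oP_\xi u}\,\diff\nu=\dots$; carefully, the cleanest route is to note $\lp\nu,\varphi\,\overline{\widetilde\oP_\xi u}\rp=\lp\nu,\widetilde\oP_\xi(\varphi\,\overline u)\rp^{-}$-type manipulation using that $\overline{e^{i\xi\sigma_g(x)}}=e^{-i\xi\sigma_g(x)}$ together with $e^{i\xi\sigma_g(x)}\varphi(gx)=e^{is}\varphi(x)$, which turns $\overline{\widetilde\oP_\xi u}(x)$ weighted by $\varphi(x)$ into $\int_G \varphi(gx)\overline{u(gx)}\,\diff\mu(g)$, and then stationarity of $\nu$ gives $\lp\varphi\,\nu,\overline{\widetilde\oP_\xi u}\rp=\lp\varphi\,\nu,\overline u\rp=0$. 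So $\widetilde\oP_\xi u\in E$.

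The main obstacle I anticipate is purely technical: making the first step rigorous, i.e.\ justifying that $\lp\varphi\,\nu,\overline u\rp$ is well-defined on all of $W^{1,2}$ and that the approximation is legitimate, which requires the good-representative machinery of the Appendix (that $\oP_\xi$ preserves good representatives of bounded functions, that $\nu$ integrates good representatives correctly, and that $\nu$ does not charge polar sets, cf.\ \cite[Theorem 2.11]{DKW:PAMQ}). The invariance computation is then a short unwinding of definitions once one is careful that all identities involving $\varphi$ hold $\nu$-a.e.\ and that $\nu$ is stationary.
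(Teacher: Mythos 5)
Your argument is correct, and for the $\widetilde\oP_\xi$-invariance of $E$ it is, after your self-correction, exactly the paper's computation: substitute $e^{i\xi\sigma_g(x)}\varphi(gx)=e^{is}\varphi(x)$ into $\varphi(x)\overline{\widetilde\oP_\xi u}(x)$, collapse to $\lp\nu,\oP_0(\varphi\overline u)\rp$, and invoke stationarity; as in the paper, carry this out for smooth $u$ and close by density and norm continuity (your intermediate display expanding both $\widetilde\oP_\xi\varphi$ and $\widetilde\oP_\xi u$ as integrals over $g$ and $h$ is a wrong turn that you rightly abandon). Where you genuinely diverge is the continuity statement. The paper deduces it structurally from Proposition \ref{p:WPC} and Corollary \ref{c:WPC}: since $\nu$ defines a bounded functional on $W^{1,2}$ and $\varphi\in L^\infty(\nu)$, the measure $\varphi\nu$ again has a representation $\alpha+\overline\partial\eta$ with $\alpha$ bounded and $\eta\in L^2_{(1,0)}$, and such functionals are continuous for both topologies. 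You instead prove norm continuity directly via $|\int\varphi\overline u\,\diff\nu|\le \mathbf m\,\lp\nu,|u|\rp\lesssim\|u\|_{W^{1,2}}$ for smooth $u$ and extend by density; this is more elementary and perfectly adequate. For weak continuity, your first route (a norm-continuous linear functional is weakly continuous) does work, but with one caveat worth stating: the paper's ``weak convergence'' is by definition distributional convergence together with bounded $W^{1,2}$-norms, not the abstract weak topology, so you must add the observation that for sequences the two notions coincide (a bounded sequence in the Hilbert space $W^{1,2}$ whose unique distributional cluster point is $u$ converges weakly to $u$). Your alternative route via the compact embedding $W^{1,2}\hookrightarrow L^2$ should be dropped: strong $L^2$ convergence gives no control on integrals against $\nu$, which is in general singular with respect to Lebesgue measure, and this is precisely the difficulty that the paper's representation $\alpha+\overline\partial\eta$ (equivalently, the good-representative machinery and Lemma \ref{lemma:int-good-rep}) is designed to circumvent.
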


\begin{proof}
This first statement follows from Corollary \ref{c:WPC} from the Appendix. So, we only need to show the invariance of $E$ under $\widetilde \oP_\xi$. This is a consequence of the identity
\begin{equation} \label{eq:P_t-tilde-invariance}
\lp \varphi \nu, \overline u \rp = \lp \varphi \nu, \overline {\widetilde \oP_\xi u} \rp \quad \text{for all } u \in W^{1,2}.
\end{equation}

In order to obtain the above identity,  it is enough to consider $u$ smooth,  since such $u$ are dense in $W^{1,2}$.  As $\varphi$ is a good representative, $\varphi \overline u$ is a good representative and the value  $\lp \varphi \nu, \overline u \rp$ coincides with the integral of $\varphi \overline u$ with respect to $\nu$ (see Lemma \ref{lemma:int-good-rep} in the Appendix).  The same is true for $\overline {\widetilde \oP_\xi u}$ instead of $\overline u$, since the former function is continuous. Using these facts and  the cohomological equation (\ref{eq:weak-cohomological}) we get
\begin{align*}
\lp \varphi \nu, \overline {\widetilde \oP_\xi u} \rp & =\int_{\P^1}  \varphi(x) \overline {\widetilde \oP_\xi u}(x)  \, \diff \nu(x) = \int_{\P^1}   \varphi(x) e^{is}  \int_G e^{-i\xi \sigma_g(x)} \overline u(gx) \, \diff \mu(g) \diff \nu(x) \\ &=  \int_{\P^1} \int_G e^{i\xi \sigma_g(x)} \varphi (gx) e^{-i\xi \sigma_g(x)} \overline u(gx) \, \diff \mu(g) \diff \nu(x) = \lp \nu, \oP_0 (\varphi \overline u) \rp = \lp \varphi \nu, \overline u \rp,
\end{align*}
yielding \eqref{eq:P_t-tilde-invariance}. This finishes the proof of the lemma.
\end{proof}

\begin{lemma} \label{lemma:weak-convergence}
Let $u \in E$. Then $\widetilde \oP_\xi^n u \to 0$ weakly and uniformly in $u \in E \cap \B$ as $n \to \infty$. In particular, for every $p \geq 1$, we have that $\| \oP_\xi^n u\|_{L^p} \to 0$  uniformly in $u \in E \cap \B$.
\end{lemma}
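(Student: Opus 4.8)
The plan is to exploit the fact that on $W^{1,2}$ the operator $\oP_\xi$ (hence $\widetilde\oP_\xi = e^{-is}\oP_\xi$) is the sum of a finite-rank part coming from eigenvalues of modulus one and a uniformly contracting part. More precisely, I would first show that, restricted to the invariant closed hyperplane $E$, $\widetilde\oP_\xi$ has spectral radius strictly less than one; then the weak (indeed norm) convergence $\widetilde\oP_\xi^n u \to 0$, uniformly on $E\cap\B$, is immediate, and the $L^p$ statement follows since the $W^{1,2}$-norm dominates the $L^p$-norm for every $p<\infty$ (via the Moser--Trudinger estimate \eqref{eq:moser-trudinger-estimate}, which gives $\|v\|_{L^p}\lesssim_p \|v\|_{W^{1,2}}$).

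The first step is to analyze the peripheral spectrum of $\widetilde\oP_\xi$ on $W^{1,2}$. By Lemma \ref{lemma:iterates-bounded} the iterates $\|\widetilde\oP_\xi^n\|_{W^{1,2}}$ are uniformly bounded, so $\rho(\widetilde\oP_\xi)\le 1$; combined with Lemma \ref{lemma:Pt-L^2_1,0-contraction} (contraction on $L^2_{(1,0)}$, so $\rho_{\rm ess}(\oP_\xi)<1$ by the usual argument decomposing $\|\oP_\xi u\|_{W^{1,2}}$ into the $L^2$-part of $u$ and the $L^2_{(1,0)}$-part of $\partial u$, exactly as in Theorem \ref{thm:DKW-spectral-gap}), the part of the spectrum of $\widetilde\oP_\xi$ on the unit circle consists of finitely many eigenvalues of finite multiplicity. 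I claim the only possible peripheral eigenvalue is $1$, and it is simple with eigenfunction $\varphi$. Indeed, if $\widetilde\oP_\xi^n v = e^{i\theta n} v$ with $v\in W^{1,2}$, $v\neq 0$, then $|v| = |\widetilde\oP_\xi^n v|\le \oP_0^n|v|$ and iterating with Theorem \ref{thm:DKW-spectral-gap} forces $\oP_0^n|v|\to \lp\nu,|v|\rp$ in $W^{1,2}$, so $|v|\le \lp\nu,|v|\rp$ with equality $\nu$-a.e.; normalizing, $|v|$ is constant $\nu$-a.e., and $v$ satisfies $|v|=|\widetilde\oP_\xi^n v|$ $\nu$-a.e. for all $n$, so Lemma \ref{lemma:|P_t^nu|=1} (applied to $e^{-i\theta}\widetilde\oP_\xi = e^{-i(s+\theta)}\oP_\xi$) shows $v$ is a weak solution of the cohomological equation. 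By Lemma \ref{eq:coboundary-unique}, $v = b\varphi$ for a unimodular constant $b$, which forces $e^{i\theta}=1$; so the peripheral spectrum is $\{1\}$ and the corresponding generalized eigenspace is spanned by $\varphi$ (a Jordan block would produce an unbounded orbit, contradicting Lemma \ref{lemma:iterates-bounded}). Hence $W^{1,2} = \C\varphi \oplus E'$ where $E'$ is the closed $\widetilde\oP_\xi$-invariant complement on which the spectral radius is $<1$.

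The second step identifies $E' = E$. The spectral projection onto $\C\varphi$ is a rank-one operator $u\mapsto \ell(u)\varphi$ for some continuous functional $\ell$ with $\ell(\varphi)=1$, and its kernel $E'$ is $\widetilde\oP_\xi$-invariant and closed (also weakly closed). Since $E$ from \eqref{eq:def-subspace-E} is a closed (weakly closed, by Lemma \ref{lemma:phi-nu-continuous}) $\widetilde\oP_\xi$-invariant hyperplane not containing $\varphi$, and $E'$ is also a closed $\widetilde\oP_\xi$-invariant hyperplane not containing $\varphi$, it suffices to check $E\subseteq E'$ or equivalently that $\ell$ vanishes on $E$; equivalently, that the functional $u\mapsto\lp\varphi\nu,\overline u\rp$ is, up to a constant, the spectral functional $\ell$. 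This is because $\lp\varphi\nu,\overline{\cdot}\rp$ is a nonzero continuous functional invariant under $\widetilde\oP_\xi$ (identity \eqref{eq:P_t-tilde-invariance}): any $\widetilde\oP_\xi$-invariant continuous functional must annihilate $E' $ (since $\widetilde\oP_\xi|_{E'}$ has spectral radius $<1$, so $\widetilde\oP_\xi^n|_{E'}\to 0$ and invariance forces the functional to be $0$ on $E'$), hence is a multiple of $\ell$; thus $E=\ker\lp\varphi\nu,\overline{\cdot}\rp \supseteq E'$, and since both are hyperplanes, $E=E'$.

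Once $E=E'$, we have $\widetilde\oP_\xi|_E$ with spectral radius $\theta_0<1$, so $\|\widetilde\oP_\xi^n\|_{E\to W^{1,2}}\le C\theta^n$ for any $\theta_0<\theta<1$ and all $n$, giving $\|\widetilde\oP_\xi^n u\|_{W^{1,2}}\le C\theta^n\|u\|_{W^{1,2}}$ uniformly on $E\cap\B$ — in particular weak convergence to $0$, uniformly. Finally $\|\oP_\xi^n u\|_{L^p} = \|\widetilde\oP_\xi^n u\|_{L^p}\le c_p\|\widetilde\oP_\xi^n u\|_{W^{1,2}}\le c_p C\theta^n\to 0$ uniformly on $E\cap\B$, where the first inequality uses Moser--Trudinger \eqref{eq:moser-trudinger-estimate}. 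I expect the main obstacle to be the careful bookkeeping in Step 1 (ruling out nontrivial peripheral eigenvalues and Jordan blocks using Lemmas \ref{lemma:|P_t^nu|=1} and \ref{eq:coboundary-unique} together with the good-representative subtleties from the Appendix), rather than Steps 2–3, which are soft functional-analytic arguments; an alternative, perhaps cleaner route for Step 1 is to argue directly that for $u\in E$ the sequence $\widetilde\oP_\xi^n u$ has all weak limits in $\Fc_\infty$ with $\lp\varphi\nu,\overline{(\cdot)}\rp = 0$, and use the maximality structure of $\Fc^M_\infty$ together with Lemma \ref{lemma:maximal-functions} to conclude the limit must vanish — I would develop whichever of the two is shorter in the final write-up.
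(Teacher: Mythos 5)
Your primary route rests on a claim that is not available and is not ``exactly as in Theorem \ref{thm:DKW-spectral-gap}'': namely that $\rho_{\rm ess}(\oP_\xi)<1$ on $W^{1,2}$ for $\xi\neq 0$. The spectral-gap argument for $\oP_0$ works because $\oP_0$ commutes with $\partial$, so $\|\partial\oP_0^N u\|_{L^2}=\|\oP_0^N\partial u\|_{L^2}$ is contracted by Proposition \ref{prop:L^2_1,0-contraction} and the remaining zeroth-order part is compact. For $\xi\neq 0$ one has instead $\partial\oP_\xi^n u=\oP_\xi^n\partial u+\Theta_\xi^{(n)}u$ with $\Theta_\xi^{(n)}u=i\xi\sum_k\oP_\xi^{k-1}\big(\int_G e^{i\xi\sigma_g}g^*(\oP_\xi^{n-k}u)\,\partial\sigma_g\,\diff\mu(g)\big)$, and controlling this commutator is precisely the hard content of Lemmas \ref{lemma:iterates-bounded} and \ref{lemma:strong-convergence}; the paper only ever proves that the iterates are \emph{bounded} and, on $E$, that $\Theta_\xi^{(n)}u\to 0$, never that the commutator is compact. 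To make your route rigorous you would have to prove compactness of $u\mapsto\int_G e^{i\xi\sigma_g}g^*u\,\partial\sigma_g\,\diff\mu(g)$ from $W^{1,2}$ to $L^2_{(1,0)}$ (plausible for each fixed $g$ since $\partial\sigma_g$ has bounded coefficients, but the passage to the $\mu$-average under only a $\log^2$-moment condition is a genuine additional argument). As written, the quasi-compactness of $\oP_\xi$ is a missing step, and the rest of your Step 1 (peripheral spectrum reduces to the simple eigenvalue $1$ via Lemmas \ref{lemma:|P_t^nu|=1} and \ref{eq:coboundary-unique}) and Step 2 ($E'=E$ via the invariant antilinear functional) hang on it. Note also that your Step 1, if completed, would subsume Lemma \ref{lemma:strong-convergence} and Corollary \ref{cor:convergence-to-phi}, which in the paper come \emph{after} and depend on the present lemma; so you would be proving a strictly stronger statement by a route the authors deliberately avoid.

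The alternative you sketch in your last sentence is, in fact, the paper's proof: one forms the family $\mathcal G_\infty$ of weak limits of $\widetilde\oP_\xi^{n_\ell}u_\ell$ with $u_\ell\in E\cap\B$, runs the maximality argument of Lemma \ref{lemma:maximal-functions} on the invariant subspace $E$ (which is weakly closed by Lemma \ref{lemma:phi-nu-continuous}), and observes that a nonzero maximizer would yield a weak solution of \eqref{eq:weak-cohomological} lying in $E$, hence equal to $b\varphi$ by Lemma \ref{eq:coboundary-unique}, contradicting $\varphi\notin E$. The $L^p$ statement then follows because weak convergence in $W^{1,2}$ implies strong $L^p$ convergence. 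I would develop that version: it needs no compactness input and only the machinery already established before this lemma.
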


\begin{proof}
We argue as in Lemma \ref{lemma:maximal-functions}, but we work on the invariant subspace $E$ instead of $W^{1,2}$. Notice that the extra factor $e^{is}$ in the definition of $\widetilde \oP_\xi$ and the complex conjugate in the definition of $E$ do not affect the identities and estimates from that lemma. 
Denote by $\mathcal G_\infty$ the family consisting of all weak limits of sequences of the type $(\widetilde \oP_\xi^{n_\ell} u_\ell  )_{\ell \geq 1}$ with $u_\ell \in E \cap \B$  and $n_\ell \nearrow \infty$. We need to show that $\mathcal G_\infty$ contains only the zero function.   Let $\mathcal G^M_\infty \subset \mathcal G_\infty$ be the set of functions $u \in \mathcal G_\infty$ such that $\langle \nu, |u| \rangle$ is maximal and denote by $\mathbf m'$ the corresponding maximal value. If $\mathbf m'=0$ the arguments of Lemma \ref{lemma:maximal-functions} give that $\mathcal G_\infty = \{0\}$ and the first part of the lemma follows.

Suppose now, by contradiction, that $\mathbf m' \neq 0$. As in Lemma  \ref{lemma:maximal-functions} we can produce a function $\eta \in \mathcal G_\infty$ satisfying (\ref{eq:weak-cohomological}). By Lemma \ref{lemma:phi-nu-continuous}, $\eta \in E$ and Lemma \ref{eq:coboundary-unique} gives that $\eta = b \, \varphi$, $\nu$-almost everywhere for some non-zero complex number $b$. In particular, this shows that $\varphi \in E$, which contradicts the definition of $E$. The first assertion follows.

The second assertion follows from  $\|\oP_\xi^n u\|_{L^p} = \| \widetilde \oP_\xi^{n} u\|_{L^p} $ and the fact that weak convergence in $W^{1,2}$ implies strong convergence in $L^p$ for every $p \geq 1$, see \cite{dinh-marinescu-vu}. 
\end{proof}

The next result upgrades weak convergence to strong convergence. 

\begin{lemma} \label{lemma:strong-convergence}
We have $\|\widetilde \oP_\xi^n u\|_{W^{1,2}} \to 0$ as $n \to \infty$ for every $u \in E$. The convergence is uniform in $E \cap \B$. In particular, we have that $\rho(\widetilde \oP_\xi|_E) < 1$.
\end{lemma}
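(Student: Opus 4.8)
The plan is to revisit the decomposition $\partial\oP_\xi^n u=\oP_\xi^n\partial u+\Theta_\xi^{(n)}u$ from the proof of Lemma~\ref{lemma:iterates-bounded}, this time extracting genuine decay rather than mere boundedness. Exactly as there, and using the uniform boundedness of $\|\oP_\xi^n\|_{W^{1,2}}$ provided by Lemma~\ref{lemma:iterates-bounded}, we may replace $\mu$ by a convolution power and assume via Lemma~\ref{lemma:Pt-L^2_1,0-contraction} that $\|\oP_\xi\|_{L^2_{(1,0)}}<1/2$. Since $\|\cdot\|_{W^{1,2}}$ is comparable to $\|\cdot\|_{L^2}+\|\partial\cdot\|_{L^2}$ and $\|\widetilde\oP_\xi^n u\|_{L^2}=\|\oP_\xi^n u\|_{L^2}\to0$ uniformly on $E\cap\B$ by Lemma~\ref{lemma:weak-convergence}, it suffices to prove $\|\partial\oP_\xi^n u\|_{L^2}\to0$ uniformly on $E\cap\B$. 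The term $\oP_\xi^n\partial u$ decays like $2^{-n}\|\partial u\|_{L^2}$ because $\|\oP_\xi\|_{L^2_{(1,0)}}<1/2$. For the remaining term we keep verbatim from Lemma~\ref{lemma:iterates-bounded} the identities $\Theta_\xi^{(n)}u=i\xi\sum_{k=1}^n\mathcal R^{(k)}_\xi u$, $\mathcal R^{(k)}_\xi u=\oP_\xi^{k-1}\phi_{n-k}$ and the bound $\|\mathcal R^{(k)}_\xi u\|_{L^2}\le 2^{-(k-1)}\|\phi_{n-k}\|_{L^2}$, where $\phi_\ell=\int_G e^{i\xi\sigma_g}\,g^*(\oP_\xi^\ell u)\,\partial\sigma_g\,\diff\mu(g)$. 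Thus everything reduces to proving that $\varepsilon_\ell:=\sup_{u\in E\cap\B}\|\phi_\ell\|_{L^2}\to0$ as $\ell\to\infty$; granting this, splitting the sum $\sum_{k=1}^n 2^{-(k-1)}\varepsilon_{n-k}$ at a large fixed index (geometric tail vs.\ finitely many vanishing terms) gives $\|\Theta_\xi^{(n)}u\|_{L^2}\to0$ uniformly on $E\cap\B$.

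The heart of the matter is therefore the claim $\varepsilon_\ell\to0$. As in Lemma~\ref{lemma:iterates-bounded} one has $\|\phi_\ell\|_{L^2}^2\le\int_{\P^1}|\oP_\xi^\ell u|^2\,R\,\omegaFS$, where $R:=\int_G\rho_{g^{-1}}\,\diff\mu(g)$ and $\rho_g$ is the density from Lemma~\ref{lemma:sigma-estimates}-(3); applying Jensen's inequality to the convex function $t\mapsto t\log t$ and then Lemma~\ref{lemma:sigma-estimates}-(3) (using $\|g^{-1}\|=\|g\|$) shows $R\in L^1$ and $\int_{\P^1}R\log^\star R\,\omegaFS\lesssim 1+\int_G\log^2\|g\|\,\diff\mu(g)<\infty$. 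The difficulty is that $R\notin L^\infty$, so $\int|\oP_\xi^\ell u|^2R$ is not controlled by $\|\oP_\xi^\ell u\|_{L^2}^2$ alone. We argue by truncation. Fix $\epsilon>0$. Since $\oP_\xi^\ell u$ is uniformly bounded in $W^{1,2}$ for $u\in\B$ (Lemma~\ref{lemma:iterates-bounded}), the Moser--Trudinger estimate~\eqref{eq:moser-trudinger-estimate} provides $\beta>0$ and $A>0$ with $\|e^{\beta|\oP_\xi^\ell u|^2}\|_{L^2}\le A$ for all $\ell$ and all $u\in\B$. Using $R\in L^1$, choose $T$ so large that $\omegaFS(\{R>T\})$ is small enough that $\int_{\{R>T\}}e^{\beta|\oP_\xi^\ell u|^2}\,\omegaFS\le A\,\omegaFS(\{R>T\})^{1/2}<\epsilon$ (Cauchy--Schwarz) for all $\ell$ and $u\in\B$, and moreover $\int_{\{R>T\}}R\log^\star R\,\omegaFS<\epsilon$ and $\int_{\{R>T\}}R\,\omegaFS<\epsilon$. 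On $\{R\le T\}$ we bound $\int_{\{R\le T\}}|\oP_\xi^\ell u|^2R\,\omegaFS\le T\|\oP_\xi^\ell u\|_{L^2}^2$, which tends to $0$ uniformly on $E\cap\B$ by Lemma~\ref{lemma:weak-convergence}. On $\{R>T\}$ we apply the Fenchel--Young inequality $ab\le e^a+b\log b-b$ with $a=\beta|\oP_\xi^\ell u|^2$ and $b=\beta^{-1}R$; after integration the right-hand side is bounded by $\int_{\{R>T\}}e^{\beta|\oP_\xi^\ell u|^2}\,\omegaFS$ plus a fixed multiple of $\int_{\{R>T\}}(R\log^\star R+R)\,\omegaFS$, hence by $O_\beta(\epsilon)$, uniformly in $\ell$ and $u\in\B$. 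Combining the two regions and letting first $\ell\to\infty$ and then $\epsilon\to0$ gives $\varepsilon_\ell\to0$.

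Putting these estimates together yields $\sup_{u\in E\cap\B}\|\widetilde\oP_\xi^n u\|_{W^{1,2}}\to0$ as $n\to\infty$; in particular there is $n_0$ with $\|\widetilde\oP_\xi^{n_0}|_E\|_{W^{1,2}}<1$. Since, by Lemma~\ref{lemma:phi-nu-continuous}, $E$ is a closed $\widetilde\oP_\xi$-invariant subspace of $W^{1,2}$, the spectral radius formula gives $\rho(\widetilde\oP_\xi|_E)\le\|\widetilde\oP_\xi^{n_0}|_E\|_{W^{1,2}}^{1/n_0}<1$. The main obstacle in this argument is precisely the claim $\varepsilon_\ell\to0$: one must upgrade the $L^2$-decay of $\oP_\xi^\ell u$ coming from the weak convergence of Lemma~\ref{lemma:weak-convergence} to decay of the weighted integral $\int|\oP_\xi^\ell u|^2R$, even though $R$ lies only in $L\log L$ and no better, which is where the Moser--Trudinger estimate re-enters through the truncation on $\{R>T\}$.
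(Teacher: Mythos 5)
Your proposal is correct and follows essentially the same route as the paper: the same decomposition $\partial\oP_\xi^n u=\oP_\xi^n\partial u+\Theta_\xi^{(n)}u$, the same reduction to $\|\phi_\ell\|_{L^2}\to0$, and the same truncation of $\check\rho$ combined with Young's inequality, Moser--Trudinger and Cauchy--Schwarz on the set where the density is large, with Lemma \ref{lemma:weak-convergence} supplying the decay on the complementary set. The only cosmetic differences are your explicit invocation of Jensen's inequality for $\check\rho\log\check\rho\in L^1$ and of the spectral radius formula at the end, both of which the paper leaves implicit.
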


\begin{proof}
We keep the same notations as in the proof Lemma \ref{lemma:iterates-bounded}, such as $\Theta_\xi^{(n)}$, $\phi_\ell$ and $\check \rho$. We need to show that $\|\Theta_\xi^{(n)} u\|_{L^2} \to 0$ as $n \to \infty$ uniformly in $E \cap \B$. Recall that $\|\phi_\ell\|_{L^2}$ is uniformly bounded and, from (\ref{eq:phi-ell-phi-ell-bar}),  $$\|\phi_\ell\|_{L^2}^2 \leq \int_{\P^1}  | \oP_\xi^\ell u|^2 \,  \check \rho \, \omegaFS.$$
\vskip5pt
\noindent\textbf{Claim.} $\|\phi_\ell\|_{L^2} \to 0$ as $\ell \to \infty$.
\proof[Proof of Claim]
Let $\varepsilon > 0$. From the above estimate we have that $$\|\phi_\ell\|^2_{L^2} \leq \int_{\P^1} | \oP_\xi^\ell u|^2 \, \check \rho \, \omegaFS = \int_{ \check \rho \leq M } | \oP_\xi^\ell u|^2 \, \check \rho \, \omegaFS + \int_{ \check \rho \geq M } | \oP_\xi^\ell u|^2 \, \check \rho \, \omegaFS, $$ where $M>0$ is a fixed large constant.

 Using Young's inequality as in the proof of Lemma \ref{lemma:iterates-bounded}, we see that the last integral in the right hand side is bounded by a constant times $$\int_{ \check \rho \geq M } e^{\frac{\alpha}{2} |\oP_\xi^\ell u|^2} \, \omegaFS + \int_{ \check \rho \geq M }  \check \rho \, \log \check \rho  \, \omegaFS.$$ The contribution of the last integral is $< \varepsilon \slash 4$ for $M$ large because $\check \rho \log \check \rho \in L^1$. By Cauchy-Schwarz inequality and Moser-Trudinger's estimate, the first integral  is bounded by 
 $$ \Big( \int  e^{\alpha |\oP_\xi^\ell u|^2} \, \omegaFS \Big)^{1 \slash 2}\cdot \text{Area}\{\check \rho \geq M\}^{1\slash 2} \leq C \,  \text{Area}\{\check \rho \geq M\}^{1 \slash 2},$$ which is also small for $M$ large because $\check \rho \in L^1$.

By the above discussion, we can choose $M> 0$ so that $\int_{ \check \rho \geq M } | \oP_\xi^\ell u|^2 \, \check \rho \, \omegaFS < \varepsilon \slash 2 $.   Now, $\int_{ \check \rho \leq M } | \oP_\xi^\ell u|^2 \, \check\rho \, \omegaFS$ is bounded by $M \| \oP_\xi^\ell u\|_{L^2}$, which tends to zero as $\ell \to \infty$ by Lemma \ref{lemma:weak-convergence}. It follows that  $\|\phi_\ell\|^2_{L^2} < \varepsilon$ for $\ell$ large enough. This proves the claim. \endproof

\vskip5pt

Now, from (\ref{eq:R_t-contraction}) and (\ref{eq:norm-Q^n}) we get
 \begin{equation*}
\|\Theta_\xi^{(n)} u\|_{L^2} \leq |\xi| \sum_{k=1}^n \|\mathcal R^{(k)}_\xi u\|_{L^2} \leq |\xi| \sum_{k=1}^n  \frac{\|\phi_{n-k}\|_{L^2}}{2^{k-1}} .
\end{equation*}

Let $\varepsilon > 0$ and pick $\ell_0$ such that $\|\phi_\ell\|_{L^2} < \varepsilon$ for $\ell \geq \ell_0$. This is possible by the above  claim. Then, for large $n$ we have $$\sum_{k=1}^n  \frac{\|\phi_{n-k}\|_{L^2}}{2^{k-1}} = \sum_{k=1}^{n-\ell_0}  \frac{\|\phi_{n-k}\|_{L^2}}{2^{k-1}} + \sum_{k=n- \ell_0 +1}^n  \frac{\|\phi_{n-k}\|_{L^2}}{2^{k-1}} \leq \varepsilon \sum_{k=1}^\infty \frac{1}{2^{k-1}} + C \sum_{k=n- \ell_0 +1}^\infty  \frac{1}{2^{k-1}} < 3 \varepsilon.$$

We conclude that $\|\Theta_\xi^{(n)} u\|_{L^2} \to 0$ as $n \to \infty$. This finishes the proof.
\end{proof}

The last conclusion of the above lemma gives a precise spectral description of the operator $\widetilde \oP_\xi$: it fixes $\varphi$ and has a small spectral radius in the invariant hyperplane $E$. In other words, $\widetilde \oP_\xi$ \textit{has a spectral gap}. In particular, the iterates of $\widetilde \oP_\xi$ converge exponentially fast  to the projection onto the one-dimensional vector space spanned by $\varphi$. This is summarized by the following corollary. Recall that we are assuming that $\mathbf m \neq 0$, which will be shown to be impossible afterwards.

\begin{corollary} \label{cor:convergence-to-phi}
Let $u \in W^{1,2}$. Then $\widetilde \oP_\xi^n  u \to c_u \varphi$ in $W^{1,2}$-norm as $n \to \infty$, where  $c_u := \frac{1}{\mathbf m^2} \lp  \varphi \, \nu, \overline u \rp$. The convergence is exponentially fast. In particular, $c_u = 0$ if and only if $u \in E$.
\end{corollary}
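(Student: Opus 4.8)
The plan is to combine the spectral gap statement of Lemma \ref{lemma:strong-convergence} with the explicit computation of the rank-one projection onto $\C \varphi$. By that lemma, $\rho(\widetilde\oP_\xi|_E) < 1$, and $\widetilde\oP_\xi \varphi = \varphi$, $\nu$-almost everywhere; moreover $E$ is a closed $\widetilde\oP_\xi$-invariant hyperplane which does not contain $\varphi$ (Lemma \ref{lemma:phi-nu-continuous} together with the fact that $|\varphi| = \mathbf m \neq 0$, $\nu$-a.e., so $\lp \varphi\nu, \overline\varphi\rp = \mathbf m^2 \neq 0$). Hence $W^{1,2} = \C\varphi \oplus E$ as a topological direct sum, and the projection $\Pi$ onto $\C\varphi$ along $E$ is bounded. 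Writing $u = \Pi u + (u - \Pi u)$ with $\Pi u = c_u \varphi$ and $u - \Pi u \in E$, we get $\widetilde\oP_\xi^n u = c_u \varphi + \widetilde\oP_\xi^n(u - \Pi u)$, and the second term tends to $0$ in $W^{1,2}$-norm exponentially fast by Lemma \ref{lemma:strong-convergence}. This already gives the convergence $\widetilde\oP_\xi^n u \to c_u\varphi$ exponentially fast, with $c_u$ the unique scalar such that $u - c_u\varphi \in E$.

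The remaining point is to identify $c_u$ with $\frac{1}{\mathbf m^2}\lp \varphi\nu, \overline u\rp$. Since $u - c_u\varphi \in E$, by definition of $E$ we have $\lp \varphi\nu, \overline{u - c_u\varphi}\rp = 0$, i.e.\ $\lp \varphi\nu, \overline u\rp = \overline{c_u}\,\lp\varphi\nu,\overline\varphi\rp$. Now $\lp\varphi\nu,\overline\varphi\rp = \int_{\P^1} |\varphi|^2\,\diff\nu = \mathbf m^2$ (using that $\varphi$ is a good representative so the pairing is the genuine integral against $\nu$, cf.\ Lemma \ref{lemma:int-good-rep}, and $|\varphi| = \mathbf m$, $\nu$-a.e.). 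Wait — this would give $\overline{c_u} = \frac{1}{\mathbf m^2}\lp\varphi\nu,\overline u\rp$; to match the stated formula one observes that the functional $u \mapsto \lp\varphi\nu,\overline u\rp$ is conjugate-linear, and the consistent convention (already used in \eqref{eq:def-subspace-E}) makes $c_u = \frac{1}{\mathbf m^2}\lp\varphi\nu,\overline u\rp$ the correct coefficient; in any case the scalar is pinned down by the condition $u - c_u\varphi \in E$ together with $\lp\varphi\nu,\overline\varphi\rp = \mathbf m^2 \neq 0$, and one reads off the formula directly. The final claim "$c_u = 0$ iff $u \in E$" is then immediate: $c_u = 0$ means $\lp\varphi\nu,\overline u\rp = 0$, which is exactly the defining condition of $E$.

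For the exponential rate: fix $\rho_1$ with $\rho(\widetilde\oP_\xi|_E) < \rho_1 < 1$; by the spectral radius formula there is $C > 0$ with $\|\widetilde\oP_\xi^n|_E\|_{W^{1,2}} \leq C\rho_1^n$ for all $n$. Applying this to $u - c_u\varphi \in E$ and using boundedness of $\Pi$ (equivalently, of $u \mapsto c_u$, which follows from continuity of the functional in Lemma \ref{lemma:phi-nu-continuous} and $\mathbf m \neq 0$) gives $\|\widetilde\oP_\xi^n u - c_u\varphi\|_{W^{1,2}} \leq C'\rho_1^n\|u\|_{W^{1,2}}$, as desired.

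I do not expect a serious obstacle here: the corollary is essentially a restatement of Lemma \ref{lemma:strong-convergence} in projection form, and every ingredient (the decomposition $W^{1,2} = \C\varphi\oplus E$, continuity of the pairing, the value $\lp\varphi\nu,\overline\varphi\rp = \mathbf m^2$, and the good-representative bookkeeping) is already in place. The only mild care needed is the conjugate-linearity convention in the definition of $c_u$ so that the formula is stated consistently with \eqref{eq:def-subspace-E}; this is purely notational.
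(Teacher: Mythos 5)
The paper states this corollary without proof, as a direct summary of Lemmas \ref{lemma:phi-nu-continuous} and \ref{lemma:strong-convergence}, and your argument is exactly the intended one: split $W^{1,2}=\C\varphi\oplus E$ using $\lp\varphi\nu,\overline\varphi\rp=\mathbf m^2\neq 0$ and the closedness and invariance of $E$, then kill the $E$-component with the spectral gap. Your handling of the conjugation convention is also fine: the scalar is pinned down by $u-c_u\varphi\in E$, and the assertion ``$c_u=0$ iff $u\in E$'' is insensitive to whether one writes $c_u$ or $\overline{c_u}$ in the formula.

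One step deserves more care than you (and the paper) give it. Writing $\widetilde\oP_\xi^n u=c_u\varphi+\widetilde\oP_\xi^n(u-\Pi u)$ uses $\widetilde\oP_\xi\varphi=\varphi$ as an identity in $W^{1,2}$, i.e.\ Lebesgue-almost everywhere, whereas the construction only yields $\widetilde\oP_\xi\varphi=\varphi$ $\nu$-almost everywhere (the cohomological equation \eqref{eq:weak-cohomological} is a $\mu\otimes\nu$-a.e.\ statement, and $\nu$ need not be absolutely continuous). The gap is short to fill with the tools already in place: by \eqref{eq:P_t-tilde-invariance} applied to $u=\varphi$, the element $v_0:=\widetilde\oP_\xi\varphi-\varphi$ lies in $E$, so $\widetilde\oP_\xi^n\varphi=\varphi+\sum_{k=0}^{n-1}\widetilde\oP_\xi^k v_0$ converges geometrically in $W^{1,2}$ to a genuine fixed point $\varphi'$ with $\varphi'-\varphi\in E$; moreover $\varphi'=\varphi$ $\nu$-a.e.\ because $|\widetilde\oP_\xi^k v_0|\leq\oP_0^k|v_0|$ and $\lp\nu,\oP_0^k|v_0|\rp=\lp\nu,|v_0|\rp=0$. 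Since $\varphi'$ still satisfies \eqref{eq:weak-cohomological} and $|\varphi'|=\mathbf m$ $\nu$-a.e., replacing $\varphi$ by $\varphi'$ changes nothing in the definition \eqref{eq:def-subspace-E} of $E$ nor in the subsequent contradiction argument, and with this replacement your decomposition argument is literally correct.
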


We can now finish the proof of Theorem \ref{thm:P_t-contracting}.

\begin{proof}[Proof of Theorem \ref{thm:P_t-contracting}]
Fix $\xi \neq 0$. Recall our notations: $\mathcal F_\infty$ is the family consisting of all weak limits of sequences of the type $(\oP_\xi^{n_\ell} \varphi_\ell  )_{\ell \geq 1}$ with $\varphi_\ell \in \B$  and $n_\ell \nearrow \infty$ as $\ell \to \infty$, and $\mathcal F^M_\infty \subset \mathcal F_\infty$ is the set of functions $\psi \in \mathcal F_\infty$ such that $\langle \nu, |\psi| \rangle =: \mathbf m $ is maximal.

\vskip5pt
\noindent\textbf{Claim.} $\mathbf m =0$.
\proof[Proof of Claim]
If not, Lemma \ref{lemma:maximal-functions} gives a  weak solution $\varphi$ to the cohomological equation (\ref{eq:weak-cohomological}). Using this $\varphi$, we can define $E$ as in (\ref{eq:def-subspace-E}), which is a proper subspace of $W^{1,2}$. Let $u$ be a $\Cc^1$ function such that $u \notin E$. In particular, $u \in W^{1,2} \cap \Cc^{\log^1}$ and $c_u \neq 0$. By Corollary \ref{cor:convergence-to-phi}, $\widetilde \oP_\xi^n  u \to c_u \varphi$ exponentially fast in $W^{1,2}$-norm as $n \to \infty$. This implies that  $\|\widetilde \oP_\xi^n  u- \widetilde \oP_\xi^{n-1} u\|_{W^{1,2}} \lesssim \lambda^n$ for some $0< \lambda <1$ and $c_u \varphi = u + \sum_{n=1}^\infty (\widetilde \oP_\xi^n  u- \widetilde \oP_\xi^{n-1} u)$. By Proposition \ref{prop:P_t-logp}, we have $[\widetilde \oP_\xi^n  u- \widetilde \oP_\xi^{n-1}u ]_{\log^1} \lesssim n^{2}$. From Corollary \ref{cor:logp-pre-equidistribution}, we conclude that 
$$\|\widetilde \oP_\xi^n  u- \widetilde \oP_\xi^{n-1}u\|_\infty \leq c \tau^n \quad \text{for some constants } c>0\text{ and }0<\tau<1.$$ 
In particular, the series $\sum_{n=1}^\infty (\widetilde \oP_\xi^n  u- \widetilde \oP_\xi^{n-1} u)$ defines a continuous function. We conclude that $\varphi$ is continuous.  As (\ref{eq:weak-cohomological}) holds $\mu \otimes \nu$-almost everywhere, the same identity should hold for all $(g,x)$ in the support of $\mu \otimes \nu$. In other words,  (\ref{eq:cohomological}) holds,  contradicting Proposition \ref{prop:non-arithmetic}. This contradiction shows that $\mathbf m = 0$, proving the claim. \endproof

\vskip5pt

As $\mathbf m =0$, Lemma \ref{lemma:maximal-functions} shows that $\mathcal F_\infty$ contains only the zero function.   In other words,  $\oP_\xi^n  u \to 0$ weakly and uniformly in $u \in \B$ as $n \to \infty$. By \cite{dinh-marinescu-vu} one gets that $\| \oP_\xi^n u\|_{L^p} \to 0$  as $n \to \infty$ for every $p \geq 1$, uniformly in $u \in \B$.  Repeating the arguments of the proof of Lemma  \ref{lemma:strong-convergence} for $u$ in $W^{1,2}$ instead of $E$ gives that $\rho(\oP_\xi) < 1$. This proves the first part of the theorem.

 From the continuity of the family $\xi \mapsto \oP_\xi$ acting on $W^{1,2}$, cf. Proposition \ref{prop:P_t-regularity},  one gets that $\rho(\oP_\xi)$ is bounded away from one for $\xi$ in any compact subset of $\R \setminus \{0\}$.  The last assertion of the theorem follows.
\end{proof}

\section{The function space $\W$} \label{sec:W}

In this section, we introduce a new space of functions on $\P^1$, denoted by $\W$, contained both in the Sobolev space $W^{1,2}$ and in the space of continuous functions. The operators $\oP_\xi$  will act continuously on $\W$ and their spectral properties persist. The advantage of working with $\W$ instead of just $W^{1,2}$ is that it automatically yields pointwise and uniform estimates, which are needed in the proof of our main theorems.

\vskip5pt

The definition of the norm $\W$ will mix the Sobolev norm and the $\log^{p-1}$-norm,  where $p$ is such that $\mu$ has a finite moment of order $p$, and will also take into account the spectral gap of $\oP = \oP_0$ on $W^{1,2}$. The main results  are summarized in the following theorem.

\begin{theorem} \label{thm:spectral-gap-W}
Let $\mu$ be a non-elementary probability measure on $G =\SL_2(\C)$. Assume that $\int_G \log^p \|g\| \, \diff \mu(g) <\infty$, where $p > 3 \slash 2$. Set $\oN u := \int_{\P^1} u \, \diff \nu$, where $\nu$ is the unique stationary measure. Then, there exists a Banach space $( \W, \|  \cdot \|_\W)$ of functions in $\P^1$ with the following properties: 

\begin{enumerate}
\item The following inequalities hold: $\| \cdot \|_{W^{1,2}} \lesssim \| \cdot \|_\W $, $\| \cdot \|_{\infty} \lesssim \| \cdot \|_\W $ and  $ \| \cdot \|_\W \lesssim \max \lbrace \| \cdot \|_{W^{1,2}}, \| \cdot \|_{\log^{p-1}} \rbrace$. In particular, we have continuous embeddings $$ W^ {1,2} \cap \Cc^{\log^{p-1}} \subset\W \subset W^ {1,2}\cap \Cc^0;$$

\item There exists an $n_0 \geq 1$ such that $\| \oP^{n_0} - \oN\|_\W < 1$;
\item For every $\xi \in \R$, $\oP_\xi$ defines a bounded operator on $\W$; 
\item The family $\xi \mapsto \oP_\xi$ acting on  $\W$ is  continuous.
\end{enumerate}
\end{theorem}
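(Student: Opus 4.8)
The plan is to construct the space $\W$ explicitly by "averaging" the Sobolev norm along the semigroup generated by $\oP=\oP_0$, in such a way that the spectral gap of $\oP$ on $W^{1,2}$ (Theorem \ref{thm:DKW-spectral-gap}) becomes an honest contraction after one step, while simultaneously keeping the $\log^{p-1}$-norm under control so that Proposition \ref{prop:moser-trudinger-logp} can be invoked to dominate the $\Cc^0$-norm. Concretely, fix $\rho<\rho'<1$ with $\|\oP^n-\oN\|_{W^{1,2}}\le c\rho^n$, and for a suitable exponent $\beta\in(0,1)$ and constant $A$ (to be chosen later, depending on $p$) set
\begin{equation*}
\|u\|_\W := \Big|\lp\nu,u\rp\Big| + \sup_{n\ge 0}\ (\rho')^{-n}\,\big\|\del\big((\oP^n-\oN)u\big)\big\|_{L^2} + A\,\big(\,[u]_{\log^{p-1}}\,\big)^{\beta}\,\big\|\del u\big\|_{L^2}^{1-\beta},
\end{equation*}
or a variant in the same spirit (one may instead add $A\,[u]_{\log^{p-1}}^{\theta}$ for small $\theta$, or take a maximum rather than a sum). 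The first two terms already define a norm equivalent to $\|\cdot\|_{W^{1,2}}$ on $W^{1,2}$ on which $\oP$ is eventually contracting; the last term, borrowed from the $\log^{p-1}$ scale, is what upgrades control to the sup-norm. One checks that $\W$, defined as the completion (equivalently, the subspace of $W^{1,2}$ on which $\|\cdot\|_\W<\infty$), is a Banach space.

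\textbf{Key steps, in order.} First I would prove part (1). The bound $\|\cdot\|_{W^{1,2}}\lesssim\|\cdot\|_\W$ is immediate from the first two terms (take $n=0$ and use the equivalence of $|\lp\nu,\cdot\rp|+\|\del\cdot\|_{L^2}$ with $\|\cdot\|_{W^{1,2}}$ from \cite[Corollary 2.13]{DKW:PAMQ}, valid for $p\ge 1/2$ by Remark \ref{rem:1/2-moment}). The bound $\|\cdot\|_\W\lesssim\max\{\|\cdot\|_{W^{1,2}},\|\cdot\|_{\log^{p-1}}\}$ follows because $\|\del((\oP^n-\oN)u)\|_{L^2}\le\|\oP^n-\oN\|_{W^{1,2}}\|u\|_{W^{1,2}}\le c\rho^n\|u\|_{W^{1,2}}$, which is summable against $(\rho')^{-n}$, and because $[u]_{\log^{p-1}}^\beta\|\del u\|_{L^2}^{1-\beta}\lesssim\max\{\|u\|_{W^{1,2}},\|u\|_{\log^{p-1}}\}$. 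The crucial inequality $\|\cdot\|_\infty\lesssim\|\cdot\|_\W$ is where Proposition \ref{prop:moser-trudinger-logp} enters: if $\|u\|_\W\le 1$ then $\|u\|_{W^{1,2}}\lesssim 1$ and $[u]_{\log^{p-1}}^\beta\|\del u\|_{L^2}^{1-\beta}\lesssim 1$; interpolating with the trivial bound and applying Proposition \ref{prop:moser-trudinger-logp} (which requires $p-1>0$, hence needs adjusting to the exponent $p-1$; this is precisely why the hypothesis is $p>3/2$, giving $p-1>1/2$ so that Corollary \ref{cor:logp-pre-equidistribution}-type estimates and the Moser--Trudinger argument at exponent $2(p-1)$ go through) yields $\|u\|_\infty\lesssim 1$.

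Next, part (2): by construction $\|(\oP^{n_0}-\oN)u\|_\W$ is, up to the $\log^{p-1}$ term, bounded by $(\rho')^{n_0}$ times $\|u\|_\W$ because of the weighted-sup telescoping built into the norm (one uses $\oN\oP=\oP\oN=\oN$, so $(\oP^{n}-\oN)(\oP^{n_0}-\oN)=\oP^{n+n_0}-\oN$, shifting the index by $n_0$); and the $\log^{p-1}$ term of $\oP^{n_0}u$ is controlled polynomially in $n_0$ via Proposition \ref{prop:P_t-logp} (with $\xi=0$), so for the correct choice of $\beta$ small and $A$ large the total contracts. Part (3) follows from Propositions \ref{prop:P_t-bounded}, \ref{prop:P_t-logp} and Lemma \ref{lemma:sigma-estimates}: each term defining $\|\oP_\xi u\|_\W$ is estimated by $\|u\|_\W$ times a constant depending on $\xi$ and $M_p(\mu)$, using the identity \eqref{eq:del-e^itsigma_g-g^*u}, the bound $\|g^*u\,\del\sigma_g\|_{L^2}\lesssim(1+\log\|g\|)\|u\|_{W^{1,2}}$, and Lemma \ref{lemma:log^p-product} for the $\log^{p-1}$ factor. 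Finally, part (4): continuity of $\xi\mapsto\oP_\xi$ on $\W$ follows by decomposing $\|(\oP_\xi-\oP_{\xi_0})u\|_\W$ into its defining terms and applying the already-established continuity of $\xi\mapsto\oP_\xi$ on $W^{1,2}$ (Proposition \ref{prop:P_t-regularity}) and on $\Cc^{\log^{p-1}}$ (Proposition \ref{prop:P_t-logp-regularity}), together with the interpolation structure of the third term — note that since $p>3/2$ we have $p>1$, so Proposition \ref{prop:P_t-logp-regularity} applies with the exponent $p-1$.

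\textbf{The main obstacle} I expect is calibrating the exponent $\beta$ (and the constant $A$) so that all four requirements hold simultaneously: the $\log^{p-1}$ ingredient must be strong enough that the Moser--Trudinger argument in (1) produces a genuine sup-norm bound, yet weak enough that the polynomial-in-$n$ growth of $[\oP^n u]_{\log^{p-1}}$ (from Proposition \ref{prop:P_t-logp}) does not destroy the contraction in (2); getting a single $\beta$ that threads this needle — and verifying that $\W$ so defined is complete and that the third term genuinely interpolates as claimed — is the delicate bookkeeping. A secondary technical point is checking that $\del$ of $\oP^n u$ and of $\oN u$ are interpreted consistently (e.g. $\del\oN u=0$ since $\oN u$ is constant), so that the weighted-sup term is well-defined; this is routine but must be stated carefully.
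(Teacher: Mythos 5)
Your high-level strategy is the right one — couple the exponential contraction of $\oP-\oN$ on $W^{1,2}$ with the polynomially growing $\log^{p-1}$-control, and use the Moser--Trudinger interpolation of Proposition \ref{prop:moser-trudinger-logp} (with exponent $2(p-1)>1$, which is exactly where $p>3/2$ enters) to dominate the sup-norm. But there is a genuine gap at the very first step: none of the functionals you propose is actually a norm, so the object you build is not a Banach space, and everything downstream (completeness, and the application of Kato's perturbation theory in Corollary \ref{cor:P_t-decomp-W}) collapses. The term $[u]_{\log^{p-1}}^{\beta}\,\|\del u\|_{L^2}^{1-\beta}$ is positively homogeneous of degree one but \emph{not} subadditive: taking $u$ with $[u]_{\log^{p-1}}\approx M$, $\|\del u\|_{L^2}\approx M^{-1}$ and $v$ with $[v]_{\log^{p-1}}\approx M^{-1}$, $\|\del v\|_{L^2}\approx M$, one gets (for $\beta=1/2$, say) $N(u)=N(v)=1$ while $N(u+v)\approx M$, so the triangle inequality fails by an unbounded factor; the same happens for every $\beta\in(0,1)$. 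The alternative $A\,[u]_{\log^{p-1}}^{\theta}$ is subadditive but fails homogeneity, and taking a maximum with the Sobolev part does not repair subadditivity. Calibrating $\beta$ is therefore not "delicate bookkeeping": there is no exponent for which these explicit formulas define a norm, and this is precisely the obstruction the construction must overcome.

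The paper's resolution is a different device, which you may want to compare with: $\|u\|_{\W}$ is defined in \eqref{eq:def-norm-W} as the gauge (Minkowski functional) of the convex balanced set of all sums $\sum_{n\ge0}u_n$ with $\|u_n\|_{W^{1,2}}\le\lambda^n$ and $\|u_n\|_{\log^{p-1}}\le\kappa^n$, where $\lambda=2\|\oQ\|_{W^{1,2}}$ and $\kappa=2CM_{p-1}(\mu)$ are tuned (by replacing $\oP$ with an iterate) so that $\kappa<\lambda^{3-2p}$. This is automatically a norm; the sup-norm bound follows by applying the Moser--Trudinger interpolation \emph{term by term} to each $u_n$ (Lemma \ref{lemma:sobolev-log^p-exponential}, which converts the pair of geometric bounds into $\|u_n\|_\infty\le c\tau^n$ with $\tau<1$ thanks to $\kappa<\lambda^{3-2p}$); and the contraction $\|\oQ\|_\W\le1/2$ follows by shifting the index of the decomposition, $v_n:=2\oQ u_{n-1}$. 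Your treatment of (3) and (4) — reduce to boundedness and continuity on $W^{1,2}$ and on $\Cc^{\log^{p-1}}$ separately — matches the paper's \eqref{eq:op-norm-W} and would carry over verbatim once the norm is defined correctly; it is only the construction of $\W$ itself that needs to be replaced.
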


Let us set up the proof of Theorem \ref{thm:spectral-gap-W}. Set $\oQ: = \oP - \oN$. Recall that, under our hypothesis, $\oQ$ acts continuously both on $W^{1,2}$ and $\Cc^{\log^{p-1}}$, see Theorem \ref{thm:DKW-spectral-gap} and Proposition \ref{prop:Pu-logp}. Observe that, as $\oN u$ is a constant function, we have $[\oQ u]_{\log^{p-1}} = [\oP u]_{\log^{p-1}}$ for $u \in \Cc^{\log^{p-1}}$. By Theorem \ref{thm:DKW-spectral-gap}  we can assume, up to taking iterates, that $$   \varrho_0 := \|\oQ\|_{W^{1,2}}   \leq 1 \slash 4 . $$

From  Proposition \ref{prop:Pu-logp}  there is a constant $C>0$ independent of $\mu$ such that 
 $$\|\oQ\|_{\log^{p-1}} \leq  B, \quad \text{where} \quad B: = C(1+M_{p-1}(\mu)).$$

 Set $\lambda:= 2\varrho_0$ and $\kappa := 2B$. If we replace $\oP$ by $\oP^n$, $\|\oQ\|_{W^{1,2}}$ is replaced by $\|\oQ^n\|_{W^{1,2}} \leq \varrho_0^n$ and $B$ is replaced by $C(1+M_{p-1}(\mu^{\ast n})) \lesssim n^{p-1}$,  see the proof of Corollary \ref{cor:logp-equidistribution}. Hence, by taking $n$ sufficiently large, we may assume that $\kappa <  \lambda^{3-2p}$. Recall that we are assuming $p>3\slash 2$.
 
For a measurable function $u$ on $\P^1$ we define
\begin{equation} \label{eq:def-norm-W}
\|u\|_\W := \inf \bigg \lbrace c>0 : u = c \, \sum_{n=0}^\infty u_n,  \quad \|u_n\|_{W^{1,2}}  \leq \lambda^n \, \text{ and } \|u_n\|_{\log^{p-1}} \leq \kappa^n, \, \forall n \geq 0 \bigg \rbrace.
\end{equation}

If a constant $c>0$ as above does not exist we set $\|u\|_\W := +\infty$. The space $\W$ is defined as the set of measurable functions $u$ such that $\|u\|_\W < +\infty$. Observe that the $u_n$ above are automatically continuous and belong to $W^{1,2}$, 

Observe that $\|u \|_{W^{1,2}} \leq 2 \| u  \|_\W $ for $u \in \W$.  Indeed,  if  $u \in\W$, we can write  $u = c \sum u_n$ as in (\ref{eq:def-norm-W}). As $\|u_n\|_{W^{1,2}} \leq \lambda^n$ and $\lambda = 2 \varrho_0 \leq 1 \slash 2$, the sum converges in $W^{1,2}$ and $\|u\|_{W^{1,2}} \leq 2c$. As $\|u\|_\W$ is the infimum of such $c$, we see that $\|u\|_{W^{1,2}} \leq 2 \|u\|_\W$.

The following lemma is analogous to Corollary \ref{cor:logp-pre-equidistribution}.

\begin{lemma} \label{lemma:sobolev-log^p-exponential}
Let $p,\lambda,\kappa$ be as above. Let $u$  be a   function in  $W^{1,2} \, \cap \, \Cc^{\log^{p-1}}$. Assume that $\|u\|_{W^{1,2}} \leq   \lambda^n$ and $ \|u\|_{\log^{p-1}} \leq  \kappa^n$ for some $n \geq 1$. Then, there are constants $c > 0$ and $0< \tau < 1$ independent of $u$ and $n$ such that  $$\|u \|_\infty \leq c \tau^n.$$ 
\end{lemma}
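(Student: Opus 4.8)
The plan is to reduce this lemma to Proposition~\ref{prop:moser-trudinger-logp}, which converts a simultaneous bound on the $W^{1,2}$-norm and the $\log^{p-1}$-norm into a sup-norm estimate. First I would normalize: set $\widetilde u := \lambda^{-n} u$, so that $\widetilde u$ lies in the unit ball of $W^{1,2}$, hence in a fixed bounded subset $\mathcal F$ of $W^{1,2}$ to which Proposition~\ref{prop:moser-trudinger-logp} applies. The hypothesis $\|u\|_{\log^{p-1}} \le \kappa^n$ gives $[\widetilde u]_{\log^{p-1}} \le \lambda^{-n}\kappa^n = (\kappa/\lambda)^n$. Since $\kappa < \lambda^{3-2p}$ and (from the set-up, shrinking if necessary) we may also assume $\kappa \ge \lambda$ and $\lambda < 1$, this quantity is $\ge 1$ but still controlled; in any case I apply Proposition~\ref{prop:moser-trudinger-logp} with $M = \max\{1, (\kappa/\lambda)^n\}$ to get $\|\widetilde u\|_\infty \le A M^{1/(2(p-1))} \le A (\kappa/\lambda)^{n/(2(p-1))}$ (the $\max$ with $1$ only affects a bounded range of $n$, which we absorb into the constant).

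Unwinding the normalization yields
\begin{equation*}
\|u\|_\infty = \lambda^n \|\widetilde u\|_\infty \le A\, \lambda^n (\kappa/\lambda)^{n/(2(p-1))} = A\, \lambda^{n\big(1 - \frac{1}{2(p-1)}\big)}\, \kappa^{n/(2(p-1))}.
\end{equation*}
To finish I must check that the base $\lambda^{1 - 1/(2(p-1))} \kappa^{1/(2(p-1))}$ is strictly less than $1$. Writing $\theta := 1/(2(p-1))$, this base equals $\lambda^{1-\theta}\kappa^{\theta}$; using $\kappa < \lambda^{3-2p}$ and $0 < \lambda < 1$ we get
\begin{equation*}
\lambda^{1-\theta}\kappa^{\theta} < \lambda^{1-\theta}\lambda^{(3-2p)\theta} = \lambda^{1-\theta + (3-2p)\theta} = \lambda^{1 + \theta(2 - 2p)} = \lambda^{1 + \frac{2-2p}{2(p-1)}} = \lambda^{1 - 1} = \lambda^0 = 1,
\end{equation*}
wait — that gives only $<1$ non-strictly at the exponent level but since $\lambda<1$ and the exponent computation shows the bound is $<\lambda^0=1$ we do obtain a base $\tau := \lambda^{1-\theta}\kappa^\theta$ with $0<\tau<1$. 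Hence $\|u\|_\infty \le A\,\tau^n =: c\,\tau^n$ with $c := A$, which is the desired conclusion.

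The only genuine subtlety — and the step I would be most careful about — is the bookkeeping with the constants $\lambda,\kappa$ and the exponent $p-1$ in place of the $p$ appearing in Corollary~\ref{cor:logp-pre-equidistribution}: there the hypothesis needed $p > 1/2$, whereas here the relevant Hölder-type exponent is $p-1$, so the analogue of that corollary's constraint becomes $p - 1 > 1/2$, i.e.\ $p > 3/2$, which is exactly the standing assumption. I would also remark explicitly that the inequality $\kappa < \lambda^{3-2p}$ is precisely what was arranged in the paragraph preceding the lemma (by passing to a high enough iterate of $\oP$), so no additional work on the operator side is needed. The rest is the elementary exponent arithmetic above, and the extraction of uniform constants $c,\tau$ independent of $u$ and $n$ follows because $A$ from Proposition~\ref{prop:moser-trudinger-logp} depends only on the bounded set $\mathcal F$ (here the unit ball of $W^{1,2}$), not on $u$ or $n$.
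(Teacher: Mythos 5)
Your proof is correct and follows essentially the same route as the paper: normalize by $\lambda^{-n}$, apply Proposition~\ref{prop:moser-trudinger-logp} with exponent $p-1$ to get $\|\widetilde u\|_\infty \leq A(\lambda^{-1}\kappa)^{n/(2p-2)}$, and then check via $\kappa < \lambda^{3-2p}$ that the resulting base $(\lambda^{2p-3}\kappa)^{1/(2p-2)}$ is strictly less than one. The exponent arithmetic and the extraction of uniform constants are exactly as in the paper; your hedging about strictness is unnecessary, since $\kappa<\lambda^{3-2p}$ is a strict inequality and $t\mapsto t^{\theta}$ is strictly increasing.
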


\begin{proof}
Let $\widetilde u := \lambda^{-n} u$. Then $u$ belongs to the unit ball of $W^{1,2}$. By hypothesis  $\|\widetilde u\|_{\log^{p-1}} \leq \lambda^{-n} \kappa^n = (\lambda^{-1} \kappa)^n$. From Proposition \ref{prop:moser-trudinger-logp}, there is a constant $A >0$ independent of $u$ and $n$ such that $\|\widetilde u\|_\infty \leq A (\lambda^{-1} \kappa)^{n \slash (2p-2)} $. This gives $$\| u\|_\infty \leq \lambda^n  A (\lambda^{-1} \kappa)^{n \slash (2p-2)} = A  (\lambda^{2p-3} \kappa)^{n \slash (2p-2)}.$$

Setting $\tau:= (\lambda^{2p-3} \kappa)^{1 \slash (2p-2)}$ and recalling that  $\kappa < \lambda^{3-2p}$  yields the result.
\end{proof}

\begin{corollary} \label{cor:uniform-W}
Let $u \in \W$. Then $u$ is continuous. Moreover, there is a constant $D >0$ such that $\|u\|_\infty \leq  D \|u\|_\W$ for all $u \in \W$.
\end{corollary}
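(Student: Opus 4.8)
The plan is to unwind the definition \eqref{eq:def-norm-W} of the $\W$-norm and apply Lemma \ref{lemma:sobolev-log^p-exponential} term by term. Given $u \in \W$ and $\varepsilon > 0$, I would choose a constant $c$ with $0 < c \leq \|u\|_\W + \varepsilon$ together with a decomposition $u = c\sum_{n \geq 0} u_n$ such that $\|u_n\|_{W^{1,2}} \leq \lambda^n$ and $\|u_n\|_{\log^{p-1}} \leq \kappa^n$ for every $n \geq 0$. The key point is that, although $\kappa$ may be large so that the $\log^{p-1}$-bounds alone carry no smallness, the simultaneous control of the Sobolev norm lets Lemma \ref{lemma:sobolev-log^p-exponential} convert the pair of bounds into a uniform estimate $\|u_n\|_\infty \leq c_0 \tau^n$ with $0 < \tau < 1$ and $c_0$ independent of $n$ and $u$. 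The case $n = 0$ is not covered by that lemma but is trivial, since $\|u_0\|_\infty \leq \|u_0\|_{\log^{p-1}} \leq 1$; enlarging $c_0$ if necessary we may assume $\|u_n\|_\infty \leq c_0 \tau^n$ for all $n \geq 0$.

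Next I would observe that $\sum_{n \geq 0} \|u_n\|_\infty \leq c_0/(1-\tau) < \infty$, so the series $\sum_{n \geq 0} u_n$ converges uniformly on $\P^1$; since each $u_n$ is continuous, its sum is a continuous function. The same series also converges in $W^{1,2}$ (because $\lambda < 1$ forces $\sum_n \|u_n\|_{W^{1,2}} < \infty$), and the two limits agree as elements of $W^{1,2}$, recalling that functions there are considered modulo null sets. Hence $u = c\sum_{n\geq 0} u_n$ admits a continuous representative, which we identify with $u$; this proves the first assertion.

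For the quantitative statement I would simply estimate
$$\|u\|_\infty = \Big\| c \sum_{n=0}^\infty u_n \Big\|_\infty \leq c \sum_{n=0}^\infty \|u_n\|_\infty \leq \frac{c\, c_0}{1-\tau} \leq \big(\|u\|_\W + \varepsilon\big)\frac{c_0}{1-\tau},$$
and then let $\varepsilon \to 0$, obtaining $\|u\|_\infty \leq C\|u\|_\W$ with $C := c_0/(1-\tau)$, which depends only on $p,\lambda,\kappa$ and not on $u$.

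There is essentially no obstacle here: the substantive work is already contained in Lemma \ref{lemma:sobolev-log^p-exponential} (and behind it, the Moser--Trudinger estimate \eqref{eq:moser-trudinger-estimate} via Proposition \ref{prop:moser-trudinger-logp}) together with the arithmetic choice $\kappa < \lambda^{3-2p}$ that makes $\tau < 1$. The only minor point requiring care is the compatibility of the uniform limit and the $W^{1,2}$-limit of the series defining $u$, i.e.\ that the continuous function produced is genuinely a representative of the $W^{1,2}$-class $u$; this is immediate since both modes of convergence imply convergence in $L^1$, where the limit is unique.
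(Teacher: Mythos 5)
Your proof is correct and follows essentially the same route as the paper: decompose $u$ according to the definition of $\|\cdot\|_\W$, apply Lemma \ref{lemma:sobolev-log^p-exponential} to get $\|u_n\|_\infty \leq c_0\tau^n$, and sum the resulting uniformly convergent series. Your extra remarks (the trivial $n=0$ case of the lemma and the identification of the uniform limit with the $W^{1,2}$-representative) are minor points of care that the paper glosses over but do not change the argument.
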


\begin{proof}
We can assume $\norm{u}_{\mathscr W}=1/2$ and write $u =  \sum_{n=0}^\infty u_n$ with $\|u_n\|_{W^{1,2}}  \leq \lambda^n$ and $  \|u_n\|_{\log^{p-1}} \leq \kappa^n$. By Lemma \ref{lemma:sobolev-log^p-exponential}, one has $\|u_n \|_\infty \leq c \tau^n$ and $0<\tau <1$ so the series converges uniformly. In particular, $u$ is continuous and $\|u\|_\infty \leq D/2$, for some constant $D >0$ independent of $u$. Thus, $\|u\|_\infty \leq D \|u\|_\W$ and the proof is complete.
\end{proof}

We can now prove  Theorem \ref{thm:spectral-gap-W}.

\begin{proof}[Proof of Theorem \ref{thm:spectral-gap-W}]
The proof of first inequality in (1) was already obtained after the definition of $\| \cdot \|_\W$.   The second inequality is the content of Corollary \ref{cor:uniform-W} above. For the third inequality, we notice that if $u \in W^ {1,2} \cap \Cc^{\log^{p-1}}$ and $ \max \lbrace \| u \|_{W^{1,2}}, \| u \|_{\log^{p-1}} \} = c \neq 0$ then we can write $u = c \sum_{n \geq 0} u_n$ where $u_0 = c^{-1} u$ and $u_n = 0$ for $n \geq 1$. Then  $\|u_n\|_{W^{1,2}}  \leq \lambda^n$ and $\|u_n\|_{\log^{p-1}} \leq \kappa^n$ for $n \geq 0$, giving $\|u\|_\W \leq c =  \max \lbrace \| u \|_{W^{1,2}}, \| u\|_{\log^{p-1}} \} $. This concludes the proof of (1).   The completeness of $\W$ can be easily obtained from the completeness of $W^{1,2}$ and $\Cc^{\log^{p-1}}$.

Let us now prove (2). We claim that $\|\oQ\|_\W \leq 1 \slash 2$, where $\oQ=\oP-\oN$. Recall that in the set-up of the proof we have already replaced $\oP$ by an iterate. It is enough to show that if  $u = c \sum u_n$ as in (\ref{eq:def-norm-W}) and $v = \oQ u$ then $\|v\|_\W \leq c \slash 2$. 

We have
\begin{equation} \label{eq:v=Qu}
v = \oQ u = c \, \sum_{n=0}^\infty \oQ u_n = \frac{c}{2} \, \sum_{n=0}^\infty 2 \oQ u_n = \frac{c}{2} \, \sum_{n=0}^\infty v_n,
\end{equation}
 where $v_0:= 0$ and $v_n:= 2 \oQ u_{n-1}$ for $n \geq 1$.
 
 As $\|\oQ\|_{W^{1,2}} = \varrho_0 = \lambda \slash 2$ we get $$\|v_n\|_{W^{1,2}} = 2\|\oQ u_{n-1}\|_{W^{1,2}} \leq 2 \varrho_0 \lambda^{n-1} = \lambda^n$$ and from $\|\oQ\|_{\log^{p-1}} \leq B$ and $\kappa = 2B  $  we get $$\|v_n\|_{\log^{p-1}} = 2\|\oQ u_{n-1}\|_{\log^{p-1}} \leq  2 B \kappa^{n-1} = \kappa^n.$$
 
  Equation (\ref{eq:v=Qu}) together with the definition of $\|\cdot\|_\W$ in (\ref{eq:def-norm-W}) gives $\|v\|_\W \leq c \slash 2$, which is what we wanted. This proves (2).
  
  It follows directly from the definition of $\|\cdot\|_\W$ that if an operator $\mathcal T$ acts continuously both on $W^{1,2}$ and $\Cc^{\log^{p-1}}$ then  it also acts continuously on $\W$ and 
\begin{equation} \label{eq:op-norm-W}
\| \mathcal T\|_\W \leq \max \{ \| \mathcal T\|_{W^{1,2}}, \| \mathcal T\|_{\log^{p-1}} \}.
\end{equation}
 
 Then,  the fact that $\oP_\xi: \W \to \W$ is bounded follows directly from the   fact that it is bounded both in $W^{1,2}$ and in $\Cc^{\log^{p-1}}$ (cf. Propositions \ref{prop:P_t-bounded} and \ref{prop:P_t-logp}). This gives (3).

Applying \eqref{eq:op-norm-W}  to $\mathcal T = \oP_\xi - \oP_{\xi_0}$ when $\xi \to \xi_0$ and using the continuity of $\xi \mapsto \oP_\xi$ both in $W^{1,2}$ and in $\Cc^{\log^{p-1}}$ (cf.   Propositions \ref{prop:P_t-regularity} and \ref{prop:P_t-logp-regularity}) gives (4). This completes the proof of the theorem. 
\end{proof}

We now have an analogue of Corollary \ref{cor:P_t-decomp}. 

\begin{corollary} \label{cor:P_t-decomp-W}
Let $\mu$, $p$ and $\W$ be as in Theorem \ref{thm:spectral-gap-W}. Then, the decomposition (\ref{eq:P_t-decomp}) holds on $\W$. More precisely,
there exists an $\epsilon_1 > 0$ such that, for $\xi \in [-\epsilon_1,\epsilon_1]$ one has a decomposition
\begin{equation} \label{eq:P_t-decomp-W}
\oP_\xi = \lambda_\xi \oN_\xi + \oQ_\xi,
\end{equation}
 as bounded operators on $\W$, where $\lambda_\xi$ is as in Corollary \ref{cor:P_t-decomp} and $\oN_\xi$ and $\oQ_\xi$ are the restriction to $\W$ of the operators on $W^{1,2}$ given by Corollary \ref{cor:P_t-decomp}.
 
 Furthermore,
  
\begin{enumerate}
\item $\lambda_0 = 1$ and $\oN_0 u = \int u \, \diff \nu$, where $\nu$ is the unique $\mu$-stationary measure;
\item $\rho':= \displaystyle \lim_{n \to \infty } \|\oP_0^n - \oN_0\|_\W^{1 \slash n} < 1$;

\item $\lambda_\xi$ is the unique eigenvalue of maximum modulus of $\oP_\xi$ in $\W$, $\oN_\xi$ is a rank-one projection and $\oN_\xi \oQ_\xi = \oQ_\xi \oN_\xi = 0$;

\item the maps $\xi \mapsto \lambda_\xi$,  $\xi \mapsto \oN_\xi$ and $\xi \mapsto \oQ_\xi$ are continuous as operators on $\W$;

\item  $|\lambda_\xi| > \frac{2 + \rho'}{3}$ and there exists a constant $c > 0$ such that $\|\oQ_\xi^n\|_\W \leq c \Big( \frac{1 + 2 \rho'}{3} \Big)^n$  for every  $n \geq 0.$

\item If $p \geq 2$, then the map $\xi \mapsto \lambda_\xi$ is differentiable and has an expansion
\begin{equation} \label{eq:lambda-expansion-2}
\lambda_\xi = 1 + i  \gamma \xi - A \frac{\xi^2}{2}+ \psi(\xi),
\end{equation}
for a  continuous function $\psi$  such that $
\lim_{\xi\to 0 }\xi^{-2}\psi(\xi) =0$ and $A = a^2 + \gamma^2$, where $a > 0$ is the variance in the CLT (Theorem \ref{thm:CLT}) and $\gamma$ is the Lyapunov exponent. 
\end{enumerate}
\end{corollary}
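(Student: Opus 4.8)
The plan is to obtain the decomposition on $\W$ by abstract perturbation theory and then identify it with the restriction of the $W^{1,2}$-decomposition of Corollary \ref{cor:P_t-decomp}; I work under the hypotheses ensuring that corollary applies (in particular $\int_G\log^2\|g\|\,\diff\mu(g)<\infty$, which holds in all our applications). First I would record the spectral gap of $\oP_0=\oP$ on $\W$. By Theorem \ref{thm:spectral-gap-W}(1) constant functions belong to $\W$ and $|\langle\nu,u\rangle|\le\|u\|_\infty\lesssim\|u\|_\W$, so $\oN_0:=\oN$ is a bounded rank-one operator on $\W$. From $\oP\oN=\oN\oP=\oN$ and $\oN^2=\oN$ the operator $\oQ_0:=\oP-\oN$ satisfies $\oN_0\oQ_0=\oQ_0\oN_0=0$ and $\oQ_0^n=\oP_0^n-\oN_0$, so Theorem \ref{thm:spectral-gap-W}(2) yields $\rho':=\lim_n\|\oP_0^n-\oN_0\|_\W^{1/n}=\rho(\oQ_0|_{\W})<1$. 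This already proves (2) and exhibits $\oP_0=\oN_0+\oQ_0$ as the spectral decomposition of $\oP_0|_{\W}$ at the isolated value $1$, with $\mathrm{spec}(\oQ_0|_{\W})\subset\{|z|\le\rho'\}$.

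Next, since $\xi\mapsto\oP_\xi$ is continuous for the operator norm on $\W$ (Theorem \ref{thm:spectral-gap-W}(4)), the perturbation theory of isolated parts of the spectrum \cite{kato:book} produces an $\epsilon_1>0$ so that for $|\xi|\le\epsilon_1$ the spectrum of $\oP_\xi|_{\W}$ splits into a single point $\widetilde\lambda_\xi$ near $1$ and a remainder contained in $\{|z|<(1+2\rho')/3\}$; writing $\oP_\xi=\widetilde\lambda_\xi\widetilde\oN_\xi+\widetilde\oQ_\xi$ for the corresponding decomposition on $\W$, the $\widetilde\oN_\xi$ are rank-one projections with $\widetilde\oN_\xi\widetilde\oQ_\xi=\widetilde\oQ_\xi\widetilde\oN_\xi=0$, and the maps $\xi\mapsto\widetilde\lambda_\xi,\widetilde\oN_\xi,\widetilde\oQ_\xi$ are continuous. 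After shrinking $\epsilon_1$ we have $|\widetilde\lambda_\xi|>(2+\rho')/3$, and the Riesz formula $\widetilde\oQ_\xi^n=\frac{1}{2\pi i}\oint_{|z|=(1+2\rho')/3}z^n(z-\oP_\xi)^{-1}\,\diff z$, together with a uniform bound for the resolvent on this circle (joint continuity of $(z,\xi)\mapsto(z-\oP_\xi)^{-1}$ and compactness of $[-\epsilon_1,\epsilon_1]$), gives $\|\widetilde\oQ_\xi^n\|_{\W}\le c\big((1+2\rho')/3\big)^n$. This settles (3) and (5) for the decomposition just constructed.

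The main point, and the step I expect to be the most delicate, is to identify $\widetilde\lambda_\xi,\widetilde\oN_\xi,\widetilde\oQ_\xi$ with $\lambda_\xi$ and the restrictions to $\W$ of $\oN_\xi,\oQ_\xi$ from Corollary \ref{cor:P_t-decomp}; for this I also require $\epsilon_1\le\epsilon_0$, so that the $W^{1,2}$-decomposition is available. Set $v_\xi:=\widetilde\oN_\xi\mathbf 1\in\W$; since $\widetilde\oN_0\mathbf 1=\oN\mathbf 1=\mathbf 1\neq 0$ and $\xi\mapsto\widetilde\oN_\xi$ is continuous, $v_\xi\neq 0$ for $\epsilon_1$ small, and $\oP_\xi v_\xi=\widetilde\lambda_\xi v_\xi$ because $\oP_\xi\widetilde\oN_\xi=\widetilde\lambda_\xi\widetilde\oN_\xi$. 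Through the continuous inclusion $\W\subset W^{1,2}$ of Theorem \ref{thm:spectral-gap-W}(1), $v_\xi$ is a nonzero eigenvector of $\oP_\xi$ acting on $W^{1,2}$, with eigenvalue $\widetilde\lambda_\xi$; since $\mathrm{spec}(\oP_\xi|_{W^{1,2}})=\{\lambda_\xi\}\cup\mathrm{spec}(\oQ_\xi|_{W^{1,2}})$ with the remaining part inside $\{|z|\le(1+2\rho)/3\}$ (Corollary \ref{cor:P_t-decomp}) and $|\widetilde\lambda_\xi|$ is close to $1$, this forces $\widetilde\lambda_\xi=\lambda_\xi$, and then $v_\xi$ spans the one-dimensional eigenspace $\oN_\xi W^{1,2}$, which therefore lies in $\W$. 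Choosing a small positively oriented circle $\Gamma$ centred at $\lambda_\xi$, lying in the resolvent sets of both $\oP_\xi|_{\W}$ and $\oP_\xi|_{W^{1,2}}$ and enclosing no point of either spectrum other than $\lambda_\xi$, and using that on $\Gamma$ the resolvent of $\oP_\xi|_{\W}$ is the restriction of that of $\oP_\xi|_{W^{1,2}}$, the Riesz integral gives $\widetilde\oN_\xi=\frac{1}{2\pi i}\oint_\Gamma(z-\oP_\xi)^{-1}\,\diff z=\oN_\xi|_{\W}$, whence $\widetilde\oQ_\xi=\oP_\xi|_{\W}-\lambda_\xi\widetilde\oN_\xi=\oQ_\xi|_{\W}$. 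This establishes \eqref{eq:P_t-decomp-W} and settles (1), (3), (4), (5) for $\lambda_\xi,\oN_\xi,\oQ_\xi$ themselves. Finally (6) is not new: $\lambda_\xi$ coincides with the leading eigenvalue of Corollary \ref{cor:P_t-decomp}, so its differentiability and the expansion \eqref{eq:lambda-expansion-2} with $A=a^2+\gamma^2$ are exactly Lemmas \ref{lemma:lambda_t-expansion} and \ref{lemma:lambda-expansion-2} (and Lemma \ref{lemma:lambda_t-expansion-3} for the $O(|\xi|^3)$ refinement under a third moment).
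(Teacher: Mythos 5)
Your proposal is correct and follows essentially the same route as the paper: apply Kato's perturbation theory to $\oP_\xi$ on $\W$ using Theorem \ref{thm:spectral-gap-W}, then identify the resulting eigenvalue and projections with those of Corollary \ref{cor:P_t-decomp} by viewing the $\W$-eigenvector inside $W^{1,2}$ and invoking uniqueness of the eigenvalue near $1$. The only cosmetic difference is that you identify the projections via restriction of resolvents and the Riesz integral, while the paper argues directly that the two rank-one projections share the same one-dimensional range; both are valid.
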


\begin{proof}
By the properties of $\oP_\xi$ given by Theorem \ref{thm:spectral-gap-W} and the general theory of perturbations of linear operators \cite{kato:book}, one has a decomposition $\oP_\xi = \alpha_\xi \mathcal M_\xi + \mathcal R_\xi$, where $\alpha_\xi$, $\mathcal M_\xi$ and $\mathcal R_\xi$ satisfy the analogues of (1)--(5). We claim that $\alpha_\xi = \lambda_\xi$, $\mathcal M_\xi = \oN_\xi$ and $\mathcal R_\xi = \oQ_\xi$. This will give assertions (1)--(5). 

Let $\varphi_\xi = \oN_\xi \mathbf 1 \in W^{1,2}$ be the eigenfunction associated with $\lambda_\xi$ and $\psi_\xi = \mathcal M_\xi \mathbf 1 \in \W \subset W^ {1,2}$  be the eigenfunction associated with $\alpha_\xi$.   It follows from the decomposition (\ref{eq:P_t-decomp}) and the statements of Corollary \ref{cor:P_t-decomp}  that $\lambda_\xi$ is the only eigenvalue of $\oP_\xi$ in $W^{1,2}$ that is close to one for $\xi$ sufficiently small. This implies that $\alpha_\xi = \lambda_\xi$ for $\xi$ near zero and consequently $\varphi_\xi = \psi_\xi$ up to a constant multiple, as the associated eigenspaces of $\oN_\xi$ and $\mathcal M_\xi$ are both one-dimensional. Therefore, the projections $\oN_\xi$ and $\mathcal M_\xi$ agree on $\W$ for $\xi$ near zero and $\oQ_\xi = \oP_\xi - \lambda_\xi \oN_\xi = \oP_\xi - \alpha_\xi \mathcal M_\xi = \mathcal R_\xi$. This completes the proof of assertions (1)--(5).

As $\lambda_\xi$ coincides with the leading eigenvalue of $\oP_\xi$ acting on $W^{1,2}$, (6) follows from Lemmas \ref{lemma:lambda_t-expansion} and \ref{lemma:lambda-expansion-2}.
\end{proof}

In the proofs of Theorems \ref{thm:LLT-coeff}, \ref{thm:berry-esseen}  and \ref{thm:BE-coeff} in the forthcoming sections we'll need the following stronger regularity for $\oP_\xi$ acting on $\W$.

\begin{lemma} \label{lemma:Pt-regularity-3-moments}
Let $\mu$ be a non-elementary probability measure on $G=\SL_2(\C)$. Assume that $\int_G \log^{p+1} \|g\| \, \diff \mu(g) <\infty$, where $p > 3 \slash 2$. Let $\W$ be as in Theorem \ref{thm:spectral-gap-W}. Then the family $\xi \mapsto \oP_\xi$ acting on $\W$ is locally Lipschitz continuous in $\xi$. The same is true for $\xi \mapsto \oN_\xi$. 
\end{lemma}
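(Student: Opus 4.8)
The plan is to prove local Lipschitz continuity of $\xi \mapsto \oP_\xi$ on $\W$ by the same mechanism used throughout Section \ref{sec:perturbed-markov-op}: reduce to the two constituent norms via the submultiplicative estimate \eqref{eq:op-norm-W}, namely $\|\mathcal T\|_\W \leq \max\{\|\mathcal T\|_{W^{1,2}},\|\mathcal T\|_{\log^{p-1}}\}$ applied to $\mathcal T = \oP_\xi - \oP_\eta$. So the proof consists of showing separately that $\|\oP_\xi - \oP_\eta\|_{W^{1,2}} \lesssim |\xi-\eta|$ and $\|\oP_\xi - \oP_\eta\|_{\log^{p-1}} \lesssim |\xi-\eta|$ for $\xi,\eta$ in a fixed bounded interval $J$, with constants depending only on $J$ and on $M_{p+1}(\mu)$.

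First I would record that $\|\oP_\xi - \oP_\eta\|_{W^{1,2}} \lesssim |\xi-\eta|$. This follows by reusing the estimates in the proof of Proposition \ref{prop:P_t-regularity} together with the identity $e^{i\xi\sigma_g}-e^{i\eta\sigma_g} = (\xi-\eta)\int_0^1 i\sigma_g\, e^{i(\eta+t(\xi-\eta))\sigma_g}\,\diff t$: for $u$ in the unit ball of $W^{1,2}$ one writes $(\oP_\xi-\oP_\eta)u = (\xi-\eta)\int_G\int_0^1 i\sigma_g\, e^{i(\eta+t(\xi-\eta))\sigma_g}\, g^*u\,\diff t\,\diff\mu(g)$, and the inner integrand has $W^{1,2}$-norm $\lesssim 1+\log^2\|g\|$ by the computations in (iv) of that proof (using $\|\sigma_g\|_\infty = \log\|g\|$, Lemma \ref{lemma:sigma-estimates}(1),(4) and Lemma \ref{lemma:g^*u-L^2}); integrating against $\mu$ uses only the second moment, which is implied by the $(p+1)$-moment since $p+1 > 2$. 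For the $\Cc^{\log^{p-1}}$ bound, the estimate $\|\oP_\xi-\oP_\eta\|_{\log^{p-1}} \lesssim |\xi-\eta|\cdot M_{p+1}(\mu)$ is \emph{exactly} the Lipschitz statement already established in the second half of the proof of Proposition \ref{prop:P_t-logp-regularity}; no new work is needed there.

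Combining the two via \eqref{eq:op-norm-W} gives $\|\oP_\xi - \oP_\eta\|_\W \lesssim |\xi-\eta|$ for $\xi,\eta \in J$, which is the first assertion. For the second assertion — local Lipschitz continuity of $\xi \mapsto \oN_\xi$ on $\W$ — I would invoke the general perturbation theory of \cite{kato:book}: the spectral projection $\oN_\xi$ is given by the Riesz integral $\oN_\xi = \frac{1}{2\pi i}\oint_\Gamma (z - \oP_\xi)^{-1}\,\diff z$ over a fixed small circle $\Gamma$ separating $\lambda_\xi$ from the rest of the spectrum of $\oP_\xi$ acting on $\W$ (valid uniformly for $\xi$ near $0$ by Corollary \ref{cor:P_t-decomp-W}). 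The resolvent identity $(z-\oP_\xi)^{-1} - (z-\oP_\eta)^{-1} = (z-\oP_\xi)^{-1}(\oP_\xi-\oP_\eta)(z-\oP_\eta)^{-1}$ together with the uniform bound on $\|(z-\oP_\xi)^{-1}\|_\W$ for $z \in \Gamma$ and the just-proved Lipschitz bound on $\oP_\xi$ then yields $\|\oN_\xi - \oN_\eta\|_\W \lesssim |\xi-\eta|$ on a neighbourhood of $0$; away from $0$ the map is real-analytic wherever $\lambda_\xi$ stays isolated, and in any case only the local behaviour near $0$ is needed for Section \ref{sec:LLT-coeff}.

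The main obstacle is purely bookkeeping: one must check that the integrand $i\sigma_g\, e^{i(\eta+t(\xi-\eta))\sigma_g}\,g^*u$ has $W^{1,2}$-norm controlled by $1+\log^2\|g\|$ uniformly in $t \in [0,1]$ and $\eta,\xi \in J$ — the factor $\|\sigma_g\|_\infty \cdot \|\del\sigma_g\, g^*u\|_{L^2}$ contributes $\log\|g\|\cdot(1+\log\|g\|)$, which is where the exponent $2$ (hence the need for only a second moment) appears, while the $\del$ falling on $e^{i(\cdots)\sigma_g}$ contributes an extra $\|\sigma_g\|_\infty\cdot\|\del\sigma_g\, g^*u\|_{L^2} \lesssim \log\|g\|(1+\log\|g\|)$ as well; none of these exceed the $(p+1)$-moment budget since $p+1>2$. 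Everything else is a direct citation of Proposition \ref{prop:P_t-logp-regularity}, \eqref{eq:op-norm-W}, and standard resolvent estimates, so the proof will be short.
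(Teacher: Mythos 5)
Your proposal is correct and follows essentially the same route as the paper: reduce to the $W^{1,2}$ and $\Cc^{\log^{p-1}}$ norms via \eqref{eq:op-norm-W}, get the Lipschitz bound on $W^{1,2}$ from the estimates behind Proposition \ref{prop:P_t-regularity} (the paper simply cites differentiability with locally bounded derivative, which your integral identity unpacks), quote Proposition \ref{prop:P_t-logp-regularity} for the $\log^{p-1}$ part, and transfer the property to $\oN_\xi$ through the Riesz integral and the resolvent identity. No gaps.
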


\begin{proof}
In view of \eqref{eq:op-norm-W}, in order to prove that $\xi \mapsto \oP_\xi$ is locally Lipschitz in $\W$ it is enough to show that the same property holds both in $W^{1,2}$ and in $\Cc^{\log^{p-1}}$. Since $p+1 > 5 \slash 2$ we know from Proposition \ref{prop:P_t-regularity} that $\xi \mapsto \oP_\xi$ is  differentiable with locally bounded derivative as a family of operators on $W^{1,2}$. In particular, this family is  locally Lipschitz continuous. Now, Proposition \ref{prop:P_t-logp-regularity} gives that $\xi \mapsto \oP_\xi$ is locally Lipschitz continuous in $\Cc^{\log^{p-1}}$. The same property for $\W$ follows. 

The operator $\oN_\xi$ is given by the integral $(2\pi i)^{-1} \int_\Gamma R_\xi(z) \diff z$ where $\Gamma$ is a small circle enclosing $1$ and $R_\xi(z) := (z - \oP_\xi)^{-1}$ is the resolvent of $\oP_\xi$, see \cite{kato:book}. Since $\xi \mapsto \oP_\xi$ is locally Lipschitz by the first part of the lemma, the same property is true for $\xi \mapsto \oN_\xi$. This completes the proof of the lemma.
\end{proof}

\section{LLT for the norm cocycle} \label{sec:LLT-norm}

This section is devoted to the proof of Theorem \ref{thm:LLT}. We follow Le Page's approach, \cite{lepage:theoremes-limites}, which is inspired by the proof of the classical LLT for sums of i.i.d.'s via Fourier transforms. The key ingredients are the fact that the spectral radius of $\oP_\xi$ acting on $W^{1,2}$  is smaller than one for $\xi \in \R \setminus \{0\}$, cf. Theorem \ref{thm:P_t-contracting}, and the spectral properties of $\oP_\xi$ acting on $\W$ near $\xi=0$ obtained in Section \ref{sec:W}, which will yield pointwise estimates.

Let $\mathscr H$ be the space of  functions $\psi \in L^1(\R)$ such that $\psi$ is the inverse Fourier transform of a continuous function $\widehat \psi$ with compact support in $\R$, i.e., $\psi(u) = (2 \pi)^{-1} \int_{-\infty}^{+\infty} e^{iu\xi} \widehat \psi(\xi) \, \diff \xi$. We prove first a version of Theorem \ref{thm:LLT}.

\begin{lemma}\label{lemma:LLT-product}
	Let $\psi\in\mathscr H$ and $\varphi\in \Cc^1(\P^1)\subseteq \mathscr W$, then the conclusion of Theorem \ref{thm:LLT} holds for $f(u,x)=\psi(u)\cdot\varphi(x)$.
\end{lemma}

\begin{proof}
Replacing $u$ by $t+\sigma(S_n,x)-n\gamma$ in the Fourier inversion  formula for $\psi$ and using Fubini's theorem, we get
\begin{align*}
& \E \Big ( \psi\big( t + \sigma(S_n,x) - n \gamma\big) \cdot \varphi( S_n \cdot  x) \Big ) =\int_{G}  \psi\big( t + \sigma(g,x) - n \gamma\big) \cdot \varphi( g  x)  \diff\mu^{*n}(g)
\\  
&=\frac{1}{2 \pi} \int_{-\infty}^{+ \infty}  \widehat \psi(\xi) \int_{G} e^{i \xi  (t+\sigma(g,x) - n \gamma)} \varphi(gx) \diff \mu^{*n}(g) \diff \xi  =\frac{1}{2 \pi} \int_{-\infty}^{+ \infty} e^{i\xi t}\, \widehat \psi(\xi) e^{-i \xi n \gamma} \oP^n_{\xi} \varphi(x)  \diff \xi,
\end{align*}
where we have used \eqref{eq:P_t^n}.  Recall that $\widehat \psi$ is continuous and compactly supported, so the use of Fubini's theorem is justified.

 Let $a^2 > 0$ be the variance in the CLT for the norm cocycle (Theorem \ref{thm:CLT}).  Using that $\widehat \psi(0)=\int_{-\infty}^{+ \infty} \psi(s)\,\diff s$ and that the inverse Fourier transform of $2\pi e^{-\xi^2/2}$ is $\sqrt{2\pi}e^{-s^2/2}$, we get
\begin{equation*}
\sqrt{2 \pi} e^{-\frac{t^2}{2a^2  n}} \int_{-\infty}^{+\infty} \psi(s) \,  \diff s = a\,\widehat{\psi}(0) \int_{-\infty}^{+\infty} e^{i\frac{\xi t}{\sqrt n}}  e^{-\frac{a^2\xi^2}{2}} \,\diff \xi.
\end{equation*}

Let
\begin{equation*} 
T_n(t,x): =   \sqrt{2 \pi n}\, a \,  \E \Big (\psi\big( t + \sigma (S_n, x) - n \gamma\big) \cdot \varphi( S_n \cdot  x) \Big )  - e^{-\frac{t^2}{2 a^2 n}}  \int_{\P^1} \int_{-\infty}^{+\infty} \psi(s) \varphi(y)\, \diff s  \diff \nu( y)
\end{equation*}
be the expression on the left hand side of \eqref{eq:LLT}. Our goal is to prove that $|T_n(t,x)|$ tends to zero uniformly in $(t,x) \in \R \times \P^1$ as $n$ tends to infinity.  

Fix $\varepsilon >0$. We'll show that $|T_n(t,x)| < \varepsilon$ for $n$ large independently of $(t,x) \in \R \times \P^1$.  From the above computations we have
\begin{align*}
{\sqrt{2 \pi}\over a} \, T_n(t,x) = \sqrt {n}  \int_{-\infty}^{+\infty} e^{i\xi t} \widehat \psi(\xi)  e^{-i \xi n \gamma} \oP^n_{\xi} \varphi (x) \, \diff \xi   - \widehat{\psi}(0) \int_{-\infty}^{+\infty} e^{i\frac{\xi t}{ \sqrt n}}  e^{-\frac{a^2\xi^2}{2}} \,\diff \xi \int_{\P^1} \varphi \, \diff \nu.
\end{align*}

Let $M >0$ be such that the support of $\widehat \psi$ is contained in $[-M , M]$. From the expansion $\lambda_\xi = 1 + i\gamma \xi  - (a^2 +\gamma^ 2)\xi^2/2 + o(\xi^2)$ given by Corollary \ref{cor:P_t-decomp-W}, we obtain $$|\lambda_\xi| = \sqrt{\lambda_\xi \overline{\lambda_\xi}}= \sqrt{1 - a^2 \xi^2  + o(\xi^2)} = 1 -\frac{a^2 \xi^2}{2} + o(\xi^2) =  e^{-a^2 \xi^2 \slash 2 + o(\xi^2)}.$$ We can thus find   $0 < \xi_0  < \epsilon_1$, where $\epsilon_1$ is the constant from Corollary \ref{cor:P_t-decomp-W}, such that $\xi_0<M$ and   
\begin{equation} \label{eq:exp-times-lambda-n}
\Big| e^{-i \xi \sqrt n \, \gamma } \lambda_{\frac{\xi}{ \sqrt n}}^n \Big| \leq e^{-\frac{a^2\xi^2}{4}}  \quad \text{ for} \quad  |\xi| < \xi_0   \sqrt n. 
\end{equation}

By  Theorem \ref{thm:spectral-gap-W}-(1),  there is a constant $D>0$ such that $\| \cdot \|_{\infty} \leq D \| \cdot \|_\W $. Set $\varepsilon' := \varepsilon \slash ( 10 D \sqrt \pi/a  \|\widehat \psi\|_\infty \|\varphi\|_\W).$ By taking a smaller $\xi_0 $ if necessary we also have, by the continuity of $\xi \mapsto \oN_\xi$ (cf. Corollary \ref{cor:P_t-decomp-W}-(4)), that
\begin{equation} \label{eq:N_s-nu}
\big \| \oN_{\frac{\xi}{\sqrt n}}  - \oN_0 \big \|_\W  \leq  \varepsilon' \quad \text{ for} \quad  |\xi| < \xi_0   \sqrt n. 
\end{equation}

Recall from Corollary \ref{cor:P_t-decomp-W} that $\oP^n_\xi = \lambda^n_\xi \oN_\xi + \oQ^n_\xi$  as bounded operators on $\W$ for $|\xi| \leq \epsilon_1$.
 We  now split $\sqrt{2\pi} /a\, T_n$ into five parts
$${\sqrt{2\pi}\over a} \,  T_n(t,x): = T^1_n(t) + T^2_n(t,x) + T^3_n(t,x) + T^4_n(t,x) + T^5_n(t),$$ where
$$T^1_n(t): = \sqrt n  \int_{|\xi| \leq \xi_0 } e^{i \xi t} \widehat \psi (\xi) \, e^{-in \xi \gamma} \lambda_\xi^n  \int_{\P^1} \varphi \, \diff \nu  \diff \xi - \widehat{\psi}(0) \int_{|\xi| < \xi_0  \sqrt n} e^{i\frac{\xi t}{ \sqrt n}}  e^{-\frac{a^2\xi^2}{2}}\,\diff \xi \int_{\P^1} \varphi \, \diff \nu,$$

$$T^2_n(t,x): =  \sqrt n   \int_{|\xi| \leq \xi_0 } e^{i \xi t} \widehat \psi (\xi) \, e^{-in \xi \gamma} \lambda_\xi^n  \Big(  \oN_ \xi \varphi (x) - \int_{\P^1} \varphi \, \diff \nu \Big) \, \diff \xi, $$

$$T^3_n(t,x) :=  \sqrt n  \int_{|\xi| \leq \xi_0 } e^{i \xi t} \widehat \psi (\xi) \, e^{-in \xi \gamma} \,   \oQ^n_\xi \varphi(x) \,\diff \xi, $$

$$T^4_n(t,x): = \sqrt n  \int_{\xi_0  \leq |\xi| \leq M} e^{i \xi t} \widehat \psi(\xi) e^{-i n \xi \gamma} \oP^n_\xi \varphi (x) \, \diff \xi, $$ and 
$$T^5_n(t) := -  \widehat \psi (0) \int_{|\xi| \geq \xi_0  \sqrt n} e^{i\frac{\xi t}{ \sqrt n}}  e^{-\frac{a^2\xi^2}{2}}\,\diff \xi \int_{\P^1} \varphi \, \diff \nu.$$
Recall that $\widehat \psi$ has support in $[-M,M]$. 

We will show separately that  $|T^j_n| < \varepsilon \slash 5$, $j=1,\ldots,5$, uniformly in $(t,x)$ for $n$ large.

\vskip5pt

(1) -- Using the change of coordinates $\xi \mapsto \xi/ \sqrt n$ in the first integral defining $T^1_n$, we get
$$T^1_n(t) = \int_{|\xi| < \xi_0 \sqrt n} e^{i  \frac{\xi t}{ \sqrt n}} \Big[ \widehat{\psi} \big( \frac{\xi}{ \sqrt n}\big) e^{ -i\sqrt n \xi \gamma} \lambda_{ \frac{\xi}{ \sqrt n}}^n - \widehat \psi(0) e^{-\frac{a^2 \xi^2}{2}} \Big] \, \diff \xi \int_{\P^1} \varphi \, \diff \nu.$$

Using the expansion $\lambda_{\xi} = 1 + i \gamma \xi - (a^2 + \gamma^2)\xi^2/2  + o(\xi^2)$  again,  we get that $\lambda_{\xi} = e^{i\gamma \xi - \frac{a^2}{2} \xi^2 + o(\xi^2)}$ for $\xi$ small, so $\lambda_\xi^n = e^{i n \gamma \xi - n \frac{a^2}{2} \xi^2 + n \cdot o(\xi^2)}$. 
It follows that 
$$\lim_{n \to \infty} e^{ -i \sqrt n \xi \gamma} \lambda_{ \frac{\xi}{ \sqrt n}}^n=  e^{-\frac{a^2\xi^2}{2}}.$$ 
Therefore, the integrand in the first integral defining $T^1_n(t)$ converges to zero pointwise in $\xi$ as $n \to \infty$. By (\ref{eq:exp-times-lambda-n}), the integrand is bounded by a fixed integrable function for every $n$. Therefore, Lebesgue's dominated convergence theorem implies that $|T^1_n(t) | \to 0$ as $n \to \infty$, uniformly in $t$. In particular, we have $|T^1_n(t) | <  \varepsilon \slash 5$ for $n$ large, uniformly in $t$.

\vskip5pt

(2) -- Using the change of coordinates $\xi \mapsto \xi/\sqrt n$ again we get
$$T^2_n(t,x) =  \int_{|\xi| \leq \xi_0  \sqrt n }  e^{i \frac{ \xi t}{ \sqrt n}} \widehat{\psi} \big( \frac{\xi}{ \sqrt n} \big) \, e^{-i \sqrt n \xi \gamma}\lambda_{\frac{\xi}{ \sqrt n}}^n  \Big(  \oN_{\frac{\xi}{ \sqrt n}}\varphi (x) - \int_{\P^1} \varphi \, \diff \nu \Big) \, \diff \xi. $$

From (\ref{eq:N_s-nu}) and the fact that $\| \cdot \|_{\infty} \leq D \| \cdot \|_\W $ we get $$ \big\|  \oN_{\frac{\xi}{ \sqrt n}}\varphi - \oN_0 \varphi \big\|_\infty \leq  D \big\|  \oN_{\frac{\xi}{ \sqrt n}}\varphi - \oN_0 \varphi \big\|_\W \leq  D \, \varepsilon' \|\varphi\|_\W.$$

Recall that $\oN_0 \varphi = \int_{\P^1} \varphi \, \diff \nu$.  Together with (\ref{eq:exp-times-lambda-n}), this gives $$ \sup_{(t,x) \in \R \times \P^1} |T^2_n(t,x) | \leq  D \varepsilon'  \|\widehat \psi\|_\infty \|\varphi\|_\W \int_{-\infty}^{+\infty} e^{-\frac{a^2\xi^2}{4} } \, \diff \xi = D \varepsilon' {2 \sqrt \pi\over a}     \|\widehat \psi\|_\infty \|\varphi\|_\W = \varepsilon \slash 5.$$

\vskip5pt

(3) -- From the fact that $\|\oQ_\xi^n\|_{\W}$ is exponentially small (cf. Corollary \ref{cor:P_t-decomp-W}-(5))  and  that $\| \cdot \|_{\infty} \leq B \| \cdot \|_\W $  we get that $\|\oQ^n_\xi \varphi\|_\infty \leq c \tau^n \, \|\varphi\|_\W$ for some $0< \tau <1$ and $c>0$. Hence $$ \sup_{(t,x) \in \R \times \P^1} |T^3_n(t,x) | \leq c_2 \sqrt  n \, \xi_0  \|\widehat \psi\|_{\infty} \tau^n   \|\varphi \|_\W$$
for some $c_2 >0$. In particular, we have $|T^3_n(t,x) | <  \varepsilon \slash 5$ for $n$ large, uniformly in $(t,x)$.

\vskip5pt

(4) -- From Theorem \ref{thm:P_t-contracting}  there is a constant $c_3 >0$ such that for $\xi_0 < |\xi| < M$, we have $\|\oP^n_{\xi}\|_{W^{1,2}} \leq c_3 \beta_M^n$ for some $0<\beta_M<1$. We also have, from Proposition \ref{prop:P_t-logp}, that $\|\oP^n_{\xi}\|_{\log^{p-1}} \leq c_4 n^p$ for some constant $c_4 >0$. Corollary \ref{cor:logp-pre-equidistribution} yields 
\begin{equation}\label{lambda-4-ineq}
\|\oP^n_{\xi}\varphi\|_\infty \leq c_5 \rho_M^n  \|\varphi \|_{\W}
\end{equation}
 for some $c_5 >0$ and $0<\rho_M<1$, both independent of $\xi$. Therefore, we have
$$ \sup_{(t,x) \in \R \times \P^1} |T^4_n(t,x) | \leq c_5 \sqrt  n \, \|\widehat \psi\|_{\infty} \rho_M^n  \|\varphi \|_{\W},$$ so 
 $|T^4_n(t,x) | <  \varepsilon \slash 5$ for $n$ large, uniformly in $(t,x)$.

\vskip5pt

(5) --  Finally, we have
$$ \sup_{t \in \R} |T^5_n(t) | \leq |\widehat \psi (0)|   \int_{|\xi| \geq \xi_0  \sqrt n } e^{-\frac{a^2\xi^2}{2}}\diff \xi \, \Big | \int_{\P^1} \varphi \, \diff \nu \Big| \leq  {c_6\over \sqrt n} \norm{\widehat \psi}_{\infty}\norm{\varphi}_{W^{1,2}}  .$$ 
The last quantity tends to $0$ as $n \to \infty$, yielding $|T^5_n(t) | <  \varepsilon \slash 5$ for $n$ large, uniformly in $t$.

\vskip5pt

The estimates in (1)---(5) give (\ref{eq:LLT}) for $f$. This finishes the proof of the lemma.
\end{proof}

In order to reduce the proof of Theorem \ref{thm:LLT} to the case of Lemma \ref{lemma:LLT-product},  we'll need the following approximation lemma.  The result  is standard and a similar approximation is used in  \cite[p. 297-298]{lepage:theoremes-limites}.  We include a proof here for completeness.

\begin{lemma} \label{lemma:weierstrass}
Let $f$ be a continuous function with compact support on $\R \times \P^1$.  Then,  for  every $\varepsilon > 0$ there exist an integer $N = N(\varepsilon) >0$ and functions $f^\pm$ of the form  $f^\pm (u,x): =\sum_{1\leq j\leq N} \psi_j^\pm(u) \varphi_j^\pm(x)$ where $\psi_j^\pm \in \mathscr H$ and $\varphi_j^\pm \in \Cc^1(\P^1) \subseteq \W$, such that 
\begin{equation*}
  f^- \leq f \leq f^+ \quad \text{and} \quad \|f^\pm - f\|_{L^1(\Leb \otimes \nu)} \leq \varepsilon.
\end{equation*}
\end{lemma}

\begin{proof}

\noindent \textbf{Case 1:} Suppose first that $f(u,x) = \psi(u)$,  where $\psi$ is a smooth function with compact support on $\R$.  In this case,  the result follows directly from Lemma \ref{lemma:conv-Fourier-2}.

\medskip

\noindent \textbf{Case 2:} Suppose now that $f (u,x) =\psi(u) \varphi(x)$,  where $\psi$ is a smooth function  with compact support on $\R$ and $\varphi \in \Cc^1$.  By adding a constant to $\varphi$,  multiplying it by a constant and using Case 1, we can assume that $\frac12 \leq \varphi \leq 1$. Given $\varepsilon >0$,   Lemma \ref{lemma:conv-Fourier-2} yields functions $\psi^\pm \in \Hc$ with $\psi^- \leq \psi \leq \psi^+$ and $\| \psi^\pm - \psi \|_{L^1} \leq \varepsilon$.  Setting $f^\pm(u,x) = \psi^\pm(u) \varphi(x)$ gives the result in this case.

\medskip
\noindent \textbf{Case 3:} Consider now the general case.  After rescaling the $u$ coordinate and fixing a partition of unity subordinated to local coordinate charts in $\P^1$,  we can reduce the problem to the case where  $f = f(u,x)$  is a function on $\R \times \R^2$ supported by $[-1/2,1/2] \times [-1/2,1/2]^2$.    Denote by $x = (x_1,x_2)$ the coordinates in the $\R^2$ factor.

Let $\varepsilon > 0$.  By Stone-Weierstrass Theorem,  there exists  $N = N(\varepsilon)$ and smooth functions 	$\widetilde\psi_i$ on $[-1,1]$ and $\widetilde\varphi_i$ on $[-1,1]^2$,  $i = 1, \ldots, N- 1$ such that  $$\Big|f(u,v)-\sum_{i=1}^{N -1}\widetilde\psi_i(u)\widetilde\varphi_i(x)\Big|\leq \varepsilon /100 \quad \text{for } \, (u,x) \in [-1,1] \times [-1,1]^2.$$
We can  can use here polynomials in $u$ and $x$.

Let  $0\leq \chi \leq 1$ be a smooth cut-off function on $\R$ supported by $[-1,1]$, with $\chi =1$ on $[-1/2,1/2]$.   Set $\psi_i^\pm(u):= \psi_i(u):= \chi(u) \widetilde\psi_i(u)$ and $\varphi^\pm_i(x) := \varphi_i(x) = \chi(x_1)\chi(x_2) \widetilde\varphi_i(x)$ for $i = 1, \ldots, N-1$. For $i = N$,  set  $\varphi^\pm_N = \pm \varepsilon \chi(x_1) \chi(x_2)/100$ and $ \psi^\pm_N = \chi$.

Then,  the functions $h^\pm (u,x):=\sum_{i=1}^{N} \psi^\pm_i(u)\varphi^\pm_i(x)$ satisfy $h^- \leq f \leq h^+ $ and $\|f-h^\pm \|_\infty\leq \varepsilon/50.$ Applying Case 2 to each term of $h^\pm$  using $\varepsilon/(100N)$ instead of $\varepsilon$,   we get functions $f^\pm$ of the desired form  such that $f^- \leq f \leq f^+ $ and $\|f-f^\pm \|_\infty\leq  \varepsilon /10 $.  Since $f$ and $f^\pm$ are supported by $[-1,1]^3$,  we get that $\|f^\pm - f\|_{L^1(\Leb \otimes \nu)} \leq  (8/10) \,  \varepsilon$ . This concludes the proof of the lemma.
\end{proof}

Now we can complete the proof of Theorem \ref{thm:LLT}.

\begin{proof}[Proof of Theorem \ref{thm:LLT}]
Let $f$ be as in the statement of Theorem \ref{thm:LLT}. Fix $\varepsilon > 0$. Then,  by Lemma \ref{lemma:weierstrass},  one can find an integer $N_\varepsilon >0$ and functions $f^\pm$ of the form  $f^\pm (u,x): =\sum_{1\leq j\leq N_\varepsilon} \psi_j^\pm(u) \varphi_j^\pm(x)$ where $\psi_j^\pm \in \mathscr H$ and $\varphi_j^\pm \in \Cc^1(\P^1) \subseteq \W$, such that $f^- \leq f \leq f^+$ and $\|f^\pm - f\|_{L^1(\Leb \otimes \nu)} \leq \varepsilon$. 

Set $$\mathcal A_n(f) := \sqrt {2 \pi n} \,  a \,  \E \Big( f\big( t + \sigma(S_n, x) - n \gamma , S_n \cdot x\big) \Big)$$ and $$\oG_n(f):=  e^{-\frac{t^2}{2a^2 n}}\int_{\R \times \P^1} f(s,  y)\, \diff s \, \diff \nu(y).$$ Then, we need to show that $\big|\mathcal  A_n(f) -\oG_n(f) \big|$ tends to zero as $n \to \infty$ uniformly in  $(t,x) \in \R \times \P^1$ (cf. eq. (\ref{eq:LLT})). From Lemma \ref{lemma:LLT-product},  the last property is true for the sequences $\big|\mathcal  A_n(f^\pm) -\oG_n(f^\pm) \big|$. We also have $\big|\oG_n(f) -\oG_n(f^\pm) \big| \leq \varepsilon$ for every $n \geq 1$ by the choice of $f^\pm$. Using this and the fact that $\mathcal A_n$ and $\oG_n$ are positive functionals, one has that, for $n$ large enough  
$$\mathcal A_n(f) - \oG_n(f) \leq \mathcal A_n(f^+) - \oG_n(f) \leq \mathcal A_n(f^+) - \oG_n(f^+) + \varepsilon \leq 2 \varepsilon $$ and 
$$- 2\varepsilon \leq  \mathcal A_n(f^-) -\oG_n(f^-) - \varepsilon \leq \mathcal  A_n(f^-) - \oG_n(f) \leq \mathcal A_n(f) -\oG_n(f).$$ We conclude that $\big| \mathcal A_n(f) - \oG_n(f) \big| < 2\varepsilon$ for $n$ large enough,  uniformly in  $(t,x) \in \R \times \P^1$. Since $\varepsilon > 0$ is arbitrary, one concludes that $\big| \mathcal A_n(f) -\oG_n(f) \big| \to 0$   as $n \to \infty$ uniformly in  $(t,x) \in \R \times \P^1$. The proof is complete.
\end{proof}

\begin{remark}[The LLT for the matrix norms] \label{rmk:LLT-norms}  Theorem \ref{thm:LLT} implies that, for any $b_1 < b_2$ one has $$\lim_{n \to \infty} \sqrt n \cdot \mathbf{P} \Big(\frac{1}{\sqrt n} \big(\sigma(S_n,x) - n \gamma \big) \in \Big [ t + \frac{b_1}{\sqrt n}, t + \frac{b_2}{\sqrt n} \Big] \Big) =  e^{- \frac{t^2}{2a^2}}  {b_2 - b_1\over  \sqrt{2 \pi}\, a}.$$

We claim that there exist a point $x \in \P^1$ and a subset $G_n^x$ of $G$ such that $\big|\sigma(g,x) - \log \|g\| \big | = O(\log n)$ for every $g \in G_n^x$ and $\mu^{\ast n}(G \setminus G_n^x) =  O(n^{-1})$.  The details are given below.

This allows us to obtain a ``local limit theorem'' with windows of larger size for the random variables $\log\|S_n\|$. More precisely, under the same assumptions of Theorem  \ref{thm:LLT}, one can show that if $v_n$ is a sequence of positive numbers such that $\lim_{n \to \infty}  v_n = 0$ and $\frac{\log n}{\sqrt n} = o(v_n)$, then
 $$\lim_{n \to \infty}  v_n^{-1} \cdot \mathbf{P} \Big(\frac{1}{\sqrt n} \big( \log\|S_n\| - n \gamma \big) \in   [ t + v_n b_1, t + v_n b_2   ] \Big) = e^{- \frac{t^2}{2a^2}} {b_2 - b_1\over  \sqrt{2 \pi}\, a},$$ uniformly in $t \in \R$.
 
The above set $G_n^x$ can be obtained as follows.  For $g \in G$,  fix a Cartan decomposition $g=k_g a_g \ell_g$ as in \eqref{eq:cartan-decomposition}  and let $z_g^m = \ell_g^{-1} e_2$  be the corresponding density point,  see \cite[Chapter 14]{benoist-quint:book}.  Let $\Sigma_n \subset G \times \P^1$ be the set of pairs $(g,x) \in G \times \P^1$ such that $d(x,z_g^m) \geq n^{-1}$.  By Fubini's Theorem and the fact that a disc of radius $n^{-1}$ in $\P^1$ has area $O(n^{-2})$,   we have that the $\mu^{\ast n} \otimes \Leb$ measure of $G \times \P^1 \setminus \Sigma_n$ is $O(n^{-2})$.   For fixed $x \in \P^1$,  we let $G_n^x$ be set of $g \in G$ such that $(g,x) \in \Sigma_n$.  Then,  by Fubini's Theorem again, there exists an $x$ such that $\mu^{\ast n}(G \setminus G_n^x) =  O(n^{-1})$.  Finally,  the estimate $\big|\sigma(g,x) - \log \|g\| \big | = O(\log n)$ for $g \in G_n^x$ follows from \cite[Lemma 14.2]{benoist-quint:book}.
\end{remark}

\section{LLT for the matrix coefficients} \label{sec:LLT-coeff}

This section is devoted to the proof of Theorem \ref{thm:LLT-coeff}.  We  follow \cite{DKW:BE-LLT-coeff}, which partly uses some ideas from \cite{grama-quint-xiao}. Our method allows us to  avoid the use of the zero-one law in \cite{grama-quint-xiao}. Differently from these works, which deal with the case of finite exponential moments, we need to use here some weaker large deviation estimates and work with the space $\W$ instead of the H\"older spaces used there. The results obtained in Sections \ref{sec:markov-op} to \ref{sec:W} are indispensable.
 
We will work in this section with the function space $\W$ defined in Section \ref{sec:W} for the value $p=2$ (see Theorem \ref{thm:spectral-gap-W}), even if we assume that $\mu$ has a finite moment of order three. In particular, $W^ {1,2} \cap \Cc^{\log^{1}} \subset \W \subset W^ {1,2}\cap \Cc^0$ and by Lemma \ref{lemma:Pt-regularity-3-moments}, the family $\xi \mapsto \oP_\xi$ acting on $\W$ is locally Lipschitz in $\xi$.
\medskip

We'll need the following large deviation estimate.  We'll apply it for $p=3$.   The lemma follows from Proposition 4.1 and Lemma 4.8 in \cite{benoist-quint:CLT},  see also \cite[Lemma 3.2]{cuny-dedecker-merlevede-peligrad-2}.  These results hold in any dimension.  In their original statement,  a $O(1/ n^{p-1})$ appear in the right hand side,   but this can be  easily improved to $o(1/ n^{p-1})$ by going back to the proof of \cite[Theorem 2.2]{benoist-quint:CLT}.  It is also possible to give a proof using the spectral gap property of Theorem \ref{thm:DKW-spectral-gap}.

\begin{lemma}\label{lemma:LDT-p-moment}
	Let $\mu$ be a non-elementary measure with a finite moment of order $p>1$. Then, 
	$$\sup_{z_1,z_2\in\P^1}\mu^{*\ell}\big\{g: \, d(gz_1,z_2)\leq e^{-n}\big\}=  o(1/n^{p-1}) \quad\text{for all} \quad \ell\geq n \quad \text{as } \, n \to \infty.$$
\end{lemma}

\medskip
We now begin the proof of Theorem \ref{thm:LLT-coeff}. Fix $x = [v] \in \P^1$ and, for a function $f$ on $\R \times \P^1$, let
\begin{equation*}
\oA_n(f)(t) := \sqrt{n}  \, \mathbf E \Big(  f \big(t+ \log{ |\lp  S_n v,w \rp | \over \norm{v} \norm{w}} - n \gamma, S_n x \big)\Big)
\end{equation*}
and
\begin{equation*}
\oG_n(f)(t)  : = \frac{1}{\sqrt{2 \pi}\,a} \, e^{ -\frac{t^2}{2 a^2 n}} \int_{\R \times \P^1} f(s,z)\, \diff s \, \diff \nu(z).
\end{equation*}

Our goal is to prove that
\begin{equation} \label{eq:LLT-main-limit}
\lim_{n\to \infty}\sup_{t\in\R} \big|\, \oA_n(f)(t)  -  \oG_n(f)(t)   \big| =0.
\end{equation}

Let $x:=[v]$ and $y:=[w^\star]$, where $w^\star\neq 0$ is orthogonal to $w$. 
Using  \eqref{eq:distance-def}, it is not hard to see that $d(S_nx,y)={|\lp S_n v, w  \rp  |\over \norm{S_n v}\norm{w} }$. Thus,
\begin{equation}\label{eq:coeff-split}
\log{ |\lp S_n v,w \rp | \over \norm{v} \norm{w}} = \sigma(S_n,x) + \log d(S_n x, y).
\end{equation}

We will prove Theorem \ref{thm:LLT-coeff} by using the above formula to replace random variable $\log{ |\lp  S_n v,w \rp | \over \norm{v} \norm{w}}$ by $\sigma(S_n,x) +\log d(S_n x, y)$. In this way, we can study the behaviour of $\sigma(S_n,x)$ using the perturbed Markov operators and estimate the term  $\log d(S_n x, y)$ using the large deviation estimates from Lemma \ref{lemma:LDT-p-moment} combined with the partition of unity described below.

\medskip

Recall that $\D(y,r)$ is the open disc of radius $r$ centered at $y$ with respect to the distance $d$ defined in \eqref{eq:distance-def}.
 Let $0<\zeta \leq 1$ be a constant. For integers $k \geq 0$, introduce
\begin{align}  \label{eq:def-annulus}
\Tc_k^\zeta := \big\{z \in \P^1 :\, e^{-(k+1)\zeta} < d(z,y) < e^{-(k-1)\zeta} \big\} = \D(y,e^{-(k-1) \zeta}) \setminus \overline{\D(y,e^{-(k+1)\zeta})}.
\end{align}
Note that,  since $\P^1$ has diameter one,  these open sets cover $\P^1$.

\begin{lemma} \label{lemma:partition-of-unity-2}
	Let $0<\zeta \leq 1$. Then, there exist non-negative smooth functions $\chi_k$ on $\P^1$, $k \geq 0$, such that
	\begin{enumerate}
		\item $\chi_k$ is supported by $\Tc_k^\zeta$;
		\item If $z \in \P^1 \setminus \{y\}$,  then  $\chi_k(z) \neq 0$ for at most two values of $k$;
		\item $\sum_{k\geq 0}  \chi_k=1$ on $\P^1 \setminus \{y\}$; 
		\item $\norm{\chi_k}_{\Cc^1}\leq C \zeta^{-1} e^{k\zeta}$  for some universal constant $C>0$.;
		\item $\norm{\chi_k}_{\W}\leq C'( \zeta^{-1}+k\zeta)$ for some universal constant $C'>0$.
	\end{enumerate}
\end{lemma}

\begin{proof}
It is easy to find a smooth function $0 \leq \widetilde \chi \leq 1$ supported by $(-1,1)$ such that $\widetilde \chi(t) = 1$ for $|t|$ small, $\widetilde \chi(t) + \widetilde \chi(t-1)= 1$ for $0 \leq t\leq 1$ and $\norm{\widetilde \chi}_{\Cc^1}\leq 4$. Define  $\widetilde \chi_k (t) := \widetilde \chi(t+k)$. We see that $\widetilde \chi_k$ is supported by $(-k-1,-k+1)$,  $\sum_{k\geq 0} \widetilde \chi_k=1$ on $\R_{\leq 0}$ and $\norm{\widetilde \chi_k}_{\Cc^1}\leq 4$.  
Define $$\chi_k(z):= \widetilde \chi_k \big(\zeta^{-1} \log d(z,y) \big).$$
 Clearly, $\chi_k$ satisfies (1)--(3). Since the function $\Psi(z) := \log d(z,y)$ satisfies  $\norm{\Psi|_{\Tc_k^\zeta}}_{\Cc^1}  \lesssim e^{k \zeta}$,  it follows that $\chi_k$ satisfies (4).
 
 We now compute $\norm{\chi_k}_{W^{1,2}\cap \log ^1}$. Obviously, $\norm{\chi_k}_{L^1} \leq 1$.  Using that $\chi_k$ is supported by $\Tc_k^\zeta$ and $\norm{\chi_k}_{\Cc^1}\leq C \zeta^{-1} e^{k\zeta}$, we get $\norm{\partial \chi_k}_{L^2}^2\lesssim e^{-2(k-1)\zeta}\zeta^{-2}e^{2k\zeta}=\zeta^{-2}e^{2\zeta}$. So $\norm{\chi_k}_{W^{1,2}}\lesssim \zeta^{-1}$.
 It remains to compute $\norm{\chi_k}_{ \log ^1}$. By definition, it is the supremum of 
 $$\big|\chi_k(z_1) - \chi_k(z_2)\big| \big(1+|\log d(z_1,z_2)|\big)$$
 over $z_1 \neq z_2$ in $\P^1$. If $d(z_1,z_2)>e^{-2k\zeta}$, then $|\log d(z_1,z_2)|< 2k\zeta$. Since $|\chi_k|\leq 1$,  the last quantity is bounded by $2(1+2k\zeta)$. Observe that $d$ is comparable with the distance $d'$ induced by the Fubini-Study metric on $\P^1$.  In particular, from the mean value theorem for $d'$ along geodesics, we deduce that $\big|\chi_k(z_1) - \chi_k(z_2)\big| \leq C'' \norm{\chi_k}_{\Cc^1}d(z_1,z_2)$ for some constant $C''>0$.  Then, if  $d(z_1,z_2)\leq e^{-2k\zeta}$,  the above quantity is bounded by a constant times
 \begin{align*}
 \norm{\chi_k}_{\Cc^1}d(z_1,z_2)\cdot\big(1+|\log d(z_1,z_2)|\big)=\norm{\chi_k}_{\Cc^1}d(z_1,z_2)+\norm{\chi_k}_{\Cc^1}\big(\sqrt{d(z_1,z_2)} \,\big)^2|\log d(z_1,z_2)|\\
 \leq C \zeta^{-1}e^{k\zeta}e^{-2k\zeta}+C \zeta^{-1}e^{k\zeta}e^{-k\zeta}\sqrt{d(z_1,z_2)}\,|\log d(z_1,z_2)|\lesssim \zeta^{-1}.
 \end{align*}
 Thus, we conclude that $\norm{\chi_k}_{W^{1,2}\cap \log ^1}\lesssim \zeta^{-1}+k\zeta$.  The estimate (5) follows. The proof of the lemma is complete.
\end{proof}

\medskip

We start with a particular case of Theorem \ref{thm:LLT-coeff}.

\begin{proposition} \label{prop:LLT-product-test-fcn}
Theorem \ref{thm:LLT-coeff} holds when $f(u,z) = \psi(u) \varphi(z)$ with $\psi$ is of class $\Cc^1$ with compact support in $\R$ and $\varphi \in \Cc^1(\P^1) \subset \W$.
\end{proposition}

We now prove the above proposition.   From now on, we fix $\psi$ and $\varphi$ as in the statement of Proposition \ref{prop:LLT-product-test-fcn}. We can assume that  $0 \leq \varphi\leq 2$,  $\oN_0 \varphi=1$ and $\norm{\varphi}_{\Cc^1}\leq 2$, since the problem is linear on $\varphi$ and these functions span the space $\Cc^1$. We can also assume that  $\psi$ is non-negative and $\|\psi\|_{\Lip} \leq 1$. In order to simplify the notation, let $$\oA_n(t):= \oA_n(\psi \cdot \varphi)(t) \quad \text{and }\quad \oG_n(t):= \oG_n(\psi \cdot \varphi)(t) =   \frac{1}{\sqrt{2 \pi}\,a} \, e^{ -\frac{t^2}{2 a^2 n}} \int_{\R} \psi(s) \, \diff s,$$
  where we have used the assumption that  $\oN_0 \varphi=1$.  The limit \eqref{eq:LLT-main-limit} will be obtained by dealing separately with the upper and lower estimate.

\subsection{Upper bound} \label{subsec:LLT-upper-limit}

 We first consider the upper bound in the limit \eqref{eq:LLT-main-limit}. The lower bound is analogous and will be treated in the end of the proof.

\begin{proposition} \label{prop:LLT-upper-limit}
	Let $\oA_n(t)$ be as above. Then, $$\limsup_{n\to \infty}\sup_{t\in\R}\big( \oA_n(t)  -   \oG_n(t) \big) \leq 0.$$
\end{proposition}

 Let $0 < \zeta \leq 1$ be a small constant and set $$\psi^\ast(u) := \sup_{|u' - u| \leq \zeta} \psi(u').$$
According to \eqref{eq:def-phi-delta-sup},  $\psi^\ast$ coincides with the function $\psi^+_{[\zeta]}$,  but we use here a different notation for simplicity.  We will also omit the dependence of $\psi^\ast$ on $\zeta$ in order to ease the notation.  From the definition and our assumptions on $\psi$,  it follows that $\psi^\ast$ is non-negative,  Lipschitz continuous and $\|\psi^\ast\|_{\Lip} \leq 1$.  Moreover,  it is not hard to check that $\|\psi^\ast - \psi\|_{L^1}$ tends to zero as $\zeta$ tends to zero.

 For later use, consider the translates of $\psi^\ast$ given by $$\psi^\ast_{t,k}(u):=\psi^\ast(u +t - k \zeta),$$
   where again we omit the dependence on $\zeta$.  Observe that, since $\psi$ is compactly supported, for fixed $t,s\in\R$, we have that $\psi^\ast_{t,k}(u)\neq 0$ for only finitely many $k$'s.

\medskip

  Fix another constant $0<\eta\leq 1$.  Applying Lemma \ref{lemma:LDT-p-moment} to $\lfloor \eta n^{1/4} \rfloor$ instead of $n$, $\ell = n$ and $p=3$ yields  
$$\mu^{*n}\big\{g: \, \log d(gx,y)\leq -\eta n^{1/4} \big\}= \eta^{-2} o(1/ \sqrt n) \quad\text{as}\quad n\to +\infty.$$

	In order to simplify the notation,  we introduce the linear functional
\begin{equation} \label{eq:En-def}
\oE_n\big(f\big):= \sqrt n \, \mathbf E\Big( f\big( \sigma(S_n,x)-n\gamma ,S_n x \big)  \Big),
\end{equation}
where $f$ is a function of $(u,z) \in \R \times \P^1$.

For $z \in \P^1$, set 
\begin{equation} \label{eq:Phi-star-def}
\Phi_n^\star (z):= \varphi(z) -  \sum_{0\leq k\leq  \zeta^{-1} \eta n^{1/4}}  (\chi_k \varphi) (z),
\end{equation}	
where $\chi_k$ are the functions from Lemma \ref{lemma:partition-of-unity-2} and define
$$\oB^\ast_n(t):=  \sum_{0\leq k\leq  \zeta^{-1} \eta n^{1/4}} \oE_n\big(\psi^\ast_{t,k}\cdot\chi_k \varphi  \big)+   \oE_n\big(\psi^\ast_{t,0}\cdot \Phi_n^\star \big).$$

\begin{lemma} \label{lemma:llt-ineq-1}
Let $\oA_n$ and $\oB^\ast_n$ be as above. Then, $$\oA_n(t) \leq \oB^\ast_n(t) +\eta^{-2} o(1)$$ as $n\to +\infty$, uniformly in $t \in \R$.
\end{lemma}

\begin{proof}
	Using   that $\mathbf P \big( \log d(S_n x,y)\leq  -\eta n^{-1/4} \big) =\eta^{-2} o(1 / \sqrt n)$ and the decomposition \eqref{eq:coeff-split}, we obtain
	\begin{align*}
	\oA_n(t) \leq \sqrt{n}  \, \mathbf E \Big(  \psi(t+ \sigma(S_n,x) +\log d(S_n x, y) - n \gamma) \mathbf 1_{\log d(S_n x, y)\geq -\eta  n^{1/4}} \varphi(S_n x)  \Big)    + \eta^{-2}o(1).
	\end{align*}
	Observe that,  when $S_n x \in\supp(\chi_k)$, we have  $-(k+1) \zeta \leq \log d(S_n x,y) \leq -(k-1)\zeta$,  so
	$$  \psi \big( t + \sigma(S_n,x) +\log d(S_n x, y) - n \gamma \big)   \leq \psi^\ast \big( t+ \sigma(S_n,x) - k\zeta - n \gamma \big) = \psi^\ast_{t,k}\big(\sigma(S_n,x)  - n \gamma\big),$$
	where we have used the definitions of $\psi^\ast$ and $\psi^\ast_{t,k}$.
	By construction, $\mathbf 1_{\log d(z, y)\geq -\eta  n^{1/4}} \leq \sum_{0\leq k \leq \zeta^{-1} \eta n^{1/4} + 1} \chi_k(z)$. So, it follows from the above, that
	\begin{align*}
	&\E \Big(\psi \big(t+ \sigma(S_n,x) +\log d(S_n x, y) - n \gamma \big) \mathbf 1_{\log d(S_n x, y)\geq -\eta  n^{1/4}} \varphi(S_n x)\Big) \\
	&\leq  \sum_{0\leq k\leq \zeta^{-1} \eta n^{1/4} + 1} \E \Big( \psi^\ast_{t,k}\big(\sigma(S_n,x)-n\gamma\big)(\chi_k \varphi)(S_n x) \Big)    \\
	 &\leq  \sum_{0\leq k\leq \zeta^{-1} \eta n^{1/4}} \E \Big(\psi^\ast_{t,k}\big(\sigma(S_n,x)-n\gamma\big) (\chi_k \varphi)(S_n x) \Big) +   2\E\Big(\chi_{k_0}(S_n x) \Big),
	\end{align*}
	where $k_0:= \lfloor  \zeta^{-1} \eta n^{1/4} \rfloor + 1$.  Recall that  $0 \leq \varphi\leq 2$,  $\psi^\ast_{t,k}$ is non-negative and $\norm{\psi^\ast_{t,k}}_\infty \leq 1$.
	
	From the fact that $\chi_{k_0} \leq \mathbf 1_{\D(y,e^{-(k_0-1) \zeta })}$, we see that the last term above is bounded by $2 \mathbf P \big( d(S_n x,y)\leq  e ^{1-\eta n^{1/4}} \big) = 2 \eta^{-2} o(1  /\sqrt  n)$ from Lemma \ref{lemma:LDT-p-moment} with $p=3$. Hence,
	\begin{equation*}
	\oA_n(t) \leq	 \sum_{0\leq k\leq \zeta^{-1} \eta n^{1/4}} \oE_n\big(\psi^\ast_{t,k}\cdot\chi_k \varphi \big)+\eta^{-2} o(1)\leq \oB^\ast_n(t)  +\eta^{-2}o(1),
	\end{equation*}
	proving the lemma.
\end{proof}

	By Lemma \ref{lemma:conv-Fourier-2} , for every $0<\delta\leq 1$, there exists a smooth function $(\psi^\ast)^+_{\delta}$  such that $\widehat {(\psi^\ast)^+_{\delta}}$ has support in $[-\delta^{-2},\delta^{-2}]$,  $$\psi^\ast \leq (\psi^\ast)^+_{\delta},\quad \lim_{\delta\to 0} (\psi^\ast)^+_{\delta} =\psi^\ast    \quad \text{and} \quad  \lim_{\delta\to 0} \big \|(\psi^\ast)^+_{\delta} -\psi^\ast \big \|_{L^1} = 0.$$ 
Moreover,  $\norm{(\psi^\ast)^+_{\delta}}_\infty$, $\norm{(\psi^\ast)^+_{\delta}}_{L^1}$ and $\|\widehat{(\psi^\ast)^+_{\delta}}\|_{\Cc^1}$ are bounded by a constant independent of $\delta$.  We fix from now until the end of the proof a  $0<\delta\leq 1$.

As above, for $t\in\R$ and $k\in\N$, we consider the translations
$$ \psi_{t,k}^+(u):=(\psi^\ast)^+_{\delta}(u+t - k \zeta)  . $$
We omit the dependence on $\delta$ and $\zeta$ in order to ease the notation.  Define also
\begin{equation} \label{eq:R-def}
	\oB_n^+(t):= \sum_{0\leq k\leq  \zeta^{-1} \eta n^{1/4}} \oE_n\big(\psi_{t,k}^+\cdot \chi_k \varphi  \big)+  \oE_n\big(\psi_{t,0}^+\cdot \Phi_n^\star  \big).	
\end{equation}

	Clearly, we have $\oB^\ast_n(t)\leq \oB_n^+(t)$. From the definition of $\oE_n$, Fourier inversion formula and Fubini's theorem, we have
	\begin{align*}
	\oE_n\big(\psi_{t,k}^+\cdot \chi_k \varphi \big)&=\sqrt n \, \int_{G} (\psi^\ast)^+_{\delta}\big(\sigma(g,x)-n\gamma+t - k \zeta \big) \cdot (\chi_k \varphi)(gx) \,\diff \mu^{*n}(g)\\
	&={\sqrt n\over 2\pi}\int_{G} \int_{-\infty}^{+\infty} \widehat{(\psi^\ast)^+_{\delta}}(\xi) e^{i\xi(\sigma(g,x)-n\gamma+t -k \zeta )} \cdot (\chi_k \varphi)(gx) \,\diff \xi\diff\mu^{*n}(g)\\
	&={\sqrt n\over 2\pi}\int_{-\infty}^{+\infty}  \widehat{(\psi^\ast)^+_{\delta}}(\xi) e^{i\xi(t-k \zeta)}\cdot e^{-i\xi n\gamma}\oP^n_{\xi}(\chi_k \varphi)(x) \,\diff \xi,
	\end{align*}
	where in the last step we have used \eqref{eq:P_t^n}.
	
	Recall that $\supp\big( \widehat{(\psi^\ast)^+_{\delta}} \big)\subset [-\delta^{-2},\delta^{-2}]$. So, after the change of variables $\xi \mapsto \xi / \sqrt n$, the above identity becomes 
	$$	\oE_n\big(\psi_{t,k}^+\cdot\chi_k \varphi \big) ={1\over 2\pi}\int_{-\delta^{-2} \sqrt n}^{\delta^{-2} \sqrt n} \widehat{(\psi^\ast)^+_{\delta}}\Big({\xi\over \sqrt n}\Big) e^{i\xi{t -k \zeta \over \sqrt n}}\cdot e^{-i\xi \sqrt n\gamma}\oP^n_{{\xi\over \sqrt n}}(\chi_k \varphi)(x) \,\diff \xi.$$
	
	A similar computation yields $$\oE_n\big(\psi_{t,0}^+ \cdot \Phi_n^\star \big) = {1\over 2\pi}\int_{-\delta^{-2} \sqrt n}^{\delta^{-2} \sqrt n} \widehat{(\psi^\ast)^+_{\delta}}\Big({\xi\over \sqrt n}\Big) e^{i\xi{t \over \sqrt n}}\cdot e^{-i\xi \sqrt n\gamma}\oP^n_{{\xi\over \sqrt n}}\Phi_{n}^{\star} (x) \,\diff \xi.$$

Define
\begin{equation} \label{eq:Phi-xi-def}
\Phi_{n,\xi} (z):= \sum_{0\leq k\leq \zeta^{-1}  \eta  n^{1/4}}  e^{ - i \xi{k \zeta \over \sqrt n}}(\chi_k \varphi)(z).
\end{equation}
 Using this notation and the above computations, \eqref{eq:R-def} becomes
\begin{equation} \label{eq:R-formula} 
\oB_n^+(t)= {1\over 2\pi}\int_{-\delta^{-2} \sqrt n}^{\delta^{-2} \sqrt n} \widehat{(\psi^\ast)^+_{\delta}}\Big({\xi\over \sqrt n}\Big) e^{i\xi{t\over \sqrt n}}\cdot e^{-i \xi \sqrt n\gamma}\oP_{{\xi\over \sqrt n}}^n(\Phi_{n,\xi} +\Phi_{n}^{\star}  )(x) \,\diff \xi.
\end{equation}

	\begin{lemma}  \label{lemma:norm-Phi-2}
	Let $0<\eta\leq 1$, $\Phi_{n,\xi}$ and $\Phi_{n}^{\star}$ be as above. Then, 
	\begin{equation} \label{eq:psi_xi+psi_T-2}
	\Phi_{n,\xi} + \Phi_{n}^{\star} = \varphi + \sum_{0\leq k\leq \zeta^{-1}  \eta  n^{1/4}} \big(  e^{- i \xi{k \zeta \over \sqrt n}} - 1 \big) \chi_k\varphi
	\end{equation}
	and there is a constant $C>0$ independent of $n,\eta,\zeta$ and $\xi$ such that 
	\begin{equation*}  \label{eq:norm-Phi-2}
	\norm{\Phi_{n,\xi} }_{\W}\leq  C  \zeta^{-2} \eta\sqrt n \quad\text{and}\quad  \norm{\Phi_{n}^{\star}}_{\W}\leq  C(1 +  \zeta^{-2} \eta\sqrt n).
	\end{equation*}
	Moreover, $\Phi_{n}^{\star}$ is supported by $\big\{z:\,\log d(z,y)\leq - \eta n^{1/4} + 1\big\}$.
\end{lemma}

\begin{proof}
	The first and last assertions are clear from the definitions of $\Phi_{n,\xi}$, $ \Phi_{n}^{\star}$ and parts (1) and (3) of Lemma \ref{lemma:partition-of-unity-2}. Since $\varphi$ is $\Cc^1$ and  $\norm{\varphi}_{\Cc^1}\leq 2$, it follows that $\norm{\chi_k \varphi}_{\W} \lesssim \norm{\chi_k}_{\W}$ and, from  Lemma \ref{lemma:partition-of-unity-2}-(5), the last quantity is $\lesssim \zeta^{-1}+k\zeta$. Therefore,
	$$\norm{\Phi_{n,\xi} }_{\W}\leq \sum_{0\leq k\leq  \zeta^{-1} \eta n^{1/4}}\norm{\chi_k \varphi}_{\W}\lesssim \sum_{0\leq k\leq \zeta^{-1} \eta  n^{1/4}}(\zeta^{-1}+k\zeta)\lesssim \zeta^{-2} \eta n^{1/4} + \zeta^{-1} \eta^2 \sqrt{n} \lesssim \zeta^{-2} \eta \sqrt n,$$
	because $0 < \eta,  \zeta  \leq 1$, yielding the first bound. As $\Phi_{n}^{\star} = \varphi - \Phi_{n,0}$ by  \eqref{eq:psi_xi+psi_T-2}, the bound on $\norm{\Phi_{n}^{\star}}_{\W}$ also follows. This completes the proof.
\end{proof}

	Define  
\begin{equation} \label{eq:S-def} 
\oG_n^+(t):={1\over 2\pi}\widehat{(\psi^\ast)^+_{\delta}}(0)  \int_{-\infty}^{+\infty} e^{i\xi{t\over \sqrt n}} e^{-{ a^2\xi^2 \over 2}}  \,\diff \xi ={1\over \sqrt{2\pi} \, a} e^{-{t^2\over 2a^2 n}}\int_{\R} (\psi^\ast)^+_{\delta} (u)\,\diff u,
\end{equation}
where in the second equality we have used the fact that the inverse Fourier transform of $e^{-{ a^2\xi^2 \over 2}}$ is $ {1\over \sqrt{2\pi} \,  a}  e^{-{t^2\over 2a^2}}$.

\begin{lemma}\label{lemma-R-S}
	Fix  $0<\zeta\leq 1$,   $0<\eta\leq 1$ and $0< \delta \leq 1$. Then, there exists a constant $C_{\delta}>0$ independent of $n$, $\eta$ and $\zeta$ such that, for all $n \geq 1$,  
	$$\sup_{t\in\R}\big|\oB_n^+(t)-\oG_n^+(t) \big| \leq C_\delta \big( \zeta^{-2}\eta+n^{-1/2}\big).$$
\end{lemma}

\begin{proof} 

Let $\xi_0>0$ be a constant satisfying Lemma \ref{lemma:lambda-estimates}. In particular, the  decomposition of $\oP_\xi$ on $\W$ in Corollary \ref{cor:P_t-decomp-W} holds for $|\xi| \leq \xi_0$. Using that decomposition, \eqref{eq:R-formula} and \eqref{eq:S-def}, we can write $$\oB_n^+(t)-\oG_n^+(t) = \Lambda_n^1(t) + \Lambda_n^2(t)  + \Lambda_n^3(t)  + \Lambda_n^4(t)  + \Lambda_n^5(t),$$ where
	$$\Lambda_n^1(t):={1\over 2\pi}\int_{-\xi_0 \sqrt n}^{\xi_0 \sqrt n} e^{i\xi{t\over \sqrt n}} \Big[ \widehat{(\psi^\ast)^+_{\delta}}\Big({\xi\over \sqrt n}\Big) e^{-i \xi \sqrt n\gamma}\lambda_{{\xi\over \sqrt n}}^n\oN_0(\Phi_{n,\xi} +\Phi_{n}^{\star}  )-\widehat{(\psi^\ast)^+_{\delta}}(0)    e^{-{  a^2\xi^2 \over 2}}\Big]  \,\diff \xi ,$$
	$$\Lambda_n^2(t):= {1\over 2\pi}\int_{-\xi_0 \sqrt n}^{\xi_0 \sqrt n} e^{i\xi{t\over \sqrt n}}\Big[ \widehat{(\psi^\ast)^+_{\delta}}\Big({\xi\over \sqrt n}\Big) e^{-i \xi \sqrt n\gamma}\lambda_{{\xi\over \sqrt n}}^n\big(\oN_{{\xi\over \sqrt n}}-\oN_0\big)(\Phi_{n,\xi} +\Phi_{n}^{\star}  ) (x) \Big]  \,\diff \xi , $$
	$$\Lambda_n^3(t):=  {1\over 2\pi}\int_{-\xi_0 \sqrt n}^{\xi_0 \sqrt n} e^{i\xi{t\over \sqrt n}}  \widehat{(\psi^\ast)^+_{\delta}}\Big({\xi\over \sqrt n}\Big)  e^{-i \xi \sqrt n\gamma} \oQ_{{\xi\over \sqrt n}}^n(\Phi_{n,\xi} +\Phi_{n}^{\star}  )(x)   \,\diff \xi,   $$
	$$ \Lambda_n^4(t):=  {1\over 2\pi}\int_{\xi_0\sqrt n \leq|\xi|\leq\delta^{-2} \sqrt n}e^{i\xi{t\over \sqrt n}} \widehat{(\psi^\ast)^+_{\delta}}\Big({\xi\over \sqrt n}\Big) e^{-i \xi \sqrt n\gamma}\oP_{{\xi\over \sqrt n}}^n(\Phi_{n,\xi} +\Phi_{n}^{\star}  )(x) \,\diff \xi   $$
	and 
	$$ \Lambda_n^5(t):=  - {1\over 2\pi}\widehat{(\psi^\ast)^+_{\delta}}(0)  \int_{|\xi|\geq \xi_0 \sqrt n} e^{i\xi{t\over \sqrt n}}  e^{-{  a^2\xi^2 \over 2}} \,\diff \xi. $$
	\medskip
	
	We will bound each $\Lambda_n^j$, $j=1,\ldots, 5$,  separately. We will use that 
	$$\norm{\Phi_{n,\xi} +\Phi_{n}^{\star}  }_{\W}\lesssim 1 + \zeta^{-2} \eta \sqrt n$$ uniformly in $\xi$,  according to Lemma \ref{lemma:norm-Phi-2}. 
	
	In order to bound $\Lambda_n^2$, we have, by Lemma \ref{lemma:Pt-regularity-3-moments}, $\norm{\oN_\xi-\oN_0}_{\W}=O(|\xi|)$ for $|\xi|\leq \xi_0$. Thus, for $|\xi|\leq \xi_0 \sqrt n$,
	 $$ \Big\| \big(\oN_{{\xi\over \sqrt n}}-\oN_0\big)(\Phi_{n,\xi} +\Phi_{n}^{\star}  ) \Big\|_\infty \lesssim  {|\xi|\over \sqrt n} \norm{\Phi_{n,\xi} +\Phi_{n}^{\star}  }_{\W}\lesssim {|\xi|\over \sqrt n}+ \zeta^{-2} \eta|\xi|.$$
	Recall, from Lemma \ref{lemma:lambda-estimates}, that $\big|\lambda_{{\xi\over \sqrt n}}^n\big|\leq e^{-{ a^2\xi^2\over 3}}$ for $|\xi|\leq \xi_0 \sqrt n$. 
		Since $\|  \widehat{(\psi^\ast)^+_{\delta}} \|_{\Cc^1}$ is bounded uniformly in $\delta$, we get  
	$$\sup_{t\in \R} \big|\Lambda_n^2(t)\big|\lesssim  \int_{-\infty}^{+\infty} e^{-{ a^2\xi^2\over 3}}  {|\xi|\over \sqrt n} \,\diff \xi+  \int_{-\infty}^{\infty} e^{-{ a^2\xi^2\over 3}}  \zeta^{-2} \eta |\xi| \,\diff \xi \lesssim {1\over \sqrt n}+ \zeta^{-2} \eta. $$
	
	For  $\Lambda_n^3$, we use that $\norm{\oQ^n_\xi}_{\W} \leq c \beta^n$ for $|\xi| \leq \xi_0$, where $c>0$ and $0<\beta<1$ are constants, see Corollary \ref{cor:P_t-decomp-W}-(5). Therefore, for $|\xi|\leq \xi_0\sqrt n$,  
	$$\Big\| \oQ_{{\xi\over \sqrt n}}^n(\Phi_{n,\xi} +\Phi_{n}^{\star}  ) \Big\|_\infty \lesssim \beta^n\norm{\Phi_{n,\xi} +\Phi_{n}^{\star}  }_{\W} \lesssim \beta^n (1+ \zeta^{-2} \eta \sqrt n),    $$
	which gives
	$$\sup_{t\in\R} \big|\Lambda_n^3(t)\big|\lesssim \int_{-\xi_0 \sqrt n}^{\xi_0 \sqrt n}  \beta^n (1+ \zeta^{-2} \eta \sqrt n) \,\diff \xi = 2\xi_0\sqrt n \beta^n (1+ \zeta^{-2} \eta \sqrt n)\lesssim {1\over \sqrt n}+ \zeta^{-2} \eta.$$

	In order to bound  $\Lambda_n^4$,  recall from \eqref{lambda-4-ineq}, there are constants $C'_\delta>0$ and $0<\rho_\delta<1$ such that $\norm{\oP^n_{\xi}(\Phi_{n,\xi} +\Phi_{n}^{\star}  ) }_{\infty}\leq C'_\delta \rho_\delta^n \norm{\Phi_{n,\xi} +\Phi_{n}^{\star} }_{\W}$ for all $\xi_0\leq |\xi|\leq \delta^{-2}$ and $n \geq 1$. Therefore,  
	$$\sup_{t\in\R}  \big|\Lambda_n^4(t)\big|\lesssim \int_{\xi_0\sqrt n \leq|\xi|\leq\delta^{-2} \sqrt n} C'_\delta \rho_\delta^n (1+ \zeta^{-2}\eta \sqrt n) \,\diff \xi \leq C''_\delta \Big( {1\over \sqrt n}+  \zeta^{-2} \eta \Big),$$
	for some constant $C''_{\delta}>0$.
	
	The modulus of the term $\Lambda_n^5$ is clearly $\lesssim 1/\sqrt n$, so it only remains to estimate $\Lambda_n^1$. For every $t \in \R$, we have $$\big| \Lambda_n^1(t) \big|\leq \Gamma_n^1+\Gamma_n^2+\Gamma_n^3,$$ where 
	$$\Gamma_n^1:= {1\over 2\pi}\int_{-\xi_0 \sqrt n}^{\xi_0 \sqrt n} \Big| \widehat{(\psi^\ast)^+_{\delta}}\Big({\xi\over \sqrt n}\Big) \Big| \, \big|\lambda_{{\xi\over \sqrt n}}^n \big| \cdot\Big| \oN_0(\Phi_{n,\xi} +\Phi_{n}^{\star}  )- 1 \Big|  \,\diff \xi , $$
	$$\Gamma_n^2:= {1\over 2\pi}\int_{-\xi_0 \sqrt n}^{\xi_0 \sqrt n} \big|\lambda_{{\xi\over \sqrt n}}^n\big|\cdot \Big| \widehat{(\psi^\ast)^+_{\delta}}\Big({\xi\over \sqrt n}\Big) -\widehat{(\psi^\ast)^+_{\delta}}(0) \Big|  \,\diff \xi $$
	and
	$$\Gamma_n^3:= {1\over 2\pi}\int_{-\xi_0 \sqrt n}^{\xi_0 \sqrt n} \big| \widehat{(\psi^\ast)^+_{\delta}}(0) \big| \cdot\Big| e^{-i \xi \sqrt n\gamma}\lambda_{{\xi\over \sqrt n}}^n-    e^{-{ a^2\xi^2 \over 2}}\Big|  \,\diff \xi. $$
	
	Recall that $\chi_k$ is bounded by $1$, supported by $\Tc_k^\zeta  \subset \D(y,e^{-(k-1)\zeta})$ and $0 \leq \varphi \leq 2$. Therefore,
	$$\oN_0 (\chi_k \varphi) = \int_{\P^1} \chi_k \varphi \, \diff \nu \leq 2\nu \big(\D(y,-(k-1)\zeta)\big)  = o\Big({1\over \zeta^2 k^2}\Big),$$
	where in the last step we have used Proposition \ref{regularity} for $p=3$.
	
	Using \eqref{eq:psi_xi+psi_T-2} and the assumption that $\oN_0 \varphi = 1$, we obtain
	\begin{align*}
	\Big| \oN_0(\Phi_{n,\xi} +\Phi_{n}^{\star}  )- 1 \Big| = \Big| \oN_0(\Phi_{n,\xi} +\Phi_{n}^{\star}  )-\oN_0 \varphi \Big|\leq  \sum_{0\leq k\leq \zeta^{-1}  \eta  n^{1/4}}  \big| e^{ - i \xi{k \zeta\over \sqrt n}}-1 \big|\oN_0 (\chi_k \varphi) \\
	\lesssim \sum_{1 \leq k\leq \zeta^{-1}  \eta  n^{1/4}} |\xi|{ k \zeta \over \sqrt n}\cdot o\Big({1\over \zeta^2 k^2 }\Big) \lesssim \sum_{0\leq k\leq \zeta^{-1}  \eta  n^{1/4}}  { \zeta^{-1} |\xi| \over  \sqrt n}  \leq   \zeta^{-2} \eta n^{-1/4}|\xi|.
	\end{align*}

	Using that $\big|\lambda_{{i\xi\over \sqrt n}}^n\big|\leq e^{-{ a^2\xi^2\over 3}}$ for $|\xi|\leq \xi_0 \sqrt n$ (Lemma \ref{lemma:lambda-estimates}) and that  $\|\widehat{(\psi^\ast)^+_{\delta}}\|_{\Cc^1}$  is uniformly bounded,  we get that 
	\begin{align*}
 \Gamma_n^1 \lesssim      \int_{-\xi_0 \sqrt n}^{\xi_0 \sqrt n}  \|\widehat{(\psi^\ast)^+_{\delta}}\|_{\infty} e^{-{ a^2\xi^2\over 3}}  \zeta^{-2} \eta n^{-1/4}|\xi| \,\diff \xi  \lesssim   \zeta^{-2} \eta n^{-1/4} \int_{-\infty}^\infty e^{-{ a^2\xi^2\over 3}}|\xi| \,\diff \xi \lesssim \zeta^{-2} \eta
 \end{align*} 
	and
	$$\Gamma_n^2\lesssim  \int_{-\xi_0 \sqrt n}^{\xi_0 \sqrt n} e^{-{ a^2\xi^2\over 3}} {|\xi|\over \sqrt n} \|\widehat{(\psi^\ast)^+_{\delta}}\|_{\Cc^1} \,\diff \xi\lesssim {1\over \sqrt n}.$$

	The bound $\Gamma_n^3\lesssim 1/\sqrt n$ follows by splitting the integral along the intervals $|\xi|\leq \sqrt[6] n$ and $\sqrt[6] n< |\xi| \leq \xi_0\sqrt n$ and using Lemma \ref{lemma:lambda-estimates}.
	
	We conclude that $$\sup_{t\in\R} \big|\Lambda_n^1(t)\big|\lesssim  \zeta^{-2} \eta + n^{-1/2}.$$
	
	\medskip
	
	Gathering the above estimates, we obtain $$\sup_{t\in\R}\big| \oB_n^+(t)-\oG_n^+(t)  \big|\lesssim (C''_\delta+1) (\zeta^{-2} \eta+n^{-1/2}).$$
	This finishes the proof of the lemma.
\end{proof}

	The above estimates are enough to obtain the desired upper bound.

\begin{proof}[Proof of Proposition \ref{prop:LLT-upper-limit}]

	Fix  $0<\zeta\leq 1$,   $0<\eta\leq 1$ and $0<\delta \leq 1$  as  before in this subsection.  Lemmas \ref{lemma:llt-ineq-1} and \ref{lemma-R-S} and the fact that $\oB^\ast_n(t) \leq \oB_n^+(t)$ give that
	   $$\oA_n(t) \leq \oG_n^+(t)  + \eta^{-2} o(1) +C_\delta \big(\zeta^{-2} \eta+n^{-1/2}\big) \quad \text{as} \quad n\to+\infty,$$ uniformly in $t\in\R$.
		
	Recall,   from \eqref{eq:S-def},  that $\oG_n^+(t) = {1\over \sqrt{2\pi} \, a} e^{-{t^2\over2 a^2 n}}\int_{\R} (\psi^\ast)^+_{\delta} (u)\,\diff u$. Hence, for every fixed $n$,  
	$$ \big|  \oG_n^+(t) - \oG_n(t)  \big| \leq {1\over \sqrt{2\pi} \, a}  \big\|   (\psi^\ast)^+_{\delta} -\psi\big\|_{L^1},$$ so
$$ \oA_n(t) -   \oG_n(t) \leq  \frac{1}{\sqrt{2\pi} a}   \big\|   (\psi^\ast)^+_{\delta} -\psi  \big\|_{L^1} +   \eta^{-2} o(1) +C_\delta \big(\zeta^{-2} \eta+n^{-1/2}\big)$$ as $n\to+\infty$.  Therefore,	
\begin{align*}  
\limsup_{n\to \infty}  \sup_{t\in\R} \big( \oA_n(t) -  \oG_n(t) \big) \leq   \frac{1}{\sqrt{2\pi} a} \big\|   (\psi^\ast)^+_{\delta} -\psi  \big\|_{L^1} +  C_\delta \,  \zeta^{-2}\eta  \\ \leq 
\frac{1}{\sqrt{2\pi} a} \big( \big\| (\psi^\ast)^+_{\delta}  - \psi^\ast   \big\|_{L^1}  +  \big\|  \psi^\ast - \psi   \big\|_{L^1} \big)  +  C_\delta \,  \zeta^{-2}\eta.
\end{align*}	

From the definition of $\psi^\ast$ and the fact that $\|\psi^\ast\|_{\Lip} \leq 1$,  it follows that $ \big\|  \psi^\ast - \psi   \big\|_{L^1} \leq A \zeta$   for some constant $A>0$.  Using again that $\psi^\ast$ is Lipschitz with $\|\psi^\ast\|_{\Lip} \leq 1$,  it follows from  the construction of  $ (\psi^\ast)^+_{\delta}$  via convolution (see the  proof of Lemma \ref{lemma:conv-Fourier-2}) that   $ \big\| (\psi^\ast)^+_{\delta}  - \psi^\ast   \big\|_{L^1}\leq B \delta$ for some constant $B>0$.  Therefore,	
\begin{equation*}  
\limsup_{n\to \infty}  \sup_{t\in\R} \big( \oA_n(t) -  \oG_n(t) \big) \leq   \frac{1}{\sqrt{2\pi} a} \big(  B \delta + A \zeta  \big)   +  C_\delta \,  \zeta^{-2}\eta.
\end{equation*}	

	Recall that $0<\zeta\leq 1$, $0<\eta\leq 1$ and $0<\delta\leq1$ are arbitrary and independent.   Thus, setting  $\eta = \zeta^3$ and letting $\zeta \to 0$ first, then $\delta \to 0$ yields the desired result.
 
\end{proof}

\subsection{Lower bound} \label{subsec:LLT-lower-limit}

We now deal with the lower bound for the limit in \eqref{eq:LLT-main-limit}. 

\begin{proposition} \label{prop:LLT-lower-limit}
	Let $\oA_n(t)$ and  $\oG_n(t)$ be as above. Then, $$\liminf_{n\to \infty}\inf_{t\in\R} \big( \oA_n(t)  -   \oG_n(t) \big) \geq 0.$$
\end{proposition}

The argument is a variation of the one used in the proof of Proposition \ref{prop:LLT-upper-limit}, but using approximations from below instead.  For $0 < \zeta \leq 1$  a small constant we set $$\psi_\ast(u) := \inf_{|u' - u| \leq \zeta} \psi(s')$$
and
$$\psi_{\ast,t,k}(u):=\psi_\ast(u +t - k \zeta).$$
As before, $\psi_\ast$ is non-negative,  Lipschitz continuous and $\|\psi^\ast\|_{\Lip} \leq 1$.  Moreover,  $\|\psi_\ast - \psi\|_{L^1}$ tends to zero as $\zeta$ tends to zero.  

Fix  $0<\eta\leq 1$, let $\chi_k$ be as in Lemma \ref{lemma:partition-of-unity-2} and $\Phi_n^\star$ be the function defined in \eqref{eq:Phi-star-def}. Set
$$\oB_{\ast,n}(t):=  \sum_{0\leq k\leq  \zeta^{-1} \eta n^{1/4}} \oE_n\big( \psi_{\ast,t,k} \cdot\chi_k \varphi  \big)+   \oE_n\big( \psi_{\ast,t,k} \cdot \Phi_n^\star \big).$$	
where $\oE_n$ is defined in \eqref{eq:En-def}.

\begin{lemma} \label{lemma:llt-ineq-2}
 Let $\oA_n$ and $\oB_{\ast,n}$ be as above. Then, $$\oA_n(t) \geq \oB_{\ast,n}(t) - \eta^{-2}o(1)$$ as $n\to +\infty$, uniformly in $t \in \R$.
\end{lemma}

\begin{proof}
	Using  \eqref{eq:coeff-split} and the definition of $\oA_n(t)$,  it follows that
	$$	\oA_n(t) \geq \sqrt{n}   \mathbf E \Big(  \psi \big(t+ \sigma(S_n,x) +\log d(S_n x, y) - n \gamma \big) \mathbf 1_{\log d(S_n x, y)\geq -\eta  n^{1/4}} \varphi(S_n x)  \Big).$$
		
When $S_n x \in\supp(\chi_k)$, we have  $-(k+1) \zeta \leq \log d(S_n x,y) \leq -(k-1)\zeta$,  so 
	$$  \psi\big(t+ \sigma(S_n,x) +\log d(S_n x,  y) - n \gamma\big) \geq \psi_\ast \big( t + \sigma(S_n,x)  -k \zeta- n \gamma \big) =  \psi_{\ast,t,k}\big(\sigma(S_n,x)  - n \gamma\big).      $$
	Since $\mathbf 1_{\log d(z,  y)\geq -\eta n^{1/4}}  \geq \sum_{0\leq k\leq  \zeta^{-1} \eta n^{1/4} - 1} \chi_k(z)$, we obtain
	\begin{align*}
	\psi \big(t+ \sigma(S_n,x) +\log d(S_n x,  y) - n \gamma \big) \mathbf 1_{\log d(S_n x,  y)\geq -\eta n^{1/4}}  \varphi(S_n x) \\
	\geq   \sum_{0\leq k\leq \zeta^{-1} \eta n^{1/4}-1} \psi_{\ast,t,k}\big(\sigma(S_n,x)-n\gamma\big)(\chi_k \varphi)(S_n x)  . 
	\end{align*} 
		Therefore, 	if  $k_0:= \lfloor \zeta^{-1} \eta n^{1/4} \rfloor$,  then
	\begin{equation*}
	\oA_n(t) \geq	 \sum_{0\leq k\leq  \zeta^{-1} \eta n^{1/4}-1} \oE_n\big(\psi_{\ast,t,k}\cdot\chi_k \varphi \big) = \oB_{\ast,n}(t) -   \oE_n\big(\psi_{\ast,t,k_0}\cdot\chi_{k_0} \varphi \big) -   \oE_n\big(\psi_{\ast,t,0}\cdot \Phi_n^\star \big).
	\end{equation*}
	Arguing as in the proof of Lemma \ref{lemma:llt-ineq-1}, we see that the last two terms above are $\eta^{-2}o(1)$ as $n\to +\infty$, uniformly in $t \in \R$. The lemma follows. 
\end{proof}		

Let $0<\delta\leq 1$. Lemma \ref{lemma:conv-Fourier-2} yields a smooth function $(\psi_\ast)^-_{\delta}$  such that $\widehat {(\psi_\ast)^-_{\delta}}$ has support in $[-\delta^{-2},\delta^{-2}]$,  $$ (\psi_\ast)^-_{\delta} \leq \psi_\ast,\quad \lim_{\delta\to 0}  (\psi_\ast)^-_{\delta} =  \psi_\ast    \quad \text{and} \quad  \lim_{\delta\to 0} \big \| (\psi_\ast)^-_{\delta} - \psi_\ast \big \|_{L^1} = 0.$$ 
Moreover, $\norm{ (\psi_\ast)^-_{\delta}}_\infty$, $\norm{ (\psi_\ast)^-_{\delta}}_{L^1}$ and $\|\widehat{  (\psi_\ast)^-_{\delta}}\|_{\Cc^1}$ are bounded by a constant independent of $\delta$.

For $t\in\R$ and $k\in\N$, set
$$ \psi_{t,k}^-(u):=  (\psi_\ast)^-_{\delta}(u+t - k \zeta)$$ and define
\begin{equation*}
	\oB_n^-(t):= \sum_{0\leq k\leq \zeta^{-1} \eta n^{1/4}} \oE_n\big(\psi_{t,k}^-\cdot \chi_k \varphi  \big)+  \oE_n\big(\psi_{t,0}^-\cdot \Phi_n^\star  \big)
\end{equation*}	
and 
\begin{equation*}
\oG_n^-(t):={1\over 2\pi}\widehat{(\psi_\ast)^-_{\delta}}(0)  \int_{-\infty}^{+\infty} e^{i\xi{t\over \sqrt n}} e^{-{ a^2\xi^2 \over 2}}  \,\diff \xi ={1\over \sqrt{2\pi} \, a} e^{-{t^2\over 2a^2 n}}\int_{\R} (\psi_\ast)^-_{\delta} (u)\,\diff u.
\end{equation*}

\begin{lemma} \label{lemma-R-S-2}
Fix  $0<\zeta\leq 1$,   $0<\eta\leq 1$ and $0< \delta \leq 1$. Then, there exists a constant $\widetilde C_{\delta}>0$ independent of $n$,  $\eta$ and $\zeta$ such that, for all $n \geq 1$,  
	$$\sup_{t\in\R}\big| \oB_n^-(t)- \oG_n^-(t) \big| \leq\widetilde  C_\delta \big( \zeta^{-2}\eta+n^{-1/2}\big).$$
\end{lemma}

\begin{proof}
	As in Subsection \ref{subsec:LLT-upper-limit}, we have the following identity (compare with \eqref{eq:R-formula})
	\begin{equation*}
\oB_n^-(t)= {1\over 2\pi}\int_{-\delta^{-2} \sqrt n}^{\delta^{-2} \sqrt n} \widehat{(\psi_\ast)^-_{\delta}}\Big({\xi\over \sqrt n}\Big) e^{i\xi{t\over \sqrt n}}\cdot e^{-i \xi \sqrt n\gamma}\oP_{{\xi\over \sqrt n}}^n(\Phi_{n,\xi} +\Phi_{n}^{\star}  )(x) \,\diff \xi,
\end{equation*}
	where $\Phi_{n,\xi}$ and $\Phi_{n}^{\star}$ are defined in \eqref{eq:Phi-xi-def} and \eqref{eq:Phi-star-def} respectively.  The proof of Lemma \ref{lemma-R-S} can be repeated by using  ${ (\psi_\ast)^-_{\delta}}$ instead of ${ (\psi^\ast)^+_{\delta}}$,  giving the desired estimate.
\end{proof}

We can now prove Proposition \ref{prop:LLT-lower-limit}.

\begin{proof}[Proof of Proposition \ref{prop:LLT-lower-limit}]
	Lemmas \ref{lemma:llt-ineq-2} and \ref{lemma-R-S-2} and the fact that $\oB_{\ast,n}(t) \geq \oB_n^-(t)$ give that,
	$$\oA_n(t) \geq \oG_n^-(t) - \eta^{-2}o(1) -\widetilde  C_\delta \big(\zeta^{-2}\eta+n^{-1/2}\big) \quad\text{as}\quad n\to+\infty,$$ uniformly in $t\in\R$.
	As in the proof of Proposition \ref{prop:LLT-upper-limit}, it is clear that, for every fixed $n$, 
	$$  \big| \oG_n^-(t) -  \oG_n(t) \big| \leq {1\over \sqrt{2\pi} \,  a}  \big\|  (\psi_\ast)^-_{\delta} - \psi \big \|_{L^1}.$$
	Therefore, 
	\begin{align*}
	\oA_n(t) -  \oG_n(t)  \geq    -  {1\over \sqrt{2\pi} \,  a}  \big\|  (\psi_\ast)^-_{\delta} - \psi \big \|_{L^1} - \eta^{-2}o(1) -\widetilde  C_\delta \big(\zeta^{-2} \eta+n^{-1/2}\big) \end{align*} uniformly in $t\in\R$, so
	\begin{align*}
	\liminf_{n\to \infty}  \inf_{t\in\R} \big( \oA_n(t) -  \oG_n(t)  \big)  \geq  -  {1\over \sqrt{2\pi} \,  a}  \big\|  (\psi_\ast)^-_{\delta} - \psi \big \|_{L^1}  -\widetilde  C_\delta \,  \zeta^{-2} \eta \\ \geq -
\frac{1}{\sqrt{2\pi} a} \big( \big\| (\psi_\ast)^-_{\delta}  - \psi_\ast  \big\|_{L^1}  +  \big\|  \psi_\ast - \psi   \big\|_{L^1} \big)   -\widetilde  C_\delta \,  \zeta^{-2} \eta.
\end{align*}

Arguing as in the end of the proof of Proposition \ref{prop:LLT-upper-limit},  we get that
\begin{equation*}  
\liminf_{n\to \infty}  \inf_{t\in\R} \big( \oA_n(t) -  \oG_n(t)  \big)  \geq -  \frac{1}{\sqrt{2\pi} a} \big(  B' \delta +  A' \zeta \big)   -\widetilde  C_\delta \,  \zeta^{-2} \eta,
\end{equation*}
for some constants $A', B' > 0$.	 We conclude as before by setting  $\eta = \zeta^3$ and letting $\zeta \to 0$ first, then $\delta \to 0$. 
\end{proof}

\begin{proof}[Proof of Proposition \ref{prop:LLT-product-test-fcn}]
	As observed above, the conclusion of Proposition \ref{prop:LLT-product-test-fcn} is equivalent to the limit \eqref{eq:LLT-main-limit} for $f=\psi \cdot \varphi$.  Propositions \ref{prop:LLT-upper-limit} and  \ref{prop:LLT-lower-limit} give that 
	$$\limsup_{n\to \infty}\sup_{t\in\R} \big(\oA_n(t)  -  \oG_n(t) \big) \leq 0 \quad \text{and} \quad \liminf_{n\to \infty}\inf_{t\in\R} \big(\oA_n(t)  -   \oG_n(t) \big) \geq 0$$ 
	respectively. This clearly implies \eqref{eq:LLT-main-limit} for $f=\psi \cdot \varphi$. The proposition follows.
\end{proof}

Now, the proof of  Theorem \ref{thm:LLT-coeff} in the general case can be concluded.

\begin{proof}[Proof of  Theorem \ref{thm:LLT-coeff}]
Let $f(u,z)$ be an arbitrary function on $\R \times \P^1$ which is continuous and has compact support. We need to show that \eqref{eq:LLT-main-limit} holds for $f$. Fix $\varepsilon > 0$.  From the proof of Lemma \ref{lemma:weierstrass},  one can find an integer $N >0$ and functions $h^\pm$ of the form  $h^\pm (u,z): =\sum_{1\leq j\leq N} \psi_j^\pm(u) \varphi_j^\pm(z)$ where $\psi^\pm_j$ is continuous with compact support in $\R$ and $\varphi_j^\pm \in \Cc^1(\P^1)$, such that $h^- \leq f \leq h^+$ and $\|h^\pm - f\|_{L^1(\Leb \otimes \nu)} \leq \varepsilon$. We can now argue as in the end of the proof of Theorem \ref{thm:LLT} in Section \ref{sec:LLT-norm} and conclude that  \eqref{eq:LLT-main-limit} holds for $f$.  It is not hard to see that all of our estimates are uniform in $x \in \P^1$.  The theorem follows.
\end{proof}

\section{Berry-Esseen bound for the norm cocycle} \label{sec:berry-esseen} 

In this section we prove Theorem \ref{thm:berry-esseen} as an application of Nagaev-Guivarc'h's spectral method.  Our proof is parallel to that of \cite[Theorem 3.7]{gouezel:spectral-methods}.  The original hypotheses of that theorem asks that the family of operators $\xi \mapsto \oP_\xi$ is of class $\Cc^3$,  which does not hold in our case under the third moment condition of Theorem  \ref{thm:berry-esseen}  (see e.g. Proposition \ref{prop:P_t-regularity}).  However,  we'll see below that the proof still works when the family is merely Lipschitz near zero,   which is true in our case,  and the leading eigenvalue has an order three expansion as in Lemma \ref{lemma:lambda_t-expansion-3}. Lemma \ref{lemma:lambda-estimates} is also crucial in our proof. As in Section \ref{sec:LLT-coeff}, we work here with the function space $\W$ defined in Section \ref{sec:W} for the value $p=2$.

\begin{proof}[Proof of  Theorem \ref{thm:berry-esseen}]
The Berry-Esseen Lemma (see \cite[XVI.3]{feller:book}, \cite[Theorem 3.7]{gouezel:spectral-methods} and Lemma \ref{lemma:BE-feller}  below) says that, for any interval $J \subset \R$ and any $T > 0$, the left hand side of \eqref{eq:berry-esseen} is bounded by a constant independent of $T,J,n$ and $x$  times
$$  \int_0^T \frac1\xi \, \Big | \E\Big(e^{i\frac{\xi}{\sqrt n}  (\sigma(S_n,x) - n \gamma)} \Big) - e^{-\frac{a^2 \xi^2}{2}} \Big|  \, \diff\xi + \frac{1}{T}.$$

Choosing $T:= \xi_0 \sqrt n$,  where $\xi_0 >0$ is a small constant,  the theorem  will follow once we show that
\begin{equation} \label{eq:berry-essen-main-estimate}
\int_0^{ \xi_0  \sqrt n} \frac 1\xi \, \Big | \E\Big(e^{i\frac{\xi}{ \sqrt n}  (\sigma(S_n,x) - n \gamma)} \Big) - e^{-\frac{a^2\xi^2}{2}} \Big| \, \diff \xi \lesssim \frac{1}{\sqrt n}.
\end{equation}

We have that $\E\big(e^{i\frac{\xi}{\sqrt n}  (\sigma(S_n,x) - n \gamma)} \big) = e^ {-i \xi \sqrt n \gamma}\oP^n_{\frac{\xi}{ \sqrt n}} \mathbf 1 (x)$ and $\oP^n_\xi = \lambda^n_\xi \oN_\xi + \oQ^n_\xi$ as operators on $\W$ for $\xi$ small enough by Corollary \ref{cor:P_t-decomp-W}.  
This gives $\E\big(e^{i \xi \sigma(S_n,x)}\big) = \lambda_\xi^n \oN_\xi \mathbf 1(x) + \oQ_\xi^n  \mathbf 1 (x)$, so 
$$\E\Big(e^{i\frac{\xi}{ \sqrt n}  (\sigma(S_n,x) - n \gamma)}\Big)= e^ {-i \xi \sqrt n \gamma}\lambda_{\frac{\xi}{ \sqrt n}}^n \oN_{\frac{\xi}{ \sqrt n}} \mathbf 1(x) + e^ {-i \xi \sqrt n \gamma} \oQ^n_{\frac{\xi}{ \sqrt n}}\mathbf 1 (x).$$ 

From Lemma \ref{lemma:Pt-regularity-3-moments}, the families $\xi \mapsto \oP_\xi$, $\oN_\xi$ and $\oQ_\xi$ acting on $\W$ are locally Lipschitz in $\xi$.  The  same  holds for the maps $\xi \mapsto \oN_\xi \mathbf 1(x)$ and $\xi \mapsto \oQ_\xi \mathbf 1(x)$ because,  as  $\| \cdot \|_{\infty} \lesssim \| \cdot \|_\W $, the evaluation map $\W \ni \psi \mapsto \psi(x)$ is continuous. 
Recall that $\oQ^n_0 \mathbf 1 = 0$. The Lipschitz property above gives that for $|\xi| \leq \xi_0 \sqrt n$, we have
\begin{equation} \label{eq: d_n(t)-estimate}
\big|\oQ^n_{\xi\over \sqrt n} \mathbf 1 (x)\big| \leq  B \beta^n {\xi\over \sqrt n}
\end{equation}
for some constants $B>0$ and $0<\beta<1$, see \cite[Remark 3.8]{gouezel:spectral-methods}.  Therefore,   $$\int_0^{ \xi_0  \sqrt n} \frac1\xi \, \Big |  e^ {-i \xi \sqrt n \gamma} \oQ^n_{\frac{\xi}{ \sqrt n}}\mathbf 1 (x) \Big|  \, \diff \xi \leq \xi_0  B  \beta^n  \lesssim  \frac{1}{\sqrt n}.$$

Thus, \eqref{eq:berry-essen-main-estimate} will follow if we show that $$ \int_0^{ \xi_0 \sqrt n} \frac1\xi \, \Big |e^ {-i \xi \sqrt n \gamma}\lambda_{\frac{\xi}{ \sqrt n}}^n \oN_{\frac{\xi}{ \sqrt n}} \mathbf 1(x)  - e^{-\frac{a^2\xi^2}{2}} \Big|  \, \diff \xi \lesssim  \frac{1}{\sqrt n}.$$
The above integral is bounded by
\begin{equation} \label{eq:B-E-two-integrals}
\int_0^{ \xi_0  \sqrt n} \frac1\xi  \, \big | \lambda_{\frac{\xi}{ \sqrt n}}\big|^n  \,   \Big| \oN_{\frac{\xi}{ \sqrt n}}  \mathbf 1 (x)  -  1 \Big|  \, \diff \xi  + \int_0^{ \xi_0  \sqrt n} \frac1\xi \, \Big | e^ {-i \xi \sqrt n \gamma}\lambda_{\frac{\xi}{\sqrt n}}^n   - e^{-\frac{a^2\xi^2}{2}} \Big|  \, \diff \xi .
\end{equation}

The Lipschitz property and the fact that $\oN_0 \mathbf 1= \mathbf 1$ gives that  $\big|\oN_{\frac{\xi}{ \sqrt n}}  \mathbf 1 (x) -  1 \big| \lesssim  \frac{\xi}{\sqrt n}$ for $0 \leq \xi \leq \xi_0  \sqrt n$.  Also,  $\big | \lambda_{\frac{\xi}{ \sqrt n}} \big| ^ n \leq e^{-{a^2\xi^2\over 3}}$ for $|\xi| \leq \xi_0  \sqrt n$ from Lemma \ref{lemma:lambda-estimates}.  We conclude that the first integral  in (\ref{eq:B-E-two-integrals}) is bounded by a constant times $\frac{1}{\sqrt n} \int_0^{+\infty}  e^{-\frac{a^2\xi^2}{3}} \, \diff \xi \lesssim  \frac{1}{\sqrt n}$.  For the second integral in (\ref{eq:B-E-two-integrals}), we use that $\big| e^ {-i \xi \sqrt n \gamma}\lambda_{\frac{\xi}{ \sqrt n}}^n   - e^{-\frac{a^2\xi^2}{2}}\big|$ is  $\frac{1}{\sqrt n} O\big(|\xi|^3 e^{ - \frac{a^2\xi^2}{2}}\big)$ for $0< \xi \leq \sqrt[6]{n}$ and $ \frac{1}{\sqrt n} O\big(e^{ - \frac{a^2 \xi^2}{4}}\big) \text{ for }  \sqrt[6]{n} <\xi \leq \xi_0  \sqrt{n}$,  as  in Lemma \ref{lemma:lambda-estimates}.  Therefore,   the second integral in (\ref{eq:B-E-two-integrals}) is bounded by a constant times  $$\frac{1}{\sqrt n} \int_0^{+\infty} \Big( \xi^2e^{-\frac{a^2\xi^2}{2}} + {1\over \sqrt[6] n} e^{ - \frac{a^2\xi^2}{4}} \Big) \, \diff \xi \lesssim  \frac{1}{\sqrt n}.$$  This completes the proof of the theorem.
\end{proof}

\begin{remark} \label{rmk:berry-esseen-norm}
Arguing as in Remark \ref{rmk:LLT-norms}, one can deduce from \eqref{eq:berry-esseen},  under a third moment condition,   an analogue bound for $\log\|S_n\|$, but with an $O(\log n \slash \sqrt n)$ rate instead. We leave the details to the reader. See  \cite{xiao-grama-liu:berry-eseen,cuny-dedecker-merlevede-peligrad} for similar results.
\end{remark}

\section{Berry-Esseen bound for the matrix coefficients}\label{sec:berry-esseen-coeff} 

This section is devoted to the proof of Theorem \ref{thm:BE-coeff}. We assume throughout this section that $\mu$ has a finite moment of order three. We follow the approach of \cite{DKW:BE-LLT-coeff}. As in Section \ref{sec:LLT-coeff},  we need to use here some weaker large deviation estimates and work with the space $\W$ instead of the H\"older spaces used there. In particular, the results from Sections \ref{sec:markov-op} to \ref{sec:W} will come into play.  As before, we work in this section with the space $\W$ defined in Section \ref{sec:W} for the value $p=2$,  even if we assume a finite moment of order three. In particular, $W^ {1,2} \cap \Cc^{\log^{1}} \subset \W \subset W^ {1,2}\cap \Cc^0$ and the family $\xi \mapsto \oP_\xi$ acting on $\W$ is locally Lipschitz in $\xi$ (Lemma \ref{lemma:Pt-regularity-3-moments}).

Our handling of the ``tail probabilities'' will be possible by Lemma \ref{lemma:LDT-p-moment} and the following large deviation estimates, due to Benoist-Quint,  which will be applied for $p=3$. 

\begin{proposition}[\cite{benoist-quint:CLT}--Proposition 4.1] \label{prop:BQLDT} 
	Let $\mu$ be a probability measure on $G=\SL_2(\C)$. Assume that $\mu$ is non-elementary and $\int_G \log^p \|g\| \, \diff \mu(g) < \infty$ for some $p > 1$. Let $\gamma > 0$ be the Lyapunov exponent of $\mu$. Then, for every $\epsilon>0$ there exist positive constants $C_{n,\epsilon}$ satisfying $\sum_{n\geq 1} n^{p-2}C_{n,\epsilon}< \infty$ such that, for every $x\in\P^1$,
	\begin{equation*}
	\mu^{*n}\big\{g\in G:\, | \sigma_g(x)-n\gamma| \geq \epsilon n\big\}\leq C_{n,\epsilon}.
	\end{equation*}
\end{proposition}

For convenience, we introduce the following definition.

\begin{definition} \label{def:conjugate-cf} \rm 
	For a real random variable $X$ with cumulative distribution function $F$ (c.d.f.\ for short), we define its \textit{conjugate characteristic function} by $$\phi_F(\xi):=\mathbf E\big(e^{-i\xi X}\big).$$
\end{definition}

We will use the following version of the Berry-Esseen lemma obtained in \cite{DKW:BE-LLT-coeff}. See also \cite[XVI.3]{feller:book}.

\begin{lemma}  \label{lemma:BE-feller} 
Let $F$ be a c.d.f.\ of some real random variable and let  $H$  be a differentiable real-valued function  on $\R$ with derivative $h$ such that $H(-\infty)=0,H(\infty)=1,|h(u)|\leq m$ for some constant $m >0$. Let $D>0$ and $0<\delta<1$ be real numbers such that $\big|F(u)-H(u) \big|\leq D \delta^2$ for $|u|\geq \delta^{-2}$. 
	Assume moreover that $h\in L^1$, $\widehat h\in\Cc^1$ and that $\phi_F$ is differentiable at zero.
	Then, there exist a constant $C>0$ (resp. $\kappa > 1$) independent of $F,H,\delta$ (resp. $D, F,H,\delta$), such that
	$$\sup_{u\in\R}\big|F(u)-H(u) \big|\leq {1\over \pi} \sup_{|u|\leq \kappa \delta^{-2}}    \Big|\int_{-\delta^{-2}}^{\delta^{-2}} {\Theta_u(\xi) \over \xi}     \,\diff \xi\Big|  +C\delta^2,$$
	where  $\Theta_u(\xi):=e^{iu\xi}\big(\phi_F(\xi)- \widehat {h}(\xi) \big)\widehat{\vartheta_\delta}(\xi)$,
	and $\vartheta_\delta$ is defined in Subsection \ref{subsec:regularization}.
\end{lemma}

Let $x:=[v]$ and $y:=[w^\star]$, where $w^\star\neq 0$ is orthogonal to $w$. As in Section \ref{sec:LLT-coeff}, the strategy to prove Theorem \ref{thm:BE-coeff} is to use the identity \eqref{eq:coeff-split} to work with the random variable $\sigma(S_n,x) +\log d(S_n x, y)$ instead of $\log{ |\lp S_n v, w \rp | \over \norm{v} \norm{w}} $, use the perturbed Markov operators to study $\sigma(S_n,x)$ and large deviation estimates together with a well-chosen partition of unity to deal with the term $\log d(S_n x, y)$.

For integers $k \geq 0$, let $\Tc_k : = \Tc_k^1$ be the annulus centered at $y$ defined in \eqref{eq:def-annulus} for $\zeta = 1$.   Let $\chi_k$, $k \geq 0$ be the functions obtained from Lemma \ref{lemma:partition-of-unity-2} with  $\zeta = 1$.  In particular, we have $\norm{\chi_k}_\W \lesssim 1 + k$.

\medskip

We now begin the proof of Theorem \ref{thm:BE-coeff}.  It  suffices to prove the result  for intervals of the type $J=(-\infty,b]$ with $b \in \R$, as the case of an arbitrary interval can be obtained as a consequence. For example, the case $(b,+\infty)$ follows directly by considering its complement. The case of $[b, +\infty)$ can be deduced by approximating it by $(b \pm \varepsilon, + \infty)$ and the case $(-\infty,b)$  follows  by taking the complement. Finally, by considering differences of the previous cases,  we obtain the result for bounded intervals. 
\medskip

Recall that  $\mu$ has a finite moment of order $3$ by assumption. Applying Lemma \ref{lemma:LDT-p-moment} to $\lfloor \sqrt[4] n \rfloor$ instead of $n$, $\ell = n$ and $p=3$, gives that  
$$\mu^{*n}\big\{g: \, \log d(gx,y)\leq - \sqrt[4] n  \big\}=  o(1/ \sqrt n) \quad\text{as}\quad n\to +\infty.$$
It follows that $\log d(S_n x,y)\leq -\sqrt[4] n $  with probability $o(1/\sqrt n)$. In particular, we can assume that $\log d(S_n x,y)> -\sqrt[4] n$ throughout the proof. In other words, in order to prove Theorem \ref{thm:BE-coeff}, it is enough to show that
\begin{equation}\label{goal-1-varphi}
\Big| \oL_n(b)  -  \frac{1}{\sqrt{2 \pi}  \, a }  \int_{-\infty}^b e^{-\frac{s^2}{2  a ^2}} \, \diff s \Big| \lesssim \frac{1}{\sqrt n},
\end{equation}
uniformly in $b$, where
$$\oL_n(b):=  \mathbf E \Big(  \mathbf 1_{{\sigma(S_n,x) +\log d(S_n x, y) - n \gamma\over \sqrt n}\leq b}   \mathbf 1_{ \log d(S_n x,y)>  -\sqrt[4] n   } \Big).$$

Let $\chi_k$ be as above.  It is clear from their definition, that
$$\sum_{0\leq k\leq \sqrt[4] n  - 1}\chi_k(z) \leq \mathbf 1_{  \log d(z,y)> - \sqrt[4] n } \leq  \sum_{0\leq k\leq \sqrt[4] n +1}\chi_k(z)$$ as functions on $\P^1$. Using that $\chi_k$ is supported by $\Tc_k$,  it follows that 
\begin{align} \label{eq:Ln-two-sided-bound}
\sum_{0\leq k\leq \sqrt[4] n -1} \hspace{-7pt} \E\Big(  \mathbf 1_{{\sigma(S_n,x) - n \gamma - k+1\over \sqrt n}\leq b} \, \chi_k(S_n x) \Big) \leq \oL_n(b)\leq  \sum_{0\leq k\leq \sqrt[4] n +1}   \mathbf E\Big(  \mathbf 1_{{\sigma(S_n,x) - n \gamma - k-1\over \sqrt n}\leq b} \, \chi_k(S_n x) \Big).
\end{align}

For $z \in \P^1$, let $$\Phi_{n}^{\star}  (z):= 1 - \sum_{0\leq k\leq \sqrt[4] n } \chi_k(z).$$ We use the same notation as in Section \ref{sec:LLT-coeff} to denote a slightly different function.  Define, for $b \in \R$
\begin{align*}
F_n(b):&=    \sum_{0\leq k\leq \sqrt[4] n } \E\Big(  \mathbf 1_{{\sigma(S_n,x) - n \gamma - k\over \sqrt n}\leq b}  \chi_k (S_n x) \Big)  +  \E \Big( \mathbf 1_{{\sigma(S_n,x) - n \gamma \over \sqrt n}\leq b} \, \Phi_{n}^{\star} (S_n x) \Big). 
\end{align*}

Notice that $F_n$ is non-decreasing, right-continuous, $F_n(-\infty)=0$ and $F_n(\infty)=1$. Therefore, it is the c.d.f.\ of some probability distribution.  Even if, as we'll see, the term involving $\Phi_{n}^{\star}$ has a negligible impact in our estimates, its presence in the definition of $F_n$ will make our computations more tractable. This could already be observed in Section \ref{sec:LLT-coeff}.

\begin{lemma} \label{lemma:Ln-Fn}
	Let $\oL_n$ and $F_n$ be as above. Then,
	$$ F_n(b - 1 / \sqrt n)  -  o(1/\sqrt n)  \leq \oL_n(b) \leq F_n(b + 1 / \sqrt n)  + o(1/\sqrt n)$$ as $n\to +\infty$, uniformly in $b \in \R$.
\end{lemma}

\begin{proof}
	Notice first that $\Phi_{n}^{\star}$ is non-negative, bounded by one and supported by an open ball $\mathbf \D_{n,y}$ of center $y$ and radius $O(e^{-\sqrt[4] n })$. As noted above, the probability that $S_n x$ belongs to $\mathbf \D_{n,y}$ is $o(1 / \sqrt n)$. This yields the following bounds for the second term in the definition of $F_n$:  $$0 \leq \E \Big( \mathbf 1_{{\sigma(S_n,x) - n \gamma \over \sqrt n}\leq b} \, \Phi_{n}^{\star} (S_n x) \Big) \leq \E \Big(\Phi_{n}^{\star}(S_n x) \Big)  =o(1 / \sqrt n).$$
	
	Therefore, in order to prove the lemma, we can replace $F_n$ by the function
	\begin{equation} \label{eq:def-Fn-tilde}
	\widetilde F_n(b) := \sum_{0\leq k\leq \sqrt[4] n } \E\Big(  \mathbf 1_{{\sigma(S_n,x) - n \gamma - k \over \sqrt n}\leq b}  \chi_k (S_n x) \Big).
	\end{equation}
	
	From the second inequality in \eqref{eq:Ln-two-sided-bound}, we have
	\begin{align*}
	\oL_n(b) - \widetilde F_n(b + 1 / \sqrt n)  \leq   \E\Big(  \mathbf 1_{{\sigma(S_n,x) - n \gamma - k^+-1\over \sqrt n}\leq b}  \chi_{k^+} (S_n x) \Big) \leq   \E\Big( \chi_{k^+} (S_n x) \Big),
	\end{align*}
	where $k^+:= \lfloor \sqrt[4] n  \rfloor + 1$. Since  $\chi_k \leq \mathbf 1_{\D(y,e^{-k+1})}$ and, by arguments similar to the ones used before,  the probability that $\log d(S_n x,y)\leq - \sqrt[4] n   +1 $  is $o( 1/\sqrt n)$, the above quantity is $o(1/\sqrt n)$. This gives the second inequality in the lemma.
	
	Using now the first inequality in \eqref{eq:Ln-two-sided-bound} and letting $k^-:= \lfloor  \sqrt[4] n   \rfloor$, we obtain
	\begin{align*}
	\widetilde F_n(b -  1 /\sqrt n)  - \oL_n(b) \leq   \E\Big(  \mathbf 1_{{\sigma(S_n,x) - n \gamma - k^- +1\over \sqrt n}\leq b}  \chi_{k^-} (S_n x) \Big) \leq   \E\Big(\chi_{k^-} (S_n x) \Big),
	\end{align*}
	which is $o( 1/\sqrt n)$ by the same arguments as before. The lemma follows.
\end{proof}

Introduce $$\Phi_{n,\xi} (z):= \sum_{0\leq k\leq  \sqrt[4] n  }  e^{i \xi{k \over \sqrt n}}\chi_k(z),$$ where, again, we use the same notation as in Section \ref{sec:LLT-coeff} to denote a slightly different function.  We have the following expression for the conjugate characteristic function of $F_n$. 

\begin{lemma} \label{lemma:char-function-Fn}
	The conjugate characteristic function of $F_n$ (cf.\ Definition \ref{def:conjugate-cf}) is given by
	$$\phi_{F_n}(\xi)=  e^{i\xi\sqrt n \gamma} \oP_{-{\xi\over \sqrt n}}^n \Phi_{n,\xi} (x) + e^{i\xi\sqrt n \gamma} \oP_{-{\xi\over \sqrt n}}^n \Phi_{n}^{\star}   (x) = e^{i\xi\sqrt n \gamma} \oP_{-{\xi\over \sqrt n}}^n \big( \Phi_{n,\xi} +  \Phi_{n}^{\star} \big)  (x).$$
	In particular,  $\phi_{F_n}$ is differentiable near zero.
\end{lemma}

\begin{proof}
The formula for $\phi_{F_n}$ is proven in \cite[Lemma 3.5]{DKW:BE-LLT-coeff}. Its differentiability follows directly from the fact that the family $\xi \to \oP_\xi$ acting on $\Cc^0$ is differentiable, which can be easily proved using the arguments of Section \ref{sec:perturbed-markov-op}, see e.g.\ Proposition \ref{prop:P_t-regularity}.
\end{proof}

The following result is the analogue of Lemma \ref{lemma:norm-Phi-2} in the current setting. Recall that we are working with the space $\W$ defined in Section \ref{sec:W} for the value $p=2$ and $\norm{\chi_k}_\W\lesssim k$.

\begin{lemma}  \label{lemma:norm-Phi}
	Let $\Phi_{n,\xi}, \Phi_{n}^{\star}$ be the functions on $\P^1$ defined above.  Then,  $\Phi_{n}^{\star} $ is supported by $\big\{z:\,\log d(z,y)\leq - \sqrt[4] n  + 1\big\}$ and  the following identity holds
	\begin{equation} \label{eq:psi_xi+psi_T}
	\Phi_{n,\xi} + \Phi_{n}^{\star} = \mathbf 1 + \sum_{0\leq k\leq  \sqrt[4] n  } \big(  e^{i \xi{k \over \sqrt n}} - 1 \big) \chi_k.
	\end{equation}
	Moreover, there  is a constant $C>0$ independent of $\xi$ and $n$ such that   
	$$\norm{\Phi_{n,\xi} }_{\W}\leq C \,  \sqrt n  ,\quad  \norm{\Phi_{n}^{\star}  }_{\W} \leq C \,  \sqrt n,$$
	and
	\begin{equation}  \label{eq:norm-Phi}
	\norm{\Phi_{n,\xi} + \Phi_{n}^{\star}}_{\W}\leq C (1+|\xi|). 
	\end{equation}

\end{lemma}

\begin{proof}
All the assertions are clear from the definitions of $\Phi_{n,\xi}$, $ \Phi_{n}^{\star}$ and parts (1) and (3) of Lemma \ref{lemma:partition-of-unity-2} except \eqref{eq:norm-Phi}. It is enough to estimate  $\norm{\Phi_{n,\xi} +\Phi_{n}^{\star} }_{W^{1,2}\cap \log^1}$, since  $\norm{\cdot}_{\W} \lesssim \norm{\cdot}_{W^{1,2}\cap \log ^1}$.  For the $W^{1,2}$ part, we use that $\norm{\chi_k}_{W^{1,2}}\lesssim 1$, from the proof Lemma \ref{lemma:partition-of-unity-2}. Then,
	\begin{align*}
	\norm{\Phi_{n,\xi} +\Phi_{n}^{\star} }_{W^{1,2}}\leq 1+\sum_{0\leq k\leq \sqrt[4] n}\big|e^{i \xi{k\over \sqrt n}} - 1 \big|\norm{\chi_k}_{W^{1,2}}
\lesssim 1+\sum_{0\leq k\leq \sqrt[4] n} |\xi|{k\over \sqrt n} \lesssim 1+|\xi|.   
	\end{align*}
	We now estimate the $\Cc^{\log^1}$-norm. It is clear that $\norm{\Phi_{n,\xi} +\Phi_{n}^{\star} }_{\infty} \lesssim 1$, so it is enough to estimate $[\Phi_{n,\xi} +\Phi_{n}^{\star}]_{\log^1}$.  Let $z_1 \neq z_2 \in \P^{d-1}$. Using \eqref{eq:psi_xi+psi_T} and the fact that, at given point, $\chi_k$ is non-zero for at most two values of $k$, there are indices  $0 \leq k_1,k_2,k_3,k_4 \leq \sqrt[4] n$ such that $$\big| (\Phi_{n,\xi} + \Phi_{n}^{\star})(z_1) - (\Phi_{n,\xi} + \Phi_{n}^{\star})(z_2) \big| \leq \sum_{1 \leq \ell \leq 4} |e^{i \xi{k_\ell \over \sqrt n}} - 1 \big| \big| \chi_{k_\ell}(z_1) - \chi_{k_\ell}(z_2) \big|.$$
Using that $[\chi_k]_{\log^1} \lesssim 1 + k$ from Lemma \ref{lemma:partition-of-unity-2}-(5), we conclude that
\begin{align*}
\big| (\Phi_{n,\xi} + \Phi_{n}^{\star})(z_1) - (\Phi_{n,\xi} + \Phi_{n}^{\star})(z_2) \big| \big(1+|\log d(z_1,z_2)|\big) \lesssim \sum_{1 \leq \ell \leq 4} |\xi| {k_\ell \over \sqrt n} (1 + k_\ell) \lesssim |\xi|
\end{align*}
where in the last step we have used that $k_\ell \leq  \sqrt[4] n$. This shows that $\norm{\Phi_{n,\xi} +\Phi_{n}^{\star} }_{\log^1} \lesssim 1 + |\xi|$ and finishes the proof of the lemma.
\end{proof}

Let $$H(b): =\frac{1}{\sqrt{2 \pi}  \, a } \int_{-\infty}^b e^{-\frac{s^2}{2  a ^2}} \, \diff s$$ be the c.d.f.\ of the normal distribution $\cali N(0; a ^2)$. In the notation of Lemma \ref{lemma:BE-feller}, we have $h(u): =\frac{1}{\sqrt{2 \pi}  \, a }   e^{-\frac{u^2}{2  a ^2}}$ and $\widehat h(\xi) =  e^{-{ a ^2 \xi^2\over 2}}$. 

\medskip

Let $\xi_0$ be a small constant. We choose it so that  Lemma \ref{lemma:lambda-estimates} holds and the decomposition  of $\oP_\xi$ from Corollary \ref{cor:P_t-decomp-W} is valid for $|\xi| \leq \xi_0$.

\medskip

\begin{lemma} \label{lemma:F_n-estimate}
	Let $F_n$ and $H$ be as above. Then, $\big|F_n(b)-H(b)\big|\lesssim 1/\sqrt n$ for $|b|\geq \xi_0 \sqrt n$, uniformly in $b$.
\end{lemma}

\begin{proof}
	We only consider the case where $b\leq -\xi_0\sqrt n$. The case $b \geq \xi_0\sqrt n$ can be treated similarly using $1-F_n$ and $1-H$ instead of $F_n$ and $H$. We can also assume that $n$ is large enough. Clearly, $H(b)\lesssim 1/\sqrt n$ for $b\leq -\xi_0\sqrt n$,  so it is enough to bound $F_n(b)$.  
	
	For $0\leq k\leq  \sqrt[4] n$, we have
	\begin{align*}
	\mathbf P\Big(  {\sigma(S_n,x) - n \gamma - k \over \sqrt n}\leq -\xi_0\sqrt n  \Big)&=\mathbf P\Big(\sigma(S_n,x) - n \gamma  \leq -\xi_0 n +k  \Big)\\
	&\leq \mathbf P\Big(\sigma(S_n,x) - n \gamma  \leq  -  \xi_0 n / 2\Big),
	\end{align*}
	since $n$ is large. By Proposition \ref{prop:BQLDT} applied with $p=3$ and $\ep= \xi_0 / 2$, there exists a sequence of positive constants $C_{n,\xi_0/2}$ with $\sum_{n \geq 1} n C_{n,\xi_0/2} < \infty $ such that the last quantity is bounded by $C_{n,\xi_0/2}$. In particular, we have that  $C_{n,\xi_0/2} = o (1 / n)$.
	
	Using the definition of $F_n$ and the fact that $\E \big(\Phi_{n}^{\star}(S_n x) \big)  =o( 1 / \sqrt n)$ (see the proof of Lemma \ref{lemma:Ln-Fn}),  it follows that 
	$$ F_n (-\xi_0\sqrt n)\leq \sum_{0\leq k\leq  \sqrt[4] n }C_{n,\xi_0/2}+ o(1/\sqrt n)  \leq \sqrt[4] n \, o(1/n) + o(1/\sqrt n )=o( 1/\sqrt n).$$ 
	As $F_n(b)$ is non-decreasing in $b$, one gets that $F_n(b) =o( 1 / \sqrt n)$ for all $b\leq -\xi_0\sqrt n$. The lemma follows.
\end{proof}

Lemmas \ref{lemma:char-function-Fn} and \ref{lemma:F_n-estimate}  imply that $F_n$ satisfies the conditions of Lemma \ref{lemma:BE-feller} with $\delta_n:=(\xi_0 \sqrt n)^{-1/2}$.  Let $\kappa > 1$ be the constant appearing in that lemma. For simplicity, by taking a smaller $\xi_0$ if necessary, one can assume that $2\kappa \xi_0 \leq 1$.  Then,  Lemma \ref{lemma:BE-feller} gives that
\begin{equation} \label{eq:BE-main-estimate}
\sup_{b \in\R}\big|F_n(b)-H(b)\big|\leq  {1\over \pi} \sup_{|b|\leq \sqrt n}    \Big|  \int_{-\xi_0\sqrt n}^{\xi_0\sqrt n} {\Theta_b(\xi) \over \xi}    \,\diff \xi  \Big|+{C\over \sqrt n},
\end{equation}
where $C>0$ is a constant independent of $n$, 
$$\Theta_b(\xi):=e^{ib\xi}\big( \phi_{F_n}(\xi)- \widehat {h}(\xi) \big)\widehat{\vartheta_{\delta_n}}(\xi),$$
and $\vartheta_\delta$ is defined in Subsection \ref{subsec:regularization}.

\medskip

We now estimate the integral in the right hand side of \eqref{eq:BE-main-estimate}.  Motivated by Lemma \ref{lemma:lambda-estimates},  Corollary \ref{cor:P_t-decomp-W} and Lemma \ref{lemma:char-function-Fn}, in order to approximate $\phi_{F_n}$, introduce the following function:
$$\widetilde h_n(\xi):=  \big(\oN_0 \Phi_{n,\xi} \big)  e^{-{ a ^2 \xi^2\over 2}} +    \big(\oN_0 \Phi_{n}^{\star} \big)    e^{-{ a ^2 \xi^2\over 2}} =  e^{-{ a ^2 \xi^2\over 2}}  \oN_0 (  \Phi_{n,\xi} +  \Phi_{n}^{\star} ).$$ Notice that  $\oN_0 (  \Phi_{n,\xi} +  \Phi_{n}^{\star} )$ is a constant independent of $x$.  Define also
$$\Theta_b^{(1)}(\xi):= e^{ib\xi}\big( \phi_{F_n}(\xi)- \widetilde {h}_n(\xi) \big)\widehat{\vartheta_{\delta_n}}(\xi) \quad\text{and} 
\quad \Theta_b^{(2)}(\xi):= e^{ib\xi}\big(  \widetilde {h}_n(\xi)-\widehat h(\xi) \big)\widehat{\vartheta_{\delta_n}}(\xi),$$
so that $\Theta_b =  \Theta_b^{(1)} + \Theta_b^{(2)}$. 
\medskip

\begin{lemma} \label{lemma:theta-1-bound} We have
	$$\sup_{|b|\leq \sqrt n}    \Big|  \int_{-\xi_0\sqrt n}^{\xi_0\sqrt n} {\Theta_b^{(1)}(\xi) \over \xi}    \,\diff \xi  \Big| \lesssim {1\over \sqrt n}.$$
\end{lemma}

\begin{proof}
	Using Lemma \ref{lemma:char-function-Fn} and the decomposition of $\oP_\xi$ on $\W$ from Corollary \ref{cor:P_t-decomp-W}, we write
	$$\Theta_b^{(1)}=\Lambda_1+\Lambda_2+\Lambda_3,$$
	where 
	$$ \Lambda_1(\xi):= e^{ib\xi}\Big(  e^{i\xi\sqrt n \gamma} \lambda_{-{\xi\over \sqrt n}}^n \oN_{-{\xi\over \sqrt n}} (\Phi_{n,\xi} +\Phi_{n}^{\star}  ) (x) -      e^{-{ a ^2 \xi^2\over 2}}   \oN_0 (\Phi_{n,\xi} +\Phi_{n}^{\star}  )  (x)   \Big) \widehat{\vartheta_{\delta_n}}(\xi), $$ 
	$$\Lambda_2(\xi):=e^{ib\xi}\Big(  e^{i\xi\sqrt n \gamma} \oQ_{-{\xi\over \sqrt n}}^n  (\Phi_{n,\xi} +\Phi_{n}^{\star}  )(x) -e^{i\xi\sqrt n \gamma}\oQ_0^n  (\Phi_{n,\xi} +\Phi_{n}^{\star}  )(x)  \Big)\widehat{\vartheta_{\delta_n}}(\xi)$$
	and
	$$ \Lambda_3(\xi):=e^{ib\xi}e^{i\xi\sqrt n \gamma}\oQ_0^n  (\Phi_{n,\xi} +\Phi_{n}^{\star}  )(x)  \,\widehat{\vartheta_{\delta_n}}(\xi).$$
	
	We will estimate the integral  of $\Lambda_j(\xi) / \xi$,  $j=1,2,3$, separately.  Notice that, from \eqref{eq:psi_xi+psi_T}, we have that $\Phi_{n,0} + \Phi_{n}^{\star} = \mathbf 1 $.   Together with the fact that $\lambda_0=1$, $\oN_0 \mathbf 1 = \mathbf 1$, $\oQ_0 \mathbf 1 = 0$ and the Lipschitz regularity of $\xi \mapsto \oP_\xi$ (Lemma \ref{lemma:Pt-regularity-3-moments}), we get that  $\Lambda_j$ is a locally Lipschitz function vanishing at the origin for  $j=1,2,3$.  In particular, $\Lambda_j(\xi) / \xi$ is a bounded function of $\xi$ for $j=1,2,3$. Moreover, $\Lambda_3$ is differentiable. We see here why the presence of the function $\Phi_{n}^{\star}$ is useful.
	
	\vskip 5pt
	
	In order to estimate $\Lambda_2$ observe that, for $|\xi|$ small, the norm of the operator $\oQ^n_\xi - \oQ^n_0$ is bounded by a constant times $|\xi| n \beta^n$ for some $0<\beta<1$. This can be seen by writing the last difference as $\sum_{\ell=0}^{n-1}  \oQ_\xi^{n-\ell-1}(\oQ_\xi - \oQ_0)\oQ_0^\ell$, applying Corollary \ref{cor:P_t-decomp-W}-(5) and using the fact that $\norm{\oQ_\xi - \oQ_0}_{\W} \lesssim |\xi|$ by Lemma \ref{lemma:Pt-regularity-3-moments}. Therefore, we have 
	$$  \Big|  \oQ_{-{\xi\over \sqrt n}}^n  (\Phi_{n,\xi} +\Phi_{n}^{\star}  )(x) - \oQ_0^n (\Phi_{n,\xi} +\Phi_{n}^{\star}  )(x) \Big| \lesssim {|\xi|\over \sqrt n} n \beta^n \norm{ \Phi_{n,\xi} +\Phi_{n}^{\star}  }_{\W}.$$
	Using Lemma \ref{lemma:norm-Phi}, this gives
	\begin{align*}
	\Big|\int_{-\xi_0\sqrt n}^{\xi_0\sqrt n} {\Lambda_2(\xi) \over \xi}    \,\diff \xi\Big|
	&\leq\int_{-\xi_0\sqrt n}^{\xi_0\sqrt n} {1\over |\xi|} \cdot \Big| \oQ_{-{\xi\over \sqrt n}}^n  (\Phi_{n,\xi} +\Phi_{n}^{\star}  )(x) -  \oQ_0^n (\Phi_{n,\xi} +\Phi_{n}^{\star}  )(x) \Big|\,\diff\xi \\
	&\lesssim     \int_{-\xi_0\sqrt n}^{\xi_0\sqrt n} {1\over |\xi|} \cdot  |\xi| \sqrt n \beta^n \norm{ \Phi_{n,\xi} +\Phi_{n}^{\star}  }_{\W} \,\diff\xi \lesssim n \beta^n \sqrt n  \lesssim {1\over \sqrt n}.     
	\end{align*}
	
	We now estimate $\Lambda_3$ using its derivative $\Lambda'_3$. Recall that $\norm{\widehat{\vartheta_{\delta_n}}}_{\Cc^1}\lesssim 1,|b|\leq \sqrt n$ and  $\big|\oQ_0^n  (\Phi_{n,\xi} +\Phi_{n}^{\star}  )(x) \big|\lesssim \beta^n\norm{ \Phi_{n,\xi} +\Phi_{n}^{\star}  }_{\W}$, where $0<\beta<1$ is as before. A direct computation using \eqref{eq:psi_xi+psi_T} gives for $|\xi|\leq \xi_0 \sqrt n$,
	\begin{align*}
	| \Lambda_3'(\xi)| &\lesssim  \Big|b \oQ_0^n  (\Phi_{n,\xi} +\Phi_{n}^{\star}  )(x)\Big|+    \Big|\sqrt n \gamma \oQ_0^n  (\Phi_{n,\xi} +\Phi_{n}^{\star}  )(x)\Big|  \\
	& \quad \quad + \sum_{1 \leq k \leq \sqrt[4] n } \Big|{k\over \sqrt n} e^{i \xi \frac{k}{\sqrt n}}\oQ_0^n \chi_k (x)    \Big|  + \Big|\oQ_0^n  (\Phi_{n,\xi} +\Phi_{n}^{\star}  )(x)\Big| \norm{\widehat{\vartheta_{\delta_n}}}_{\Cc^1} \\
	&\lesssim \sqrt n  \beta^n \sqrt n  + \sum_{1 \leq k \leq \sqrt[4] n } {k^2\beta^n\over \sqrt n}  +   \beta^n \sqrt n   \lesssim \sqrt n  \beta^n \sqrt n ,
	\end{align*}
	where we have used that $|b| \leq \sqrt n$, $\norm{\chi_k}_{\W} \lesssim k$ and $\norm{ \Phi_{n,\xi} +\Phi_{n}^{\star}  }_{\W} \lesssim \sqrt n $, see Lemma \ref{lemma:norm-Phi}.
	
	Applying the mean value theorem, we get
	\begin{align*}
	\Big|\int_{-\xi_0\sqrt n}^{\xi_0\sqrt n} {\Lambda_3(\xi) \over \xi}    \,\diff \xi\Big|\leq 2\xi_0\sqrt n\sup_{|\xi|\leq \xi_0\sqrt n} | \Lambda_3'(\xi)|\lesssim n   \beta^n \sqrt n\lesssim {1 \over \sqrt n}.
	\end{align*}
	\vskip 5pt
	
	It remains to estimate the term involving $\Lambda_1$. We have
	$$ \Big|\int_{-\xi_0\sqrt n}^{\xi_0\sqrt n} {\Lambda_1(\xi) \over \xi}   \diff \xi\Big| \leq \int_{-\xi_0\sqrt n}^{\xi_0\sqrt n} {1\over |\xi|}\cdot \Big|  e^{i\xi\sqrt n \gamma} \lambda_{-{\xi\over \sqrt n}}^n \oN_{-{\xi\over \sqrt n}} (\Phi_{n,\xi} +\Phi_{n}^{\star}  ) (x) -   e^{-{ a ^2 \xi^2\over 2}}   \oN_0 (\Phi_{n,\xi} +\Phi_{n}^{\star}  )(x)    \Big|   \diff \xi.$$
	We split the last integral into two integrals using
	$$ \Gamma_1(\xi):=   e^{i\xi\sqrt n \gamma} \lambda_{-{\xi\over \sqrt n}}^n \oN_{-{\xi\over \sqrt n}}  (\Phi_{n,\xi} +\Phi_{n}^{\star}  ) (x) -      e^{i\xi\sqrt n \gamma} \lambda_{-{\xi\over \sqrt n}}^n  \oN_0 (\Phi_{n,\xi} +\Phi_{n}^{\star}  )(x)   $$
	and
	$$  \Gamma_2(\xi):=   e^{i\xi\sqrt n \gamma} \lambda_{-{\xi\over \sqrt n}}^n  \oN_0 (\Phi_{n,\xi} +\Phi_{n}^{\star}  ) (x) -      e^{-{ a ^2 \xi^2\over 2}}   \oN_0 (\Phi_{n,\xi} +\Phi_{n}^{\star}  ) (x) .   $$
	\vskip 5pt
	
	From   Lemmas \ref{lemma:Pt-regularity-3-moments} and  \ref{lemma:norm-Phi}, one has 
	$$\Big\|\big(\oN_{-{\xi\over \sqrt n}}-\oN_0 \big)  (\Phi_{n,\xi} +\Phi_{n}^{\star}  ) \Big\|_\infty \lesssim {|\xi| \over \sqrt n} \norm{ \Phi_{n,\xi} +\Phi_{n}^{\star}  }_{\W}\lesssim    {|\xi| \over \sqrt n}(1 + |\xi|). $$
	
 Also, $\big|\lambda_{-{\xi\over \sqrt n}}^n\big|\leq e^{-{ a ^2\xi^2\over 3}}$ from Lemma \ref{lemma:lambda-estimates}, so	
	$$ \int_{-\xi_0\sqrt n}^{\xi_0\sqrt n}  {1\over |\xi|}\cdot \big| \Gamma_1(\xi) \big|  \,\diff \xi \lesssim  \int_{-\infty}^{+\infty}  {1\over |\xi|} \cdot  e^{-{ a ^2\xi^2\over 3}} {|\xi|\over \sqrt n}   (1+|\xi|)  \,\diff \xi \lesssim {1 \over \sqrt n}. $$
	
	We now claim that, for $|\xi| < \xi_0 \sqrt n$, we have
		\begin{equation} \label{eq:lambda-estimate}
\Big| e^{i\xi\sqrt n \gamma} \lambda_{-{\xi\over \sqrt n}}^n-e ^{-{ a ^2\xi^2\over 2}}  \Big|\lesssim {1\over \sqrt n} |\xi|^3 e^{-{ a ^2\xi^2\over 4}}.
	\end{equation}
	Indeed, Lemma \ref{lemma:lambda-estimates} directly gives \eqref{eq:lambda-estimate} when $|\xi|\leq \sqrt[6] n$ and when $\sqrt[6] n<|\xi|\leq \xi_0\sqrt n$, the same lemma gives the bound ${1 \over \sqrt n}e^{-{a^2\xi^2\over 4}}$ which is bounded by  ${1\over \sqrt n} |\xi|^3 e^{-{ a ^2\xi^2\over 4}}$ since $|\xi| \geq 1$.
	
	By definition of $\Phi_{n,\xi}$ and Lemma \ref{lemma:norm-Phi}, $ \big| \Phi_{n,\xi} +\Phi_{n}^{\star} \big| \leq \Phi_{n,0} +\Phi_{n}^{\star} = 1$, so  $\big|\oN_0  (\Phi_{n,\xi} +\Phi_{n}^{\star}  )\big| \leq 1$. Therefore, using \eqref{eq:lambda-estimate}, we obtain
	$$ \int_{-\xi_0\sqrt n}^{\xi_0\sqrt n}  {1\over |\xi|}\cdot \big| \Gamma_2(\xi) \big|  \,\diff \xi \lesssim \int_{-\xi_0\sqrt n}^{\xi_0\sqrt n} {1\over |\xi|}\cdot |\xi|^3  {1\over \sqrt n} e^{-{ a ^2\xi^2\over 4}}   \,\diff \xi  \lesssim { 1\over \sqrt n}.$$
We deduce that 
	$$\Big|\int_{-\xi_0\sqrt n}^{\xi_0\sqrt n} {\Lambda_1(\xi) \over \xi}    \,\diff \xi\Big|  \lesssim {1 \over \sqrt n},$$ which ends the proof of the lemma. \end{proof}

\begin{lemma} \label{lemma:theta-2-bound}
	We have
	$$\sup_{|b|\leq  \sqrt n}    \Big|  \int_{-\xi_0\sqrt n}^{\xi_0\sqrt n} {\Theta_b^{(2)}(\xi) \over \xi}   \,\diff \xi  \Big| \lesssim { 1\over \sqrt n}.$$
\end{lemma}

\begin{proof}
From the proofs of  Theorem 2.2 and Proposition 4.5 in \cite{benoist-quint:CLT}, there exist positive constants $C_k$, $k \geq 1$ such that $\nu\big(\D(x,e^{-k+1})\big) \leq C_k$  and $\sum_{k \geq 1} k C_k < \infty$. Since $\chi_k$ is bounded by $1$ and is supported by $ \Tc_k \subset \D(y,e^{-k+1})$,  we get that  
	$$\sum_{k\geq 0} k (\oN_0 \chi_k) <+\infty.$$
	
	Recall that  $\widehat h(\xi) =  e^{-{ a ^2 \xi^2\over 2}}$ and $\widetilde h_n(\xi)=  e^{-{ a ^2 \xi^2\over 2}} \oN_0 (\Phi_{n,\xi} +  \Phi_{n}^{\star})$. Using \eqref{eq:psi_xi+psi_T} and $\oN_0 \mathbf 1 = \mathbf 1$, we get
	\begin{align*}
	\Theta_b^{(2)}(\xi) &= e^{ib \xi} e^{-{ a ^2 \xi^2\over 2}} \big( \oN_0 (\Phi_{n,\xi} +  \Phi_{n}^{\star}) - 1 \big) \widehat{\vartheta_{\delta_n}}(\xi) \\ &=  e^{ib \xi} e^{-{ a ^2 \xi^2\over 2}} \sum_{0\leq k\leq \sqrt[4] n } \big(  e^{i \xi{k \over \sqrt n}}-1 \big) \big( \oN_0 \chi_k \big) \cdot \widehat{\vartheta_{\delta_n}}(\xi).
	\end{align*}
	As $\norm{\widehat{\vartheta_{\delta_n}}}_\infty \lesssim 1$, we obtain 
	\begin{align*}
	\big|\Theta_b^{(2)}(\xi)\big| \lesssim  e^{-{ a ^2 \xi^2\over 2}} \sum_{0\leq k\leq \sqrt[4] n } \big|  e^{i \xi{k \over \sqrt n}}-1\big| \big( \oN_0\chi_k \big)    \lesssim e^{-{ a ^2 \xi^2\over 2}}\sum_{1\leq k\leq \sqrt[4] n } | \xi| {k \over \sqrt n} \big( \oN_0\chi_k \big) \lesssim e^{-{ a ^2 \xi^2\over 2}} {|\xi|\over \sqrt n}.
	\end{align*}
	 Therefore,
	$$   \int_{-\xi_0\sqrt n}^{\xi_0\sqrt n} \Big|{\Theta_b^{(2)}(\xi)\over \xi} \Big|  \,\diff \xi   \lesssim  \int_{-\xi_0\sqrt n}^{\xi_0\sqrt n}  {1\over |\xi|} \cdot e^{-{ a ^2 \xi^2\over 2}} {|\xi|\over \sqrt n}  \,\diff \xi \lesssim {1 \over \sqrt n},$$
	thus proving the lemma.
\end{proof}

Gathering the above estimates, we can finish the proof.

\begin{proof}[End of proof of Theorem \ref{thm:BE-coeff}] 
	Recall that our goal is to prove \eqref{goal-1-varphi}. Estimate \eqref{eq:BE-main-estimate} together with Lemmas \ref{lemma:theta-1-bound} and \ref{lemma:theta-2-bound} give that
	$\big|F_n(b)-H(b)\big| \leq C'/ \sqrt n$ for all $b \in \R$, where $C' > 0$ is a constant. Since $H(b): =\frac{1}{\sqrt{2 \pi}  \, a } \int_{-\infty}^b e^{-\frac{s^2}{2  a ^2}} \, \diff s$,  the last estimate together with Lemma \ref{lemma:Ln-Fn} and the easy fact that $\sup_{b \in \R} \big|H(b)-H(b \pm 1/ \sqrt n)\big| \lesssim 1/ \sqrt n$ give that $ \big|\oL_n(b)-H(b)\big| \leq C'' /  \sqrt n$ for some constant $C'' > 0$. We have thus shown that   \eqref{goal-1-varphi} holds. Observe that all of our estimates are uniform in $x,y \in \P^1$ and $b\in\R$. This completes the proof of the theorem.
\end{proof}

\appendix

\section{Properties of functions in $W^{1,2}$}

In this appendix, we recall some properties of functions in the Sobolev space $W^{1,2}$ that are used in this work. Most of them also hold for functions in the higher dimensional analogues $W_*$ of $W^{1,2}$ introduced by Sibony and the first author. These spaces are defined in \cite{dinh-sibony:decay-correlations} for any K\"ahler manifold and are very useful in applications to dynamics and complex analysis. For simplicity, we only deal here with the special case of $\P^1$.

By definition, elements of $W^{1,2}$ are equivalence classes of measurable functions with finite $W^{1,2}$-norm, where we identify functions that agree almost everywhere with respect to the Lebesgue measure. However, we'll see below that functions on $W^{1,2}$ admit canonically defined pointwise values outside a polar subset of $\P^1$.

We start by recalling some notation. We denote $\diff^c:=  \frac{i}{2\pi} (\overline \partial - \partial)$, so that $\ddc = \frac{i}{\pi} \partial \bar \partial$. Recall that the standard Fubini-Study form $\omegaFS$ on $\P^1$ is the unique smooth closed $(1,1)$-form of unit integral that is invariant by the transitive action of the unitary group. A function $\psi: \P^1 \to \R \cup \{-\infty\}$ is \textit{quasi-subharmonic} if it locally the difference of a subharmonic function and a smooth one. If $\psi$ is quasi-subharmonic, there is a constant $c>0$ such that $c \, \omegaFS + \ddc \psi$ is a positive measure on $\P^1$. A set $E \subset \P^1$ is \textit{polar} if there is a quasi-subharmonic $\psi$ such that $E \subset \{\psi = - \infty\}$.

There are various useful notions of convergence of functions in $W^{1,2}$.

\begin{definition}\rm
Let $(u_k)_{k \geq 1}$ be a sequence in $W^{1,2}$ and $u \in W^{1,2}$.
\begin{itemize}
\item We say that $u_k \to u$ \textit{strongly} or \textit{in $W^{1,2}$-norm} if $\|u_k - u\|_{W^{1,2}} \to 0$ as $k \to \infty$.
\item We say that $u_k \to u$ \textit{weakly} if $u_k$ converge to $u$ in the sense of currents (or Schwartz distributions) and $\|u_k\|_{W^{1,2}}$ is uniformly bounded.
\item We say that $u_k \to u$ \textit{locally nicely} if $u_k \to u$ weakly and if for every $x \in \P^1$ there exist an open neighborhood $U$ of $x$ and a sequence of subharmonic functions $\varphi_k$ on $U$ decreasing pointwise to a function $\varphi$ so that, for every $k\geq 1$, one has
\begin{equation} \label{eq:locally-nice}
i \del u_k\big|_U \wedge \overline{\del u_k}\big|_U \leq \ddc \varphi_k
\end{equation}
and the same for $\overline {u_k}$.

\item We say that $u_k \to u$ \textit{globally nicely} if $u_k \to u$ weakly and if there exist a number $c>0$ and a sequence of quasi-subharmonic functions $\psi_k$ on $\P^1$ decreasing pointwise to a function $\psi$ so that, for every $k\geq 1$, one has
\begin{equation} \label{eq:globally-nice}
i \del u_k \wedge \overline{\del u_k } \leq c \, \omegaFS + \ddc \psi_k
\end{equation}
and the same for $\overline {u_k}$.
\end{itemize}
\end{definition}

\medskip

Notice that the functions $\psi_k$ above satisfy $ c \, \omegaFS + \ddc \psi_k \geq 0$. It follows that the limit function $\psi$ is also  quasi-subharmonic and satisfies the same inequality. Observe that when the $u_k$ are real, the estimates \eqref{eq:locally-nice} and \eqref{eq:globally-nice} for $u_k$ and $\overline {u_k}$ are equivalent.

\begin{remark}
Dinh-Marinescu-Vu only work with local nice convergence. It is clear that global nice convergence implies local nice convergence. It is enough to take $\varphi_k = c \, \mathbf g + \psi_k|_U$ and $\varphi =c \, \mathbf g + \psi |_U $ where $\mathbf g$ is a local potential of $\omegaFS$ on $U$, that is, $\omegaFS = \ddc \mathbf g$ on $U$.
\end{remark}

Using that $\P^1$ is homogeneous under the holomorphic action of $\SL_2(\C)$, we can regularize functions on $\P^1$ by convolution in a standard way, see \cite{dinh-sibony:acta}. Using this regularization, the following lemma can be proved in the same way as \cite[Lemma 2.3]{dinh-marinescu-vu}. See also \cite[Lemma 5]{vigny:dirichlet}.

\begin{lemma} \label{lemma:standard-reg}
Let $u \in W^{1,2}$. Then, there exists a sequence $(u_k)_{k \geq 1}$ of functions on $\P^1$ such that $u_k$ is smooth for every $k$ and $u_k \to u$ globally nicely as $k \to \infty$. Moreover, if $u$ is bounded then $u_k$ are uniformly bounded.
\end{lemma}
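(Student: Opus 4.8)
\textbf{Proof plan for Lemma \ref{lemma:standard-reg}.}

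The plan is to produce the regularizing sequence by averaging $u$ over small neighborhoods of the identity in $\SL_2(\C)$, exactly the homogeneous convolution procedure of \cite{dinh-sibony:acta}. First I would fix a smooth family of probability measures $(\rho_k)_{k\geq 1}$ on $\SL_2(\C)$ supported in balls $B_k$ around the identity that shrink to the identity as $k\to\infty$, each $\rho_k$ being a smooth compactly supported density with respect to a fixed Haar measure. Then I would set
\begin{equation*}
u_k(x) := \int_{\SL_2(\C)} u(h^{-1}\cdot x)\,\diff\rho_k(h).
\end{equation*}
Because the action is holomorphic and transitive and $\rho_k$ is smooth, the function $u_k$ is smooth on $\P^1$; and since $\rho_k\to\delta_{\id}$ weakly and $\P^1$ is compact, $u_k\to u$ in $L^1$, hence weakly in the sense of currents, and $\|u_k\|_{W^{1,2}}$ stays bounded (in fact $\|u_k\|_{W^{1,2}}\lesssim\|u\|_{W^{1,2}}$ uniformly in $k$) because the group acts with uniformly bounded distortion on the small balls $B_k$. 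If $u$ is bounded, then $\|u_k\|_\infty\leq\|u\|_\infty$ trivially, giving the last assertion.

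The heart of the matter — and the step I expect to be the main obstacle — is verifying the global nice convergence, i.e. producing a single constant $c>0$ and quasi-subharmonic functions $\psi_k$ decreasing pointwise to some $\psi$ with $i\,\del u_k\wedge\overline{\del u_k}\leq c\,\omegaFS+\ddc\psi_k$. The idea is the standard one from \cite{dinh-marinescu-vu,vigny:dirichlet}: apply Cauchy–Schwarz under the integral sign. Writing $\del u_k(x)=\int (h^{-1})^*(\del u)(x)\,\diff\rho_k(h)$ and using that each $h\in B_k$ acts holomorphically, one gets by Cauchy–Schwarz
\begin{equation*}
i\,\del u_k\wedge\overline{\del u_k}\leq \int_{\SL_2(\C)} (h^{-1})^*\big(i\,\del u\wedge\overline{\del u}\big)\,\diff\rho_k(h)=:\Psi_k,
\end{equation*}
a positive $(1,1)$-current (measure) of uniformly bounded mass. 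One then checks that $\Psi_k$ has a quasi-potential: since $i\,\del u\wedge\overline{\del u}$ has finite mass, it can be written as $c_0\,\omegaFS+\ddc\psi_0$ for a quasi-subharmonic $\psi_0$ with the normalization $\int\psi_0\,\omegaFS=0$, and averaging this representation over $h$ (using $h^*\omegaFS=\omegaFS+\ddc\mathbf g_h$ for a smooth potential $\mathbf g_h$ depending smoothly on $h\in B_k$, with $\mathbf g_h\to 0$ uniformly as $k\to\infty$) yields $\Psi_k=c_k\,\omegaFS+\ddc\widetilde\psi_k$ with $c_k$ bounded and $\widetilde\psi_k$ quasi-subharmonic. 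The remaining technical point is to pass from this family $\widetilde\psi_k$ to an honestly \emph{decreasing} sequence $\psi_k$: this is achieved by a diagonal/telescoping argument — replacing $\psi_k$ by $\sup_{j\geq k}\widetilde\psi_j$ (its upper semicontinuous regularization), which is still quasi-subharmonic because the $\widetilde\psi_j$ are uniformly bounded above and the usc regularization of a bounded-above family of quasi-subharmonic functions is quasi-subharmonic, and which decreases in $k$. One must check that this regularization still dominates $\Psi_k$ in the sense of \eqref{eq:globally-nice}, possibly after enlarging $c$; this follows because $\ddc(\sup_{j\geq k}\widetilde\psi_j)\geq\ddc\widetilde\psi_k$ is false in general, so instead one argues directly that $i\,\del u_k\wedge\overline{\del u_k}\leq\Psi_k\leq c\,\omegaFS+\ddc\psi_k$ by comparing masses on small balls and using the submean-value characterization, exactly as in \cite[Lemma 2.3]{dinh-marinescu-vu}. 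I would then invoke that reference for the details of this last comparison, since the argument there transfers verbatim once the homogeneous regularization and the Cauchy–Schwarz bound above are in place. This completes the construction; weak convergence plus the uniform upper bound \eqref{eq:globally-nice} is precisely global nice convergence, and boundedness of $u$ gives uniform boundedness of the $u_k$.
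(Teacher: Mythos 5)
Your proposal follows exactly the route the paper intends: the paper gives no written proof here beyond citing the homogeneous convolution regularization of \cite{dinh-sibony:acta} and the argument of \cite[Lemma 2.3]{dinh-marinescu-vu} (see also \cite[Lemma 5]{vigny:dirichlet}), and your sketch (group convolution, Cauchy--Schwarz under the integral, writing the averaged measure as $c\,\omegaFS+\ddc\psi_k$) is precisely that argument. The only shaky point is your $\sup_{j\geq k}\widetilde\psi_j$ detour for making the potentials decrease, which you yourself note does not work and then punt to the reference; the standard fix is that no such detour is needed, since the group convolutions $\psi*\rho_k$ of a fixed quasi-subharmonic potential $\psi$ with shrinking kernels are (after adding a small constant $\epsilon_k\downarrow 0$) automatically monotone decreasing to $\psi$, which is how the cited lemma obtains the decreasing sequence directly.
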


\begin{theorem}[\cite{dinh-marinescu-vu}--Theorem 2.10] \label{thm:good-rep} Let $u \in W^{1,2}$. Then there exists a Borel function $\widetilde u$ defined everywhere on $\P^1$ such that $\widetilde u = u$ almost everywhere (in particular, $\widetilde u$ and $u$ define the same element in $W^{1,2}$) and the following holds:
\begin{itemize}
\item[(i)] For every open set $U \subset \P^1$ and every sequence $(u_k)_{k \geq 1}$ in $W^{1,2}(U) \cap \Cc^0(U)$ such that $u_k \to u$ nicely, there is a subsequence $(u_{k_\ell})_{\ell_\geq 1}$  and a polar set $E \subset U$ such that $u_{k_{\ell}}$ converges pointwise to $\widetilde u$ everywhere on $U \setminus E$.

\item[(ii)] If $\widetilde v$ is another Borel function satisfying (i) then $\widetilde v = \widetilde u$ on $U$ except on a polar set.
\end{itemize}
\end{theorem}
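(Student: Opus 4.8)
The plan is to construct $\widetilde u$ from one fixed regularisation of $u$, to prove that this regularisation converges off a polar set by means of a capacity estimate for super-level sets, and then to deduce (i) and (ii) from the uniqueness of quasi-continuous representatives. First I would fix, using Lemma \ref{lemma:standard-reg}, a sequence $(u_k)_{k\geq 1}$ of smooth functions on $\P^1$ with $u_k\to u$ globally nicely, so that $i\del u_k\wedge\overline{\del u_k}\leq c\,\omegaFS+\ddc\psi_k$ for quasi-subharmonic functions $\psi_k$ decreasing to a quasi-subharmonic $\psi$; after subtracting constants we may assume $\psi_k\leq 0$ and $\int_{\P^1}\psi_k\,\omegaFS\geq -1$ for all $k$. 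Since $u_k\to u$ in $L^1$, after passing to a subsequence we may also arrange $\|u_{k+1}-u_k\|_{L^1}\leq\eta_k$ for a sequence $\eta_k\to 0$ that decays as fast as we wish, and then $i\del(u_{k+1}-u_k)\wedge\overline{\del(u_{k+1}-u_k)}$ is still dominated, up to a harmless constant factor, by $c\,\omegaFS+\ddc\widetilde\psi_k$ with $\widetilde\psi_k:=\tfrac12(\psi_k+\psi_{k+1})\searrow\psi$. The candidate representative is then $\widetilde u:=\limsup_k u_k$, a Borel function defined at every point of $\P^1$.

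The core of the argument, and the step I expect to be the main obstacle, is a capacity estimate of the following shape: there is a function $G$ with $G(t)\to 0$ as $t\to 0$ such that every $h\in W^{1,2}\cap\Cc^0(\P^1)$ with $i\del h\wedge\overline{\del h}\leq c\,\omegaFS+\ddc\chi$ for some quasi-subharmonic $\chi\leq 0$ normalised by $\int_{\P^1}\chi\,\omegaFS\geq -1$ satisfies
$$\capacity\big\{x\in\P^1:\ |h(x)|\geq \delta\big\}\ \leq\ \frac{G\big(\|h\|_{L^1}\big)}{\delta^{2}}\qquad\text{for every }\delta>0,$$
where $\capacity$ denotes the logarithmic capacity, equivalently the $W^{1,2}$-capacity, whose null sets are exactly the polar sets. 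Its proof should combine the Moser--Trudinger inequality \eqref{eq:moser-trudinger-estimate} --- which turns the uniform Sobolev bound $\|\del h\|_{L^2}^2\leq c$ into exponential integrability of $h$, hence a strong bound for the Lebesgue measure of $\{|h|\geq\delta\}$ --- with the gradient domination and the monotonicity of the potentials: the part of the mass of $\ddc\chi$ that can force $|h|$ to be large must concentrate near $\{\chi=-\infty\}$, a set of vanishing capacity, and this is where the nice-convergence structure is genuinely used rather than mere $W^{1,2}$-boundedness. This estimate is the analytic heart of \cite{dinh-marinescu-vu}.

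Granting it, applying the estimate to $h=\tfrac12(u_{k+1}-u_k)$ with $\chi=\widetilde\psi_k$ gives $\capacity\{|u_{k+1}-u_k|\geq\delta_k\}\leq\delta_k^{-2}G(\eta_k)$; choosing the $\eta_k$ small enough that $\sum_k\delta_k^{-2}G(\eta_k)<\infty$ for some summable $\delta_k\to 0$, a Borel--Cantelli argument --- legitimate because the $W^{1,2}$-capacity is monotone and countably subadditive and a countable union of polar sets is polar --- yields a decreasing family of open sets $O_N$ with $\capacity(O_N)\to 0$ outside of which $|u_{k+1}-u_k|<\delta_k$ for all large $k$. Hence $(u_k)$ converges uniformly on each $\P^1\setminus O_N$, therefore pointwise off the polar set $\bigcap_N O_N$, the limit being continuous on each $\P^1\setminus O_N$ and agreeing there with $\widetilde u=\limsup_k u_k$. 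As polar sets are Lebesgue negligible and $u_k\to u$ almost everywhere, $\widetilde u=u$ almost everywhere, so $\widetilde u$ represents $u$ in $W^{1,2}$, and by the above it is quasi-continuous: continuous off an open set of arbitrarily small capacity.

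It remains to obtain (i) and (ii). For (i), given an open $U$ and a sequence $(u'_k)$ in $W^{1,2}(U)\cap\Cc^0(U)$ with $u'_k\to u$ locally nicely, I would localise the capacity estimate to small discs --- the local domination $i\del u'_k\wedge\overline{\del u'_k}\leq\ddc\varphi_k$ with $\varphi_k\searrow\varphi$ subharmonic on $U$ is exactly the hypothesis needed --- use that weak $W^{1,2}$-convergence implies strong $L^2_{\loc}$-convergence (so that, along an exhaustion of $U$ by compacts, $(u'_k)$ is $L^1_{\loc}$-Cauchy after extraction), and rerun the Borel--Cantelli argument to get a subsequence $(u'_{k_\ell})$ converging pointwise on $U$ off a polar set $E$ to a Borel function $w$ that is quasi-continuous on $U$ and equals $u$ almost everywhere there. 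One then identifies $w$ with $\widetilde u$ off a polar set using the standard uniqueness of quasi-continuous representatives: if $w_1,w_2$ are quasi-continuous on an open set $V$ and agree Lebesgue-almost everywhere, then for every $\varepsilon>0$ there is an open $O$ with $\capacity(O)<\varepsilon$ such that $w_1,w_2$ are continuous on $V\setminus O$, so $\{w_1\neq w_2\}\cap(V\setminus O)$ is relatively open and Lebesgue-null, hence empty; thus $\{w_1\neq w_2\}\cap V\subset O$, and letting $\varepsilon\to 0$ shows $\{w_1\neq w_2\}\cap V$ has zero capacity, i.e.\ is polar. Applying this on $V=U$ with $w_1=w$ and $w_2=\widetilde u$ proves (i); and for (ii), if $\widetilde v$ is another function satisfying (i), applying (i) to the global regularising sequence $(u_k)$ on $\P^1$ and passing to a common subsequence gives $\widetilde v=\widetilde u$ off a polar set.
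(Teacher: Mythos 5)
First, a point of reference: the paper does not prove this statement at all — it is imported verbatim from \cite{dinh-marinescu-vu} (Theorem 2.10 there), so there is no internal proof to compare your route against. Your outline is the classical construction of quasi-continuous representatives: regularise, obtain convergence in capacity via a weak-type capacitary inequality and Borel--Cantelli, define $\widetilde u$ as the quasi-everywhere limit, and deduce (i) and (ii) from the uniqueness of quasi-continuous functions that agree Lebesgue-almost everywhere. The soft reductions in your last paragraph (the uniqueness argument, passing from one nicely convergent sequence to another, localisation to $U$) are sound.

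The genuine gap is the capacity estimate $\capacity\{|h|\geq\delta\}\leq\delta^{-2}G(\|h\|_{L^1})$, which you assert but do not prove, and which carries the entire weight of part (i). For the mere existence of $\widetilde u$ you do not need it: the regularisations of Lemma \ref{lemma:standard-reg} converge \emph{strongly} in $W^{1,2}$, so the classical weak-type inequality $\capacity\{|h|\geq\delta\}\lesssim\delta^{-2}\|h\|_{W^{1,2}}^{2}$ already gives quasi-everywhere convergence of a subsequence. But in (i) the sequence $u'_k$ is only \emph{weakly} convergent, so $\|\del(u'_{k+1}-u'_k)\|_{L^2}$ stays of order one; the classical inequality then yields only $\capacity\{|u'_{k+1}-u'_k|\geq\delta\}\lesssim\delta^{-2}$, which is useless, and smallness of $\|u'_{k+1}-u'_k\|_{L^1}$ alone cannot close the argument. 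The only additional information available is the monotone decrease of the dominating potentials $\varphi_k$ — this is precisely what distinguishes nice convergence from weak convergence — yet your estimate is formulated for a single $h$ and a single $\chi$, and the monotonicity plays no role in your Borel--Cantelli step; the sentence ``the part of the mass of $\ddc\chi$ that can force $|h|$ to be large must concentrate near $\{\chi=-\infty\}$'' is a heuristic, not an argument. As written, the analytic heart of the theorem is missing, and you acknowledge as much by deferring it to \cite{dinh-marinescu-vu}.
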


\begin{definition} \rm \label{def:good-rep}
Let $u \in W^{1,2}$. Then a function $\widetilde u$ satisfying (i) from Theorem \ref{thm:good-rep} is called a \textit{good representative} of $u$.
\end{definition}

Notice that, by definition, a good representative is always defined everywhere on $\P^1$ except on a polar set. Any function that is both a nice limit and a pointwise limit of a sequence of continuous functions in $W^{1,2}$ is a good representative. If a function $u \in W^{1,2}$ is continuous in some open set $U$ then $\widetilde u = u$ on $U$ outside a polar set. Also, by Theorem \ref{thm:good-rep}-(ii), any two good representatives agree at every point of $\P^1$ except on a polar set. From \cite{dinh-marinescu-vu}, when $u$ is real, the functions $\widetilde u^+, \widetilde u^-$  are good representatives of $u^+,u^-$. Here $u^+:=\max(u,0)$ and $u^-:=\max(-u,0)$. It also follows from the arguments in \cite{dinh-marinescu-vu} that if $\widetilde u$ is a good representative of $u$ then the set $\{|\widetilde u| = + \infty \}$ is polar and $\Re \widetilde u$, $\Im \widetilde u$ and $|\widetilde u|$ are good representatives of $\Re   u$, $\Im u$ and $|u|$ respectively. 

The next results show that good representatives are preserved by the action of the perturbed Markov operators $\oP_\xi$ defined in (\ref{eq:def-P_t}).

\begin{lemma} \label{lemma:good-rep-P_t-2}
Let $\mu$ be a probability measure on $G=\SL_2(\C)$ with a finite first moment, i.e.,  $\int_G \log \|g\| \, \diff \mu(g) <\infty$. Let $\widetilde u$ be a good representative of a bounded function $u \in W^{1,2}$.  Then, the integral $\int_G e^{i \xi \sigma_g(x)} \widetilde u (gx) \, \diff \mu(g)$ is well-defined and finite for all $x$ outside a polar set.
\end{lemma}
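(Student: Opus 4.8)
The goal is to show that for a bounded $u \in W^{1,2}$ with good representative $\widetilde u$, the quantity $\int_G e^{i\xi\sigma_g(x)} \widetilde u(gx)\,\diff\mu(g)$ makes sense (the integrand is measurable in $g$ and absolutely integrable) for all $x$ outside a polar set. Since $\widetilde u$ is bounded, $|e^{i\xi\sigma_g(x)}\widetilde u(gx)| \le \|u\|_\infty$, so absolute integrability is automatic \emph{once} we know the integrand is $\mu$-measurable in $g$; the only genuine issue is that $\widetilde u$ is defined merely outside a polar set, so for a fixed $x$ the map $g \mapsto \widetilde u(gx)$ could a priori hit the bad polar set on a set of positive $\mu$-measure, in which case the integral would not be well-defined. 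So the heart of the matter is a Fubini-type argument showing that the set of $x$ for which $\mu\{g : gx \in E\} > 0$ is itself polar, where $E$ is the polar exceptional set of $\widetilde u$.

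First I would fix a good representative $\widetilde u$ and let $E \subset \P^1$ be the polar set off which $\widetilde u$ is Borel and finite; choose a quasi-subharmonic function $\psi$ on $\P^1$ with $E \subset \{\psi = -\infty\}$. Consider the function $x \mapsto \int_G \psi(gx)\,\diff\mu(g)$ on $\P^1$. Using Cartan's decomposition and the estimate $d(gx,gy) = e^{-\sigma_g(x)-\sigma_g(y)}d(x,y)$ together with the first moment hypothesis, one controls how much $\psi \circ g$ can decrease: more precisely, $\psi \circ g$ is again quasi-subharmonic with a uniform bound on $\ddc(\psi\circ g)$ in terms of $\log\|g\|$ (the pull-back of a quasi-subharmonic function by a Möbius transformation stays quasi-subharmonic, with mass controlled by $\|g\|$). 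Averaging against $\mu$ and using $\int_G \log\|g\|\,\diff\mu(g) < \infty$, one sees that $\Psi(x) := \int_G \psi(gx)\,\diff\mu(g)$ is either $\equiv -\infty$ or is itself a quasi-subharmonic (in particular, not identically $-\infty$) function on $\P^1$; the non-triviality follows because $\int_{\P^1}\Psi\,\omegaFS = \int_G \int_{\P^1}\psi(gx)\,\omegaFS(x)\,\diff\mu(g)$, and the inner integral is finite and bounded uniformly (by invariance of $\omegaFS$ under $\mathrm{SU}(2)$ and a change of variables contributing only a $\log\|g\|$-type factor). Hence $\{\Psi = -\infty\}$ is polar. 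For $x \notin \{\Psi = -\infty\}$ we have $\int_G \psi(gx)\,\diff\mu(g) > -\infty$, which forces $\psi(gx) > -\infty$ — i.e.\ $gx \notin E$ — for $\mu$-a.e.\ $g$.

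Then for such $x$, the map $g \mapsto \widetilde u(gx)$ is defined $\mu$-a.e.; its measurability in $g$ follows since $(g,x)\mapsto gx$ is continuous, $\widetilde u$ is Borel on $\P^1 \setminus E$, and $\sigma_g(x)$ is continuous in $g$. Boundedness of $\widetilde u$ then yields $\big|\int_G e^{i\xi\sigma_g(x)}\widetilde u(gx)\,\diff\mu(g)\big| \le \|u\|_\infty < \infty$, which is the claim. I expect the main obstacle to be the quasi-subharmonicity and mass estimate for the averaged function $\Psi$: one must verify carefully that pulling back $\psi$ by $g \in \SL_2(\C)$ and integrating against $\mu$ produces a function that is not identically $-\infty$, and this is exactly where the first-moment hypothesis enters (it bounds the potential-theoretic mass $\int (c\,\omegaFS + \ddc(\psi\circ g))$ and the normalizing integral $\int_{\P^1}\psi\circ g\,\omegaFS$ by expressions integrable in $g$). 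The rest — measurability and the trivial $L^\infty$ bound — is routine.
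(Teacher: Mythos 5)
Your reduction is the same as the paper's: set $E' := \{x \in \P^1 : \mu\{g : gx \in E\} > 0\}$, where $E$ is the polar exceptional set of $\widetilde u$, note that boundedness of $\widetilde u$ makes the integral trivially well-defined for $x \notin E'$, and prove that $E'$ is polar. But your route to the polarity of $E'$ is genuinely different from the paper's. The paper does \emph{not} average the quasi-subharmonic function $\varphi$ itself: it normalizes $\varphi \le -2$, passes to $\Psi := \log(-\varphi)$, which lies in $W^{1,2}$ and is the increasing limit of a $W^{1,2}$-bounded sequence $\Psi_n = \log(-\varphi_n)$, and then uses the boundedness of $\oP$ on $W^{1,2}$ (this is where the first moment enters) together with the fact that good representatives of $W^{1,2}$ functions are finite outside polar sets: $\widetilde{\oP\Psi}$ would be $+\infty$ on $E'$, so $E'$ must be polar. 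That trick outsources all the potential theory to the already-established Sobolev machinery. Your direct approach via $\Psi := \oP\psi$ is viable, but it is precisely the step you yourself flag as ``the main obstacle'' that is misstated.

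Concretely, the claim that ``$\psi\circ g$ is again quasi-subharmonic with a uniform bound on $\ddc(\psi\circ g)$ in terms of $\log\|g\|$'' is false. If $\ddc\psi \ge -c\,\omegaFS$, then $\ddc(g^*\psi) \ge -c\,g^*\omegaFS$; the total mass of $g^*\omegaFS$ is $1$ independently of $g$, while its density with respect to $\omegaFS$ is only bounded by $\|g\|^4$ — neither is $O(\log\|g\|)$. Likewise $\int_{\P^1}\psi(gx)\,\omegaFS(x) = \int_{\P^1}\psi\,(g^{-1})^*\omegaFS$ is not ``bounded uniformly'', and the crude density bound gives only $O(\|g\|^4)$, which is not integrable under a first moment. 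The correct mechanism, which you must supply, is the potential estimate: $(g^{-1})^*\omegaFS - \omegaFS = \ddc u_g$ with $\|u_g\|_\infty \lesssim \log\|g\|$ (the potential is a multiple of $\sigma_{g^{-1}}$ and $\|\sigma_{g^{-1}}\|_\infty = \log\|g\|$), so that integration by parts gives $\big|\int\psi\,\ddc u_g\big| = \big|\int u_g\,(\ddc\psi + c\,\omegaFS) - c\int u_g\,\omegaFS\big| \lesssim c\log\|g\|$, whence $\int_{\P^1}\Psi\,\omegaFS > -\infty$ by the first moment. Similarly, for the quasi-subharmonicity of $\Psi$ one should not argue $g$ by $g$ but use $\ddc\Psi \ge -c\,\Omega$ with $\Omega := \int_G g^*\omegaFS\,\diff\mu(g) = \omegaFS + \ddc v$ and $\|v\|_\infty \lesssim \int_G\log\|g\|\,\diff\mu(g)$, so that $\Psi + cv$ is $c\,\omegaFS$-psh and $\{\Psi = -\infty\}$ is polar. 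With these corrections your argument closes; as written, the crucial inequality is asserted with an incorrect justification.
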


\begin{proof}
By definition $\widetilde u$ is well-defined and finite outside a polar set $E$. Define $$G_x :=\{g \in G: gx \in E\} \subset G \quad \text{ and } \quad E':=\{x \in \P^1 : \mu(G_x) \neq 0 \}.$$

Notice that the integral $\int_G e^{i \xi \sigma_g(x)}\widetilde u (gx) \, \diff \mu(g)$ is well-defined for $x \notin E'$, since it coincides with $\int_{G \setminus G_x} e^{i \xi \sigma_g(x)} \widetilde u (gx) \, \diff \mu(g)$ and the integrand in the last integral is well-defined and finite.

  We claim that $E'$ is polar. In order to see this, let $\varphi$ be a quasi-subharmonic function such that  $E \subset \{\varphi = - \infty\}$. We may assume that $\varphi\leq -2$ and $\varphi$ is the limit of a decreasing sequence of negative smooth functions $\varphi_n$ with $\ddc \varphi_n \geq -\omega$. One can show that $\Psi := \log (-\varphi)$ belongs to $W^{1,2}$ and is the increasing limit of the sequence $\Psi_n :=  \log (-\varphi_n)$ which is bounded in $W^{1,2}$, see \cite{dinh-sibony:decay-correlations} and  \cite[Example 1]{vigny:dirichlet}. Observe that, for every constant $A>0$, there is a neighborhood $U_A$ of $E'$ such that $\oP \Psi_n \geq A$ on $U_A$ for $n$ large enough. because $\lim_{n \to \infty} \oP \Psi_n = + \infty$ on $E'$. We deduce that $\widetilde {\oP \Psi} \geq A$ on $U_A$ outside a polar set. Hence $\widetilde{\oP \Psi} = +\infty$ on $E'$ outside a polar set. We have seen that $\widetilde {\oP \Psi}$ is finite outside a polar set, so $E'$ must be polar. This concludes the proof.
\end{proof}

\begin{proposition} \label{prop:good-rep-P_t}
Let $\mu$ be a probability measure on $G=\SL_2(\C)$ with a finite first moment, i.e.,  $\int_G \log \|g\| \, \diff \mu(g) <\infty$. Let $\widetilde u$ be a good representative of a bounded function $u \in W^{1,2}$.  Then $\oP_\xi \widetilde u$ is a good representative of $\oP_\xi u$ and its value at $x$ coincides with the integral $\int_G e^{i \xi \sigma_g(x)}\widetilde u (gx) \, \diff \mu(g)$ for all $x$ outside a polar set.
\end{proposition}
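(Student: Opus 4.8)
The plan is to combine Lemma \ref{lemma:good-rep-P_t-2} (which guarantees that the integral $\int_G e^{i\xi\sigma_g(x)}\widetilde u(gx)\,\diff\mu(g)$ is well-defined and finite off a polar set) with the regularization result of Lemma \ref{lemma:standard-reg} and the characterization of good representatives in Theorem \ref{thm:good-rep}. First, I would invoke Lemma \ref{lemma:standard-reg} to choose a sequence $(u_k)_{k\geq1}$ of smooth, uniformly bounded functions with $u_k\to u$ globally nicely; since $u$ is bounded, we may also assume $\|u_k\|_\infty\leq\|u\|_\infty+1$ for all $k$. For smooth $u_k$ the function $\oP_\xi u_k$ is continuous (indeed the integrand $e^{i\xi\sigma_g}g^*u_k$ depends continuously on $x$ with a dominating bound coming from the first moment condition and $\|\sigma_g\|_\infty=\log\|g\|$), so $\oP_\xi u_k$ is automatically a good representative of itself. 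By Proposition \ref{prop:P_t-bounded}, $\oP_\xi u_k\to\oP_\xi u$ in $W^{1,2}$; the point is to upgrade this to global nice convergence so that Theorem \ref{thm:good-rep}(i) applies.

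The key step is therefore to show that $\oP_\xi u_k\to\oP_\xi u$ globally nicely. For this I would produce, from the quasi-subharmonic functions $\psi_k\downarrow\psi$ controlling $i\del u_k\wedge\overline{\del u_k}\leq c\,\omegaFS+\ddc\psi_k$, a new sequence of quasi-subharmonic functions controlling $i\del(\oP_\xi u_k)\wedge\overline{\del(\oP_\xi u_k)}$. Expanding the derivative as in \eqref{eq:del-e^itsigma_g-g^*u}, $\del(\oP_\xi u_k)=\int_G\big(i\xi e^{i\xi\sigma_g}\del\sigma_g\,g^*u_k+e^{i\xi\sigma_g}g^*\del u_k\big)\diff\mu(g)$; by Cauchy--Schwarz (in the measure $\mu$) and the bound $|u_k|\leq\|u\|_\infty+1$, together with Lemma \ref{lemma:sigma-estimates}-(3) controlling $\int_G\rho_{g^{-1}}\log\rho_{g^{-1}}$, one estimates $i\del(\oP_\xi u_k)\wedge\overline{\del(\oP_\xi u_k)}$ by a fixed multiple of $\omegaFS$ plus $\ddc$ of an expression built from the $g_*\psi_k$'s and from the fixed (independent of $k$) quasi-subharmonic potential of $\check\rho:=\int_G\rho_{g^{-1}}\,\diff\mu(g)$ appearing in the proof of Lemma \ref{lemma:strong-convergence}. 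Since $\psi_k\downarrow\psi$, the pushed-forward/integrated potentials also decrease pointwise, and one obtains a decreasing sequence of quasi-subharmonic functions dominating the Dirichlet masses of $\oP_\xi u_k-\oP_\xi u_j$; combined with the $W^{1,2}$ convergence already known, this yields global nice convergence of $\oP_\xi u_k$ to $\oP_\xi u$.

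Granting the nice convergence, Theorem \ref{thm:good-rep}(i) gives a subsequence $(\oP_\xi u_{k_\ell})$ converging pointwise off a polar set to the good representative $\widetilde{\oP_\xi u}$. On the other hand, for $x$ outside the polar set where $\widetilde u$ is finite and where Lemma \ref{lemma:good-rep-P_t-2} applies, one checks that $\oP_\xi u_{k_\ell}(x)=\int_G e^{i\xi\sigma_g(x)}u_{k_\ell}(gx)\,\diff\mu(g)\to\int_G e^{i\xi\sigma_g(x)}\widetilde u(gx)\,\diff\mu(g)$: indeed $u_{k_\ell}(gx)\to\widetilde u(gx)$ for $\mu$-a.e.\ $g$ (the convergence $u_{k}\to\widetilde u$ holds off a polar set, and for a.e.\ $x$ the set of $g$ with $gx$ in that polar set has $\mu$-measure zero, by the argument in the proof of Lemma \ref{lemma:good-rep-P_t-2}), while $|u_{k_\ell}(gx)|\leq\|u\|_\infty+1$ provides a $\mu$-integrable dominating function; apply Lebesgue's dominated convergence theorem. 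Comparing the two limits shows $\widetilde{\oP_\xi u}(x)=\int_G e^{i\xi\sigma_g(x)}\widetilde u(gx)\,\diff\mu(g)$ off a polar set, i.e.\ $\oP_\xi\widetilde u$ (defined by this integral) is a good representative of $\oP_\xi u$, which is the assertion.

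\textbf{Main obstacle.} I expect the delicate point to be establishing the global nice convergence $\oP_\xi u_k\to\oP_\xi u$ — specifically, organizing the Cauchy--Schwarz estimate so that the Dirichlet energies $i\del(\oP_\xi u_k-\oP_\xi u_j)\wedge\overline{\del(\oP_\xi u_k-\oP_\xi u_j)}$ are dominated by $\ddc$ of a \emph{decreasing} sequence of quasi-subharmonic functions uniformly in $k,j$. The cross term involving $\del\sigma_g$ is the source of the difficulty, and controlling it requires the Moser--Trudinger bound on $u_k$ together with Lemma \ref{lemma:sigma-estimates}-(3) exactly as in the proof of Lemma \ref{lemma:strong-convergence}; one must be careful that the auxiliary potentials built from $\check\rho$ and the $g_*\psi_k$ are genuinely quasi-subharmonic on $\P^1$ and decrease pointwise. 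Everything else is a routine application of the already-established machinery.
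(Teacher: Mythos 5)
Your proposal follows essentially the same route as the paper's proof: regularize via Lemma \ref{lemma:standard-reg}, establish global nice convergence of $\oP_\xi u_k$ by splitting $\del(\oP_\xi u_k)$ into the $\del\sigma_g$-term (dominated by the fixed finite measure $M^2\xi^2\check\rho\,\omegaFS$, hence by $c_1\omegaFS+\ddc \mathbf{v}_1$ for a fixed quasi-subharmonic $\mathbf{v}_1$) and the $g^*\del u_k$-term (dominated via Cauchy--Schwarz by $c\,\Omega+\ddc\,\oP_0\psi_k$ with $\oP_0\psi_k$ decreasing), and then identify the pointwise limit with $\int_G e^{i\xi\sigma_g(x)}\widetilde u(gx)\,\diff\mu(g)$ by dominated convergence and Theorem \ref{thm:good-rep}. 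The only cosmetic differences are that the definition of nice convergence requires bounding $i\del(\oP_\xi u_k)\wedge\overline{\del(\oP_\xi u_k)}$ itself rather than the differences $\oP_\xi u_k-\oP_\xi u_j$, and that the $\rho\log\rho$ estimate of Lemma \ref{lemma:sigma-estimates}-(3) is not needed here — finiteness of the mass of $\check\rho$, which follows from part (2) and the first moment condition, suffices.
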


\begin{proof}
 Let $u_k$ be as in Lemma \ref{lemma:standard-reg}. Then $u_k$ are uniformly bounded, say by $M >0$, and converge to $\widetilde u$ globally nicely and pointwise outside a polar set, see Theorem \ref{thm:good-rep}. Recall that $\oP_\xi: W^{1,2} \to W^{1,2}$ is a bounded operator (cf. Proposition \ref{prop:P_t-bounded}) and $\oP_\xi u_k$ are continuous.

\vskip5pt

\noindent \textbf{Claim.} $\oP_\xi u_k$ converges globally nicely to $\oP_\xi \widetilde u$ as $k \to \infty$.
\proof[Proof of Claim]
It is easy to see that $\oP_\xi u_k \to \oP_\xi \widetilde u$ weakly as $k \to 0 $. To prove the nice convergence, it remains to show that
\begin{equation*}
i \del \oP_\xi u_k \wedge \overline{\del \oP_\xi u_k } \leq c' \, \omegaFS + \ddc \chi_k,
\end{equation*}
for some constant $c'>0$ and quasi-subharmonic functions $\chi_k$ decreasing to some function $\chi$ as $k \to \infty$, and an analogous estimate for $\overline{\oP_\xi u_k}$ instead of $\oP_\xi u_k$. We'll only prove the first one, as the second one follows from the same arguments.

We have $$\del \oP_\xi u_k = \int_G i \xi \, e^{i \xi \sigma_g} \del \sigma_g \, g^* u_k \, \diff \mu(g) +  \int_G e^{i \xi \sigma_g} g^* \del u_k \, \diff \mu(g): = \phi_1 + \phi_2.$$

By Cauchy-Schwarz inequality, we have $$i \, \phi_1 \wedge \overline{\phi_1} \leq \xi^2 \int_G |g^*u_k|^2 \, i \del \sigma_g \wedge \overline{\del \sigma_g}\, \diff \mu(g) \leq M^2 \xi^2 \check \rho \, \omegaFS,$$ where $i \del \sigma_g \wedge \overline{\del \sigma_g} = \rho_g \, \omegaFS$ and $\check \rho = \int_G \rho_g \, \diff \mu (g)$. We have from Lemma \ref{lemma:sigma-estimates} that $M^ 2 \xi^2 \check \rho \, \omegaFS$ is a positive measure of finite mass on $\P^1$, so we can write it as $c_1 \omegaFS + \ddc \mathbf v_1$ for some constant $c_1 >0$ and a quasi-subharmonic function $\mathbf v_1$. We conclude that $i \, \phi_1 \wedge \overline{\phi_1} \leq c_1 \omegaFS + \ddc \mathbf v_1$.

Using (\ref{eq:globally-nice}) one gets $$i\, e^{i \xi \sigma_g} g^* \del u_k \wedge \overline{e^{i \xi \sigma_g} g^* \del u_k} = g^*(i \, \del u_k \wedge \overline{\del u_k}) \leq c g^* \omegaFS + \ddc (\psi_k \circ g),$$
where $\psi_k$ are quasi-subharmonic and decrease to $\psi$.

Let $\Omega : = \int_G g^* \omegaFS \, \diff \mu(g)$ and $\eta_k := \int_G \psi_k \circ g \, \diff \mu(g)= \oP_0 \psi_k$. By the above estimate and Cauchy-Schwarz inequality we get $$i \, \phi_2 \wedge \overline{\phi_2} \leq  \int_G e^{i \xi \sigma_g} g^* \del u_k \wedge \overline{e^{i \xi \sigma_g} g^* \del u_k} \, \diff \mu(g) \leq c\Omega + \ddc \eta_k.$$ 

Since $\Omega$ has mass one, we can write $c\Omega = c\omegaFS + \ddc \mathbf v_2$ for some quasi-subharmonic function $\mathbf v_2$. Setting $\chi_k=2 \mathbf v_2 + 2\eta_k + 2\mathbf v_1$ yields $$i \, \phi_2 \wedge \overline{\phi_2} \leq c\omegaFS + \ddc (\chi_k \slash 2 -  \mathbf v_1).$$ 

Using Cauchy-Schwarz inequality and combining with the estimate on $i \,\phi_1 \wedge \overline{\phi_1}$, we get $$i \del \oP_\xi u_k \wedge \overline{\del \oP_\xi u_k } \leq 2i \,\phi_1 \wedge \overline{\phi_1} + 2i \,\phi_2 \wedge \overline{\phi_2} \leq  2(c_1 + c) \, \omegaFS + \ddc \chi_k,$$
and $\chi_k$ decrease to $\chi := 2(\mathbf v_1 + \mathbf v_2 + \oP_0 \psi)$. This proves the claim. \endproof

\vskip5pt

Define now $v(x) := \oP_\xi \widetilde u (x) = \int_G e^{i \xi \sigma_g(x)} \widetilde u (gx) \, \diff \mu(g)$. By Lemma \ref{lemma:good-rep-P_t-2}, $v$ is well-defined outside a polar set.  Lebesgue's dominated convergence theorem implies that $\oP_\xi u_k$ converge to $v$ pointwise outside a polar set. Finally, the above claim and Theorem \ref{thm:good-rep} implies that $v = \widetilde{\oP_\xi u}$ outside a polar set. This finishes the proof.
\end{proof}

We end this appendix by proving some results that were used in Section \ref{sec:spec-Pt}.

\begin{proposition} \label{p:WPC}
Let $\nu$ be a Schwarz distribution on $\P^1$.  Then,  the following are equivalent:
\begin{enumerate}
\item There is a constant $c>0$ such that $|\langle\nu, u \rangle|\leq c\| u \|_{W^{1,2}}$  for every smooth test function $ u $ on $\P^1$.

\item There exist a $(1,1)$-form with bounded coefficients $\alpha$ and a $(1,0)$-form with $L^2$  coefficients $\eta$ on $\P^1$ such that $\nu=\alpha+\dbar \eta$.
\end{enumerate}

In particular,  if $\nu$ satisfies one of the properties above,  the linear functional $ u \mapsto \langle\nu, u \rangle$ defined for smooth $u$ can be extended to a functional on $ W^{1,2}$ that is continuous with respect to both strong and weak topologies on $W^{1,2}$.
\end{proposition}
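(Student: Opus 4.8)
The statement to prove is Proposition~\ref{p:WPC}, characterizing distributions $\nu$ that extend to continuous functionals on $W^{1,2}$. Here is my plan.

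\smallskip

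\noindent\textbf{Strategy.} The implication $(2)\Rightarrow(1)$ is the easy direction and I would do it first. If $\nu=\alpha+\dbar\eta$ with $\alpha$ a bounded $(1,1)$-form and $\eta\in L^2_{(0,1)}$ (the conjugate of $L^2_{(1,0)}$; note a $(1,0)$-form pairs against a function via integration after taking $\dbar$, so actually I should write $\eta$ as a $(1,0)$-form and pair $\dbar\eta$ with $u$ through integration by parts — let me keep the statement's convention), then for smooth $u$,
$$\langle\nu,u\rangle=\int_{\P^1}u\,\alpha+\int_{\P^1}u\,\dbar\eta=\int_{\P^1}u\,\alpha-\int_{\P^1}\dbar u\wedge\eta,$$
and one bounds $\big|\int u\,\alpha\big|\lesssim\|u\|_{L^1}\|\alpha\|_\infty\lesssim\|u\|_{W^{1,2}}$ and $\big|\int\dbar u\wedge\eta\big|\leq\|\dbar u\|_{L^2}\|\eta\|_{L^2}\lesssim\|u\|_{W^{1,2}}$ by Cauchy--Schwarz. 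This gives $(1)$ with $c=c(\|\alpha\|_\infty,\|\eta\|_{L^2})$.

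\smallskip

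\noindent\textbf{The main direction $(1)\Rightarrow(2)$.} The natural approach is a Hahn--Banach / Riesz-representation argument. Consider the map $L\colon u\mapsto(\,\int_{\P^1}u\,\omegaFS,\ \del u\,)$ which embeds the completion of smooth functions under $\|\cdot\|_{W^{1,2}}$ isometrically (up to equivalence of norms) into $\C\oplus L^2_{(1,0)}$, using that $\|u\|_{W^{1,2}}\simeq|\int u\,\omegaFS|+\|\del u\|_{L^2}$ and, by \cite[Prop.~2.1]{DKW:PAMQ}, also $\simeq\|u\|_{L^2}+\|\del u\|_{L^2}$. Hypothesis $(1)$ says $\langle\nu,\cdot\rangle$ is a bounded functional on this subspace; extend it by Hahn--Banach to all of $\C\oplus L^2_{(1,0)}$ and apply the Riesz representation theorem to that Hilbert space. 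This produces a constant $\lambda\in\C$ and a form $\beta\in L^2_{(1,0)}$ such that $\langle\nu,u\rangle=\lambda\int_{\P^1}u\,\omegaFS+\langle\del u,\beta\rangle_{L^2}$ for all smooth $u$. The first term is $u$ integrated against the bounded $(1,1)$-form $\lambda\,\omegaFS$; the second term, after recognizing the $L^2$ pairing $\langle\del u,\beta\rangle=\int_{\P^1}i\,\del u\wedge\overline\beta=-\int_{\P^1}u\wedge(i\,\dbar\,\overline\beta)$ distributionally, i.e.\ $\langle\del u,\beta\rangle=\langle\dbar(\text{something built from }\overline\beta),u\rangle$. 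So setting $\eta:=i\,\overline\beta$ (a $(1,0)$-form with $L^2$ coefficients) and $\alpha:=\lambda\,\omegaFS$ gives $\nu=\alpha+\dbar\eta$ as distributions, which is $(2)$. The bookkeeping of which conjugate/$i$ factors land where is routine but must be done carefully to match the $\dbar\eta$ in the statement; I would use the convention $\diff^c=\frac{i}{2\pi}(\dbar-\partial)$ fixed in the appendix.

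\smallskip

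\noindent\textbf{The final ``In particular'' clause.} Given $(2)$, the bound $|\langle\nu,u\rangle|\leq c\|u\|_{W^{1,2}}$ on the dense subspace of smooth functions (shown in $(1)\Rightarrow(2)$ is not needed — we already have $(1)$) extends $\langle\nu,\cdot\rangle$ uniquely to a strongly continuous functional on $W^{1,2}$. For \emph{weak} continuity: if $u_k\to u$ weakly in $W^{1,2}$, regularize using Lemma~\ref{lemma:standard-reg} (or the standard convolution regularization on $\P^1$) and use that the representation $\langle\nu,u\rangle=\int u\,\alpha-\int\dbar u\wedge\eta$ is continuous under weak $W^{1,2}$ convergence: the term $\int u\,\alpha$ is continuous because weak $W^{1,2}$ convergence implies $L^2$ (in fact $L^p$ for all $p$) strong convergence by \cite{dinh-marinescu-vu}, while $\int\dbar u_k\wedge\eta\to\int\dbar u\wedge\eta$ because $\dbar u_k\rightharpoonup\dbar u$ weakly in $L^2_{(1,0)}$ (a bounded sequence in $W^{1,2}$ has $\del$-parts bounded in $L^2$, and weak convergence of $u_k$ in the sense of currents forces the $L^2$ weak limit of $\del u_k$ to be $\del u$) and $\eta\in L^2$. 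Hence $\langle\nu,u_k\rangle\to\langle\nu,u\rangle$.

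\smallskip

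\noindent\textbf{Expected main obstacle.} The genuinely delicate point is the weak-continuity assertion, specifically justifying that $\del u_k\rightharpoonup\del u$ \emph{weakly in $L^2$} from the hypothesis that $u_k\to u$ in the sense of currents with $\|u_k\|_{W^{1,2}}$ bounded; this needs the uniqueness of distributional limits plus boundedness to upgrade to weak $L^2$ convergence, together with the $L^p$-strong-convergence input from \cite{dinh-marinescu-vu}. The $(1)\Rightarrow(2)$ Hahn--Banach step is conceptually straightforward but I expect the sign/conjugation bookkeeping in identifying $\langle\del u,\beta\rangle_{L^2}$ with a $\dbar$-exact current to be the most error-prone routine part.
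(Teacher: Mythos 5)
Your $(2)\Rightarrow(1)$ direction and the ``in particular'' clause are fine and essentially match the paper. For $(1)\Rightarrow(2)$ you take a genuinely different route: the paper first writes $\nu=\alpha+\dbar\eta$ with $\alpha$ smooth cohomologous to $\nu$ and $\eta$ a priori only a $(1,0)$-current, and then upgrades $\eta$ to $L^2_{(1,0)}$ by testing against $\theta=\dbar u$ (using that $H^{0,1}(\P^1)=0$ and the $L^2$-estimate for $\dbar$) and applying Riesz on $L^2_{(0,1)}$; you instead embed $u\mapsto(\int u\,\omegaFS,\del u)$ into $\C\oplus L^2_{(1,0)}$ and apply Hahn--Banach plus Riesz there. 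Both are legitimate, and both must ultimately invoke the same analytic input.

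However, there is a genuine gap at your final identification step, and it is not the ``routine bookkeeping'' you describe. The Riesz representative gives $\langle\nu,u\rangle=\lambda\int u\,\omegaFS+\int i\,\del u\wedge\overline\beta$ with $\beta\in L^2_{(1,0)}$. Integration by parts (the $\dbar u\wedge\overline\beta$ term vanishes by bidegree) yields $\int i\,\del u\wedge\overline\beta=-\langle i\,\del\overline\beta,\,u\rangle$: this is $\del$ of a $(0,1)$-form, not $\dbar$ of a $(1,0)$-form. Your proposed $\eta:=i\,\overline\beta$ is a $(0,1)$-form, so it is not an admissible $\eta$ for statement (2), and the expression $\dbar\overline\beta$ you write is a $(0,2)$-current, hence zero on $\P^1$ --- the formula as written collapses. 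To repair this you must convert $-i\,\del\overline\beta$ into $\dbar\eta$ with $\eta\in L^2_{(1,0)}$: since $H^{0,1}(\P^1)=0$, solve $\dbar f=\overline\beta$ with $\|f\|_{W^{1,2}}\lesssim\|\beta\|_{L^2}$ (the $L^2$ Hodge decomposition of $L^2_{(0,1)}$, with no harmonic part), and set $\eta:=i\,\del f\in L^2_{(1,0)}$, so that $\dbar\eta=i\,\dbar\del f=-i\,\del\dbar f=-i\,\del\overline\beta$ as currents. This is precisely the $\dbar$-solvability-with-$L^2$-estimates ingredient the paper deploys, so your argument is fixable, but as stated it does not produce an object of the form required by (2).
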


\begin{proof}
We first prove that (1) implies (2).  Statement (1) means that $\nu$ belongs to the dual space $H^{-1}$ of $W^{1,2}$. Notice that $\nu$ is a $(1,1)$-current on $\P^1$ which, by degree reasons,  is $\overline \partial$-closed.  Let  $\alpha$ be a smooth $(1,1)$-form cohomologous to $\nu$. Then  $\nu=\alpha+\dbar \eta$ for some $(1,0)$-current $\eta$. By degree reasons $\del \eta = 0$ and since the Dolbeault cohomology group $H^{1,0}(\P^1,\C)$ is trivial, we can write $\eta = i \del v$ for some Schwartz distribution $v$. Then, $i \del \overline \del v = \alpha - \nu$. Notice that, in a local holomorphic coordinate, $i \del \overline \del v$ coincides with the Laplace operator up to a constant multiple. Since $\alpha - \nu \in H^{-1}$, it follows from standard elliptic estimates for the Laplace operator that $v \in H^1 = W^{1,2}$. Therefore, $\eta = i \del v$ has $L^2$ coefficients, thus giving (2).

\vskip3pt

Let us now prove that (2) implies (1).   If $\nu=\alpha+\dbar \eta$,  then 
\begin{equation} \label{eq:nu-hodge}
\langle\nu, u \rangle = \langle\alpha, u \rangle + \langle\eta, \dbar  u \rangle.
\end{equation} holds for $u$ smooth.  It is clear that $| \langle\alpha, u \rangle | \lesssim \|u\|_{W^{1,2}}$.  On the other hand,  by Cauchy-Schwarz inequality, we have that $|\langle\eta, \dbar  u \rangle|\leq \|\eta\|_{L^2} \|\dbar u\|_{L^2} \leq \|\eta\|_{L^2} \|u\|_{W^{1,2}}$, so (1) follows. 

\vskip3pt

For the last assertion, observe that we only need to consider the weak topology as the case of strong topology is a direct consequence.  Assuming that  $\eta$ belongs to $L^2_{(1,0)}$,  the right hand side of \eqref{eq:nu-hodge} defines a linear functional on $W^{1,2}$. Since $\alpha$ has bounded coefficients and weak convergence in $W^{1,2}$ implies $L^1$ convergence  \cite[Corollary 1.5]{dinh-marinescu-vu}, it follows  that $u \mapsto \langle\alpha, u \rangle$ is continuous for the weak topology.  The functional $u \mapsto \langle\eta, \dbar  u \rangle$ is  also  continuous for the weak topology on $W^{1,2}$ because this is true for $\eta$ smooth and, in the general case,  one can approximate $\eta$ in $L^2_{(1,0)}$ by smooth forms.  This finishes the proof of the proposition.
\end{proof}

\begin{corollary} \label{c:WPC}
Let $\nu$ be a positive measure on $\P^1$ and $\varphi$ a function in $L^\infty(\nu)$. Assume that $\nu$ satisfies the properties from Proposition \ref{p:WPC}. Then $\varphi\nu$ satisfies the same properties.
\end{corollary}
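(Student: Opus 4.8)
The plan is to reduce the statement about $\varphi\nu$ to the characterization in Proposition \ref{p:WPC}, using the second (Hodge-theoretic) description of measures with the $W^{1,2}$-duality property. Recall that by Proposition \ref{p:WPC}, since $\nu$ has the stated properties, we may write $\nu = \alpha + \dbar\eta$ where $\alpha$ is a $(1,1)$-form with bounded coefficients and $\eta$ is a $(1,0)$-form with $L^2$ coefficients. The goal is to produce an analogous decomposition for $\varphi\nu$, i.e.\ to find a $(1,1)$-form with bounded coefficients $\alpha'$ and a $(1,0)$-form $\eta'\in L^2_{(1,0)}$ with $\varphi\nu = \alpha' + \dbar\eta'$; then (2)$\Rightarrow$(1) from Proposition \ref{p:WPC} gives the $W^{1,2}$-continuity, and the last assertion of that proposition gives continuity for both the strong and the weak topologies.

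First I would check that $\varphi\nu$ is a well-defined $(1,1)$-current, which is immediate since $\nu$ is a positive measure and $\varphi \in L^\infty(\nu)$, so $\varphi\nu$ is a complex measure of finite total mass; in particular it is $\dbar$-closed for degree reasons. The natural idea is to regularize: approximate $\varphi$ by smooth functions $\varphi_j$, bounded uniformly by $\|\varphi\|_{L^\infty(\nu)}$, with $\varphi_j \to \varphi$ in $L^1(\nu)$ (hence $\varphi_j\nu \to \varphi\nu$ weakly as currents). For smooth $u$ we have
\begin{equation*}
\langle \varphi\nu, u \rangle = \langle \nu, \varphi u \rangle,
\end{equation*}
but $\varphi u$ is not smooth, so one cannot directly plug into the bound for $\nu$. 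Instead, the cleaner route is to bound $\langle\varphi\nu,u\rangle$ directly: for smooth $u$,
\begin{equation*}
|\langle \varphi\nu, u \rangle| = \left| \int_{\P^1} \varphi\, u \, \diff\nu \right| \leq \|\varphi\|_{L^\infty(\nu)} \int_{\P^1} |u| \, \diff\nu = \|\varphi\|_{L^\infty(\nu)} \, \langle \nu, |u| \rangle.
\end{equation*}
Now $\nu$ is a positive measure satisfying property (1) of Proposition \ref{p:WPC}, and the key point is that this bound applies to $|u|$: indeed $u \in W^{1,2}$ implies $|u| \in W^{1,2}$ with $\||u|\|_{W^{1,2}} \lesssim \|u\|_{W^{1,2}}$ (cf.\ \cite[Proposition 4.1]{dinh-sibony:decay-correlations}, as used elsewhere in the paper). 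Since $|u|$ can be approximated $W^{1,2}$-nicely by nonnegative smooth functions, and $\nu$ is a \emph{positive} measure, the inequality $\langle\nu, |u|\rangle \leq c\,\||u|\|_{W^{1,2}}$ passes to the limit. Combining, $|\langle\varphi\nu, u\rangle| \leq c\,\|\varphi\|_{L^\infty(\nu)}\,\||u|\|_{W^{1,2}} \lesssim \|u\|_{W^{1,2}}$ for all smooth $u$, which is exactly property (1) for $\varphi\nu$. Proposition \ref{p:WPC} then gives property (2) and the extension/continuity statement for free.

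The main obstacle is the subtlety that the $W^{1,2}$-duality bound for $\nu$ is a priori stated only for smooth test functions, whereas I need to apply it to $|u|$, which is merely in $W^{1,2}$ and generally not smooth even when $u$ is smooth. Resolving this requires invoking that $\nu$ (being positive and satisfying Proposition \ref{p:WPC}(1)) extends to a continuous functional on $W^{1,2}$ — which is precisely the last assertion of Proposition \ref{p:WPC}, available to us — together with the stability $|u| \in W^{1,2}$ and a regularization of $|u|$ by nonnegative smooth functions converging nicely, so that positivity of $\nu$ forces the inequality to survive the limit. Once this technical point is handled, everything else is a direct application of Proposition \ref{p:WPC}, so the proof is short.
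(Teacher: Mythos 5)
Your proof is correct and follows essentially the same route as the paper: both verify Property (1) of Proposition \ref{p:WPC} directly by bounding $|\langle\varphi\nu,u\rangle|$ via the positivity of $\nu$ and the boundedness of $\varphi$, reducing to the absolute value (the paper uses $u=u^+-u^-$, you use $|u|$, which is the same device), and invoking the stability of $W^{1,2}$ under taking moduli together with the continuous extension of the functional $\nu$ to $W^{1,2}$. The technical point you flag about applying the bound to the non-smooth function $|u|$ is handled exactly as you describe, so no gap.
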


\begin{proof}
We will show that $\varphi\nu$ satisfies Property (1) in Proposition \ref{p:WPC}. For this purpose, we can assume that $0\leq \varphi\leq 1$. Let $u$ be a real-valued continuous function of unit norm in $W^{1,2}$ and write $u=u^+-u^-$ with $u^\pm:=\max(\pm u,0)$. The functions $u^\pm$ are continuous and have bounded $W^{1,2}$ norms. It is now easy to see, using Property (1) for $\nu$ and that $0\leq \varphi\leq 1$, that $\langle \varphi\nu, u\rangle=\langle \varphi\nu, u^+\rangle-\langle \varphi\nu, u^-\rangle$ is bounded. This shows that $\varphi\nu$ satisfies (1) for real-valued functions $u$. The general case follows by decomposing $u$ into its real and imaginary parts.
\end{proof}

\begin{lemma} \label{lemma:int-good-rep}
Let $\nu$ be a positive measure on $\P^1$. Assume that $\nu$ satisfies the properties from Proposition \ref{p:WPC}.  Let $\widetilde u$ be a good representative of $u$. Then $\widetilde u \in L^1(\nu)$ and $\lp \nu, u \rp = \int_{\P^1} \widetilde u \, \diff \nu$.  
\end{lemma}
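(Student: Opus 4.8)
The statement to prove is Lemma~\ref{lemma:int-good-rep}: if $\nu$ satisfies the equivalent properties of Proposition~\ref{p:WPC} and $\widetilde u$ is a good representative of $u\in W^{1,2}$, then $\widetilde u\in L^1(\nu)$ and $\lp\nu,u\rp=\int_{\P^1}\widetilde u\,\diff\nu$. The natural approach is to pass from smooth functions, where the identity is a tautology, to general $W^{1,2}$ functions via a regularization adapted to both the $W^{1,2}$-topology and the measure $\nu$. The key tool is Lemma~\ref{lemma:standard-reg}: given $u\in W^{1,2}$, there is a sequence $(u_k)$ of smooth functions converging to $\widetilde u$ globally nicely, with $\|u_k\|_{W^{1,2}}$ bounded. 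Since we only need the bounded case in the applications, but the statement as given is for general $u$, I would first reduce to bounded $u$ by truncation and monotone convergence, or simply handle the general case directly using that $|\widetilde u^{\pm}|$ and $|\widetilde u|$ are good representatives of $u^{\pm}$ and $|u|$ (noted in the appendix after Theorem~\ref{thm:good-rep}).

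\textbf{Main steps.} First, for $u$ smooth the identity $\lp\nu,u\rp=\int_{\P^1}u\,\diff\nu$ is immediate from the definition of the distributional pairing, since a smooth function is its own good representative. Second, take $u\in W^{1,2}$ arbitrary and apply Lemma~\ref{lemma:standard-reg} to obtain smooth $u_k\to\widetilde u$ globally nicely; by Theorem~\ref{thm:good-rep}, after passing to a subsequence, $u_k\to\widetilde u$ pointwise outside a polar set, hence $\nu$-almost everywhere because $\nu$ does not charge polar sets (this is where Property~(1)--(2) of Proposition~\ref{p:WPC} enters: a measure of the stated form puts no mass on polar sets; alternatively one invokes \cite[Theorem 2.11]{DKW:PAMQ} for the stationary measure, but here one should argue directly). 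Third, to control the integrals I would bound $\int_{\P^1}|u_k|\,\diff\nu$ uniformly: by Cauchy--Schwarz and the global nice bound $i\,\del u_k\wedge\overline{\del u_k}\le c\,\omegaFS+\ddc\psi_k$, together with a Moser--Trudinger type estimate, one gets that $\{|u_k|\}$ is uniformly integrable with respect to $\nu$ — the cleanest route is to observe $|\lp\nu,u_k\rp|\le c'\|u_k\|_{W^{1,2}}$ from Proposition~\ref{p:WPC}(1), and to obtain uniform integrability from the fact that $\nu$ lifts to a pairing on $W^{1,2}$ and $\{u_k\}$ is $W^{1,2}$-bounded and nicely convergent, so that $u_k\to\widetilde u$ in $L^1(\nu)$. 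Fourth, combine: $\lp\nu,u\rp=\lim_k\lp\nu,u_k\rp=\lim_k\int_{\P^1}u_k\,\diff\nu=\int_{\P^1}\widetilde u\,\diff\nu$, the first equality by weak continuity of the functional (Proposition~\ref{p:WPC}, last assertion), the second by the smooth case, the third by $L^1(\nu)$-convergence; this also gives $\widetilde u\in L^1(\nu)$.

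\textbf{Main obstacle.} The delicate point is Step three: upgrading $\nu$-a.e.\ pointwise convergence of $u_k$ to $\widetilde u$ to $L^1(\nu)$-convergence, i.e.\ establishing uniform integrability of $\{u_k\}$ against $\nu$. Pointwise convergence plus $W^{1,2}$-boundedness alone does not give this for an arbitrary measure; one must exploit that $\nu$ is dominated, in a suitable sense, by the $W^{1,2}$-norm (Proposition~\ref{p:WPC}(1)) and that the nice convergence provides quasi-subharmonic dominating potentials $\psi_k\downarrow\psi$ with $\psi\in W^{1,2}$. Concretely, writing $\nu=\alpha+\dbar\eta$ with $\alpha$ bounded and $\eta\in L^2_{(1,0)}$, the pairing $\lp\nu,u_k\rp=\lp\alpha,u_k\rp+\lp\eta,\dbar u_k\rp$ shows the second term is controlled in $L^2$ and the first by the bounded coefficients of $\alpha$; a truncation argument $u_k\mapsto\max(-N,\min(N,u_k))$ combined with the Moser--Trudinger estimate \eqref{eq:moser-trudinger-estimate} and Young's inequality — exactly as in the proof of Lemma~\ref{lemma:iterates-bounded} — bounds the tail $\int_{|u_k|\ge N}|u_k|\,\diff\nu$ uniformly in $k$, yielding uniform integrability. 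With that in hand, Vitali's convergence theorem finishes the argument.
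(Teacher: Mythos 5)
Your overall skeleton is right --- regularize via Lemma \ref{lemma:standard-reg}, use that $\nu$ does not charge polar sets, identify $\lp\nu,u_k\rp$ with $\int u_k\,\diff\nu$ for smooth $u_k$, and pass to the limit --- and you even name the paper's actual route (``reduce to bounded $u$ by truncation and monotone convergence'') in passing. But the argument you actually develop has a gap exactly at the step you flag as the main obstacle. The mechanism you propose for uniform integrability --- Moser--Trudinger plus Young's inequality ``exactly as in the proof of Lemma \ref{lemma:iterates-bounded}'' --- does not transfer to the present setting: in that lemma the measure being integrated against is $\rho_{g^{-1}}\,\omegaFS$, with an explicit density satisfying $\rho\log\rho\in L^1(\omegaFS)$, and Young's inequality $ab\le e^a+b\log b$ is applied to that density. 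Here $\nu$ is only assumed to be of the form $\alpha+\dbar\eta$ with $\alpha$ bounded and $\eta\in L^2_{(1,0)}$; such a measure (e.g.\ the stationary measure, which is the case of interest) is in general singular with respect to $\omegaFS$, so there is no density to feed into Young's inequality, and the Moser--Trudinger bound $\int e^{\alpha|u_k|^2}\,\omegaFS\le C$ gives no control of $\int_{|u_k|\ge N}|u_k|\,\diff\nu$. The alternative tail bound $\int_{|u_k|\ge N}|u_k|\,\diff\nu \le N^{-1}\int|u_k|^2\,\diff\nu$ fares no better, since $u_k^2$ need not be bounded in $W^{1,2}$.

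The paper's proof sidesteps uniform integrability altogether. After reducing to $u\ge 0$: (i) if $u$ is bounded, Lemma \ref{lemma:standard-reg} produces \emph{uniformly bounded} smooth $u_k\to\widetilde u$ nicely, hence pointwise off a polar set along a subsequence; since $\nu$ charges no polar sets, ordinary dominated convergence gives $\int u_k\,\diff\nu\to\int\widetilde u\,\diff\nu$, while $\lp\nu,u_k\rp\to\lp\nu,u\rp$ by weak continuity of the functional. (ii) For general $u\ge 0$, set $u_M:=\min(u,M)$; by \cite[Lemma 2.4]{dinh-marinescu-vu} this lies in $W^{1,2}$ with good representative $\min(\widetilde u,M)$ and with $\|u_M\|_{W^{1,2}}$ bounded uniformly in $M$, so $u_M\to u$ weakly and $\lp\nu,u_M\rp\to\lp\nu,u\rp$; the monotone convergence theorem then upgrades $\int\min(\widetilde u,M)\,\diff\nu$ to $\int\widetilde u\,\diff\nu$, and Property (1) of Proposition \ref{p:WPC} gives finiteness, i.e.\ $\widetilde u\in L^1(\nu)$. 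If you replace your Step three by this two-stage argument (which you already sketched as an option), the proof is complete.
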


\begin{proof}
As above, it is enough to prove the result for $u$ non-negative. Assume first that $u$ is bounded. Let $u_k$ be as in Lemma \ref{lemma:standard-reg}. On the one hand, we have that $\lp \nu, u_k \rp \to \lp \nu, u \rp$ as $k \to \infty$, because $u_k \to u$ weakly and $\nu$ is continuous with respect to weak convergence. On the other hand, we have that $\lp \nu, u_k \rp = \int_{\P^1}  u_k \, \diff \nu$, since $u_k$ is smooth.  Arguing as in in the proof of  \cite[Theorem 2.1]{DKW:PAMQ}, we see that $\nu$ has no mass on polar sets. By Lemma \ref{lemma:standard-reg} and Theorem \ref{thm:good-rep}, $u_k \to \widetilde u$ globally nicely and pointwise outside a polar set along a subsequence. As $u_k$ are uniformly bounded, we can apply Lebesgue's dominated convergence theorem. This gives that $\int_{\P^1}  u_{k} \, \diff \nu \to \int_{\P^1}  \widetilde u \, \diff \nu$. This completes the proof when $u$ is bounded.

Let now $u$ be an arbitrary non-negative function in $W^{1,2}$. For $M \geq 1$ let $u_M:= \min(u,M)$. Then, by \cite[Lemma 2.4]{dinh-marinescu-vu}, $u_M \in W^{1,2}$ and has $\min (\widetilde u,M)$ as  a good representative. Moreover, $\|u_M\|_{W^{1,2}}$ is bounded independently of $M$. It follows that $u_M \to u$ weakly and hence $\lp \nu, u_M \rp \to \lp \nu, u \rp$. By the first part of the proposition  $\lp \nu, u_M \rp = \int_{\P^1}  \min (\widetilde u,M) \, \diff \nu$. Letting $M \to +\infty$ and using the monotone convergence theorem  yield $\lp \nu, u \rp = \int_{\P^1} \widetilde u \, \diff \nu$.  Observe that, from Property (1) of Proposition \ref{p:WPC},  the last quantity is finite.  This completes the proof.
\end{proof}


\vspace{-7pt}

\end{document}